\title{Local-global compatibility over function fields}
\author{Siyan Daniel Li-Huerta}
\email{sli@math.harvard.edu}
\address{Department of Mathematics\\Harvard University\\1 Oxford Street\\Cambridge, MA 02138}
\theoremstyle{plain}
\newtheorem{thm}[subsection]{Theorem}
\newtheorem*{thm*}{Theorem}
\newtheorem{lem}[subsection]{Lemma}
\newtheorem*{lem*}{Lemma}
\newtheorem{prop}[subsection]{Proposition}
\newtheorem*{prop*}{Proposition}
\newtheorem*{conj*}{Conjecture}
\newtheorem*{cor*}{Corollary}
\newtheorem*{thmA}{Theorem A}
\newtheorem*{thmB}{Theorem B}
\newtheorem*{thmC}{Theorem C}
\newtheorem*{thmD}{Theorem D}
\theoremstyle{definition}
\newtheorem{defn}[subsection]{Definition}
\newtheorem*{defn*}{Definition}
\theoremstyle{remark}
\newtheorem*{rem*}{Remark}
\newtheorem*{rems*}{Remarks}
\newcommand{\fLocSht}{\fL\mathrm{oc}\fS\mathrm{ht}}
\newcommand{\cGr}{\cG\mathrm{r}}
\newcommand{\cLocSht}{\cL\mathrm{oc}\cS\mathrm{ht}}
\newcommand{\fFr}{\fF\mathrm{r}}
\newcommand{\cFr}{\cF\mathrm{r}}
\begin{document}

\begin{abstract}
We prove that V. Lafforgue's global Langlands correspondence is compatible with Fargues--Scholze's semisimplified local Langlands correspondence. As a consequence, we canonically lift Fargues--Scholze's construction to a non-semisimplified local Langlands correspondence for positive characteristic local fields. We also deduce that Fargues--Scholze's construction agrees with that of Genestier--Lafforgue, answering a question of Fargues--Scholze, Hansen, Harris, and Kaletha. The proof relies on a uniformization morphism for moduli spaces of shtukas.
\end{abstract}

\maketitle
\tableofcontents

\section*{Introduction}
The Langlands program predicts a relationship between automorphic forms and Galois representations. More precisely, in the case of a connected reductive group $\mathbf{G}$ over a global function field $\mathbf{F}$ of characteristic $p>0$, the Langlands program posits a canonical map
\begin{align*}
\GLC_{\mathbf{G}}: \left\{
  {\begin{tabular}{c}
    cuspidal automorphic\\
    representations of $\mathbf{G}(\bA_{\mathbf{F}})$
  \end{tabular}}
\right\}\ra \left\{
  {\begin{tabular}{c}
  $L$-parameters \\
  for $\mathbf{G}$ over $\mathbf{F}$
  \end{tabular}}
    \right\},
\end{align*}
where $\bA_{\mathbf{F}}$ denotes the adele ring of $\mathbf{F}$, and all representations are taken with $\ov\bQ_\ell$-coefficients for some $\ell\neq p$. In a landmark result, such a map $\GLC_{\mathbf{G}}$ was constructed by V. Lafforgue \cite{Laf16}.

In the case of a connected reductive group $G$ over a nonarchimedean local field $F$, the Langlands program predicts a similar map
\begin{align*}\label{eqn:LLC}
\LLC_G: \left\{
  {\begin{tabular}{c}
    irreducible smooth\\
    representations of $G(F)$
  \end{tabular}}
\right\}\ra \left\{
  {\begin{tabular}{c}
  $L$-parameters \\
  for $G$ over $F$
  \end{tabular}}
    \right\}.\tag{$\dagger$}
\end{align*}
Recent breakthrough work of Fargues--Scholze \cite{FS21} constructs such a map up to semisimplification; namely, they construct a map
\begin{align*}\label{eqn:LLCss}
\LLC_G^{\semis}: \left\{
  {\begin{tabular}{c}
    irreducible smooth\\
    representations of $G(F)$
  \end{tabular}}
\right\}\ra \left\{
  {\begin{tabular}{c}
  \emph{semisimple} $L$-parameters \\
  for $G$ over $F$
  \end{tabular}}
    \right\}.\tag{$\ddagger$}
\end{align*}

Our main result is that V. Lafforgue's global Langlands correspondence is compatible with Fargues--Scholze's semisimplified local Langlands correspondence.
\begin{thmA}
  Let $v$ be a place of $\mathbf{F}$. Then the square
  \begin{align*}
        \xymatrix{
\left\{
  {\begin{tabular}{c}
    cuspidal automorphic\\
    representations of $\mathbf{G}(\bA_{\mathbf{F}})$
  \end{tabular}}
\right\}\ar[r]^-{\GLC_{\mathbf{G}}}\ar[d]^-{(-)_v} & \left\{
  {\begin{tabular}{c}
  $L$-parameters \\
  for $\mathbf{G}$ over $\mathbf{F}$
  \end{tabular}}
    \right\}\ar[d]^-{(-)|_{W_{\mathbf{F}_v}}^{\semis}} \\
\left\{
  {\begin{tabular}{c}
    irreducible smooth\\
    representations of $\mathbf{G}(\mathbf{F}_v)$
  \end{tabular}}
\right\}\ar[r]^-{\LLC^{\semis}_{\mathbf{G}_{\mathbf{F}_v}}} & \left\{
  {\begin{tabular}{c}
  semisimple $L$-parameters \\
  for $\mathbf{G}_{\mathbf{F}_v}$ over $\mathbf{F}_v$
  \end{tabular}}
\right\}
    }
  \end{align*}
commutes.
\end{thmA}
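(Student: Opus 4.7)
Both $\GLC_{\mathbf{G}}(\pi)$ and $\LLC^{\semis}_{\mathbf{G}_{\mathbf{F}_v}}(\pi_v)$ are constructed through systems of excursion operators built out of Satake sheaves and partial Frobenii, and in each case the parameter is determined by the traces of these operators on an isotypic component (the $\pi$-isotypic part of cuspidal cohomology of moduli of global shtukas for V.\ Lafforgue, the $\pi_v$-isotypic part of the sheaf-theoretic input to the spectral action on $\mathrm{Bun}_G$ for Fargues--Scholze). The plan is therefore to compare the two excursion formalisms geometrically, via a uniformization morphism from local moduli of shtukas at $v$ into V.\ Lafforgue's moduli of global shtukas, and to check that this morphism identifies the local and global excursion data after restricting the legs in $I$ to the place $v$. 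Matching traces on both sides would then force $\GLC_{\mathbf{G}}(\pi)|_{W_{\mathbf{F}_v}}^{\semis}$ to agree with $\LLC^{\semis}_{\mathbf{G}_{\mathbf{F}_v}}(\pi_v)$, since semisimple $L$-parameters are determined by their excursion trace data.

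\textbf{Uniformization morphism.} The main technical input I would construct is a uniformization morphism, realized at the level of small $v$-sheaves, from (a suitable twist of) Fargues--Scholze's local moduli of shtukas at $v$, together with prime-to-$v$ adelic coset data, into the moduli stack $\mathrm{Cht}_{\mathbf{G},I,W}$ of global shtukas with all legs constrained to lie at $v$. Conceptually this should be the function-field analogue of Rapoport--Zink uniformization: a global $\mathbf{G}$-shtuka whose legs collapse at $v$ decomposes into a local shtuka at $v$, in the Fargues--Scholze sense, plus a $\mathbf{G}$-bundle on the curve punctured at $v$ that is encoded by a prime-to-$v$ adelic double coset; Beauville--Laszlo-type glueing reassembles these pieces. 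I expect the moduli-theoretic construction to go through cleanly in equal characteristic.

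\textbf{Compatibility of excursion operators and conclusion.} With the uniformization in hand, I would verify that the two families of geometric inputs to excursion operators match under it. The Satake sheaves appearing in V.\ Lafforgue's construction on global Hecke stacks correspond, via Beauville--Laszlo glueing, to the Fargues--Scholze Satake sheaves on local Hecke stacks that enter the spectral action on $\mathrm{Bun}_G$, and the global partial Frobenii at $v$ on $\mathrm{Cht}_{\mathbf{G},I,W}$ pull back to the local partial Frobenii feeding Fargues--Scholze's excursion operators. Taking traces on the $\pi$-isotypic summand of global cuspidal cohomology and comparing with traces on the $\pi_v$-isotypic summand of the local sheaf-theoretic data, one reads off that for every excursion datum supported on $W_{\mathbf{F}_v}^I\subset W_{\mathbf{F}}^I$, the two systems of traces coincide, giving the desired commutativity.

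\textbf{Main obstacle.} The hardest step should be constructing and analysing the uniformization morphism at a level of precision that tracks both partial Frobenii and Satake sheaves. V.\ Lafforgue's moduli are Deligne--Mumford stacks with legs in $\mathrm{Spec}\,\mathbf{F}$ and cohomology is taken in the algebraic \'etale setting, whereas Fargues--Scholze's local moduli are small $v$-sheaves with legs moving over a local curve for $\mathbf{F}_v$; translating between the two worlds requires passing through perfectoid covers and matching partial Frobenii on both sides in a fully compatible manner. A secondary difficulty is ensuring that cuspidality of $\pi$ singles out $\pi_v$ cleanly on the local side, so that the trace comparison takes place on the correct isotypic component and no spurious contributions obstruct the identification of parameters.
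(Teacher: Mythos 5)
Your proposal follows essentially the same route as the paper: reduce to comparing excursion operators, construct a uniformization morphism from the local moduli of shtukas at $v$ into the global moduli via Beauville--Laszlo gluing, check compatibility of Satake sheaves and partial Frobenii under it, and conclude by matching the two excursion actions. The paper's implementation differs only in technical details you flag as the "main obstacle" (a formal model for local shtukas, relative $z$-adic Hodge theory to compare it with the Fargues--Scholze side, and Harder--Narasimhan truncations to upgrade formal \'etaleness to \'etaleness), and it compares the excursion actions directly on $C_c(G(F)\backslash G(\bA)/K_N)$ via pushforward rather than on isotypic components, which sidesteps your worry about cuspidality isolating $\pi_v$.
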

Since $\GLC_{\mathbf{G}}$ \cite[Th\'eor\`eme 12.3]{Laf16} and $\LLC_G^{\semis}$ \cite[Theorem IX.0.5]{FS21} are compatible with the Satake isomorphism at unramified places, for a given cuspidal automorphic representation this is already known at \emph{unramified} places.

We actually prove a refinement of Theorem A on the level of \emph{excursion algebras}; see Theorem \ref{ss:localglobalcompatibilityalgebras}.
\begin{rems*}\hfill
  \begin{enumerate}[(1)]
  \item V. Lafforgue \cite[Th\'eor\`eme 13.2]{Laf16} and Fargues--Scholze \cite[Proposition IX.4.1]{FS21} prove a version of their results with $\ov\bF_\ell$-coefficients, and the analogous version of Theorem A also holds in this mod-$\ell$ context. See Theorem \ref{ss:BernsteincenterGLFS}.

  \item Once one constructs a non-semisimplified local Langlands correspondence as in Equation (\ref{eqn:LLC}) (e.g. see Theorem B below), one can ask whether Theorem A holds before semisimplification. The answer is already negative when $\mathbf{G}$ is the units of a quaternion algebra \cite[Remarque 0.3]{GL17}. More generally, Arthur's conjecture \cite{Art89} predicts that the answer is negative precisely for global $A$-packets where a local $A$-packet $\Pi_{\psi_v}$ contains a representation whose $L$-parameter does not equal the $L$-parameter associated with $\Pi_{\psi_v}$. For instance, examples of Howe--Piatetski-Shapiro \cite{HPS79} show that the answer is also negative when $\mathbf{G}$ is $\Sp_4$. 
  \end{enumerate}
\end{rems*}
We now turn to some consequences of Theorem A. When $\Char{F}>0$, Theorem A enables us to remove the ``up to semisimplification'' ambiguity in Fargues--Scholze's construction.
\begin{thmB}
Assume that $\Char{F}>0$. Then $\LLC^{\semis}_G$ canonically lifts to a non-semisimplified local Langlands correspondence $\LLC_G$ as in Equation $($\ref{eqn:LLC}$)$.
\end{thmB}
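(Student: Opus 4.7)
The plan is to define $\LLC_G(\pi)$ for each irreducible smooth representation $\pi$ of $G(F)$ by globalization and restriction. Since $\Char F>0$, I realize $F = \mathbf{F}_v$ for some global function field $\mathbf{F}$ and place $v$, extend $G$ to a connected reductive group $\mathbf{G}$ over $\mathbf{F}$, produce a cuspidal automorphic representation $\Pi$ of $\mathbf{G}(\bA_{\mathbf{F}})$ with $\Pi_v \cong \pi$, and set $\LLC_G(\pi)$ equal to the Weil--Deligne parameter associated with $\GLC_{\mathbf{G}}(\Pi)\big|_{W_{\mathbf{F}_v}}$ via Grothendieck's $\ell$-adic monodromy theorem. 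Theorem A immediately yields $\LLC_G(\pi)^{\semis} = \LLC_G^{\semis}(\pi)$, so whatever we construct is by design a lift of Fargues--Scholze's correspondence.

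The first prerequisite is a globalization statement: every irreducible smooth $\pi$ should arise as the local component at $v$ of some cuspidal $\Pi$ on a suitable $\mathbf{G}$. In the positive-characteristic setting, I would invoke or adapt known globalization techniques --- Poincar\'e series and simple trace formula input, together with a careful choice of auxiliary places where the globalization can be forced to have a tractable local component (e.g.\ Steinberg or a fixed supercuspidal type). This is considerably more flexible than in the number-field case and is not where I expect the real difficulty to lie.

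The central obstacle is canonicity: if $\Pi,\Pi'$ are two cuspidal representations with $\Pi_v \cong \Pi'_v \cong \pi$, then $\GLC_{\mathbf{G}}(\Pi)|_{W_F}$ and $\GLC_{\mathbf{G}}(\Pi')|_{W_F}$ must agree as full Weil--Deligne parameters, not merely up to semisimplification. Theorem A (even in its refined excursion-algebra form, Theorem \ref{ss:localglobalcompatibilityalgebras}) pins down the underlying semisimple Weil-group representation but a priori controls neither the monodromy operator $N$ nor the non-semisimple Frobenius structure. The strategy I would pursue to close this gap is to reduce to the supercuspidal case, where $N=0$ and the parameter is automatically semisimple: prove that $\Pi \mapsto \GLC_{\mathbf{G}}(\Pi)|_{W_{\mathbf{F}_v}}$ is compatible with parabolic induction on the automorphic side, so that the parameter of a general $\pi$ is determined by the parameters attached to the cuspidal support of its constituents, which are then controlled by Theorem A alone.

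The principal obstacle is precisely this monodromy-canonicity step, which lies genuinely beyond Theorem A. Bridging from the fully semisimplified level to the Weil--Deligne level is where I expect the main technical work to concentrate, and is likely where the uniformization morphism for moduli spaces of shtukas advertised in the abstract would enter the argument, either directly (by identifying $N$ with a geometric invariant of $\pi$) or indirectly (by furnishing the compatibility with parabolic induction).
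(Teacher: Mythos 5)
Your proposal correctly identifies the globalization step and correctly isolates the central difficulty (canonicity of the monodromy operator), but the route you propose to close that gap does not work. You suggest reducing to the supercuspidal case by showing the parameter of a general $\pi$ is "determined by the parameters attached to the cuspidal support of its constituents." It is not: distinct irreducible constituents of the same parabolic induction carry the same cuspidal support but different $L$-parameters, differing exactly in the monodromy $N$ (for $\GL_2$, the trivial and Steinberg representations are the basic example). So knowing the supercuspidal-support parameters, plus Theorem A, still leaves $N$ undetermined for precisely the representations where $N\neq 0$. Your closing guess that the uniformization morphism enters here is also off the mark; it is used only in the proof of Theorem A.

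The paper's actual mechanism (due to Gan--Harris--Sawin) is different in two respects. First, the reduction is not to supercuspidals but to essentially $L^2$ representations, via the Langlands classification and compatibility of $\LLC_G$ with parabolic induction for essentially $L^2$ representations in the sense of \cite[Conjecture 4.1 (5)]{KT}; this is the correct level at which the parameter of a general $\pi$ is forced by the values on a smaller class. Second, for an $L^2$ representation $\pi$ (with finite order central character) the lift is \emph{characterized} as the unique \emph{pure} $L$-parameter whose semisimplification is $\LLC^{\semis}_G(\pi)$ (\cite[Lemma 3.5.(b)]{GHSBP21}); purity pins down $N$ completely, which is exactly the datum your approach cannot recover. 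Existence of this pure lift is what requires work: one globalizes $\pi$ (via Beuzart-Plessis) to a cuspidal $\Pi$ whose component at an auxiliary split place is an unramified twist of the Heinloth--Ng\^o--Yun Kloosterman representation; its Fargues--Scholze parameter is irreducible by \cite{HNY13} and \cite{XZ22}, so Theorem A forces $\GLC_{\mathbf{G}}(\Pi)$ to be irreducible, hence pure by Deligne's purity theorem, and its restriction to $W_{\mathbf{F}_v}$ is the desired pure lift. Without some such purity (or equivalent) characterization, your construction is not well-defined, since different globalizations need not yield the same monodromy.
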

The proof that Theorem A implies Theorem B is due to Gan--Harris--Sawin \cite{GHSBP21}; roughly, the idea is to maneuver into a situation where Theorem A holds even before semisimplification. This uses a globalization result of Beuzart-Plessis \cite{GHSBP21}, work of Heinloth--Ng\^o--Yun \cite{HNY13} on $\ell$-adic Kloosterman sheaves, results of Xu--Zhu \cite{XZ22} on their $p$-adic companions, and Deligne's purity theorem.

Our next result concerns previous work of Genestier--Lafforgue \cite{GL17}, who also constructed a map as in Equation (\ref{eqn:LLCss}) when $\Char{F}>0$. Genestier--Lafforgue obtained a version of Theorem A for their construction, and since this property basically uniquely characterizes such maps, we deduce the following result.
\begin{thmC}
The Genestier--Lafforgue correspondence agrees with the Fargues--Scholze correspondence.
\end{thmC}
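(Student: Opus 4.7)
The strategy is to compare both correspondences through V. Lafforgue's global correspondence and then invoke a globalization argument, as hinted in the paragraph preceding the theorem: a correspondence satisfying local-global compatibility with $\GLC_{\mathbf{G}}$ is essentially uniquely determined, because its values are pinned down by global input on sufficiently many representations.

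The first step is to recall that Genestier--Lafforgue \cite{GL17} proved a local-global compatibility theorem for their construction: for every cuspidal automorphic representation $\Pi$ of $\mathbf{G}(\bA_{\mathbf{F}})$ and every place $v$, the Genestier--Lafforgue parameter of $\Pi_v$ equals the semisimplification of $\GLC_{\mathbf{G}}(\Pi)|_{W_{\mathbf{F}_v}}$. Combining this with Theorem A immediately yields that for any irreducible smooth representation $\pi$ of $G(F)$ that arises as a local component $\Pi_v$ of some cuspidal automorphic representation of a global group $\mathbf{G}$ with $\mathbf{F}_v \cong F$ and $\mathbf{G}_{\mathbf{F}_v} \cong G$, the two parameters attached to $\pi$ by Genestier--Lafforgue and Fargues--Scholze coincide.

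The second step is to upgrade this comparison to \emph{all} irreducible smooth representations of $G(F)$. For this I would invoke a globalization result: any irreducible smooth $\pi$ of $G(F)$ can be realized as a local component at some place of a cuspidal automorphic representation of a suitable global group. Since we only need existence of such a globalization (without any control of the resulting global parameter), standard function field globalization techniques should suffice.

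Alternatively, and more cleanly, one can work at the level of excursion algebras, using the refinement of Theorem A recorded in Theorem \ref{ss:localglobalcompatibilityalgebras}: both the Genestier--Lafforgue and Fargues--Scholze constructions identify local excursion operators with the restriction of V. Lafforgue's global excursion action, so they are forced to agree. The main obstacle in the representation-by-representation approach is guaranteeing that the globalization step is available for arbitrary irreducible smooth representations, including non-supercuspidal ones, with the prescribed local group structure at $v$; the excursion-algebra approach sidesteps this difficulty by transferring the entire comparison to the global setting, where only V. Lafforgue's correspondence is needed.
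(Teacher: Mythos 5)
Your overall strategy---pin down any correspondence satisfying local-global compatibility by globalizing local representations---is the right one, but your main route has a genuine gap at the globalization step. The claim that \emph{any} irreducible smooth representation $\pi$ of $G(F)$ can be realized as a local component of a cuspidal automorphic representation is false: for $G=\GL_n$ with $n\geq 2$, local components of cuspidal automorphic representations are generic, so non-generic representations (e.g.\ the trivial representation) admit no such globalization, and similar obstructions occur for general $G$. The paper's proof avoids this by first using two additional properties that \emph{both} correspondences satisfy---compatibility with twisting by characters and compatibility with parabolic induction (for Fargues--Scholze these are \cite[p.~326]{FS21} and \cite[Corollary IX.7.3]{FS21}; for Genestier--Lafforgue, \cite[Th\'eor\`eme 8.1]{GL17})---to reduce to \emph{cuspidal} $\pi$ with finite-order central character. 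Cuspidal representations do globalize, via the classical Poincar\'e series argument of \cite[Theorem 1.1]{GL17}, after which Theorem A (for Fargues--Scholze) and the Genestier--Lafforgue local-global compatibility force both parameters to equal $\GLC_{\mathbf{G}}(\Pi)|_{W_{\mathbf{F}_v}}^{\semis}$. So the missing idea in your write-up is precisely the reduction to cuspidals via parabolic induction; without it the globalization you invoke does not exist.

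Your alternative "excursion-algebra" route is essentially the paper's refinement on the level of Bernstein centers (Theorem \ref{ss:BernsteincenterGLFS}), and it does work, but not quite for the reason you give. One must show that the Genestier--Lafforgue Bernstein-center elements $\fz^{\GL}_{n,c,I,f,\ga_\bullet}$ coincide with the images of the excursion generators under the Fargues--Scholze map; knowing that both act on global cusp forms compatibly with V.~Lafforgue's excursion operators (by \cite[Proposition 1.3]{GL17} on one side and Theorem \ref{ss:localglobalcompatibilityalgebras} on the other) only identifies their actions on $C_{\cusp}(G(F)\Xi\bs G(\bA)/K_N)$. To conclude that the elements themselves are equal one still needs the density statement \cite[Lemma 1.4]{GL17}, i.e.\ that an element of the Bernstein center is determined by its action on such spaces of cusp forms as the global data vary. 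With that input supplied, your second route is a valid (and in fact stronger) proof.
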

This answers a question of Fargues--Scholze \cite{FS21}, Hansen, Harris, and Kaletha \cite{Kal22}. We also prove a refinement of Theorem C on the level of Bernstein centers; see Theorem \ref{ss:BernsteincenterGLFS}.
\begin{rem*}
Conversely, if we only had Theorem C, then work of Genestier--Lafforgue would imply Theorem A. However, our proof of Theorem A is independent of their results.
\end{rem*}
We conclude by showing that $\LLC^{\semis}_G$ satisfies the expected compatibility with the local Jacquet--Langlands correspondence \cite{Bad02}, which we denote by $\JL$, when $\Char{F}>0$ and $G$ is the units of a central simple algebra over $F$.
\begin{thmD}
Assume that $\Char{F}>0$ and $G$ is the units of a central simple algebra over $F$. For any irreducible essentially $L^2$ representation $\pi$ of $G(F)$, we have $\LLC^{\semis}_G(\pi)=\LLC^{\semis}_{\GL_n}(\JL(\pi))$.
\end{thmD}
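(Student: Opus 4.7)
The plan is to prove Theorem~D by globalizing $\pi$ to a cuspidal automorphic representation of an inner form of $\GL_n$ over a global function field, applying the global Jacquet--Langlands correspondence, and comparing the two resulting V.~Lafforgue parameters at the place recovering $F$ via Theorem~A. (An alternative route would combine Theorem~C with the analogous compatibility of the Genestier--Lafforgue correspondence with local $\JL$; the argument below is logically independent of that.)

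First, choose a global function field $\mathbf{F}$ of characteristic $p$ with a place $v_0$ satisfying $\mathbf{F}_{v_0}\cong F$, together with a central simple algebra $\mathbf{D}$ over $\mathbf{F}$ whose completion at $v_0$ is isomorphic to the algebra defining $G$; set $\mathbf{G}=\mathbf{D}^{\times}$, so that $\mathbf{G}_{\mathbf{F}_{v_0}}\cong G$. Using a suitable positive-characteristic globalization result (e.g.\ along the lines of the Beuzart-Plessis globalization referenced in the introduction, applicable since $\pi$ is essentially $L^{2}$), produce a cuspidal automorphic representation $\Pi$ of $\mathbf{G}(\bA_{\mathbf{F}})$ with $\Pi_{v_0}\cong\pi$. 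Applying the function-field global Jacquet--Langlands correspondence (due to Badulescu, building on L.~Lafforgue) to $\Pi$ then yields a discrete automorphic representation $\Pi'$ of $\GL_n(\bA_{\mathbf{F}})$ such that $\Pi'_w$ is the local Jacquet--Langlands transfer of $\Pi_w$ wherever $\mathbf{D}$ ramifies and $\Pi'_w\cong\Pi_w$ under the chosen splitting elsewhere; in particular $\Pi'_{v_0}\cong\JL(\pi)$.

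Next, I would argue that the global parameters $\GLC_{\mathbf{G}}(\Pi)$ and $\GLC_{\GL_n}(\Pi')$ coincide. At every place $w$ at which both representations are unramified, their local components match under the chosen splitting, so the compatibility of V.~Lafforgue's construction with the Satake isomorphism (recalled just after Theorem~A) forces the Frobenius semisimple conjugacy classes of the two global parameters to coincide at $w$. Since V.~Lafforgue's parameters are semisimple, and a continuous semisimple Weil group representation is determined by its Frobenius traces on a density-one set of places, we deduce $\GLC_{\mathbf{G}}(\Pi)=\GLC_{\GL_n}(\Pi')$. Restricting to the Weil group at $v_0$, semisimplifying, and invoking Theorem~A on each side then gives
\[
\LLC^{\semis}_{G}(\pi)=\GLC_{\mathbf{G}}(\Pi)|_{W_{\mathbf{F}_{v_0}}}^{\semis}=\GLC_{\GL_n}(\Pi')|_{W_{\mathbf{F}_{v_0}}}^{\semis}=\LLC^{\semis}_{\GL_n}(\JL(\pi)),
\]
which is the desired equality.

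I expect the main obstacle to be the globalization step: one must produce $\Pi$ that is cuspidal with prescribed component $\pi$ at $v_0$, while also choosing $\mathbf{D}$ so that its set of ramified places is controlled and ensuring that the auxiliary local components of $\Pi$ do not obstruct the application of global $\JL$ or the Satake comparison. The supercuspidal case (covering in particular the situation when $G$ is the units of a division algebra, since then every irreducible smooth representation is supercuspidal) is handled by the cited positive-characteristic globalization; the remaining generalized Steinberg case should be reducible to it by transiting through the division algebra form of the same inner class via iterated local $\JL$, though the precise arrangement of central characters and ramification data will require care.
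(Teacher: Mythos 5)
Your overall architecture (globalize, apply global Jacquet--Langlands, match the two global parameters by Satake compatibility plus Chebotarev, then restrict to $v_0$ and invoke Theorem A) is exactly the paper's, and the parameter-comparison and final steps are fine. The genuine gap is the one you flag yourself: the globalization step on the inner form. The Beuzart-Plessis-style globalization of \cite{GHSBP21} only prescribes the \emph{cuspidal support} of $\Pi_{v_0}$, not its isomorphism class, so when $\pi$ is essentially $L^2$ but not supercuspidal (the generalized Steinberg case, which occurs whenever $G$ is not the unit group of a division algebra) you do not get $\Pi_{v_0}\cong\pi$. Once $\Pi_{v_0}$ is merely some constituent with the right cuspidal support, the correspondence entering global $\JL$ at $v_0$ (which on non-$L^2$ local components is the extended correspondence on the unitary dual, not $\JL$ itself) need not produce $\JL(\pi)$, so the right-hand end of your chain of equalities breaks. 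Your proposed repair by ``transiting through the division algebra form via iterated local $\JL$'' is not an argument.

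The paper resolves precisely this by globalizing in the opposite direction: it uses the pseudo-coefficient of $\JL(\pi)$ constructed in \cite[Section 5]{BR17} and the simple trace formula argument of \cite[(15.10)]{LRS93} to produce a cuspidal $\wt\Pi$ on $\GL_n(\bA_{\mathbf{F}})$ with $\wt\Pi_v\cong\JL(\pi)$ \emph{exactly}, and with cuspidal components at every other place where the chosen global division algebra $\mathbf{D}$ ramifies. The latter condition makes $\wt\Pi$ transferable under the global Jacquet--Langlands correspondence of \cite[Theorem 3.2]{BR17} to a cuspidal $\Pi$ on $\mathbf{D}^\times$, whose component at $v$ is then $\JL^{-1}(\JL(\pi))=\pi$ on the nose; Theorem A applied to both $\Pi$ and $\wt\Pi$ finishes as in your last display. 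If you insist on your direction, you would need a strong globalization theorem prescribing the exact essentially $L^2$ component on the inner form; the trace-formula-with-pseudo-coefficient route is the standard way to obtain one, and it runs most cleanly on the $\GL_n$ side as the paper does.
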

When $\Char{F}>0$, Theorem D was previously only known when $G$ is $\GL_n$ or the units of a central division algebra over $F$ \cite[Theorem IX.7.4]{FS21}. The $\Char{F}=0$ analogue of Theorem D is due to Hansen--Kaletha--Weinstein \cite[Theorem 6.6.1]{HKW17} as a consequence of their work on the local Kottwitz conjecture.

Let us discuss our proof of Theorem A. Elements of our strategy go back to Deligne's letter to Piatetski-Shapiro \cite{DelPS}, which proves local-global compatibility for modular forms. Their associated Galois representations are constructed via the cohomology of modular curves, and one of Deligne's key ideas was to restrict to the supersingular locus, using the uniformization of the latter by Lubin--Tate space to relate the local and global Langlands correspondences for $\GL_2$.

Deligne's proof, as well as subsequent works on local-global compatibility using basic uniformization \cite{Car86, HT01, Sch13, LH17}, also crucially relies on arguments specific to the particular group $\mathbf{G}$ in question. However, our proof of Theorem A is uniform in all groups $\mathbf{G}$.

We begin by observing that, since the correspondences of V. Lafforgue and Fargues--Scholze are constructed via \emph{excursion operators}, it suffices to show that said operators are compatible. Let us recall their definition, which involves moduli spaces of shtukas. For simplicity, assume that $\mathbf{G}$ is split, and write $\wh{\mathbf{G}}$ for the dual group of $\mathbf{G}$ over $\ov\bQ_\ell$. For any finite set $I$ and representation $V$ of $\wh{\mathbf{G}}^I$, write $\Sht^I_{\mathbf{G},V}$ for the associated moduli space of \emph{global $\mathbf{G}$-shtukas},\footnote{In the introduction, we ignore convolution data and level structures in our notation.} which is a Deligne--Mumford stack. Work of Xue \cite{Xue20b} naturally endows the compactly supported intersection cohomology $H^I_V$ of its generic fiber with an action of $W_{\mathbf{F}}^I$, where $W_{\mathbf{F}}$ denotes the absolute Weil group. For any $x$ and $\xi$ in $V$ and $V^\vee$, respectively, that are fixed by the image of $\De:\wh{\mathbf{G}}\hra\wh{\mathbf{G}}^I$, and any $\ga_\bullet$ in $W_{\mathbf{F}}^I$, the associated global excursion operator is
\begin{gather*}\label{eqn:introglobalexcursion}
H^{*}_{\mathbf{1}}\lra^xH^{*}_{V|_{\De(\wh{G})}}=H^{I}_V\lra^{\ga_{\bullet}}H^{I}_V=H^{*}_{V|_{\De(\wh{G})}}\lra^\xi H^{*}_{\mathbf{1}},\tag{$\heartsuit$}
\end{gather*}
where $*$ denotes the singleton set, and $\mathbf{1}$ denotes the trivial representation.

In the local setting, write $\cLocSht^I_{\mathbf{G},V}$ for the associated moduli space of \emph{local $\mathbf{G}$-shtukas}, which is an analytic adic space. Work of Fargues--Scholze \cite{FS21} naturally endows the intersection homology $H^{\loc,I}_V$ of its generic fiber with an action of $W_{\mathbf{F}_v}^I$, so when $\ga_\bullet$ lies in $W_{\mathbf{F}_v}^I$, one can form local excursion operators using the same recipe as in Equation (\ref{eqn:introglobalexcursion}).

We compare the local and global excursion operators using a uniformization morphism. To define it, first we construct a formal model $\fLocSht^I_{\mathbf{G},V}$ for $\cLocSht^I_{\mathbf{G},V}$ at hyperspecial level. Stating the formal moduli problem is straightforward, although comparing it with our original definition of local $\mathbf{G}$-shtukas requires an equicharacteristic version of Kedlaya--Liu's results \cite{KL15} on relative $p$-adic Hodge theory, which we prove. Next, we use Beauville--Laszlo gluing to construct a formally \'etale morphism of formal stacks
\begin{align*}
  \wh\Te:\fLocSht^I_{\mathbf{G},V}\ra\wh\Sht^I_{\mathbf{G},V}
\end{align*}
when the level is hyperspecial at $v$, where $\wh\Sht^I_{\mathbf{G},V}$ denotes the formal completion of $\Sht^I_{\mathbf{G},V}$ along $v^I$, and we assume that $\deg{v}=1$ for simplicity. This generalizes results of Arasteh Rad--Hartl \cite{AH13}.

From here, we restrict to a Harder--Narasimhan truncation $\Sht^{I,\leq s}_{\mathbf{G},V}$ of $\Sht^I_{\mathbf{G},V}$ and enlarge the level away from $v$. This yields a scheme that is locally of finite type, so we can use Huber's analytification \cite[(3.8)]{Hub94} to extend $\wh\Te$ to a morphism of analytic adic spaces
\begin{align*}
\Te:\cLocSht^{I,\leq s}_{\mathbf{G},V}\ra(\Sht^{I,\leq s}_{\mathbf{G},V})_{(\Spa\mathbf{F}_v)^I}.
\end{align*}
for deeper levels at $v$. To prove that $\Te$ is \'etale, it suffices to consider the case of hyperspecial level. There, we prove that $\fLocSht^I_{\mathbf{G},V}$ is a formal scheme that is locally formally of finite type, generalizing results of Arasteh Rad--Hartl \cite{EH14}. After restricting to a Harder--Narasimhan truncation, this lets us upgrade the formal \'etaleness of $\Te$ to \'etaleness, as desired.

Since $\Te$ is \'etale, we can form the $!$-pushforward map
\begin{align*}
\Te_!:H^{\loc,I,\leq s}_V\ra H^{I,\leq s}_V.
\end{align*}
After restricting to a Harder--Narasimhan truncation, this induces a morphism from the composition diagram in Equation (\ref{eqn:introglobalexcursion}) to the analogous composition diagram for $H^{\loc,I}_V$. We use this to prove that the global and local excursion operators are compatible, which concludes the proof of Theorem A.

With Theorem A in hand, let us return to the local context and sketch the proofs of Theorem B, Theorem C, and Theorem D. For Theorem B, compatibility with parabolic induction and the Langlands classification reduce us to the case of $L^2$ representations $\pi$. Then the Langlands program predicts $\LLC_G(\pi)$ to be the unique pure $L$-parameter whose semisimplification is $\LLC^{\semis}_G(\pi)$, if it exists. To construct this $L$-parameter, we use a globalization result of Beuzart-Plessis \cite{GHSBP21} to obtain a cuspidal automorphic representation $\Pi$ that has the same cuspidal support as $\pi$ at one place and is isomorphic to the cuspidal representation $\pi'$ considered by Gross--Reeder \cite{GR10} at another place. Combining Theorem A with work of Heinloth--Ng\^o--Yun \cite{HNY13} and Xu--Zhu \cite{XZ22} shows that the Fargues--Scholze parameter of $\pi'$ is irreducible. From here, applying Deligne's purity theorem and Theorem A to $\Pi$ yields the desired result.

For Theorem C, we instead reduce to the case of cuspidal representations. Then a classical Poincar\'e series argument and Theorem A give the desired result. Finally, for Theorem D we use the simple trace formula to construct a cuspidal automorphic representation of $\GL_n$ that globalizes $\JL(\pi)$ and transfers to a suitable central division algebra under the global Jacquet--Langlands correspondence \cite{BR17}. From here, the Chebotarev density theorem and Theorem A imply the desired result.

\subsection*{Outline}
In \S\ref{s:grassmannians}, we recall some facts about loop groups and Beilinson--Drinfeld affine Grassmannians. In \S\ref{s:formallocalshtukas}, we define the formal moduli problem and prove that it is a formal scheme that is locally formally of finite type. In \S\ref{s:zadichodgetheory}, we prove the necessary results on $z$-adic Hodge theory. In \S\ref{s:analyticlocalshtukas}, we define the analytic moduli problem, compare it with the formal moduli problem, and recall results of Fargues--Scholze \cite{FS21} on its intersection homology. In \S\ref{s:uniformizing}, we recall the global moduli problem and construct the uniformization morphism. In \S\ref{s:localglobalcompatibility}, we use this to prove Theorem A. In \S\ref{s:applications}, we use Theorem A to prove Theorem B, Theorem C, and Theorem D.

\subsection*{Notation}
Unless otherwise specified, all products are taken over $\bF_q$. The transition morphisms for our ind-schemes are required to be closed embeddings. We view all functors between derived categories as derived functors.

Starting in \S\ref{s:zadichodgetheory}, we freely use definitions from perfectoid geometry as in \cite{Sch17} and \cite{FS21}. When viewing an adic space $X$ as a locally ringed space, we take $\sO_X$ for its structure sheaf. For any adic space $X$ over $\bZ_p$, write $X^\Diamond$ for the associated v-sheaf over $\bF_p$ as in \cite[Lemma 18.1.1]{SW20}. 

\subsection*{Acknowledgements}
The author thanks Mark Kisin for his patience and advice. The author would also like to thank Michael Harris for giving a talk on \cite{GHSBP21} that motivated him to prove Theorem A, to thank Urs Hartl for helpful corrections, and to thank David Hansen for his interest and encouragement.

\section{Recollections on affine Grassmannians}\label{s:grassmannians}
In this section, we begin by setting up our local context. We then establish some notation on loop groups, Beilinson--Drinfeld affine Grassmannians, and their affine Schubert varieties, as well as recall basic facts about these objects. Nothing in this section is new.

\subsection{}\label{ss:globalizinggroupschemes}

Let $F$ be a local field of characteristic $p>0$, and write $\bF_q$ for its residue field. Fix a separable closure $\ov{F}$ of $F$, and write $\Ga_F$ for $\Gal(\ov{F}/F)$. Choose a uniformizer $z$ of $\cO_F$, which yields an identification $\cO_F=\bF_q\lb{z}$. Let $G$ be a parahoric group scheme over $\cO_F$ as in \cite[5.2.6]{BT84}.

It will be convenient to use the following globalization of our local setup, although we will see that our constructions are independent of this globalization.
\begin{lem*}
  There exists a geometrically connected smooth proper curve $C$ over $\bF_q$, a nonempty open subspace $U\subseteq C$, a parahoric group scheme $G_C$ over $C$ as in \cite[Definition 2.18]{Ric16}, a closed point $v$ of $C$, and an isomorphism $\wh\sO_{C,v}\cong\cO_F$ such that
  \begin{enumerate}[a)]
  \item $G_C|_U$ is reductive over $U$,
  \item $G_C|_{\cO_v}$ is identified with $G$ as group schemes over $\wh\sO_{C,v}\cong\cO_F$.
  \end{enumerate}
  Moreover, there exists an $\SL_h$-bundle $\sV$ on $C$ and a closed embedding
  \begin{align*}
    \io:G_C\ra\ul{\Aut}(\sV)
  \end{align*}
  of group schemes over $C$ such that $\ul{\Aut}(\sV)/G_C$ is quasi-affine over $C$.
\end{lem*}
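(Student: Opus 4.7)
The plan is to construct $(C, U, G_C, v)$ by first choosing a curve with the correct completion at $v$, globalizing the generic fiber $G_F$ of $G$ over the function field, and then gluing Bruhat--Tits models at the remaining places. I would take $C := \bP^1_{\bF_q}$ and let $v$ be any rational point, so that $\wh\sO_{C,v}$ is isomorphic to $\bF_q\lb t$ for a local parameter $t$, which I identify with $\cO_F = \bF_q\lb z$ by sending $z$ to $t$. The key step is to produce a reductive group $\widetilde G$ over $K := \bF_q(C)$ with $\widetilde G \otimes_K F \cong G_F$, where $F$ is viewed as the $v$-adic completion of $K$. Since any reductive group over $F$ is an inner twist of a quasi-split form determined by a pinned root datum with a Galois action factoring through a finite quotient, this globalization decomposes into: (i) realizing the splitting extension of the quasi-split form as a finite Galois cover of $C$ with prescribed completion at $v$, done via constant-field extensions for the unramified part and Krasner's lemma for the ramified part; and (ii) globalizing the inner twist class in $H^1(K,(\widetilde G_{\mathrm{qs}})^{\mathrm{ad}})$ so that its local component at $v$ matches the prescribed class, using the Hasse principle and weak approximation for Galois cohomology of reductive groups over global function fields in the style of Harder.

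Once $\widetilde G$ is in hand, spreading out yields a reductive group scheme over some nonempty open $U \subseteq C$, which I may shrink to avoid $v$. At each closed point $x \in C \setminus U$, I would glue in a Bruhat--Tits parahoric extension of the generic fiber: the given $G$ at $v$, and canonical (e.g.\ hyperspecial) choices at the other points, using Beauville--Laszlo to descend formal-local data to Zariski-local data. This produces a smooth affine group scheme $G_C$ over $C$ that is reductive over $U$ and identified with $G$ over $\cO_v$, hence parahoric in the sense of \cite[Definition 2.18]{Ric16}, giving (a) and (b). For the embedding $\iota$, since $G_C$ is smooth affine of finite type over the regular curve $C$, standard results (Thomason; Prasad--Raghunathan) yield a closed embedding $G_C \hookrightarrow \GL(W)$ for some finite locally free $\cO_C$-module $W$. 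Setting $\sV := W \oplus \det(W)^{-1}$ produces an $\SL_h$-bundle with $h := \mathrm{rk}(W) + 1$ together with a closed embedding $\iota : G_C \hookrightarrow \ul\Aut(\sV)$, since the composite map has determinant $1$. To obtain the quasi-affineness of $\ul\Aut(\sV)/G_C$, one can enlarge $W$ further and invoke Chevalley's trick to realize $G_C$ as the stabilizer of a distinguished tensor in a representation.

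The main obstacle is the globalization step: controlling the Galois cohomology classes over $K$ so that $\widetilde G$ has the prescribed local form at $v$ is subtle when $G_F$ is a nontrivial inner form, and requires careful use of the Hasse principle and weak approximation for reductive groups over global function fields. The subsequent spreading out, Bruhat--Tits gluing, and construction of $\iota$ are essentially standard.
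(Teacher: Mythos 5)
Your proposal is correct in outline but takes a genuinely different route from the paper. The paper disposes of the globalization in one stroke by citing \cite[Lemma 3.1]{Ric16}, which already produces a smooth affine group scheme with geometrically connected fibers over an affine curve, reductive away from $v$ and restricting to $G$ over $\wh\sO_{C,v}\cong\cO_F$; it then compactifies the curve, extends the group scheme over the missing points by fpqc descent and \cite[5.1.9]{BT84}, and quotes \cite[Proposition 2.2(b)]{AH13} for the embedding $\io$. You instead reprove the globalization from scratch: realize the splitting field of the quasi-split form globally, globalize the inner twist cohomologically, spread out, and glue parahorics by Beauville--Laszlo. This buys a self-contained and more explicit construction (with $C=\bP^1$), at the cost of carrying out the hardest step yourself.

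Two caveats on that step. First, your appeal to ``the Hasse principle and weak approximation in the style of Harder'' hides a genuine characteristic-$p$ subtlety: the standard argument runs through $H^1(K,\wt G_{\mathrm{qs}}^{\mathrm{sc}})=0$ (Harder) and the boundary to $H^2(K,Z)$ together with Poitou--Tate duality to hit a prescribed local class at $v$, but the center $Z$ of the simply connected cover may be infinitesimal (e.g.\ $\mu_p\subseteq\SL_p$), so this must be done in fppf cohomology with the corresponding flat duality theorems, not Galois cohomology. The conclusion is true, but the sketch as written does not engage with this, and it is exactly the content that \cite[Lemma 3.1]{Ric16} packages for you. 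Second, a minor imprecision: at the places of $C\ssm U$ other than $v$ you cannot in general choose \emph{hyperspecial} models (these exist only where the group is unramified); any parahoric suffices. Your construction of $\io$ via $W\mapsto W\oplus\det(W)^{-1}$ and Chevalley's trick (stabilizer of a vector, not a line, to get a quasi-affine rather than merely quasi-projective quotient) is essentially the argument of \cite[Proposition 2.2]{AH13} and is fine.
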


\begin{proof}
By \cite[Lemma 3.1]{Ric16}, there exists a connected smooth curve $\mathring{C}$ over $\bF_q$, a smooth affine group scheme $\mathring{G}$ over $\mathring{C}$ with geometrically connected fibers, a closed point $v$ of $\mathring{C}$, and an isomorphism $\wh\sO_{C,v}\cong\cO_F$ such that $\mathring{G}|_{\mathring{C}\ssm v}$ is reductive over $\mathring{C}\ssm v$ and $\mathring{G}|_{\wh\sO_{C,v}}$ is identified with $G$ as group schemes over $\wh\sO_{C,v}\cong\cO_F$. Because $\mathring{C}$ has an $\bF_q$-point $v$, it is geometrically connected. Write $C$ for the associated smooth proper curve over $\bF_q$. Fpqc descent and \cite[5.1.9]{BT84} yield a parahoric group scheme $G_C$ over $C$ as in \cite[Definition 2.18]{Ric16} that extends $\mathring{G}$, so we can take $U=\mathring{C}\ssm v$. Finally, the last claim follows from \cite[Proposition 2.2(b)]{AH13}.
\end{proof}

\subsection{}\label{ss:globalBDgrassmannian}
Let us recall some facts about loop groups and affine Grassmannians. Let $I$ be a finite set, and let $S=\Spec{R}$ be an affine scheme over $C^I$. For all $i$ in $I$, write $\Ga_i$ for the graph of its $i$-th projection $S\ra C$, which is a relative effective Cartier divisor on $C\times S$.

Let $I_1,\dotsc,I_k$ be an ordered partition of $I$. Write $\wh\cO_C(S)$ for the ring of global sections of the completion of $\sO_{C\times S}$ along $\sum_{i\in I}\Ga_i$. For all $1\leq j\leq k$, write $\wh\cO^{j,\circ}_C(S)$ for the version that is punctured along $\sum_{i\in I_j}\Ga_i$.
\begin{defn*}\hfill
  \begin{enumerate}[a)]
  \item Write $L^n_I(G_C)$, $L^+_I(G_C)$, and $L^{j,\circ}_I(G_C)$ for the sheaves over $C^I$ given by sending $S$ to $G_C(\sO_{n\sum_{i\in I}\Ga_i})$, $G_C(\wh\cO_C(S))$, and $G_C(\wh\cO^{j,\circ}_C(S))$, respectively.

  \item Write $\Gr^{(I_1,\dotsc,I_k)}_{G_C}$ for the sheaf over $C^I$ whose $S$-points parametrize data consisting of
  \begin{enumerate}[i)]
  \item for all $1\leq j\leq k$, a $G$-bundle $\sG_j$ on $\Spec\wh\cO_C(S)$,
  \item for all $1\leq j\leq k$, an isomorphism of $G$-bundles
    \begin{align*}
      \phi_j:\sG_j|_{\Spec\wh\cO^{j,\circ}_C(S)}\ra^\sim\sG_{j+1}|_{\Spec\wh\cO^{j,\circ}_C(S)},
    \end{align*}
    where $\sG_{k+1}$ denotes the trivial $G$-bundle.
  \end{enumerate}
\end{enumerate}
Write $L^+_zG$ and $L_zG$ for the fiber at $v$ of $L^+_*(G_C)$ and $L^{1,\circ}_*(G_C)$, respectively, where $*$ denotes the singleton set. Also, write $\Gr^k_{z,G}$ for the fiber at $v^I$ of $\Gr^{(\{1\},\dotsc,\{k\})}_{G_C}$.
\end{defn*}
The proof of \cite[Lemma 3.2]{HR20} shows that $L^n_I(G_C)$ is of finite type and affine over $C^I$, so $L^+_I(G_C)=\textstyle\varprojlim_nL^n_I(G_C)$ is affine over $C^I$. Recall that $L^{j,\circ}_I(G_C)$ is ind-affine over $C^I$ \cite[Lemma 3.2(i)]{HR20}, and $\Gr^{(I_1,\dotsc,I_k)}_{G_C}$ is ind-projective over $C^I$ \cite[Proposition 3.12]{AH13}. Also, note that $L_z^+G$, $L_zG$, and $\Gr^k_{z,G}$ are independent of the globalization from Lemma \ref{ss:globalizinggroupschemes}.

\subsection{}\label{ss:formaldisk}
The following lemmas give an alternative description of the Beilinson--Drinfeld affine Grassmannian after completing at a point. Write $\bD$ for the formal scheme $\Spf\cO_F$, and recall that $\Spec$ yields an anti-equivalence from the category of $\bF_q\lb{\ze_i}_{i\in I}$-algebras where the $\ze_i$ are nilpotent to the category of affine schemes over $\bD^I$. Let $S=\Spec{R}$ be an affine scheme over $\bD^I$. 
\begin{lem*}
The direct system $(n\sum_{i\in I}\Ga_i)_{n\geq0}$ of schemes over $C\times S$ is naturally isomorphic to $(nv\times S)_{n\geq0}$. Consequently, $\wh\cO_C(S)$ is naturally isomorphic to $R\lb{z}$, and $\wh\cO^{j,\circ}_C(S)=\wh\cO_C(S)[\textstyle\frac1{z-\ze_i}]_{i\in I_j} = R\lb{z}[\frac1{z-\ze_i}]_{i\in I_j}$ is naturally isomorphic to $R\lp{z}$.
\end{lem*}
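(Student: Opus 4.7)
The plan is to work locally around the subscheme $\sum_{i\in I}\Ga_i$ of $C\times S$ and use that a morphism $S\ra\bD^I$ corresponds to a collection of nilpotent elements $\ze_i\in R$ for $i\in I$. Since each $\ze_i$ is nilpotent, the $i$-th projection $S\ra C$ factors set-theoretically through $v$, so each graph $\Ga_i\subset C\times S$ has topological image contained in $\{v\}\times S$. Consequently, the support of $n\sum_{i\in I}\Ga_i$ is contained in $v\times S$ for every $n$, so one may compute everything inside the affine chart $\Spec(\cO_{C,v}\otimes_{\bF_q}R)$, whose completion along the closed subscheme $v\times S$ is $R\lb{z}$.

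The first step is to identify defining ideals: $n\Ga_i$ is cut out by $(z-\ze_i)^n$ and $nv\times S$ is cut out by $(z^n)$ inside $R\lb{z}$. Next I would verify the cofinality of the two resulting filtrations. In one direction, because $\ze_i^{N_i}=0$ for some $N_i\geq 1$, one has the identity $z^{N_i}=(z-\ze_i)\cdot g_i$ with $g_i:=\sum_{k=0}^{N_i-1}z^k\ze_i^{N_i-1-k}$, so raising to the $n$-th power gives $z^{nN_i}\in (z-\ze_i)^n$, and taking the product over $i\in I$ yields $z^{n\sum_i N_i}\in\prod_i(z-\ze_i)^n$. In the other direction, expanding $(z-\ze_i)^n$ binomially and discarding the terms killed by $\ze_i^{N_i}$ shows $(z-\ze_i)^n\subseteq(z^{n-N_i+1})$ once $n\geq N_i-1$, hence $\prod_i(z-\ze_i)^n\subseteq(z^{n|I|-\sum_i N_i+|I|})$ for $n$ large enough.

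Thus the two direct systems of infinitesimal thickenings of $v\times S$ inside $C\times S$ are cofinal, yielding the asserted natural isomorphism $(n\sum_{i\in I}\Ga_i)_{n\geq 0}\cong(nv\times S)_{n\geq 0}$ and consequently $\wh\cO_C(S)\cong\varprojlim_n R[z]/(z^n)=R\lb{z}$. For the punctured version, I would note that inverting any single $(z-\ze_i)$ for $i\in I_j$ inverts $z$ by the cofinality just established; once $z$ is a unit, the identity $z-\ze_{i'}=z(1-\ze_{i'}/z)$ combined with the nilpotence of $\ze_{i'}/z$ shows that every other $z-\ze_{i'}$ is automatically a unit. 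Therefore $\wh\cO^{j,\circ}_C(S)=R\lb{z}[(z-\ze_i)^{-1}]_{i\in I_j}=R\lb{z}[z^{-1}]=R\lp{z}$.

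There is no substantive obstacle here; the only point requiring care is the bookkeeping of the nilpotence indices $N_i$ in the cofinality estimates above. Conceptually the entire lemma is driven by the single observation that $(z)$ and each $(z-\ze_i)$ generate ideals with the same radical in $R\lb{z}$ as soon as $\ze_i$ is nilpotent.
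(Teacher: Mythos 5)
Your proposal is correct and follows essentially the same route as the paper: reduce to a local chart around $v\times S$, identify the defining ideals $(\prod_{i}(z-\ze_i)^n)$ and $(z^n)$, and prove mutual cofinality of the two filtrations via binomial expansions exploiting the nilpotence of the $\ze_i$ (your telescoping identity $z^{N_i}=(z-\ze_i)g_i$ is just a repackaging of the paper's expansion of $((z-\ze_i)+\ze_i)^{n+n_i-1}$). The only point handled more carefully in the paper is the initial localization, which it justifies by noting that nilpotent thickenings are étale-local and $C$ is smooth at $v$, so one may replace $C$ by $\bA^1_{\bF_q}$ with coordinate $z$; your appeal to the chart $\Spec(\cO_{C,v}\otimes_{\bF_q}R)$ needs the same smoothness to produce the coordinate $z$.
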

\begin{proof}
  As nilpotent thickenings are \'etale-local and $C$ is smooth at $v$, it suffices to replace $C$ with $\bA^1_{\bF_q}=\Spec\bF_q[z]$ and $v$ with the origin. Then $n\sum_{i\in i}\Ga_i$ is the vanishing locus of $\prod_{i\in I}(z-\ze_i)^n$ in $C\times S=\Spec R[z]$, and $nv\times S$ is the vanishing locus of $z^n$ in $C\times S$. Choose positive integers $n_i$ such that $\ze_i^{n_i}=0$ in $R$.

Set $N_1\coloneqq\sum_{i\in I}n+n_i-1$ and $N_2\coloneqq n+\max_{i\in I}\{n_i\}-1$. On $n\sum_{i\in I}\Ga_i$, we have
\begin{align*}
  z^{N_1} = \prod_{i\in I}((z-\ze_i)+\ze_i)^{n+n_i-1} = \prod_{i\in I}\sum_{l=0}^{n+n_i-1}\textstyle\binom{n+n_i-1}l(z-\ze_i)^l\ze_i^{n+n_i-1-l}= 0,
\end{align*}
so $n\sum_{i\in I}\Ga_i$ lies in $N_1v\times S$. Conversely, on $nv\times S$, we see that
\begin{align*}
\prod_{i\in I}(z-\ze_i)^{N_2} = \prod_{i\in I}\sum_{l=0}^{N_2}\textstyle\binom{N_2}lz^l\ze_i^{N_2-l}=0,
\end{align*}
so $nv\times S$ lies in $N_2\sum_{i\in I}\Ga_i$.
\end{proof}

\subsection{}\label{ss:localBDgrassmannian}
Write $\wh\Gr^{(I_1,\dotsc,I_k)}_G$ for the formal completion of $\Gr^{(I_1,\dotsc,I_k)}_{G_C}$ along $v^I$ in $C^I$.
\begin{lem*}
  Our $\wh\Gr^{(I_1,\dotsc,I_k)}_G$ is an ind-projective ind-scheme over $\bD^I$, and it is naturally isomorphic to $\Gr^k_{z,G}|_{\bD^I}$.
\end{lem*}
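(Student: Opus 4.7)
The plan is to compare functors of points, transferring the global description of $\Gr^{(I_1,\dotsc,I_k)}_{G_C}$ across the ring identifications of Lemma~\ref{ss:formaldisk}. Concretely, for $S = \Spec R$ over $\bD^I$, that lemma gives natural isomorphisms $\wh\cO_C(S) \cong R\lb{z}$ and $\wh\cO^{j,\circ}_C(S) \cong R\lp{z}$ for every $1 \leq j \leq k$. The essential feature is that the punctured ring is the same $R\lp{z}$ regardless of which nonempty $I_j \subseteq I$ one is puncturing along; nilpotence of the $\ze_i$ forces $z$ and each $z-\ze_i$ to generate the same localization of $R\lb{z}$. This is precisely what makes $\wh\Gr^{(I_1,\dotsc,I_k)}_G$ depend, up to canonical isomorphism, only on the length $k$ of the partition.

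Combining these identifications with the isomorphism $G_C|_{\cO_v} \cong G$ from Lemma~\ref{ss:globalizinggroupschemes}, the $S$-points of $\wh\Gr^{(I_1,\dotsc,I_k)}_G$ translate into tuples $(\sG_1,\dotsc,\sG_k,\phi_1,\dotsc,\phi_k)$ where each $\sG_j$ is a $G$-bundle on $\Spec R\lb{z}$, each $\phi_j:\sG_j|_{\Spec R\lp{z}} \ra^\sim \sG_{j+1}|_{\Spec R\lp{z}}$ is an isomorphism, and $\sG_{k+1}$ is trivial. Applying Lemma~\ref{ss:formaldisk} again to $\Gr^{(\{1\},\dotsc,\{k\})}_{G_C}$ yields the identical description of the $S$-points of $\Gr^k_{z,G}|_{\bD^I}$, so I would conclude that the resulting natural transformation $\wh\Gr^{(I_1,\dotsc,I_k)}_G \ra^\sim \Gr^k_{z,G}|_{\bD^I}$ is a functorial isomorphism.

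For the ind-projectivity assertion, I would invoke that $\Gr^{(I_1,\dotsc,I_k)}_{G_C}$ is ind-projective over $C^I$ by \cite[Proposition 3.12]{AH13}. Writing it as a filtered colimit $\varinjlim_n Y_n$ of closed subschemes with each $Y_n \to C^I$ projective, formal completion along $v^I$ commutes with this colimit, and each $\wh{Y}_n$ is a formal scheme projective over $\bD^I$---its underlying reduced scheme is the projective $\bF_q$-scheme $Y_n|_{v^I}$. Hence $\wh\Gr^{(I_1,\dotsc,I_k)}_G = \varinjlim_n \wh{Y}_n$ is an ind-projective ind-scheme over $\bD^I$. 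The only step I expect to require some care is tracking naturality of the ring identifications of Lemma~\ref{ss:formaldisk} through the $G$-bundle and gluing data; the substantive input, namely that the punctured disks all collapse to $R\lp{z}$, is already in hand, so the rest is bookkeeping.
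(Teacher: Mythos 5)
Your proposal is correct and follows the same route as the paper, whose entire proof is that the statement "follows immediately from Lemma \ref{ss:formaldisk}": once that lemma identifies $\wh\cO_C(S)\cong R\lb{z}$ and all the punctured rings $\wh\cO^{j,\circ}_C(S)\cong R\lp{z}$, the two functors of points coincide, and ind-projectivity is inherited from the ind-projectivity of $\Gr^{(I_1,\dotsc,I_k)}_{G_C}$ over $C^I$ recalled in \ref{ss:globalBDgrassmannian}. You have simply written out the bookkeeping the paper leaves implicit.
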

Thus $\wh\Gr^{(I_1,\dotsc,I_k)}_G$ is independent of the globalization from Lemma \ref{ss:globalizinggroupschemes}.
\begin{proof}
This follows immediately from Lemma \ref{ss:formaldisk}.
\end{proof}

\subsection{}\label{ss:affineschubertvarieties}
We now introduce affine Schubert varieties. Write $\ov{\bF_q(C)}$ for the separable closure of $\bF_q(C)$ in $\ov{F}$, and write $\Ga_{\bF_q(C)}$ for $\Gal(\ov{\bF_q(C)}/\bF_q(C))$. Let $T$ be a maximal subtorus of $G_C|_{\bF_q(C)}$, and write $X_*^+(T)$ for the set of dominant cocharacters of $T_{\ov{\bF_q(C)}}$ with respect to a fixed Borel subgroup $B\subseteq G_C|_{\ov{\bF_q(C)}}$ containing $T_{\ov{\bF_q(C)}}$. Identify $X_*^+(T)$ with the set of conjugacy classes of cocharacters of $G_C|_{\ov{\bF_q(C)}}$.

Let $\mu_\bullet=(\mu_i)_{i\in I}$ be in $X_*^+(T)^I$. Identify the field of definition of $\mu_i$ with $\bF_q(C_i)$ for some finite cover $C_i\ra C$ that is \'etale over $U$, and write $U_i$ for the preimage of $U$ in $C_i$. Note that the closure $F_i$ of $\bF_q(C_i)$ in $\ov{F}$ equals the completion of $\bF_q(C_i)$ at the closed point $v_i$ of $C_i$ above $v$ induced by $\ov{\bF_q(C)}\ra\ov{F}$. Write $\bD_i$ for $\Spf\cO_{F_i}$.

\begin{defn*}\hfill
  \begin{enumerate}[a)]
  \item Write $\Gr^{(I_1,\dotsc,I_k)}_{G_C,\mu_\bullet}|_{\prod_{i\in I}U_i}\subseteq\Gr^{(I_1,\dotsc,I_k)}_{G_C}|_{\prod_{i\in I}U_i}$ for the associated closed affine Schubert variety, and write $\Gr^{(I_1,\dotsc,I_k)}_{G_C,\mu_\bullet}|_{\prod_{i\in I}C_i}$ for its closure in $\Gr^{(I_1,\dotsc,I_k)}_{G_C}|_{\prod_{i\in I}C_i}$.
  \item Write $\wh\Gr^{(I_1,\dotsc,I_k)}_{G,\mu_\bullet}|_{\prod_{i\in I}\bD_i}$ for the formal completion of $\Gr^{(I_1,\dotsc,I_k)}_{G_C,\mu_\bullet}|_{\prod_{i\in I}C_i}$ along $\textstyle\prod_{i\in I}v_i$ in $\textstyle\prod_{i\in I}C_i$.
  \item When $I=*$, write $\Gr^1_{z,G,\mu}|_{v_*}$ for the fiber at $v_*$ of $\Gr^{(*)}_{G_C,\mu}|_{C_*}$.
  \end{enumerate}

\end{defn*}
Recall that $\Gr^{(I_1,\dotsc,I_k)}_{G_C,\mu_\bullet}|_{\prod_{i\in I}C_i}$ is a projective scheme over $\prod_{i\in I}C_i$, and the natural $L^+_I(G_C)$-action on $\Gr^{(I_1,\dotsc,I_k)}_{G_C,\mu_\bullet}|_{\prod_{i\in I}C_i}$ factors through $L^n_I(G_C)$ for large enough $n$ \cite[Proposition 1.10]{Laf16}\footnote{While \cite[Proposition 1.10]{Laf16} only treats the case of split $G$, it extends to the general case. Indeed, this is already implicitly used in \cite[(12.10)]{Laf16}.}. Therefore $\wh\Gr^{(I_1,\dotsc,I_k)}_{G,\mu_\bullet}|_{\prod_{i\in I}\bD_i}$ is a formal scheme that is formally of finite type and adic over $\prod_{i\in I}\bD_i$, and its special fiber is projective over $\textstyle\prod_{i\in I}v_i$. Also, the proof of \cite[Lemma 3.2]{Zhu14} shows that $\wh\Gr^{(I_1,\dotsc,I_k)}_{G,\mu_\bullet}|_{\prod_{i\in I}\bD_i}$ is independent of the globalization from Lemma \ref{ss:globalizinggroupschemes}.

\subsection{}\label{ss:SLrisogenybound}
Recall that we have an isomorphism
\begin{align*}
\Gr^k_{z,G}\ra^\sim(\Gr^1_{z,G})^k
\end{align*}
given by $((\sG_j)_{j=1}^k,(\phi_j)_{j=1}^k)\mapsto((\sG_k,\phi_k),\dotsc,(\sG_1,\phi_k\circ\dotsb\circ\phi_1))$.
\begin{defn*}
  Under this identification, write $\Gr^k_{z,\SL_h,m}$ for the closed subsheaf of $\Gr^k_{z,\SL_h}$ corresponding to $(\Gr^1_{z,\SL_h,m2\rho^\vee})^k\subseteq(\Gr^1_{z,\SL_h})^k$, where $2\rho^\vee$ denotes the sum of positive coroots in $\SL_h$.
\end{defn*}
By \ref{ss:affineschubertvarieties}, we see that $\Gr^k_{z,\SL_h,m}$ is a projective scheme over $\bF_q$.

\subsection{}\label{ss:SLrloopgroup}
We conclude by showing that, after pulling back to the loop group, affine Schubert varieties are affine. Write $L_I(G_C)_{\mu_\bullet}|_{\prod_{i\in I}C_i}$ for the pullback of
\begin{align*}
\Gr^{(I_1,\dotsc,I_k)}_{G_C,\mu_\bullet}|_{\prod_{i\in I}C_i}
\end{align*}
under the natural morphism $\textstyle\prod_{j=1}^kL_I^{j,\circ}(G_C)\ra\Gr^{(I_1,\dotsc,I_k)}_{G_C}$.
\begin{lem*}
Our $L_I(G_C)_{\mu_\bullet}|_{\prod_{i\in I}C_i}$ is affine over $\prod_{i\in I}C_i$.
\end{lem*}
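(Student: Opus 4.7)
The assertion is Zariski-local on the base $\prod_{i\in I}C_i$, so after passing to a Zariski open of $C$ trivializing the $\SL_h$-bundle $\sV$ from Lemma \ref{ss:globalizinggroupschemes}, I may assume $\ul{\Aut}(\sV)\cong\SL_h$ as group schemes over $C$. The closed embedding $\iota\colon G_C\hookrightarrow\SL_h$ then induces, for each $1\leq j\leq k$, a closed embedding of ind-affine schemes $L_I^{j,\circ}(G_C)\hookrightarrow L_I^{j,\circ}(\SL_h)$ over $C^I$, since equations cutting out $G_C$ inside $\SL_h$ pull back to equations cutting out the $G_C$-loop functor inside the $\SL_h$-loop functor.

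I then reduce to $G_C=\SL_h$. The induced morphism $\Gr^{(I_1,\dotsc,I_k)}_{G_C,\mu_\bullet}|_{\prod C_i}\ra\Gr^{(I_1,\dotsc,I_k)}_{\SL_h}|_{\prod C_i}$ has quasi-compact source (it is projective, by \S\ref{ss:affineschubertvarieties}), so its image is contained in $\Gr^{(I_1,\dotsc,I_k)}_{\SL_h,\nu_\bullet}|_{\prod C_i}$ for some tuple $\nu_\bullet$ of sufficiently large dominant cocharacters of $T_{\SL_h}$. Combined with the closed embedding of loop groups above, this realizes $L_I(G_C)_{\mu_\bullet}|_{\prod C_i}$ as a closed subscheme of $L_I(\SL_h)_{\nu_\bullet}|_{\prod C_i}$. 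Since a closed subscheme of a scheme affine over $\prod C_i$ is again affine over $\prod C_i$, it suffices to treat the $\SL_h$ case.

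For $\SL_h$, I further use the closed embedding $\SL_h\hookrightarrow\ul{\End}(\sO_C^h)$ into $h\times h$ matrices. By the lattice description of Schubert varieties for $\SL_h$ together with the product decomposition $\Gr^k_{z,\SL_h}\cong(\Gr^1_{z,\SL_h})^k$ of \S\ref{ss:SLrisogenybound}, the pullback of $\Gr^{(I_1,\dotsc,I_k)}_{\SL_h,\nu_\bullet}|_{\prod C_i}$ to $\prod_{j=1}^k L_I^{j,\circ}(\SL_h)|_{\prod C_i}$ consists precisely of tuples $(g_j)$ such that both $g_j$ and $g_j^{-1}$ have poles along $\sum_{i\in I_j}\Gamma_i$ of order bounded by $\nu_\bullet$. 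Taking $f_j$ to be a suitable power of $\prod_{i\in I_j}(z-\zeta_i)$ and setting $A_j=f_jg_j$, $B_j=f_jg_j^{-1}$, this becomes a closed cut inside the scheme $\prod_{j=1}^k L_I^+(\ul{\End}(\sO_C^h))\times L_I^+(\ul{\End}(\sO_C^h))$, defined by the equations $A_jB_j=f_j^2\cdot I$ and $\det A_j=f_j^h$. Because $L_I^+(\ul{\End}(\sO_C^h))$ is affine over $C^I$, the pullback is a closed subscheme of a scheme affine over $\prod C_i$, hence affine over $\prod C_i$.

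The main obstacle is the translation in the final paragraph: identifying the Schubert bound for the convolution affine Grassmannian of $\SL_h$ with the explicit bounded-pole equations on the matrix coordinates $A_j,B_j$. This relies both on the standard lattice-theoretic description of single-point Schubert varieties in $\Gr^1_{z,\SL_h,\nu}|_{v_*}$ and on the componentwise nature of the decomposition $\Gr^k_{z,\SL_h}\cong(\Gr^1_{z,\SL_h})^k$ from \S\ref{ss:SLrisogenybound}; everything else is a standard reduction via the globalization provided in Lemma \ref{ss:globalizinggroupschemes}.
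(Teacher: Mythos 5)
Your proof follows the paper's argument in all essentials: reduce to $\SL_h$ via the closed embedding $\io$, use quasi-compactness of the Schubert variety together with \cite[Lemma 5.4]{HV11} to land in a bounded $\SL_h$-Schubert variety (the paper takes $\nu_\bullet=(m2\rho^\vee)_{i\in I}$), deduce that $L_I(G_C)_{\mu_\bullet}|_{\prod_{i\in I}C_i}$ is closed in $L_I(\SL_{h})_{\nu_\bullet}|_{\prod_{i\in I}C_i}$, and then verify affineness in the $\SL_h$ case by the explicit matrix-equation argument, which is precisely the content of \cite[Lemma 4.23]{EH14} that the paper cites at this step. One small imprecision that does not affect the conclusion: for $h\geq 4$ the conditions that $f_jg_j$ and $f_jg_j^{-1}$ be regular with the stated determinant constraint cut out a closed subscheme that strictly contains the pullback of the Schubert variety (the dominance order bounds all partial sums of elementary divisors, i.e.\ all exterior powers, not just the first and last), but since you only need to realize $L_I(\SL_h)_{\nu_\bullet}|_{\prod_{i\in I}C_i}$ as a closed subscheme of something affine over $\prod_{i\in I}C_i$, the one containment you actually use is correct.
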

\begin{proof}
  Because $\ul{\Aut}(\sV)/G_C$ is quasi-affine over $C$, the proof of \cite[Proposition 1.2.6]{Zhu17} shows that $\io_*:\Gr^{(I_1,\dotsc,I_k)}_{G_C}\ra\Gr^{(I_1,\dotsc,I_k)}_{\SL_{h,C}}$ is a locally closed embedding. Now \ref{ss:affineschubertvarieties} indicates that $\Gr^{(I_1,\dotsc,I_k)}_{G_C,\mu_\bullet}|_{\prod_{i\in I}C_i}$ is a quasi-compact scheme, so \cite[Lemma 5.4]{HV11} implies that its image under $\io_*$ lies in $\Gr^{(I_1,\dotsc,I_k)}_{\SL_{h,C},(m2\rho^\vee)_{i\in I}}|_{\prod_{i\in I}C_i}$ for large enough $m$. Since $\Gr^{(I_1,\dotsc,I_k)}_{G_C,\mu_\bullet}|_{\prod_{i\in I}C_i}$ is projective over $\prod_{i\in I}C_i$ by \ref{ss:affineschubertvarieties} and $\io_*$ is a monomorphism, we see that $\io_*:\Gr^{(I_1,\dotsc,I_k)}_{G_C,\mu_\bullet}|_{\prod_{i\in I}C_i}\ra\Gr^{(I_1,\dotsc,I_k)}_{\SL_{h,C},(m2\rho^\vee)_{i\in I}}|_{\prod_{i\in I}C_i}$ is a closed embedding. Combined with the fact that $L_I^+(G_C)\ra L_I^+(\SL_{r,C})$ is a closed embedding, this implies that $L_I(G_C)_{\mu_\bullet}|_{\prod_{i\in I}C_i}\ra L_I(\SL_{h,C})_{(m2\rho^\vee)_{i\in I}}|_{\prod_{i\in I}C_i}$ is also a closed embedding. Now the argument in the proof of \cite[Lemma 4.23]{EH14} shows that $L_I(\SL_{h,C})_{(m2\rho^\vee)_{i\in I}}$ is affine over $C^I$, so $L_I(G_C)_{\mu_\bullet}|_{\prod_{i\in I}C_i}$ is affine over $\textstyle\prod_{i\in I}C_i$.
\end{proof}

\section{Formal moduli of local shtukas}\label{s:formallocalshtukas}
To define the uniformization morphism via Beauville--Laszlo gluing in \S\ref{s:uniformizing}, we need a formal variant of the moduli of local shtukas. Moreover, to show that the uniformization morphism is \'etale (after passing to generic fibers), we need some finitude properties of this formal moduli. Accomplishing these tasks is the goal of this section.

We start by defining local shtukas and their quasi-isogenies in the formal setting. After proving a rigidity property for quasi-isogenies, we define the formal moduli problem, and we dedicate the rest of this section to proving that it gives a formal scheme that is locally formally of finite type over $\bD^I$.

Our strategy ultimately harks back to Rapoport--Zink's proof \cite{RZ96} of the analogous property for Rapoport--Zink spaces. The equicharacteristic incarnation of this argument is heavily based on work of Hartl--Viehmann \cite{HV11} and Arasteh Rad--Hartl \cite{EH14}, although we generalize their results to the case of arbitrarily many legs.

\subsection{}\label{ss:localshtukas}
Later, it will be useful to work in the following generality. Let $R$ be a topological $\bF_q\lb{\ze_i}_{i\in I}$-algebra that is adic with finitely generated ideal of definition, and write $S\coloneqq\Spec{R}$. Write $\tau:S\ra S$ for the absolute $q$-Frobenius endomorphism. By abuse of notation, we also write $\tau: R\lb{z}\ra R\lb{z}$ for the canonical lift of absolute $q$-Frobenius. We use $\prescript\tau{}{(-)}$ to denote pullback by $\tau$.

Write $R\ba{z,\textstyle\frac1z}$ for the completion of $R\lp{z}$ with respect to the topology induced from $R$. We now define \emph{local $G$-shtukas}.
\begin{defn*}\hfill
  \begin{enumerate}[a)]
  \item   A \emph{local $G$-shtuka} over $S$ consists of
  \begin{enumerate}[i)]
  \item for all $1\leq j\leq k$, a $G$-bundle $\sG_j$ on $\Spec R\lb{z}$,
  \item for all $1\leq j\leq k$, an isomorphism of $G$-bundles
    \begin{align*}
      \phi_j:\sG_j|_{\Spec R\lb{z}[\frac1{z-\ze_i}]_{i\in I_j}}\ra^\sim\sG_{j+1}|_{\Spec R\lb{z}[\frac1{z-\ze_i}]_{i\in I_j}},
    \end{align*}
    where $\sG_{k+1}$ denotes the $G$-bundle $\prescript\tau{}{\sG_1}$.
  \end{enumerate}
\item Suppose that $\Spf{R}$ lies over $\prod_{i\in I}\bD_i$, and let $\sG=((\sG_j)_{j=1}^k,(\phi_j)_{j=1}^k)$ be a local shtuka over $S$. We say that $\sG$ is \emph{bounded by $\mu_\bullet$} if, for any affine \'etale cover $\Spf\wt{R}\ra\Spf{R}$ such that $\prescript\tau{}{\sG_1}|_{\Spec\wt{R}\lb{z}}$ is trivial and any trivialization $t:\prescript\tau{}{\sG_1}|_{\Spec\wt{R}\lb{z}}\ra^\sim G$, the $\Spf\wt{R}$-point of $\wh\Gr^{(I_1,\dotsc,I_k)}_G|_{\prod_{i\in I}\bD_i}$ given by
  \begin{align*}
        \xymatrixcolsep{0.75in}
    \xymatrix{\sG_1|_{\Spec\wt{R}\lb{z}}\ar@{-->}[r]^-{(\phi_1)_{\wt{R}\ba{z,\frac1z}}} & \dotsm\ar@{-->}[r]^-{(\phi_{k-1})_{\wt{R}\ba{z,\frac1z}}} & \sG_k|_{\Spec\wt{R}\lb{z}}\ar@{-->}[r]^-{(t\circ\phi_k)_{\wt{R}\ba{z,\frac1z}}} & G}
  \end{align*}
lies in $\wh\Gr^{(I_1,\dotsc,I_k)}_{G,\mu_\bullet}|_{\prod_{i\in I}\bD_i}$, using the description of $\wh\Gr^{(I_1,\dotsc,I_k)}_G$ from Lemma \ref{ss:localBDgrassmannian}.
  \end{enumerate}
\end{defn*}
It suffices to check Definition \ref{ss:localshtukas}.b) for a single $\Spf\wt{R}\ra \Spf{R}$ and $t$.

\subsection{}\label{ss:quasiisogenies}
For the rest of this section, assume that $R$ is discrete, so that the $\ze_i$ are nilpotent in $R$. In this setting, we use the following notion of quasi-isogenies.
\begin{defn*}Let $\sG$ and $\sG'$ be local $G$-shtukas over $S$.
  \begin{enumerate}[a)]
  \item A \emph{quasi-isogeny} from $\sG$ to $\sG'$ consists of, for all $1\leq j\leq k$, an isomorphism of $G$-bundles
      \begin{align*}
     \de_j:\sG_j|_{\Spec R\lp{z}}\ra^\sim \sG_j'|_{\Spec R\lp{z}}
    \end{align*}
    such that the diagram
    \begin{align*}
      \xymatrix{\sG_j|_{\Spec R\lp{z}}\ar[r]^-{\phi_j}\ar[d]^-{\de_j} & \sG_{j+1}|_{\Spec R\lp{z}}\ar[d]^-{\de_{j+1}}\\
      \sG_j'|_{\Spec R\lp{z}}\ar[r]^-{\phi_j'} & \sG'_{j+1}|_{\Spec R\lp{z}}}
    \end{align*}
    commutes, where $\de_{k+1}$ denotes the isomorphism $\prescript\tau{}{\de_1}$, and we use Lemma \ref{ss:formaldisk} to identify $R\lb{z}[\textstyle\frac1{z-\ze_i}]_{i\in I_j}$ with $R\lp{z}$.
\item Let $m$ be a non-negative integer, and let $\de$ be a quasi-isogeny from $\sG$ to $\sG'$. We say that $\de$ is \emph{bounded by $m$} if, for all $1\leq j\leq k$, the morphism $\io_*(\de_j)$ yields a point of $[L^+_z\SL_h\!\bs\!\Gr^1_{z,\SL_h,m2\rho^\vee}]$.
  \end{enumerate}
\end{defn*}
Since $L^+_zG$-bundles on $\Spec R$ are trivial after an \'etale cover, \cite[Lemma 5.4]{HV11} implies that any quasi-isogeny is bounded by $m$ for large enough $m$.

\subsection{}\label{ss:rigidisogeny}
We will need the following quantitative version of the rigidity of quasi-isogenies. Let $J$ be an ideal of $R$ satisfying $J^n=0$, and write $\j:\ov{S}\ra S$ for the associated closed embedding.
\begin{prop*}
  For all local $G$-shtukas $\sG$ and $\sG'$ over $S$, pullback yields a bijection
  \begin{align*}
\{\mbox{quasi-isogenies from }\sG\mbox{ to }\sG'\}\ra^\sim\{\mbox{quasi-isogenies from }\j^*\sG\mbox{ to }\j^*\sG'\}.
  \end{align*}
Moreover, suppose that $S$ lies over $\prod_{i\in I}\bD_i$ and that $\sG$ and $\sG'$ are bounded by $\mu_\bullet$. There exists a non-negative integer $B$ such that, if $\j^*\de$ is bounded by $m$, then $\de$ is bounded by $m+B\lceil\log_qn\rceil$.
\end{prop*}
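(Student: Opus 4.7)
The plan is to reduce both the bijection and the quantitative bound to a Frobenius-contracting fixed-point iteration. The first step is to observe that in a quasi-isogeny $\de$, the commutativity $\phi'_j\circ\de_j=\de_{j+1}\circ\phi_j$ recursively determines $\de_2,\dotsc,\de_k$ from $\de_1$, and the closure condition $\de_{k+1}=\prescript\tau{}{\de_1}$ collapses to the single equation $\Phi'\circ\de_1=\prescript\tau{}{\de_1}\circ\Phi$, where $\Phi\coloneqq\phi_k\circ\dotsb\circ\phi_1$ is the total isomorphism $\sG_1|_{\Spec R\lp{z}}\ra^\sim\prescript\tau{}{\sG_1}|_{\Spec R\lp{z}}$ and $\Phi'$ is its analogue for $\sG'$. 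Quasi-isogenies from $\sG$ to $\sG'$ are thus identified with fixed points of $F'(x)\coloneqq\Phi'^{-1}\circ\prescript\tau{}{x}\circ\Phi$ on the sheaf of isomorphisms from $\sG_1|_{\Spec R\lp{z}}$ to $\sG'_1|_{\Spec R\lp{z}}$, and self-quasi-isogenies of $\sG$ with fixed points of $F(x)\coloneqq\Phi^{-1}\circ\prescript\tau{}{x}\circ\Phi$ on $\Aut(\sG_1|_{\Spec R\lp{z}})$.

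For the bijection, I would exploit that since we are in characteristic $p$ and $\tau$ acts on $R$ by $r\mapsto r^q$, we have $\tau(J)\subseteq J^q$; hence $x_1\equiv x_2\pmod{J^M}$ implies $F'(x_1)\equiv F'(x_2)\pmod{J^{qM}}$. Uniqueness reduces by the torsor structure to self-quasi-isogenies $\al$ of $\sG$ with $\al_1\equiv1\pmod J$: the equation $\al_1=F(\al_1)$ then forces $\al_1\equiv1\pmod{J^{q^i}}$ for all $i$, so $\al_1=1$ once $q^i\geq n$. For existence, work on an \'etale cover of $S$ trivializing $\sG_1$ and $\sG'_1$ on $\Spec R\lb{z}$, and pick any lift $\wt\de_1^{(0)}$ of $\de_1$ to an $R\lp{z}$-isomorphism; since $\de_1$ already satisfies the fixed-point equation on $\ov S$, the sequence $\wt\de_1^{(i+1)}\coloneqq F'(\wt\de_1^{(i)})$ has successive terms with ratio congruent to $1$ modulo $J^{q^i}$, so it stabilizes after $\lceil\log_q n\rceil$ steps to a fixed point $\wt\de_1$; the remaining $\wt\de_j$ come from $\wt\de_{j+1}\coloneqq\phi'_j\circ\wt\de_j\circ\phi_j^{-1}$.

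For the quantitative bound, I would refine the initial choice $\wt\de_1^{(0)}$ so that $|\io_*(\wt\de_1^{(0)})|\leq m$. \'Etale-locally, write $\io_*(\de_1)=g\cdot z^\lambda\cdot h$ with $\lambda\leq m\cdot2\rho^\vee$ and $g,h$ in $L_z^+\SL_h$, then lift $g,h$ using smoothness of $L_z^+\SL_h$ over $\bF_q$ and set $\wt\de_1^{(0)}$ with Schubert position $\lambda$. Because $\sG$ and $\sG'$ are bounded by $\mu_\bullet$, Lemma \ref{ss:SLrloopgroup} implies that $\io_*(\Phi)$ and $\io_*(\Phi')$ have Schubert bounds at most $m_0\cdot2\rho^\vee$ for some $m_0=m_0(\mu_\bullet)$. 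Expanding the closed form $\wt\de_1=F'^{N_0}(\wt\de_1^{(0)})$ with $N_0=\lceil\log_q n\rceil$ as a product of $N_0$ Frobenius twists of $\Phi'$ (inverted, on the left) and $\Phi$ (on the right) sandwiching $\prescript{\tau^{N_0}}{\wt\de_1^{(0)}}$, and using that Frobenius preserves Schubert bounds on strata defined over $\bF_q$ together with the submultiplicativity $|xy|\leq|x|+|y|$ for $2\rho^\vee$-bounds in $\SL_h$, I would conclude $|\io_*(\wt\de_1)|\leq m+2N_0m_0$; taking $B=2m_0$, inflated slightly to absorb contributions from $\io_*(\phi_j),\io_*(\phi'_j)$ in the $\wt\de_j$ for $j\geq2$, then works.

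The main obstacle is this initial-lift step: since affine Schubert varieties are not formally smooth, one cannot naively lift $\ov S$-points of $\Gr^1_{z,\SL_h,m2\rho^\vee}$ to $S$-points of the same Schubert stratum. The workaround through the Bruhat decomposition transfers the lifting problem to the smooth group scheme $L_z^+\SL_h$, but it forces the argument to take place \'etale-locally on $S$, which is harmless since both assertions are stable under \'etale descent.
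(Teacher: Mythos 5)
Your argument is, at bottom, the paper's argument in different packaging. The paper reduces by induction to the case $n=q$, where $\tau=\j\circ\i$ for a unique $\i:S\ra\ov{S}$, so that $\prescript\tau{}{\de_1}=\i^*\j^*\de_1$ is determined by the data over $\ov{S}$; it then recovers $\de_k,\dotsc,\de_1$ by the backward recursion $\de_j=(\phi_j')^{-1}\circ\de_{j+1}\circ\phi_j$. Composing that recursion all the way around is exactly one application of your $F'$, and iterating over the $\lceil\log_qn\rceil$ induction steps is exactly your fixed-point iteration; the per-step cost $B$, coming from the boundedness of the $\phi_j$ and $\phi_j'$, is also the same in both treatments. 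So the bijection and the shape of the bound are fine.

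The one place you go astray is the ``initial lift'' step for the quantitative bound, which you flag as the main obstacle. It is a manufactured obstacle: since $q^{N_0}\geq n$ kills $J$, the morphism $\tau^{N_0}:S\ra S$ factors through $\j$, so the innermost term $\prescript{\tau^{N_0}}{}{\wt\de_1^{(0)}}$ in your closed-form expansion equals the pullback along $S\ra\ov{S}$ of $\j^*\de_1$ \emph{for any choice of lift} $\wt\de_1^{(0)}$, and is therefore bounded by $m$ simply because relative-position bounds are preserved under pullback. This is precisely how the paper obtains its estimate, one $q$-th power at a time. Moreover, the workaround you propose is itself unsound: over $\ov{S}=\Spec R/J$ with $R/J$ an arbitrary ring, an $\ov{S}$-point of $\Gr^1_{z,\SL_h,m2\rho^\vee}$ need not admit a Cartan-type factorization $g\cdot z^{\lambda}\cdot h$ even \'etale-locally, since the Bruhat cells merely stratify the Schubert variety and a point of $\ov{S}$ can meet several strata. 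Delete that step entirely, and replace the appeal to Lemma \ref{ss:SLrloopgroup} by the quasi-compactness argument that converts the bound on the $\phi_j$ (taken with respect to the divisors $z-\ze_i$) into a bound $m_02\rho^\vee$ with respect to $z$, and your proof coincides with the paper's.
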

\begin{proof}
  By induction, it suffices to consider the $n=q$ case. There $\tau:S\ra S$ factors as $\j\circ\i$ for a unique morphism $\i:S\ra\ov{S}$, so for any quasi-isogeny $\de$ from $\sG$ to $\sG'$, we get $\prescript\tau{}{\de_1}=\i^*\j^*\de_1$. Hence the commutative square
  \begin{align*}
          \xymatrix{\sG_k|_{\Spec R\lp{z}}\ar[r]^-{\phi_k}\ar[d]^-{\de_k} & \prescript\tau{}{\sG_1}|_{\Spec R\lp{z}}\ar[d]^-{\prescript\tau{}{\de_1}}\\
      \sG_k'|_{\Spec R\lp{z}}\ar[r]^-{\phi_k'} & \prescript\tau{}{\sG'_1}|_{\Spec R\lp{z}}},
  \end{align*}
  enables us to recover $\de_k$ from $\j^*\de_1$, where we use Lemma \ref{ss:formaldisk} to identify $R\lb{z}[\textstyle\frac1{z-\ze_i}]_{i\in I_k}$ with $R\lp{z}$. From here, we similarly recover $\de_j$ for $1\leq j\leq k-1$, showing that pullback by $\j$ is injective on quasi-isogenies. Considering the same squares over $\ov{S}$ also shows that pullback by $\j$ is surjective on quasi-isogenies.

Next, suppose that $S$ lies over $\prod_{i\in I}\bD_i$ and that $\sG$ and $\sG'$ are bounded by $\mu_\bullet$. If $\j^*\de$ is bounded by $m$, then its pullback $\i^*\j^*\de_1=\prescript\tau{}{\de_1}$ is as well. Because $\phi_k$ and $\phi_k'$ are bounded by $(\mu_i)_{i\in I_k}$, where the relative position bound is taken with respect to the $(z-\ze_i)$ for $i$ in $I_k$, a quasi-compactness argument shows that there exists a non-negative integer $B$ such that $\de_k$ is bounded by $m+B$. For $1\leq j\leq k-1$, applying the same argument to $\de_j$ indicates that, after increasing $B$ by an amount depending only on $\mu_\bullet$, our $\de_j$ is also bounded by $m+B$.
\end{proof}

\subsection{}\label{ss:formalmoduli}
We now define the formal moduli of local $G$-shtukas.
\begin{defn*}
  Write $\fLocSht_{G}^{(I_1,\dotsc,I_k)}$ for the sheaf over $\bD^I$ whose $S$-points parametrize data consisting of
  \begin{enumerate}[i)]
  \item a local $G$-shtuka $\sG$ over $S$,
  \item a quasi-isogeny $\de$ from $\sG$ to the trivial local $G$-shtuka $G=((G)_{j=1}^k,(\id)_{j=1}^k)$.
  \end{enumerate}
  Write $\fLocSht^{(I_1,\dotsc,I_k)}_{G,\mu_\bullet}|_{\prod_{i\in I}\bD_i}$ for the subsheaf of $\fLocSht^{(I_1,\dotsc,I_k)}_G|_{\prod_{i\in I}\bD_i}$ whose $S$-points consist of the $(\sG,\de)$ such that $\sG$ is bounded by $\mu_\bullet$.

  Write $f^\fL:\fLocSht^{(I_1,\dotsc,I_k)}_{G,\mu_\bullet}|_{\prod_{i\in I}\bD_i}\ra\textstyle\prod_{i\in I}\bD_i$ for the structure morphism.
\end{defn*}
\begin{prop}\label{lem:unboundedformalmoduli}
Our $\fLocSht^{(I_1,\dotsc,I_k)}_{G}$ is naturally isomorphic to $\Gr^k_{z,G}|_{\bD^I}$ over $\bD^I$, and $\fLocSht^{(I_1,\dotsc,I_k)}_{G,\mu_\bullet}|_{\prod_{i\in I}\bD_i}$ is a closed subsheaf of $\fLocSht^{(I_1,\dotsc,I_k)}_G|_{\prod_{i\in I}\bD_i}$. 
\end{prop}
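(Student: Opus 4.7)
The plan is to construct an explicit natural isomorphism $\fLocSht^{(I_1,\dotsc,I_k)}_G\cong\Gr^k_{z,G}|_{\bD^I}$ on $S$-points, and then argue that the bounded locus is closed by \'etale descent.

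Given an $S$-point $((\sG_j)_{j=1}^k,(\phi_j)_{j=1}^k,(\de_j)_{j=1}^k)$ of $\fLocSht^{(I_1,\dotsc,I_k)}_G$, the target of the quasi-isogeny is the trivial shtuka $G$, so each $\de_j$ is by Definition \ref{ss:quasiisogenies} a trivialization of $\sG_j$ on the punctured disk. Combining \ref{ss:SLrisogenybound} with Lemma \ref{ss:localBDgrassmannian}, I would send our $S$-point to $((\sG_j,\de_j))_{j=1}^k\in(\Gr^1_{z,G})^k\cong\Gr^k_{z,G}|_{\bD^I}$; this is manifestly natural in $S$. For the inverse, given a Grassmannian $S$-point $((\sG_j,\psi_j))_{j=1}^k$, I would set $\phi_j^{\mathrm{sh}}\coloneqq\psi_{j+1}^{-1}\circ\psi_j$ for $1\le j\le k-1$, take $\sG_{k+1}\coloneqq\prescript\tau{}{\sG_1}$, define $\phi_k^{\mathrm{sh}}\coloneqq(\prescript\tau{}{\psi_1})^{-1}\circ\psi_k$, and set $\de_j\coloneqq\psi_j$. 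By Lemma \ref{ss:formaldisk}, each $\phi_j^{\mathrm{sh}}$ automatically has the required domain. The commuting squares of Definition \ref{ss:quasiisogenies} reduce to tautologies for $j<k$ and, for $j=k$, to the identity $\de_k\circ(\phi_k^{\mathrm{sh}})^{-1}=\prescript\tau{}{\psi_1}=\prescript\tau{}{\de_1}$, which holds by construction. Hence these constructions are mutually inverse, yielding the claimed sheaf isomorphism.

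For the closed subsheaf statement, I would argue \'etale-locally: after pulling back to an affine \'etale cover $\Spf\wt{R}\ra\Spf{R}$ that trivializes $\prescript\tau{}{\sG_1}|_{\Spec\wt{R}\lb{z}}$ and fixing a trivialization $t$ as in Definition \ref{ss:localshtukas}.b), the testing datum $(\phi_1,\dotsc,\phi_{k-1},t\circ\phi_k)$ defines a morphism from $\fLocSht^{(I_1,\dotsc,I_k)}_G$ pulled back to $\Spf\wt{R}$ into $\wh\Gr^{(I_1,\dotsc,I_k)}_G|_{\prod_{i\in I}\bD_i}$, and the pullback of $\fLocSht^{(I_1,\dotsc,I_k)}_{G,\mu_\bullet}|_{\prod_{i\in I}\bD_i}$ is precisely the preimage of the closed subsheaf $\wh\Gr^{(I_1,\dotsc,I_k)}_{G,\mu_\bullet}|_{\prod_{i\in I}\bD_i}$ recalled in \ref{ss:affineschubertvarieties}. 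Since closedness descends along fpqc covers, the desired statement follows. The main subtlety throughout is correctly matching the Frobenius structure: the identity $\de_{k+1}=\prescript\tau{}{\de_1}$, combined with $\sG_{k+1}=\prescript\tau{}{\sG_1}$, is exactly what allows the $k$ independent Grassmannian trivializations to uniquely reconstruct the shtuka and its quasi-isogeny.
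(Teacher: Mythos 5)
Your proposal is correct and follows essentially the same route as the paper: the paper identifies the data $(\sG,\de)$ with the point of $\Gr^k_{z,G}|_{\bD^I}$ determined by $(\sG_j)_{j=1}^k$, $(\phi_j)_{j=1}^{k-1}$, and $\de_k$, which is the same isomorphism you obtain after composing with the standard identification $\Gr^k_{z,G}\cong(\Gr^1_{z,G})^k$ from \ref{ss:SLrisogenybound}. The closedness argument is likewise the one the paper intends (it defers the details to the proof of Proposition 4.11 of Arasteh Rad--Hartl): \'etale-locally after trivializing $\prescript\tau{}{\sG_1}$, the bounded locus is the preimage of the closed affine Schubert variety, and this is independent of the trivialization by $L^+$-stability, so closedness descends.
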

\begin{proof}
  In Definition \ref{ss:formalmoduli}, note that i) and ii) are uniquely determined by $(\sG_j)_{j=1}^k$, $(\phi_j)_{j=1}^{k-1}$, and $\de_k$. This is precisely the data parametrized by $\Gr^k_{z,G}|_{\bD^I}$, which proves the first claim. The second claim follows from the fact that $\wh\Gr^{(I_1,\dotsc,I_k)}_{G,\mu_\bullet}|_{\prod_{i\in I}\bD_i}$ is a closed subsheaf of $\wh\Gr^{(I_1,\dotsc,I_k)}_G|_{\prod_{i\in I}\bD_i}$; see the proof of \cite[Proposition 4.11]{EH14}. 
\end{proof}

\subsection{}\label{ss:formalisogenybound}
First, we naively stratify $\fLocSht^{(I_1,\dotsc,I_k)}_{G,\mu_\bullet}|_{\prod_{i\in I}\bD_i}$ by bounding the quasi-isogeny. Write $\fLocSht^{(I_1,\dotsc,I_k)}_{G,\mu_\bullet,m}|_{\prod_{i\in I}\bD_i}$ for the subsheaf of $\fLocSht^{(I_1,\dotsc,I_k)}_{G,\mu_\bullet}|_{\prod_{i\in I}\bD_i}$ whose $S$-points consist of the $(\sG,\de)$ such that $\de$ is bounded by $m$.
\begin{prop*}
Our $\fLocSht^{(I_1,\dotsc,I_k)}_{G,\mu_\bullet,m}|_{\prod_{i\in I}\bD_i}$ is a formal scheme that is formally of finite type and adic over $\textstyle\prod_{i\in I}\bD_i$, its reduced subscheme is projective over $\textstyle\prod_{i\in I}v_i$, and $\fLocSht^{(I_1,\dotsc,I_k)}_{G,\mu_\bullet}|_{\prod_{i\in I}\bD_i}$ equals the direct limit $\varinjlim_m\fLocSht^{(I_1,\dotsc,I_k)}_{G,\mu_\bullet,m}|_{\prod_{i\in\bD_i}}$.
\end{prop*}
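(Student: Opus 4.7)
The plan is to follow the Rapoport--Zink strategy as adapted to the equicharacteristic setting by Hartl--Viehmann and Arasteh Rad--Hartl: use rigidity of quasi-isogenies to reduce infinitesimal deformations of $(\sG,\de)$ to deformations of $\sG$ alone, then transport the formal-scheme structure already established for the bounded affine Schubert variety $\wh\Gr^{(I_1,\dotsc,I_k)}_{G,\mu_\bullet}|_{\prod_{i\in I}\bD_i}$.

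I would first dispose of the direct-limit assertion. The remark after Definition \ref{ss:quasiisogenies} says that any quasi-isogeny over a discrete ring is bounded by some $m$, so every $S$-point of $\fLocSht^{(I_1,\dotsc,I_k)}_{G,\mu_\bullet}$ factors through some stratum $\fLocSht^{(I_1,\dotsc,I_k)}_{G,\mu_\bullet,m}$. Promoting this union to a direct limit amounts to showing that each inclusion is a closed embedding, which falls out of the rest of the argument.

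For the reduced subscheme, I would restrict to reduced $R$, in which each $\ze_i$ vanishes, so that $S$ lies over $\prod_{i\in I}v_i$. Combining $\fLocSht_G^{(I_1,\dotsc,I_k)}\cong\Gr^k_{z,G}|_{\bD^I}$ from Proposition \ref{lem:unboundedformalmoduli} with the decomposition $\Gr^k_{z,G}\cong(\Gr^1_{z,G})^k$ of \ref{ss:SLrisogenybound} and the argument used in \ref{ss:SLrloopgroup}, the embedding $\io:G\hra\SL_h$ pushes the $m$-bounded moduli into the projective scheme $\Gr^k_{z,\SL_h,m}$ as a closed subscheme. Further intersection with the closed condition imposed by the shtuka bound $\mu_\bullet$ (coming from the Schubert closure in $\Gr^k_{z,G}$) exhibits the reduced moduli as a closed subscheme of a projective scheme over $\prod_{i\in I}v_i$.

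For the formal-scheme structure, the main tool is the quantitative rigidity of Proposition \ref{ss:rigidisogeny}. For an infinitesimal thickening $\j:\ov S\hra S$ cut out by a nilpotent ideal $J$ with $J^n=0$, rigidity says quasi-isogenies lift uniquely, and its quantitative half bounds the growth of $m$ by $O(\log_q n)$; combined with the observation that the trivial local shtuka is its own canonical lift, this identifies the deformation functor of $(\sG,\de)$ with that of $\sG$ as a local $G$-shtuka bounded by $\mu_\bullet$. The latter is controlled by $\wh\Gr^{(I_1,\dotsc,I_k)}_{G,\mu_\bullet}|_{\prod\bD_i}$, which is a formal scheme formally of finite type and adic over $\prod\bD_i$ by \ref{ss:affineschubertvarieties}. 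Transporting this deformation-theoretic structure over the projective reduced base from the previous step yields the claimed formal scheme. The main obstacle I expect is precisely this transport: one must globalize the pointwise identification of deformation functors and verify that it is compatible with the $m$-stratification. This is exactly where the quantitative, rather than merely qualitative, half of Proposition \ref{ss:rigidisogeny} is indispensable, since without it the bound on $\de$ could jump across an infinitesimal thickening and the strata would fail to be stable.
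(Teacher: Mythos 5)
Your first and second steps are essentially on track: the direct-limit claim does reduce to the remark after Definition \ref{ss:quasiisogenies} (every quasi-isogeny over a discrete ring is bounded by some $m$), and the projectivity of the reduced subscheme does come from pushing forward along $\io$ into $\Gr^k_{z,\SL_h,m}$. But your third step --- the deformation-theoretic ``transport'' --- is where the argument breaks down, and you have in effect flagged the gap yourself without closing it. Identifying the deformation functor of $(\sG,\de)$ with that of $\sG$ at each point does not produce a formal scheme: representability is a global statement, and a pointwise comparison of infinitesimal deformation functors gives neither an atlas nor the adic and formally-of-finite-type structure. Moreover, Proposition \ref{ss:rigidisogeny} is not needed for this proposition at all; the bound on $\de$ is \emph{imposed} in the definition of the stratum, so there is no question of it ``jumping across an infinitesimal thickening'' here. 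Quantitative rigidity enters only later, for the formal completions $\fLocSht^{(I_1,\dotsc,I_k)}_{G,\mu_\bullet,\wh{m}}$ in Lemma \ref{ss:mpartformalscheme} and for Theorem \ref{ss:formalmodulirepresentable}; you are importing the strategy of those results into a statement that does not require it.

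The point you are missing is that Proposition \ref{lem:unboundedformalmoduli} already does all the work: the quasi-isogeny is part of the Grassmannian datum, since $(\sG,\de)$ is determined by $((\sG_j)_{j=1}^k,(\phi_j)_{j=1}^{k-1},\de_k)$, which identifies the full unbounded moduli functor with $\Gr^k_{z,G}|_{\bD^I}$. There is therefore nothing to deform and nothing to transport. One has a Cartesian square exhibiting $\fLocSht^{(I_1,\dotsc,I_k)}_{G,\mu_\bullet,m}|_{\prod_{i\in I}\bD_i}$ as the pullback of the closed embedding $\Gr^k_{z,\SL_h,m}\subseteq\Gr^k_{z,\SL_h}$ along $\io_*$; since $\SL_h/G$ is quasi-affine, $\fLocSht^{(I_1,\dotsc,I_k)}_{G,\mu_\bullet}|_{\prod_{i\in I}\bD_i}\ra\Gr^k_{z,\SL_h}|_{\prod_{i\in I}\bD_i}$ is a closed embedding, hence so is $\fLocSht^{(I_1,\dotsc,I_k)}_{G,\mu_\bullet,m}|_{\prod_{i\in I}\bD_i}\ra\Gr^k_{z,\SL_h,m}|_{\prod_{i\in I}\bD_i}$, and the target is a projective scheme by \ref{ss:SLrisogenybound}. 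Restricting to $\prod_{i\in I}nv_i$ gives projective schemes whose underlying topological space is independent of $n$, so \cite[(\textbf{1}, 10.6.4)]{GD71} assembles them into a noetherian formal scheme adic over $\prod_{i\in I}\bD_i$ whose reduced subscheme is projective over $\prod_{i\in I}v_i$, i.e.\ formally of finite type. The direct-limit statement is then just the pullback of $\Gr^k_{z,\SL_h}=\varinjlim_m\Gr^k_{z,\SL_h,m}$, with no separate closedness argument needed.
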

\begin{proof}
  Note that we have a Cartesian square
  \begin{align*}
  \xymatrix{\fLocSht^{(I_1,\dotsc,I_k)}_{G,\mu_\bullet,m}|_{\prod_{i\in I}\bD_i}\ar[r]\ar[d]^{\io_*} & \fLocSht^{(I_1,\dotsc,I_k)}_{G,\mu_\bullet}|_{\prod_{i\in I}\bD_i}\ar[d]^-{\io_*}\\
  \Gr^k_{z,\SL_h,m}|_{\prod_{i\in I}\bD_i}\ar[r] & \Gr^k_{z,\SL_h}|_{\prod_{i\in I}\bD_i},
}
  \end{align*}
  where we use Proposition \ref{lem:unboundedformalmoduli} to identify $\fLocSht^{(I_1,\dotsc,I_k)}_{\SL_h}|_{\prod_{i\in I}\bD_i}$ with $\Gr^k_{z,\SL_h}|_{\prod_{i\in I}\bD_i}$. Because $\SL_h\!/G$ is quasi-affine over $\cO_F$, Proposition \ref{lem:unboundedformalmoduli} and \cite[Proposition 1.2.6]{Zhu17} show that $\fLocSht^{(I_1,\dotsc,I_k)}_{G,\mu_\bullet}|_{\prod_{i\in I}\bD_i}\ra\Gr^k_{z,\SL_h}|_{\prod_{i\in I}\bD_i}$ is a closed embedding. Therefore its pullback $\fLocSht^{(I_1,\dotsc,I_k)}_{G,\mu_\bullet,m}|_{\prod_{i\in I}\bD_i}\ra\Gr^k_{z,\SL_h,m}|_{\prod_{i\in I}\bD_i}$ is as well. Since
  \begin{align*}
    \Gr^k_{z,\SL_h,m}|_{\prod_{i\in I}nv_i}
  \end{align*}
  is projective over $\prod_{i\in I}nv_i$ for any positive integer $n$ by \ref{ss:SLrisogenybound}, the same holds for $\fLocSht^{(I_1,\dotsc,I_k)}_{G,\mu_\bullet,m}|_{\prod_{i\in I}nv_i}$. Now the underlying topological space of $\Gr^k_{z,\SL_h,m}|_{\prod_{i\in I}nv_i}$ is independent of $n$, so the $\fLocSht^{(I_1,\dotsc,I_k)}_{G,\mu_\bullet,m}|_{\prod_{i\in I}nv_i}$ have this property too. From here, \cite[(\textbf{1}, 10.6.4)]{GD71} indicates that $\fLocSht^{(I_1,\dotsc,I_k)}_{G,\mu_\bullet,m}|_{\prod_{i\in I}\bD_i}$ is a noetherian formal scheme that is adic over $\prod_{i\in I}\bD_i$. Therefore its reduced subscheme equals that of $\fLocSht^{(I_1,\dotsc,I_k)}_{G,\mu_\bullet,m}|_{\prod_{i\in I}v_i}$, which is projective over $\prod_{i\in I}v_i$, so $\fLocSht^{(I_1,\dotsc,I_k)}_{G,\mu_\bullet,m}|_{\prod_{i\in I}\bD_i}$ is formally of finite type over $\prod_{i\in I}\bD_i$. Finally, last statement follows from $\Gr^k_{z,\SL_h}$ equaling the direct limit $\varinjlim_m\Gr^k_{z,\SL_h,m}$.
\end{proof}

\subsection{}\label{ss:nilpotentalgebraization}
To obtain a better stratification of $\fLocSht^{(I_1,\dotsc,I_k)}_{G,\mu_\bullet}|_{\prod_{i\in I}\bD_i}$, we need the following algebraization lemma. Briefly, relax our assumption that $R$ is discrete, since we will also use this lemma later. Let $(S_l)_{l\geq0}$ be a direct system of affine schemes $S_l=\Spec R_l$ over $\prod_{i\in I}\bD_i$ such that
\begin{enumerate}[i)]
\item the morphisms $S_l\ra S_{l'}$ are closed embeddings,
\item the associated ideals $\ker(R_{l'}\ra R_l)$ are nilpotent.
\end{enumerate}
Take $R$ to be the ring $\textstyle\varprojlim_lR_l$, and endow $R$ with a topological ring structure such that $\bF_q\lb{\ze_i}_{i\in I}\ra R$ is continuous, the $R\ra R_l$ are continuous for the discrete topology on $R_l$, and $R$ is adic with finitely generated ideal of definition.
\begin{lem*}
Pullback yields an equivalence of groupoids
\begin{align*}
  \left\{\begin{tabular}{c}
    local $G$-shtukas over \\
    $S$ bounded by $\mu_\bullet$
         \end{tabular}\right\}\lra^\sim \varprojlim_l
  \left\{\begin{tabular}{c}
    local $G$-shtukas over \\
   $S_l$ bounded by $\mu_\bullet$
  \end{tabular}\right\}.
\end{align*}
\end{lem*}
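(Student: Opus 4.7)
The plan is to combine two ingredients: the identification $R\lb{z}=\varprojlim_l R_l\lb{z}$ as an inverse limit along nilpotent thickenings, and the fact that the affine Schubert variety $\wh\Gr^{(I_1,\dotsc,I_k)}_{G,\mu_\bullet}|_{\prod_{i\in I}\bD_i}$ is a formal scheme formally of finite type and adic over $\prod_{i\in I}\bD_i$ by \ref{ss:affineschubertvarieties}, so that its functor of points commutes with the relevant inverse limits of admissible rings. After \'etale-locally trivializing the $G$-bundle $\sG_1$ (hence also $\prescript\tau{}{\sG_1}$), the shtuka data becomes an honest point of this Schubert variety, and limit-preservation for formally-of-finite-type formal schemes finishes the argument.

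First I would verify the setup: $R\lb{z}=\varprojlim_l R_l\lb{z}$ holds as abstract rings (since formal power series commute with inverse limits of rings), and the transitions $R_{l'}\lb{z}\twoheadrightarrow R_l\lb{z}$ have nilpotent kernels, because if $J=\ker(R_{l'}\ra R_l)$ satisfies $J^N=0$, then $(J\lb{z})^N\subseteq J^N\lb{z}=0$. For essential surjectivity, given a compatible system $(\sG^{(l)})_l$, I would trivialize $\sG^{(0)}_1$ on an \'etale cover $\Spec\wt R_0\ra S_0$, which exists because $L^+_zG$ is pro-smooth so that its torsors on affine schemes are \'etale-locally trivial. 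By topological invariance of the \'etale site under nilpotent thickenings, this cover extends to a compatible system of \'etale covers $\Spec\wt R_l\ra S_l$; smoothness of $L^+_zG$ then lets me lift the trivialization successively to compatible trivializations of each $\sG^{(l)}_1$. Under these trivializations, each $\sG^{(l)}$ defines a $\Spec\wt R_l$-point of $\wh\Gr^{(I_1,\dotsc,I_k)}_{G,\mu_\bullet}|_{\prod_{i\in I}\bD_i}$ via Definition \ref{ss:localshtukas}.b). Limit preservation for this formal scheme produces a $\Spec\wt R$-point whose unpacking, followed by \'etale descent, yields a bounded local $G$-shtuka over $S$ restricting to the given system.

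For full faithfulness, a morphism between two bounded local $G$-shtukas over $S$ is a tuple of $G$-bundle isomorphisms on $\Spec R\lb{z}$ intertwining the modification data. After trivializing \'etale-locally as above, such a tuple becomes a section of a $\mu_\bullet$-bounded closed subscheme of the loop group $L_IG_C$, which is affine over $\prod_{i\in I}\bD_i$ by Lemma \ref{ss:SLrloopgroup}; the identification $R=\varprojlim_l R_l$ then translates directly to both injectivity and surjectivity on morphisms. The main subtle point throughout is that $G$-bundles in isolation do not satisfy the inverse-limit property, since $BG$ has non-trivial higher deformation theory; the resolution is that boundedness by $\mu_\bullet$ packages the bundle and modification data into a single point of a formal scheme formally of finite type, which is exactly what makes the limit argument go through.
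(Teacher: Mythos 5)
Your reduction for essential surjectivity has a gap at the decisive step. You trivialize only $\sG_1^{(l)}$, so the shtuka data becomes a compatible system of $\Spec\wt{R}_l$-points of the formal Schubert variety $\wh\Gr^{(I_1,\dotsc,I_k)}_{G,\mu_\bullet}|_{\prod_{i\in I}\bD_i}$, and you then invoke ``limit preservation for formally-of-finite-type formal schemes.'' That is not a theorem in this generality, and it is precisely the difficulty the lemma is designed to overcome. The system $(R_l)_l$ is an arbitrary inverse system of discrete rings with nilpotent transition kernels: it is \emph{not} the $J$-adic tower of $R=\varprojlim_lR_l$, the kernel of $R\ra R_0$ need not be nil or topologically nilpotent, and $R$ need not be noetherian (the lemma is later applied both to $A/J^c=\varprojlim_lA_l/J^c$ in Lemma \ref{ss:mpartformalscheme} and, via \ref{ss:quasiisogenylimit}, to non-noetherian $R^+$). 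So neither the definition of adic morphisms of formal schemes nor Grothendieck existence/formal GAGA applies, and for a non-affine (e.g.\ projective) target the functor of points does not commute with such limits: already the attempt to spread an open cover of $\Spec R_0$ to one of $\Spec R$ fails, since a lift of a partition of unity is $1+\eps$ with $\eps\in\ker(R\ra R_0)$ not necessarily invertible.

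The fix is the ingredient you already cite for full faithfulness, deployed where it is actually essential: trivialize \emph{all} of the $\sG_j^{(l)}|_{\Spec\wt{R}_l\lb{z}}$ compatibly (possible by \cite[Proposition 2.2(c)]{HV11} once the cover is lifted), so that the $\phi_j^{(l)}$ become elements $b_j^l\in G(\wh\cO^{j,\circ}_C(\Spec\wt{R}_l))$ and the boundedness by $\mu_\bullet$ places the tuple $(b_j^l)_j$ in $L_I(G_C)_{\mu_\bullet}|_{\prod_{i\in I}V_i}$, which is \emph{affine} by Lemma \ref{ss:SLrloopgroup}. Affine schemes' functors of points commute with arbitrary inverse limits of rings, so the system glues to an $\wt{R}$-point of the loop group, and étale descent finishes the argument. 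This is the paper's route; your closing remark that ``formally of finite type'' is what makes the limit argument work should be replaced by ``affineness of the bounded loop group.'' The rest of your proposal (the identification $R\lb{z}=\varprojlim_lR_l\lb{z}$ with nilpotent kernels, the lifting of the étale cover and of trivializations, and the full-faithfulness argument) is sound.
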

\begin{proof}
  Let $(\sG^l)_{l\geq0}$ be a compatible system of local $G$-shtukas over $S_l$ bounded by $\mu_\bullet$. We can form the $G$-bundles $\sG_j\coloneqq\varprojlim_l\sG^l_j$ on $\Spec R\lb{z}$, so now we just need to form the isomorphisms $\phi_j$.

  Let $\Spec\wt{R}_0\ra S_0$ be an affine \'etale cover where $\sG^0_j|_{\Spec\wt{R}_0\lb{z}}$ is trivial for all $1\leq j\leq k$, and fix trivializations of the $\sG_j^0|_{\Spec\wt{R}_0\lb{z}}$. By ii), there exists a unique affine \'etale cover $\Spec\wt{R}_l\ra S_l$ whose pullback to $S_0$ is $\Spec\wt{R}_0$, and there also exist compatible systems of trivializations of the $\sG_j^l|_{\Spec\wt{R}_l\lb{z}}$ \cite[Proposition 2.2(c)]{HV11}\footnote{While \cite{HV11} only treats split reductive $G$, the proof immediately adapts to any smooth $G$.}. Under these identifications, the $(\phi_j^l)_{\wt{R}_l\lp{z}}$ correspond to compatible systems of $b_j^l$ in $G(\wh\cO^{j,\circ}_C(\Spec\wt{R}_l))$, where we use Lemma \ref{ss:formaldisk} to identify $\wt{R}_l\lp{z}$ with $\wh\cO^{j,\circ}_C(\Spec\wt{R}_l)$.

For all $i$ in $I$, let $V_i$ be an affine neighborhood of $v_i$ in $C_i$. Because the $\sG^l$ are bounded by $\mu_\bullet$, our $(b^l_j)_{j=1}^k$ yield $\wt{R}_l$-points of $L_I(G_C)_{\mu_\bullet}|_{\prod_{i\in I}V_i}$. The latter is affine by Lemma \ref{ss:SLrloopgroup}, so the compatible system of $(b^l_j)_{j=1}^k$ yields an $\wt{R}\coloneqq\varprojlim_l\wt{R}_l$-point $(b_j)_{j=1}^k$ of $L_I(G_C)|_{\prod_{i\in I}V_i}$. By construction, the resulting local $G$-shtuka $\wt\sG\coloneqq((\sG_j|_{\Spec\wt{R}\lb{z}})_{j=1}^k,(b_j)_{j=1}^k)$ over $\Spec\wt{R}$ is bounded by $\mu_\bullet$. Since the $(\phi^l_j)_{\wt{R}_l\lp{z}}$ and thus $b_j^l$ are compatible with the descent data of $\sG^l_j$ from $\Spec\wt{R}_l$ to $S_l$, we see that the $b_j$ are compatible with the descent data of $\sG_j$ from $\Spec\wt{R}$ to $S$. Hence $\wt\sG$ naturally descends to a local $G$-shtuka $\sG$ over $S$ bounded by $\mu_\bullet$, as desired.
\end{proof}

\subsection{}\label{ss:mpartformalscheme}
Resume our assumption that $R$ is discrete. The following stratification of $\fLocSht^{(I_1,\dotsc,I_k)}_{G,\mu_\bullet}|_{\prod_{i\in I}\bD_i}$ has the advantage of being closed under formal completion. Write $\fLocSht^{(I_1,\dotsc,I_k)}_{G,\mu_\bullet,\wh{m}}|_{\prod_{i\in I}\bD_i}$ for the formal completion of $\fLocSht^{(I_1,\dotsc,I_k)}_{G,\mu_\bullet}|_{\prod_{i\in I}\bD_i}$ along the reduced subscheme of $\fLocSht^{(I_1,\dotsc,I_k)}_{G,\mu_\bullet,m}|_{\prod_{i\in I}\bD_i}$.
\begin{lem*}
Our $\fLocSht^{(I_1,\dotsc,I_k)}_{G,\mu_\bullet,\wh{m}}|_{\prod_{i\in I}\bD_i}$ is a formal scheme that is formally of finite type over $\prod_{i\in I}\bD_i$.
\end{lem*}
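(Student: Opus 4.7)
The plan is to bound the quasi-isogeny on each infinitesimal thickening of $Z := (\fLocSht^{(I_1,\dotsc,I_k)}_{G,\mu_\bullet,m}|_{\prod_{i\in I}\bD_i})_{\red}$ using the quantitative rigidity from Proposition \ref{ss:rigidisogeny}, and then to assemble the resulting schemes into a formal scheme of finite type. Write $Y := \fLocSht^{(I_1,\dotsc,I_k)}_{G,\mu_\bullet}|_{\prod_{i\in I}\bD_i}$ and $Y_{m'} := \fLocSht^{(I_1,\dotsc,I_k)}_{G,\mu_\bullet,m'}|_{\prod_{i\in I}\bD_i}$, so that $Y = \varinjlim_{m'}Y_{m'}$ by Proposition \ref{ss:formalisogenybound}. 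For each $n \geq 1$ let $Z_n$ be the $n$-th infinitesimal neighborhood of $Z$ in $Y$. The first step is to show that $Z_n$ coincides with the $n$-th infinitesimal neighborhood of $Z$ in $Y_{m(n)}$, where $m(n) := m + B\lceil\log_q n\rceil$ and $B$ is the constant from Proposition \ref{ss:rigidisogeny}. Indeed, an $R$-point of $Z_n$ is a pair $(\sG,\de)$ whose reduction modulo some ideal $J \subseteq R$ with $J^n = 0$ factors through $Z \subseteq Y_m$, so $\j^*\de$ is bounded by $m$, and Proposition \ref{ss:rigidisogeny} then forces $\de$ itself to be bounded by $m(n)$.

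By Proposition \ref{ss:formalisogenybound}, each $Y_{m(n)}$ is a noetherian formal scheme formally of finite type over $\prod_{i\in I}\bD_i$, so $Z_n$ is a noetherian closed subscheme of $Y_{m(n)}$. Since $Z$ is projective over $\prod_{i\in I}v_i$ and the ideal $I_Z$ of $Z$ in $Y_{m(n)}$ contains the ideal of definition of $Y_{m(n)}$, the scheme $Z_n$ is of finite type over $\prod_{i\in I}v_i$. The chain $Z_1 \hookrightarrow Z_2 \hookrightarrow \cdots$ of nilpotent closed embeddings with common reduced subscheme $Z$ then assembles into a formal scheme $\varinjlim_n Z_n$ with underlying topological space $|Z|$ and structure sheaf $\varprojlim_n \cO_{Z_n}$, and by construction this realizes $\fLocSht^{(I_1,\dotsc,I_k)}_{G,\mu_\bullet,\wh{m}}|_{\prod_{i\in I}\bD_i}$.

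To verify formal finiteness of type, I work locally. Choose an affine open $U = \Spec A_0 \subseteq Z$, so $A_0$ is of finite type over $\bF_q$. Writing $\wh{Y_{m'}}|_Z$ locally as $\Spf \wh B_{m'}$, one has $R_n := \cO(Z_n|_U) = \wh B_{m(n)}/I_Z^n$. Set $\wh R := \varprojlim_n R_n$ and $I := \ker(\wh R \to A_0)$, with $J_n := \ker(\wh R \to R_n)$. A computation of graded pieces shows $J_n/J_{n+1}$ equals the image of $I^n$ in $R_{n+1}$; iterating yields $J_n = \overline{I^n}$ in the $(J_n)$-adic topology, so the $I$-adic and $(J_n)$-adic topologies on $\wh R$ agree and $\wh R$ is $I$-adically complete. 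Since $R_2$ is noetherian, $I/I^2$ is finitely generated over $A_0$; topological Nakayama then produces finitely many topological generators of $I$, which together with lifts of generators of $A_0$ exhibit $\wh R$ as a quotient of a restricted power series algebra over $\bF_q\lb{\ze_i}_{i\in I}$. Thus $\Spf \wh R$ is formally of finite type, completing the proof.

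The main obstacle is this last step. The noetherian formal schemes $\wh{Y_{m'}}|_Z$ grow without bound as $m' \to \infty$, and $\wh R$ is strictly larger than any single $\wh B_{m'}$, so a priori $\wh R$ need not be topologically finitely generated. The logarithmic growth of $m(n)$ in Proposition \ref{ss:rigidisogeny} is essential here: it ensures that all higher graded pieces of $\wh R$ for the $I$-adic filtration are already controlled by the $I/I^2$-piece inside the noetherian quotient $R_2$, which lets Nakayama finish the job.
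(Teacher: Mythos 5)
Your argument is correct, and it reorganizes the paper's proof in a genuinely different way. The paper fixes the isogeny bound first: it writes the completion as $\varinjlim_l$ of the completions of $\fLocSht^{(I_1,\dotsc,I_k)}_{G,\mu_\bullet,m+l}|_{\prod_{i\in I}\bD_i}$ along $Z$, and the whole difficulty is showing that, modulo each power $J^c$ of the ideal of definition, this limit stabilizes; that stabilization claim is proved by algebraizing a compatible system of shtukas over the $\Spec A_l/J^c$ via Lemma \ref{ss:nilpotentalgebraization}, lifting the quasi-isogeny by the qualitative part of Proposition \ref{ss:rigidisogeny}, and then invoking \cite[proposition (2.5)]{RZ96} to get noetherianity and adic-ness of $A=\varprojlim_lA_l$. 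You instead fix the infinitesimal order first and use the \emph{quantitative} part of Proposition \ref{ss:rigidisogeny} to place the entire $n$-th infinitesimal neighborhood $Z_n$ inside the single noetherian truncation $Y_{m(n)}$ with $m(n)=m+B\lceil\log_qn\rceil$; this eliminates any need for the algebraization Lemma \ref{ss:nilpotentalgebraization} in this proof, at the price of re-deriving the Rapoport--Zink finiteness criterion by hand. Your key graded-piece identity $J_n/J_{n+1}=\mathrm{im}(I^n)$ does hold, but only because the rigidity bound forces the ideal cutting $Y_{m(n)}$ out of $Y_{m(n+1)}$ (restricted to the $(n+1)$-st neighborhood) to lie in $I_Z^n$ — it would be worth making that explicit, since a priori $\ker(R_{n+1}\to R_n)$ also contains that ideal. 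Two small imprecisions, neither fatal: what $R_2$ gives you is finite generation of $I/J_2$ (a quotient of $I/I^2$), which is all topological Nakayama needs; and the $I$-adic and $(J_n)$-adic topologies agree only after you know $I^n$ is closed, which comes out of (rather than goes into) the surjection from the restricted power series ring — so the logical order in your last paragraph should be: completeness for $(J_n)$, topological generation of $J_1$, surjectivity of the restricted power series presentation, and only then noetherianity and $I$-adic-ness, exactly as packaged in \cite[proposition (2.5)]{RZ96}.
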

\begin{proof}
Proposition \ref{ss:formalisogenybound} and \cite[Lemma 5.4]{HV11} imply that $\fLocSht^{(I_1,\dotsc,I_k)}_{G,\mu_\bullet,\wh{m}}|_{\prod_{i\in I}\bD_i}$ equals the direct limit $\varinjlim_l\fLocSht^{(I_1,\dotsc,I_k)}_{G,\mu_\bullet,\wh{m},l}|_{\prod_{i\in I}\bD_i}$, where $\fLocSht^{(I_1,\dotsc,I_k)}_{G,\mu_\bullet,\wh{m},l}|_{\prod_{i\in I}\bD_i}$ denotes the formal completion of $\fLocSht^{(I_1,\dotsc,I_k)}_{G,\mu_\bullet,m+l}|_{\prod_{i\in I}\bD_i}$ along the reduced subscheme of $\fLocSht^{(I_1,\dotsc,I_k)}_{G,\mu_\bullet,m}|_{\prod_{i\in I}\bD_i}$. The reduced subscheme of $\fLocSht^{(I_1,\dotsc,I_k)}_{G,\mu_\bullet,m}|_{\prod_{i\in I}\bD_i}$ is quasi-compact by Proposition \ref{ss:formalisogenybound}, so it is covered by finitely many affine open subschemes $U$. Proposition \ref{ss:formalisogenybound} indicates that $\fLocSht^{(I_1,\dotsc,I_k)}_{G,\mu_\bullet,\wh{m},l}|_{\prod_{i\in I}\bD_i}$ is a noetherian formal scheme with the same reduced subscheme as $\fLocSht^{(I_1,\dotsc,I_k)}_{G,\mu_\bullet,m}|_{\prod_{i\in I}\bD_i}$, so $U$ corresponds to an affine open formal subscheme $\fU_l=\Spf A_l$ of $\fLocSht^{(I_1,\dotsc,I_k)}_{G,\mu_\bullet,\wh{m},l}|_{\prod_{i\in I}\bD_i}$.

The above shows that $\varinjlim_l\fU_l$ is an open subsheaf of $\fLocSht^{(I_1,\dotsc,I_k)}_{G,\mu_\bullet,\wh{m}}|_{\prod_{i\in I}\bD_i}$. Thus it suffices to prove that $\varinjlim_l\fU_l$ is an affine formal scheme that is formally of finite type over $\prod_{i\in I}\bD_i$. Because the inclusion morphisms
  \begin{align*}
    \fLocSht^{(I_1,\dotsc,I_k)}_{G,\mu_\bullet,\wh{m},l}|_{\prod_{i\in I}\bD_i}\ra\fLocSht^{(I_1,\dotsc,I_k)}_{G,\mu_\bullet,\wh{m},l'}|_{\prod_{i\in I}\bD_i}
  \end{align*}
  are closed embeddings, the maps $A_{l'}\ra A_l$ are surjective. Write $A\coloneqq\varprojlim_lA_l$. Write $J_0$ for the largest ideal of definition of $A_0$, and write $J$ for its preimage in $A$.

  For any positive integer $c$, we claim that $A_{l'}/J^c\ra A_l/J^c$ is an isomorphism for large enough $l$ and $l'$. Note that $A_{l'}/J^c\ra A_l/J^c$ is surjective with nilpotent kernel, and the Mittag-Leffler criterion implies that $A/J^c=\varprojlim_lA_l/J^c$. Endow $A/J^c$ with the discrete topology. Because the $\ze_i$ vanish in $A/J=A_0/J_0$, the $\ze_i$ are nilpotent in $A/J^c$, so $\bF_q\lb{\ze_i}_{i\in I}\ra A/J^c$ is continuous. Altogether we can apply Lemma \ref{ss:nilpotentalgebraization} to the local $G$-shtukas $\sG^l$ over $\Spec A_l/J^c$ obtained from the morphism
\begin{align*}
\Spec A_l/J^c\ra\Spf A_l=\fU_l\subseteq\fLocSht^{(I_1,\dotsc,I_k)}_{G,\mu_\bullet,\wh{m},l}|_{\prod_{i\in I}\bD_i}
\end{align*}
to get a local $G$-shtuka $\sG$ over $\Spec A/J^c$ bounded by $\mu_\bullet$. Next, consider the quasi-isogeny $\de^0$ obtained from $\Spec A_0/J_0\ra\fLocSht^{(I_1,\dotsc,I_k)}_{G,\mu_\bullet,\wh{m},0}|_{\prod_{i\in I}\bD_i}$. Proposition \ref{ss:rigidisogeny} uniquely lifts $\de^0$ to a quasi-isogeny $\de$ from $\sG$ to $G$, which implies that the resulting $A/J^c$-point $(\sG,\de)$ of $\fLocSht^{(I_1,\dots,I_k)}_{G,\mu_\bullet}|_{\prod_{i\in I}\bD_i}$ lies in $\fLocSht^{(I_1,\dots,I_k)}_{G,\mu_\bullet,\wh{m}}|_{\prod_{i\in I}\bD_i}$. Therefore \cite[Lemma 5.4]{HV11} indicates that $(\sG,\de)$ lies in $\fLocSht^{(I_1,\dots,I_k)}_{G,\mu_\bullet,\wh{m},l}|_{\prod_{i\in I}\bD_i}$ for large enough $l$. Pulling back to $\Spec A_0/J_0$ shows that $(\sG,\de)$ even lies in $\Spf A_l$. The uniqueness of Proposition \ref{ss:rigidisogeny} implies that the pullback of $(\sG,\de)$ to $\Spec A_{l'}/J^c$ equals $(\sG^l,\de^l)$, so $A_{l'}\ra A_l\ra A/J^c\ra A_{l'}/J^c$ equals the quotient map. Quotienting by the image of $J^c$ in $A_l$ shows that $A_{l'}/J^c\ra A_l/J^c$ is an isomorphism, which concludes our proof of the claim.

Write $\fa_l\coloneqq\ker(A\ra A_l)$. The claim indicates that the ideals $\fa_l+J^c$ of $A$ stabilize for any positive integer $c$, and because the $A_l$ are noetherian, we see that the $\im(J/J^2\ra A_l/J^2)=J/(J^2+\fa_l)$ are finite over $A$. Therefore \cite[proposition (2.5)]{RZ96} shows that $A$ with the inverse limit topology is noetherian and $J$-adic, which implies that $\varinjlim_l\fU_l=\Spf A$. Finally, the reduced subscheme of $\Spf{A}$ is of finite type over $\prod_{i\in I}v_i$ by Proposition \ref{ss:formalisogenybound}, so $\Spf{A}$ is formally of finite type over $\bD_i$.
\end{proof}

\subsection{}
We can use the quasi-isogeny to define the following distance function.
\begin{defn*}
  Let $K$ be a field over $\bF_q$, and let $x=(\sG,\de)$ and $x'=(\sG',\de')$ be $K$-points of $\fLocSht^{(I_1,\dotsc,I_k)}_G$. Write $d(x,x')$ for the smallest non-negative integer $m$ such that the quasi-isogeny $\de^{-1}\circ\de'$ of local $G$-shtukas over $\Spec K\lp{z}$ is bounded by $m$.
\end{defn*}

\begin{lem}\label{lem:formalmetric}
As $K$ runs over all fields over $\bF_q$, the maps $d$ induce a metric on the underlying set $|\fLocSht^{(I_1,\dotsc,I_k)}_G|$. For any $x$ in $|\fLocSht^{(I_1,\dotsc,I_k)}_G|$ and non-negative integer $r$, the associated closed ball $B_r(x)$ of radius $r$ centered at $x$ is closed with respect to the Zariski topology on $|\fLocSht^{(I_1,\dotsc,I_k)}_G|$.
\end{lem}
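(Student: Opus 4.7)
The plan is to verify the metric axioms directly from the definitions, and then to realize $B_r(x)$ as the set-theoretic image of a closed subsheaf cut out by a Schubert-type condition.

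For the metric: $d(x,x)=0$ holds because the identity quasi-isogeny is bounded by $0$. If $d(x,x')=0$, then $\io_*(\de^{-1}\circ\de')_j$ lies in $L^+_z\SL_h$, so $\de^{-1}\circ\de'$ extends to an isomorphism of $\SL_h$-bundles over $\Spec K\lb z$, and hence to an isomorphism of $G$-bundles since $\SL_h/G$ is quasi-affine (\cite[Proposition 1.2.6]{Zhu17}); the commutative squares of Definition \ref{ss:quasiisogenies} force this isomorphism to intertwine the Frobenius data, yielding $(\sG,\de)=(\sG',\de')$ as points of $\fLocSht^{(I_1,\dotsc,I_k)}_G$. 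For symmetry, $-w_0$ sends positive coroots to positive coroots, so $-w_0(2\rho^\vee)=2\rho^\vee$, whence inversion on $\Gr^1_{z,\SL_h}$ preserves the Schubert bound $m2\rho^\vee$. The triangle inequality follows from the standard convolution containment $\Gr^1_{z,\SL_h,\mu}\cdot\Gr^1_{z,\SL_h,\nu}\subseteq\Gr^1_{z,\SL_h,\mu+\nu}$ applied componentwise to each leg.

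For the closed ball: any $x$ has its quasi-isogeny bounded by some $m$ (by the remark after Definition \ref{ss:quasiisogenies}), so the preimage $X_m$ of $\Gr^k_{z,\SL_h,m}$ under the closed embedding $\io_*$ (available via \cite[Proposition 1.2.6]{Zhu17} and Proposition \ref{lem:unboundedformalmoduli}) is a closed sub-formal-scheme of $\fLocSht^{(I_1,\dotsc,I_k)}_G$ containing $x$, and by the triangle inequality $B_r(x)\subseteq|X_{m+r}|$. Base-changing along $\Spec K\to\prod_{i\in I}v_i$, the point $x$ furnishes a constant quasi-isogeny $\de$ alongside the tautological $\de'$; the condition that $\io_*(\de^{-1}\circ\de')$ land in $\Gr^k_{z,\SL_h,r}\subseteq\Gr^k_{z,\SL_h}$ (Definition \ref{ss:SLrisogenybound}) cuts out a closed subsheaf $Z$ of the base change, and by construction $B_r(x)$ is the image of $|Z|$ in $|X_{m+r}|$.

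The main obstacle is verifying that this image is Zariski-closed, since $\Spec K\to\prod_{i\in I}v_i$ need not be a closed immersion when $K$ is a transcendental extension. The plan is to exploit that $Z$ is bounded by $r2\rho^\vee$ and hence sits inside the pullback of a projective Schubert subscheme (Definition \ref{ss:SLrisogenybound}), so properness of the restricted projection to $X_{m+r}$ delivers the required closedness of the image, completing the proof.
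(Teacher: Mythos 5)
Your verification of the metric axioms is essentially the paper's argument: well-definedness via insensitivity to field extensions (which you should state explicitly, since it is what lets the various $d$'s glue to a function on $|\fLocSht^{(I_1,\dotsc,I_k)}_G|\times|\fLocSht^{(I_1,\dotsc,I_k)}_G|$), symmetry from $-w_0(2\rho^\vee)=2\rho^\vee$ (the paper phrases this via the Chevalley involution), the triangle inequality from sub-additivity of relative position bounds under composition, and separatedness from the fact that $\io_*(\de_j^{-1}\circ\de_j')$ extending over $\Spec K\lb{z}$ forces $\de_j^{-1}\circ\de_j'$ to extend. That half is fine.

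The closedness argument, however, has a genuine gap exactly at the step you flag. You realize $B_r(x)$ as the \emph{image} of a closed subset $|Z|\subseteq|X_{m+r}\times_{\bF_q}\Spec K|$ under the projection $p\colon X_{m+r,K}\ra X_{m+r}$, and propose to get closedness of the image from properness of $Z\ra X_{m+r}$, using that $Z$ sits inside the pullback of a projective Schubert scheme. This cannot work: $p$ is the base change of $\Spec K\ra\Spec\bF_q$ and is therefore \emph{affine}, so any closed subscheme $Z\subseteq X_{m+r,K}$ is affine over $X_{m+r}$, and affine $+$ proper $=$ finite. For transcendental $K$ the fibers of $p$ are spectra of rings like $k(\eta)\otimes_{\bF_q}K$, which are not finite, so $Z\ra X_{m+r}$ is not proper. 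Containment of $Z$ in $\Gr^k_{z,\SL_h,r}|_K$ only gives properness of $Z$ over $\Spec K$, and $\Spec K\ra\Spec\bF_q$ is itself not proper, so nothing is gained. The paper avoids this entirely by running the argument in the opposite direction: $B_r(x)$ is exhibited as the \emph{preimage} of the closed substack $[L^+_z\SL_h\!\bs\!\Gr^1_{z,\SL_h,r2\rho^\vee}]^k$ under the morphism $(\sG',\de')\mapsto(\io_*(\de_j^{-1}\circ\de_j'))_{j=1}^k$, and preimages of closed substacks are closed. If you insist on the image formulation, the correct repair is not properness but descent: $\Spec K\ra\Spec\bF_q$ is fpqc, hence $p$ is submersive, so it suffices to check that $|Z|$ is the \emph{full} preimage $p^{-1}(B_r(x))$, which again comes down to insensitivity of $d$ to field extensions. (Note also that for the points actually used later — the $\bF_q$-point $\mathbf 1$ and $\bF_{q'}$-points $y$ — the projection $p$ is finite, hence closed, and your image argument would go through without any properness detour; it is only the general $x$ of the lemma statement that forces the issue.)
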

\begin{proof}
  We immediately see that $d$ is insensitive to field extensions, so $d$ induces a map $|\fLocSht^{(I_1,\dotsc,I_k)}_G|\times|\fLocSht^{(I_1,\dotsc,I_k)}_G|\ra\bZ_{\geq0}$. Since relative position bounds along the same divisor are sub-additive under composition, $d$ satisfies the triangle inequality, and because $2\rho^\vee$ is fixed by the Chevalley involution, $d$ is symmetric. Next, if $d(x,x')=0$, then $\io_*(\de_j^{-1}\circ\de_j')$ extends to an isomorphism of $\SL_r$-bundles on $\Spec K\lb{z}$ for all $1\leq j\leq k$. Since $\io$ is a monomorphism, this implies that the $\de_j^{-1}\circ\de_j'$ extend to isomorphisms of $G$-bundles on $\Spec K\lb{z}$, so $x=x'$. For the last statement, note that $B_r(x)$ equals, on the level of topological spaces, the preimage of the closed substack $[L_z^+\SL_h\!\bs\!\Gr^1_{z,\SL_h,r2\rho^\vee}]^k$ under the morphism
  \begin{align*}
    \fLocSht^{(I_1,\dotsc,I_k)}_G\ra[L_z^+\SL_h\!\bs\!\Gr^1_{z,\SL_h}]^k
  \end{align*}
given by $(\sG',\de')\mapsto(\io_*(\de^{-1}_j\circ\de'_j))_{j=1}^k$.
\end{proof}

\subsection{}\label{ss:isogenyboundingformalmoduli}
All points of $\fLocSht^{(I_1,\dotsc,I_k)}_{G,\mu_\bullet}|_{\prod_{i\in I}\bD_i}$ are close enough to one defined over a fixed finite field in the following sense.
\begin{lem*}
There exists a finite extension $\bF_{q'}$ of $\bF_q$ and a non-negative integer $D$ such that, for every $x$ in $|\fLocSht^{(I_1,\dotsc,I_k)}_{G,\mu_\bullet}|_{\prod_{i\in I}\bD_i}|$, there exists an $\bF_{q'}$-point $y$ of $\fLocSht^{(I_1,\dotsc,I_k)}_{G,\mu_\bullet}|_{\prod_{i\in I}\bD_i}$ satisfying $d(x,y)\leq D$.
\end{lem*}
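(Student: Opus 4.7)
The plan is to combine the finiteness of the set $B(G,\mu_\bullet)$ of $\si$-conjugacy classes of local $G$-shtukas bounded by $\mu_\bullet$ with a uniform bound on change-of-trivialization elements within each class. The argument first uses finiteness to reduce to a single $\si$-conjugacy class, then bounds the distance within a class via reduction theory.

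For any algebraically closed extension $K$ of $\bF_q$ and $K$-point $x=(\sG_x,\de_x)$ of $\fLocSht^{(I_1,\dotsc,I_k)}_{G,\mu_\bullet}|_{\prod_{i\in I}\bD_i}$, after trivializing $\sG_1$ via $\de_x$ and composing the resulting maps $\phi_j$, one obtains an element $b_x\in G(K\lp{z})$ whose $\si$-conjugacy class $[b_x]$ depends only on the underlying local shtuka $\sG_x$. The bound by $\mu_\bullet$ implies $[b_x]$ lies in a finite subset $B(G,\mu_\bullet)$ of the Kottwitz set, cut out by the affine Schubert bound coming from $\mu_1+\dotsb+\mu_k$. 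For each $[b]\in B(G,\mu_\bullet)$, I would choose once and for all a representative $b_{[b]}$ defined over a common finite extension $\bF_{q'}$ of $\bF_q$ also containing every residue field $\bF_{q_i}$ of $v_i$; this yields a distinguished $\bF_{q'}$-point $y_{[b]}=(\sG_{[b]},\de_{[b]})$ of $\fLocSht^{(I_1,\dotsc,I_k)}_{G,\mu_\bullet}|_{\prod\bD_i}$ realizing $[b]$.

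Given $x$ with class $[b]$, there exists $g\in G(K\lp{z})$ with $g^{-1}b_x\tau(g)=b_{[b]}$, so $g$ determines a quasi-isogeny of underlying local shtukas $\sG_x\to\sG_{[b]}$ over $K\lp{z}$. Matching with the trivializations, the quasi-isogeny $\de_x^{-1}\circ\de_{[b]}$ is represented by $g$ up to left multiplication by $G(K\lb{z})$ (a harmless change of trivialization of $\sG_x$) and right multiplication by the $\si$-centralizer $J_{b_{[b]}}(F)=\{j\in G(K\lp{z}):j^{-1}b_{[b]}\tau(j)=b_{[b]}\}$. Bounding $d(x,y_{[b]})$ therefore reduces to finding a representative of $g$ in $G(K\lb{z})\bs G(K\lp{z})/J_{b_{[b]}}(F)$ with bounded Cartan invariant. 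A reduction theory argument along the lines of \cite{HV11} and \cite{EH14} then yields such a bound $D_{[b]}$ depending only on $[b]$ and $\mu_\bullet$, and $D\coloneqq\max_{[b]}D_{[b]}$ concludes the argument.

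The main obstacle is this final reduction theory step, namely bounding the Cartan invariant of representatives in the double coset space; in the general multi-leg setting with an arbitrary parahoric $G$, this goes beyond what is explicitly in \cite{HV11} and \cite{EH14}. A cleaner alternative would be to leverage Proposition \ref{ss:formalisogenybound} directly: for $m$ sufficiently large the reduced subscheme of $\fLocSht^{(I_1,\dotsc,I_k)}_{G,\mu_\bullet,m}|_{\prod v_i}$ is projective over $\prod v_i$ and meets every $\si$-conjugacy class, so one may enlarge $\bF_{q'}$ to contain the residue fields of a finite set of points hitting every class and then use Proposition \ref{ss:rigidisogeny} together with Lemma \ref{lem:formalmetric} to propagate the distance bound to the rest of $\fLocSht^{(I_1,\dotsc,I_k)}_{G,\mu_\bullet}|_{\prod\bD_i}$.
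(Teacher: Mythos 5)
Your overall shape---produce the nearby $\bF_{q'}$-point by rationalizing the framing---is the right one, but the proposal has a genuine gap at precisely the step you flag as ``the main obstacle,'' and that step is the entire content of the lemma. First, the $\sigma$-conjugacy bookkeeping largely evaporates in this setting: every point of $\fLocSht^{(I_1,\dotsc,I_k)}_{G,\mu_\bullet}|_{\prod_{i\in I}\bD_i}$ carries a quasi-isogeny $\de$ to the \emph{trivial} local $G$-shtuka, and the commutativity constraint gives $\prescript\tau{}{\de_1^{-1}}\circ\de_1=\phi_k\circ\dotsb\circ\phi_1$, so every $\sG_x$ lies in the basic class $[1]$ and the relevant $\sigma$-centralizer is just $G(F)$; the finiteness of $B(G,\mu_\bullet)$ is not the operative input. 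What remains is exactly the assertion that if the coboundary $\tau(g_1)^{-1}g_1$ lies in a fixed quasi-compact affine Schubert variety, then $g_1=u_1h_1$ with $h_1\in G(\bF_q\lp{z})$ and $u_1$ bounded by a uniform $D$. This is what the paper extracts from \cite[2.2.1 (ii)]{Lan00} and \cite[(2.1)]{RZ99}; the point $y$ is then a \emph{fixed} $\bF_{q'}$-shtuka $\sH=((G)_j,(b_j)_j)$ with $b_k\dotsm b_1=1$ (produced via Lang's lemma and surjectivity of convolution) framed by the \emph{varying} rational element $(h_j)_{j=1}^k$. Your formulation of this step---``find a representative of $g$ in $G(K\lb{z})\bs G(K\lp{z})/J_b(F)$ of bounded Cartan invariant''---is false for arbitrary $g$: the constraint that $g^{-1}b_x\tau(g)$ is bounded is indispensable. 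Even granting it, you would also need the $J_b(F)$-translates of your base point $y_{[b]}$ to remain $\bF_{q'}$-rational, which requires an argument (it is automatic for $[b]=[1]$, where $J_b(F)=G(F)$, but $J_b(F)$ sits inside $G(\ov\bF_q\lp{z})$ in general).

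The ``cleaner alternative'' in your last paragraph does not repair this. Proposition \ref{ss:formalisogenybound} only makes the truncations $\fLocSht^{(I_1,\dotsc,I_k)}_{G,\mu_\bullet,m}|_{\prod_{i\in I}\bD_i}$ quasi-compact; the full moduli space is the increasing union over $m$ and is only \emph{locally} formally of finite type. Any finite set of base points lies in some fixed truncation $\fLocSht^{(I_1,\dotsc,I_k)}_{G,\mu_\bullet,m_0}|_{\prod_{i\in I}\bD_i}$, and the triangle inequality then forces $d(x,y)\geq d(\mathbf1,x)-m_0$, which is unbounded as $x$ ranges over the whole space. So no finite set of points can be $D$-dense: the lemma genuinely requires infinitely many $\bF_{q'}$-points $y$, obtained by varying the rational framing $h_1\in G(\bF_q\lp{z})$ with $x$. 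Finally, Proposition \ref{ss:rigidisogeny} concerns lifting quasi-isogenies through nilpotent thickenings and provides no mechanism for ``propagating'' a distance bound across the space.
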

\begin{proof}
  Suppose that $x$ corresponds to a $K$-point $(\sG,\de)$, where we can assume that $K$ is an algebraically closed field over $\bF_q$. Then $\sG_j$ is trivial for all $1\leq j\leq k$, and after fixing trivializations of the $\sG_j$, our $\de_j$ correspond to $g_j$ in $G(K\lp{z})$. The commutativity of the diagram
  \begin{align*}
    \xymatrix{\sG_1|_{\Spec K\lp{z}}\ar@{-->}[r]^-{\phi_1}\ar@{-->}[d]^-{\de_1} & \dotsm\ar@{-->}[r]^-{\phi_{k-1}} & \sG_k|_{\Spec K\lp{z}}\ar@{-->}[r]^-{\phi_k}\ar@{-->}[d]^-{\de_k} & \prescript\tau{}{\sG_1}|_{\Spec K\lp{z}}\ar@{-->}[d]^-{\prescript\tau{}{\de_1}}\\
    G\ar@{=}[r] &\dotsm\ar@{=}[r] & G\ar@{=}[r] & G}
  \end{align*}
  implies that $\prescript\tau{}{\de_1^{-1}}\circ\de_1=\phi_k\circ\dotsb\circ\phi_1$, so the image of $\tau(g_1)^{-1}g_1$ in $\Gr^1_{z,G}|_{v_*}$ lies in $\Gr^1_{z,G,\sum_{i\in I}\mu_i}|_{v_*}$. Now \ref{ss:affineschubertvarieties} indicates that $\Gr^1_{z,G,\sum_{i\in I}\mu_i}|_{v_*}$ is a quasi-compact scheme, so \cite[Lemma 5.4]{HV11} shows that its image under $\io_*$ lies in $\Gr^1_{z,\SL_h,m}$ for large enough $m$. Therefore \cite[2.2.1 (ii)]{Lan00} and \cite[(2.1)]{RZ99} yield a non-negative integer $D$ such that, for all such $g_1$, there exists $h_1$ in $G(\bF_q\lp{z})$ such that the image of $g_1h^{-1}_1$ in $\Gr^1_{z,\SL_h}$ lies in $\Gr^1_{z,\SL_h,D2\rho^\vee}$.

If $\sum_{i\in I}\mu_i$ is not a coroot, then $\fLocSht^{(I_1,\dotsc,I_k)}_{G,\mu_\bullet}|_{\prod_{i\in I}\bD_i}$ is empty, and the result vacuously holds. So assume that $\sum_{i\in I}\mu_i$ is a coroot, which implies that $\Gr^{(I)}_{G,\sum_{i\in I}\mu_i}|_{\prod_{i\in I}v_i}$ contains the image of $1$ in $\Gr^{(I)}_G|_{\prod_{i\in I}v_i}$. Since the convolution morphism $\Gr^{(I_1,\dotsc,I_k)}_{G,\mu_\bullet}|_{\prod_{i\in I}v_i}\ra\Gr^{(I)}_{G,\sum_{i\in I}\mu_i}|_{\prod_{i\in I}v_i}$ is of finite type by \ref{ss:affineschubertvarieties} and surjective, its fiber at $1$ has an $\bF_{q'}$-point $b$ for some finite extension $\bF_{q'}$ of $\bF_q$. Next, identify $\Gr^{(I_1,\dotsc,I_k)}_G|_{v^I}$ with $\Gr^k_{z,G}|_{v^I}$. Because the fiber of $(L_zG)^k\ra\Gr^k_G|_{\prod_{i\in I}v_i}$ at $b$ is an $(L_z^+G)^k$-bundle on $\Spec\bF_{q'}$, Lang's lemma indicates that it has an $\bF_{q'}$-point $(b_j)_{j=1}^k$. By construction, the local $G$-shtuka $\sH\coloneqq((G)_{j=1}^k,(b_j)_{j=1}^k)$ over $\Spec\bF_{q'}$ is bounded by $\mu_\bullet$, and $b_k\dotsm b_1$ equals $1$ up to right $G(\bF_{q'}\lb{z})$-translation. After replacing $b_1$ with a right $G(\bF_{q'}\lb{z})$-translate, we can assume that $b_k\dotsm b_1=1$. Combined with the fact that $h_1=\tau(h_1)$, we see that the diagram
\begin{align*}
    \xymatrix{G\ar@{-->}[r]^-{b_1}\ar@{-->}[d]^-{h_1} & \dotsm\ar@{-->}[r]^-{b_{k-1}} & G\ar@{-->}[r]^-{b_k}\ar@{-->}[d]^-{h_k} & G\ar@{-->}[d]^-{\prescript\tau{}{h_1}}\\
    G\ar@{=}[r] &\dotsm\ar@{=}[r] & G\ar@{=}[r] & G}
\end{align*}
commutes for uniquely determined $h_2,\dotsc,h_k$ in $G(\bF_{q'}\lp{z})$. Since $b_j$ and $\phi_j$ are bounded by $\sum_{i\in I_j}\mu_j$ for $1\leq j\leq k-1$, where the relative position bound is taken with respect to $z$, a quasi-compactness argument shows that, after increasing $D$ by an amount depending only on $\mu_\bullet$, the image of $g_jh^{-1}_j$ in $\Gr^1_{z,\SL_h}$ lies in $\Gr^1_{z,\SL_h,D2\rho^\vee}$. Hence the quasi-isogeny $h\coloneqq(h_j)_{j=1}^k$ from $\sH$ to $G$ yields an $\bF_{q'}$-point $y\coloneqq(\sH,h)$ of $\fLocSht^{(I_1,\dotsc,I_k)}_{G,\mu_\bullet}|_{\prod_{i\in I}\bD_i}$ with $d(x,y)\leq D$, as desired.
\end{proof}

\subsection{}\label{ss:formalmodulirepresentable}
The following theorem is the main result of this section. Write $B_r(x)_{\mu_\bullet}$ for the intersection of $B_r(x)$ and $|\fLocSht^{(I_1,\dots,I_k)}_{G,\mu_\bullet}|_{\prod_{i\in I}\bD_i}|$, and write $\mathbf1$ for the $\bF_q$-point $(G,(\id)_{j=1}^k)$ of $\fLocSht^{(I_1,\dotsc,I_k)}_{G,\mu_\bullet}$. Note that $B_m(\mathbf1)_{\mu_\bullet}$ equals $|\fLocSht^{(I_1,\dots,I_k)}_{G,\mu_\bullet,m}|_{\prod_{i\in I}\bD_i}|$.
\begin{thm*}
  Our $\fLocSht^{(I_1,\dotsc,I_k)}_{G,\mu_\bullet}|_{\prod_{i\in I}\bD_i}$ is a formal scheme that is locally formally of finite type over $\textstyle\prod_{i\in I}\bD_i$. 
\end{thm*}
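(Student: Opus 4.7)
The plan is to cover $\fLocSht^{(I_1,\dotsc,I_k)}_{G,\mu_\bullet}|_{\prod_{i\in I}\bD_i}$ by formal subschemes indexed by $\bF_{q'}$-points, each formally of finite type, using Lemma \ref{ss:isogenyboundingformalmoduli} to ensure exhaustiveness; the pieces are formal completions along the closed balls $B_D(y)_{\mu_\bullet}$, and I would generalize Lemma \ref{ss:mpartformalscheme} to handle basepoints other than $\mathbf{1}$.

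The key translation argument: given an $\bF_{q'}$-point $y=(\sH,h)$, the assignment $(\sG,\de)\mapsto(\sG,h\circ\de)$ defines an isomorphism between the moduli problem with framings targeting $\sH$ and the original $\fLocSht^{(I_1,\dotsc,I_k)}_G$, sending the analogous identity point $\mathbf{1}_\sH$ to $y$. This translation preserves distances (it changes framings uniformly) and preserves $\mu_\bullet$-boundedness (which depends only on the underlying shtuka $\sG$). Hence, writing $\fU_y$ for the formal completion of $\fLocSht^{(I_1,\dotsc,I_k)}_{G,\mu_\bullet}|_{\prod_{i\in I}\bD_i}$ along $B_D(y)_{\mu_\bullet}$ (Zariski-closed by Lemma \ref{lem:formalmetric}), the proofs of Proposition \ref{ss:formalisogenybound} and Lemma \ref{ss:mpartformalscheme} apply with $\mathbf{1}$ replaced by $\mathbf{1}_\sH$, yielding that $\fU_y$ is a formal scheme formally of finite type over $\prod_{i\in I}\bD_i$.

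Lemma \ref{ss:isogenyboundingformalmoduli} then guarantees that the $B_D(y)_{\mu_\bullet}$ exhaust $|\fLocSht^{(I_1,\dotsc,I_k)}_{G,\mu_\bullet}|_{\prod_{i\in I}\bD_i}|$ as $y$ ranges over $\bF_{q'}$-points. To conclude, I would glue the $\fU_y$ along their overlaps: on $B_D(y_1)_{\mu_\bullet}\cap B_D(y_2)_{\mu_\bullet}$, both $\fU_{y_j}$ restrict to the formal completion of the ambient moduli problem along this intersection, so compatibility follows from Proposition \ref{ss:rigidisogeny} and the universal property of formal completions. The resulting formal scheme structure is locally noetherian (since each $\fU_y$ is), and its reduced special fiber is locally of finite type over $\prod_{i\in I}v_i$ because it is exhausted by the projective subschemes $B_m(\mathbf{1})_{\mu_\bullet}$ from Proposition \ref{ss:formalisogenybound}.

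The main obstacle is the gluing step: because the $\fU_y$ are formal completions along closed rather than open subsets, additional care is required to produce genuine open formal subschemes. I would refine the cover by passing to affine opens of each reduced subscheme of $\fU_y$, which by rigidity of quasi-isogenies (Proposition \ref{ss:rigidisogeny}) extend uniquely to open affine formal subschemes $\Spf A$ of $\fLocSht^{(I_1,\dotsc,I_k)}_{G,\mu_\bullet}|_{\prod_{i\in I}\bD_i}$ with $A$ noetherian and $A/J$ of finite type over $\bF_q$ for the largest ideal of definition $J$; these provide the open cover needed to deduce that the moduli space is locally formally of finite type.
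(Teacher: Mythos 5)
Your overall strategy --- use Lemma \ref{ss:isogenyboundingformalmoduli} to reduce to neighborhoods of finitely many rational points, and a translation-by-$y$ argument to transport Proposition \ref{ss:formalisogenybound} and Lemma \ref{ss:mpartformalscheme} to basepoints other than $\mathbf{1}$ --- starts from the same ingredients as the paper, and the translation $(\sG,\de')\mapsto(\sH,h)\cdot(\sG,\de')$ is plausible (modulo the base change to $\bF_{q'}$ and re-proving \ref{ss:formalisogenybound} for the $\sH$-framed moduli problem). But there is a genuine gap exactly at the point you flag as the ``main obstacle,'' and the fix you propose does not work. The pieces $\fU_y$ are formal completions along the \emph{closed} subsets $B_D(y)_{\mu_\bullet}$; a closed ball is not open (a point $x$ with $d(x,y)=D$ has no neighborhood inside $B_D(y)$), so the $\fU_y$ are not open subsheaves, they do not cover the moduli problem functorially (an $S$-point whose reduction meets several balls without lying in any single one factors through no $\fU_y$), and a collection of formal completions along closed subsets cannot be glued to recover the ambient object --- compare completing $\bA^1$ at each of its closed points. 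Your proposed refinement fails for the same reason: an affine open $U$ of the reduced subscheme of $\fU_y$ is open in $B_D(y)_{\mu_\bullet}^{\red}$, hence only \emph{locally closed} in the reduced subscheme of $\fLocSht^{(I_1,\dotsc,I_k)}_{G,\mu_\bullet}|_{\prod_{i\in I}\bD_i}$, so it cannot be the reduced subscheme of an open formal subscheme; rigidity of quasi-isogenies controls infinitesimal lifts of points of $U$ but does not make $U$ open. (The same confusion appears in your last step: being exhausted by the projective closed subschemes $B_m(\mathbf{1})_{\mu_\bullet}$ does not make the reduced subscheme locally of finite type --- that is precisely what an ind-projective ind-scheme typically fails to be.)

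The missing idea, which is the actual content of the paper's proof, is to manufacture \emph{open} sets by taking complements rather than balls: for each $s$, the paper removes from $\fLocSht^{(I_1,\dotsc,I_k)}_{G,\mu_\bullet,\wh{m}}|_{\prod_{i\in I}\bD_i}$ the finite union $Z^s_m$ of the $D$-balls around those $\bF_{q'}$-points $y$ with $d(\mathbf{1},y)\geq s$; since $Z^s_m$ is Zariski closed by Lemma \ref{lem:formalmetric}, the complement $\fU^s_m$ is an open formal subscheme of the formally-finite-type object from Lemma \ref{ss:mpartformalscheme}. The triangle inequality together with Lemma \ref{ss:isogenyboundingformalmoduli} then shows that $\fU^s_m$ stabilizes for $m\geq s+D-1$, so $\fU^s=\varinjlim_m\fU^s_m$ is a genuinely open, formally-finite-type subsheaf, and these exhaust the moduli problem as $s\to\infty$. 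If you want to salvage your closed-ball cover, you would need to replace each $B_D(y)_{\mu_\bullet}$ by an open set containing it whose closure is still controlled --- which is essentially what the complementation trick achieves.
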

\begin{proof}
Let $\bF_{q'}$ and $D$ be as in Lemma \ref{ss:isogenyboundingformalmoduli}. Write $Z^s_m$ for the union
\begin{align*}
  \bigcup_y B_D(y)_{\mu_\bullet}\cap B_m(\mathbf1)_{\mu_\bullet},
\end{align*}
where $y$ runs over $\bF_{q'}$-points of $\fLocSht^{(I_1,\dotsc,I_k)}_{G,\mu_\bullet}|_{\prod_{i\in I}\bD_i}$ satisfying $d(\mathbf1,y)\geq s$. The triangle inequality implies that it suffices to take $y$ also satisfying $d(\mathbf1,y)\leq m+D$. Because $B_{m+D}(\mathbf1)_{\mu_\bullet}$ equals $|\fLocSht^{(I_1,\dots,I_k)}_{G,\mu_\bullet,m+D}|_{\prod_{i\in I}\bD_i}|$, Proposition \ref{ss:formalisogenybound} implies that there are finitely many such $y$. Hence Lemma \ref{lem:formalmetric} indicates that $Z^s_m$ is a a finite union of Zariski closed subsets of $|\fLocSht^{(I_1,\dotsc,I_k)}_{G,\mu_\bullet}|_{\prod_{i\in I}\bD_i}|$.

Write $\fU^s_m$ for the open formal subscheme of $\fLocSht^{(I_1,\dots,I_k)}_{G,\mu_\bullet,\wh{m}}|_{\prod_{i\in I}\bD_i}$ with underlying topological space given by the complement of $Z^s_m$. By Lemma \ref{ss:mpartformalscheme}, $\fU^s_m$ is formally of finite type over $\prod_{i\in I}\bD_i$. Note that $\fLocSht^{(I_1,\dots,I_k)}_{G,\mu_\bullet,\wh{m}}|_{\prod_{i\in I}\bD_i}$ equals the formal completion of $\fLocSht^{(I_1,\dots,I_k)}_{G,\mu_\bullet,\wh{m+1}}|_{\prod_{i\in I}\bD_i}$ along the reduced subscheme of $\fLocSht^{(I_1,\dotsc,I_k)}_{G,\mu_\bullet,\wh{m}}|_{\prod_{i\in I}\bD_i}$, so $\fU^s_{m+1}$ equals the formal completion of $\fU^s_m$ along the reduced subscheme of $\fU^s_m$.

For any non-negative integer $s$, we claim that $\fU^s_m$ stabilizes. The above indicates that it suffices to check this on underlying sets, so suppose that there exists $x$ in $|\fU^s_{m+1}|\ssm|\fU^s_m|$. Lemma \ref{ss:isogenyboundingformalmoduli} yields an $\bF_{q'}$-point $y$ of $\fLocSht^{(I_1,\dotsc,I_k)}_{G,\mu_\bullet}|_{\prod_{i\in I}\bD_i}$ satisfying $d(x,y)\leq D$. As $x$ does not lie in $Z^s_{m+1}$, we have $d(\mathbf1,y)<s$, so the triangle inequality yields $m+1 = d(\mathbf1,x)<s+D$. Hence $\fU^s_m$ stabilizes for $m\geq s+D-1$, which concludes our proof of the claim.

Set $\fU^s\coloneqq\varinjlim_m\fU^s_m$. Proposition \ref{ss:formalisogenybound} implies that $\fU^s$ is an open subsheaf of
\begin{align*}
\fLocSht^{(I_1,\dotsc,I_k)}_{G,\mu_\bullet}|_{\prod_{i\in I}\bD_i}.
\end{align*}
The claim shows that $\fU^s$ equals $\fU^s_m$ for large enough $m$, so $\fU^s$ is formally of finite type over $\prod_{i\in I}\bD_i$. Now we just need to prove $\varinjlim_s\fU^s=\fLocSht^{(I_1,\dotsc,I_k)}_{G,\mu_\bullet}|_{\prod_{i\in I}\bD_i}$. It suffices to check this on underlying sets, so take $x$ in $|\fLocSht^{(I_1,\dotsc,I_k)}_{G,\mu_\bullet}|_{\prod_{i\in I}\bD_i}|$. Proposition \ref{ss:formalisogenybound} indicates that $x$ lies in
\begin{align*}
|\fLocSht^{(I_1,\dotsc,I_k)}_{G,\mu_\bullet,m}|_{\prod_{i\in I}\bD_i}|
\end{align*}
for large enough $m$, so for all $y$ in $|\fLocSht^{(I_1,\dotsc,I_k)}_G|$ such that $x$ lies in $B_D(y)_{\mu_\bullet}$, the triangle inequality yields $d(\mathbf1,y)\leq m+D$. Therefore $x$ lies in $|\fU^{m+D+1}|$. 
\end{proof}

\subsection{}\label{ss:Lgroup}
Using representations of the dual group, we can index relative position bounds as follows. Let $\wt{F}$ be the finite Galois extension of $F$ such that $\Gal(\wt{F}/F)$ equals the image of the $\Ga_F$-action on $X_*^+(T)$. Write $\wt\bD$ for $\Spd\cO_{\wt{F}}$. Let $E$ be a finite extension of $\bQ_\ell(\sqrt{q})$, write $\wh{G}$ for the dual group of $G_F$ over $\cO_E$, and write $\prescript{L}{}{G}$ for $\wh{G}\rtimes\Gal(\wt{F}/F)$.

Let $V$ be an object of $\Rep_E(\prescript{L}{}{G})^I$. Note that $\coprod_{\mu_\bullet}\fLocSht^{(I_1,\dotsc,I_k)}_{G,\mu_\bullet}|_{\wt\bD^I}$ naturally descends to a sheaf $\fLocSht^{(I_1,\dotsc,I_k)}_{G,V}$ over $\bD^I$, where $\mu_\bullet$ runs over highest weights appearing in $V_{\ov\bQ_\ell}|_{\wh{G}^I}$. Theorem \ref{ss:formalmodulirepresentable} and descent imply that $\fLocSht^{(I_1,\dotsc,I_k)}_{G,V}$ is a formal scheme that is locally formally of finite type over $\bD^I$.

\subsection{}\label{ss:formalpartialfrobenius}
Finally, we define partial Frobenii for the formal moduli of local $G$-shtukas.
\begin{defn*}
  Write $\fFr^{(I_1,\dotsc,I_k)}:\fLocSht^{(I_1,\dotsc,I_k)}_{G,V}\ra\fLocSht^{(I_2,\dotsc,I_k,I_1)}_{G,V}$ for the morphism given by sending
  \begin{align*}
    \xymatrix{\sG_1\ar@{-->}[r]^-{\phi_1}\ar@{-->}[d]^-{\de_1} & \dotsm\ar@{-->}[r]^-{\phi_{k-1}} & \sG_k\ar@{-->}[r]^-{\phi_k}\ar@{-->}[d]^-{\de_k} & \prescript\tau{}{\sG_1}\ar@{-->}[d]^-{\prescript\tau{}{\de_1}\quad\mbox{to}} & \sG_2\ar@{-->}[r]^-{\phi_2}\ar@{-->}[d]^-{\de_2} & \dotsm\ar@{-->}[r]^-{\phi_k} & \prescript\tau{}{\sG_1}\ar@{-->}[r]^-{\prescript\tau{}{\phi_1}}\ar@{-->}[d]^-{\prescript\tau{}{\de_1}} & \prescript\tau{}{\sG_2}\ar@{-->}[d]^-{\prescript\tau{}{\de_2}} \\
    G\ar@{=}[r] & \dotsm \ar@{=}[r] & G\ar@{=}[r] & G &     G\ar@{=}[r] & \dotsm \ar@{=}[r] & G\ar@{=}[r] & G.}
  \end{align*}
\end{defn*}
Note that $\fFr^{(I_1,\dotsc,I_k)}$ lies above the endomorphism of $\bD^I$ given by geometric $q$-Frobenius on the $i$-th factor for $i$ in $I_1$ and the identity on all other factors.

\section{Relative $z$-adic Hodge theory}\label{s:zadichodgetheory}
The local shtukas defined in \S\ref{s:formallocalshtukas} are (formal) algebraic, while the local shtukas used by Fargues--Scholze \cite{FS21} are (non-archimedean) analytic in nature. To compare them, we need an equicharacteristic version of Kedlaya--Liu's results \cite{KL15} on relative $p$-adic Hodge theory. Our goal in this section is to prove the necessary results on \emph{relative $z$-adic Hodge theory}, in the spirit of work of Hartl \cite{Har11}.

We begin by recalling the equicharacteristic version of Fontaine's period ring $A_{\inf}$. Using a result of Ansch\"utz \cite{An22}, we prove an algebraization theorem for $G$-bundles on $A_{\inf}$, at least pro-\'etale locally on the base. Finally, we relate $\ul{G(\cO_F)}$-local systems to $G$-bundles on the equicharacteristic version of the (relative integral) Robba ring equipped with a Frobenius automorphism.

Our arguments closely follow those of Kedlaya--Liu \cite{KL15} and Scholze--Weinstein \cite{SW20}. However, we have streamlined and simplified the presentation, both because we only prove what we need as well as because the arithmetic of formal power series is easier than that of Witt vectors.

\subsection{}
Let $S=\Spa(R,R^+)$ be an affinoid perfectoid space over $\bF_q$, and choose a pseudouniformizer $\vpi$ of $R$. Write $\cY_S$ for the complement of the vanishing locus of $\vpi$ and $z$ in $\Spa R^+\lb{z}$, and note that $\cY_S$ is the analytic locus of the pre-adic space $\Spa R^+\lb{z}$. We have a continuous map $\rad:\abs{\cY_S}\ra[0,\infty]$ given by
\begin{align*}
x\mapsto\frac{\log\abs{\vpi(\wt{x})}}{\log\abs{z(\wt{x})}},
\end{align*}
where $\wt{x}$ denotes the unique rank-$1$ generalization of $x$ in $\cY_S$. For any closed interval $\cI$ in $[0,\infty]$ with rational endpoints, write $\cY_{S,\cI}=\Spa(B_{S,\cI},B^+_{S,\cI})$ for the associated rational open subspace of $\Spa R^+\lb{z}$, which lies in $\cY_S$. More generally, for any subset $\cI$ of $[0,\infty]$, write $\cY_{S,\cI}$ for the open subspace $\bigcup_{\cI'}\cY_{S,\cI'}$ of $\cY_S$, where $\cI'$ runs over closed intervals in $\cI$ with rational endpoints. Note that $\cY_{S,\cI}\subseteq\rad^{-1}(\cI)$. We see that $\cY_{S,[0,\infty)}$ and $\cY_{S,(0,\infty)}$ are naturally isomorphic to $\bD\times S$ and $\Spa{F}\times S$, respectively.

Write $\tau:S\ra S$ for the absolute $q$-Frobenius automorphism, and by abuse of notation, write $\tau:R\lb{z}\ra R\lb{z}$ for the canonical lift of absolute $q$-Frobenius. Note that $\rad\circ\tau=q\cdot\rad$. Finally, write $X_S$ for the quotient $\cY_{S,(0,\infty)}/\tau^\bZ$.

\subsection{}\label{ss:Batinfinity}
When $\cI$ contains $\infty$, we can describe $B_{S,\cI}$ using the following lemma. For any positive $r$ in $\bZ[\textstyle\frac1p]$, write $R^+\ba{z,\textstyle\frac{\vpi^r}z}$ for the $\vpi$-adic completion of $R^+\lb{z}[\textstyle\frac{\vpi^r}z]$.
\begin{lem*}
We can identify
\begin{align*}
R^+\ba{z,\textstyle\frac{\vpi^r}z} = \left\{
  \begin{tabular}{c|c}
    \multirow{2}{*}{$\displaystyle\sum_{m=-\infty}^\infty a_mz^m$} & the $a_m$ lie in $R^+$, $\displaystyle\lim_{m\to-\infty}a_m\vpi^{rm}=0$,\\
  & and for $m\leq0$, $a_m\vpi^{rm}$ lies in $R^+$ 
  \end{tabular}\right\}.
\end{align*}
If we give $R^+\ba{z,\textstyle\frac{\vpi^r}z}$ the $(\vpi,z)$-adic topology, then $B_{S,[1/r,\infty]}$ equals $R^+\ba{z,\frac{\vpi^r}z}[\frac1z]$.
\end{lem*}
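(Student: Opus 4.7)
My plan is to prove the two assertions in turn, both by direct unwinding.

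For the first identification, I would realize $R^+\lb{z}[\tfrac{\vpi^r}{z}]$ as the subring of $R\lp{z}$ consisting of Laurent series $\sum_{m=-N}^{\infty} a_m z^m$ (with $N$ depending on the element) satisfying $a_m \in R^+$ for $m \geq 0$ and $a_{-k} \in \vpi^{rk} R^+$ for $1 \leq k \leq N$. Here I use that $R^+$ is perfect, so that $\vpi^r$ makes sense for $r \in \bZ[\tfrac{1}{p}]_{>0}$, and that $R^+$ is $\vpi$-torsion-free, so that $a_{-k}\vpi^{rk}$ is an unambiguous element of $R^+$. Writing $\widetilde{A}$ for the right-hand side of the claimed formula, a direct inspection shows that $\widetilde{A}$ is closed under the formal convolution product (the convergence of each coefficient being guaranteed by the decay condition $\lim_{m \to -\infty} a_m \vpi^{rm} = 0$), is $\vpi$-adically separated and complete, and contains $R^+\lb{z}[\tfrac{\vpi^r}{z}]$ as a $\vpi$-adically dense subring (to approximate $\sum a_m z^m \in \widetilde{A}$ modulo $\vpi^n$, truncate the negative tail at a large enough index $-N$, using the decay condition to ensure the discarded tail lies in $\vpi^n \widetilde{A}$). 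These properties together identify $\widetilde{A}$ with $R^+\ba{z, \tfrac{\vpi^r}{z}}$, the $\vpi$-adic completion of $R^+\lb{z}[\tfrac{\vpi^r}{z}]$.

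For the second identification, I would invoke Huber's construction of rational opens \cite{Hub94}: inside the analytic locus $\cY_S$ (where $z$ is already a unit), the rational subspace cut out by $|\vpi^r| \leq |z|$ has integral ring of functions equal to the $(\vpi, z)$-adic completion of $R^+\lb{z}[\tfrac{\vpi^r}{z}]$. By the first part, this coincides with $R^+\ba{z, \tfrac{\vpi^r}{z}}$ equipped with the $(\vpi, z)$-adic topology; one verifies that the ring $R^+\ba{z, \tfrac{\vpi^r}{z}}$, built as a $\vpi$-adic completion, is already $(\vpi, z)$-adically complete via the explicit series description of its elements. The full Huber ring $B_{S,[1/r,\infty]}$ is then obtained by inverting $z$, yielding $R^+\ba{z, \tfrac{\vpi^r}{z}}[\tfrac{1}{z}]$; no further completion is needed, since on the rational subspace $z$ is a topologically invertible unit.

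The main obstacle I anticipate is the coefficient-level bookkeeping for the first part: one must verify carefully that the $\vpi$-adic limit of a Cauchy sequence of Laurent series of the stated form is again of that form, and in particular that the condition $\lim_{m \to -\infty} a_m\vpi^{rm} = 0$ is precisely the convergence condition preserved under $\vpi$-adic completion. Once this is established, the second part follows essentially formally from Huber's construction of rational opens.
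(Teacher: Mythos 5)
Your proposal is correct and follows essentially the same route as the paper: both establish the explicit Laurent-series description of the $\vpi$-adic completion directly from the definition, observe from that description that the ring is already $(\vpi,z)$-adically complete, and then identify $B_{S,[1/r,\infty]}$ via Huber's description of the rational open subspace $\{|\vpi^r|\leq|z|\neq0\}$ of $\Spa R^+\lb{z}$. Your version merely spells out the density and completeness bookkeeping that the paper dispatches with ``follows immediately from the definition.''
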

\begin{proof}
The above description of $R^+\ba{z,\textstyle\frac{\vpi^r}z}$ follows immediately from the definition. This description shows that $R^+\ba{z,\textstyle\frac{\vpi^r}z}$ is $z$-adically complete as a ring, so $R^+\ba{z,\textstyle\frac{\vpi^r}z}$ equals the $(\vpi,z)$-adic completion of $R^+\lb{z}[\textstyle\frac{\vpi^r}z]$ as rings. Since $\cY_{S,[1/r,\infty]}$ equals the rational open subspace $\{|\vpi^r|\leq|z|\neq0\}$ of $\Spa R^+\lb{z}$, this identifies $B_{S,[1/r,\infty]}$ with $R^+\ba{z,\textstyle\frac{\vpi^r}z}[\frac1z]$ if we give $R^+\ba{z,\textstyle\frac{\vpi^r}z}$ the $(\vpi,z)$-adic topology.
\end{proof}

\subsection{}\label{ss:alternativeAinf}
Sometimes, it will be convenient to ignore the topology induced from $R$ as follows. Write $A'(R^+)$ for $R^+\lb{z}$ with the $z$-adic topology.
\begin{lem*}
Our $\Spa(A'(R^+)[\textstyle\frac1z],A'(R^+))$ is a sousperfectoid adic space.
\end{lem*}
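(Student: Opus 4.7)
The plan is to show that $A'(R^+)[\tfrac1z]$ is sousperfectoid by producing a perfectoid Tate ring that contains it and such that the inclusion admits a continuous $A'(R^+)[\tfrac1z]$-linear splitting. Because $R$ is perfectoid of characteristic $p$, the ring $R^+$ is perfect, so the natural candidate is $\wt{A}(R^+)$, defined as the $z$-adic completion of $R^+[z^{1/p^\infty}]$ equipped with its $z$-adic topology.

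First I would check that $\wt{A}(R^+)[\tfrac1z]$ is perfectoid. Each quotient $R^+[z^{1/p^\infty}]/z^n$ is the direct sum $\bigoplus_{a\in[0,n)\cap\bZ[1/p]}R^+ z^a$, so every element of $\wt{A}(R^+)$ admits a unique convergent expansion $\sum_{a\in\bZ[1/p]_{\geq0}}r_a z^a$ with $r_a\in R^+$ and, for each $n$, only finitely many $a<n$ with $r_a\neq0$. Frobenius on $R^+[z^{1/p^\infty}]$ is bijective because $R^+$ is perfect, and this description of $\wt{A}(R^+)$ shows that bijectivity persists after $z$-adic completion. Thus $\wt{A}(R^+)[\tfrac1z]$ is a complete Tate ring with pseudouniformizer $z$ whose underlying ring is perfect, which in characteristic $p$ is the defining condition for being perfectoid.

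Next I would construct the splitting. Define $\pi\colon\wt{A}(R^+)\to A'(R^+)$ by $\sum_a r_a z^a\mapsto\sum_{n\in\bZ_{\geq0}}r_n z^n$, i.e., project onto the integer-exponent summands. It is $A'(R^+)$-linear because multiplication by an element of $A'(R^+)=R^+\lb z\rb$ shifts integer exponents to integer exponents only, and it is continuous because $\pi(z^n\wt{A}(R^+))\subseteq z^n A'(R^+)$. Since $\pi$ restricts to the identity on $A'(R^+)$, inverting $z$ yields a continuous $A'(R^+)[\tfrac1z]$-linear retraction of $A'(R^+)[\tfrac1z]\hookrightarrow\wt{A}(R^+)[\tfrac1z]$, establishing that $A'(R^+)[\tfrac1z]$ is sousperfectoid (hence in particular sheafy). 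The main subtleties to confirm will be that $A'(R^+)\hookrightarrow\wt{A}(R^+)$ is an inclusion of rings of definition inducing the correct topology on $A'(R^+)[\tfrac1z]$, and that bijectivity of Frobenius really does descend from the polynomial ring to its $z$-adic completion; both reduce to the direct-sum description of $R^+[z^{1/p^\infty}]/z^n$ above.
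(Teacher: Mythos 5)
Your argument is correct and is exactly the paper's proof: the paper also embeds $A'(R^+)[\tfrac1z]$ into the perfectoid ring $R^+\lb{z^{\pm1/p^\infty}}$ with its $z$-adic topology (your $\wt{A}(R^+)[\tfrac1z]$) and splits the inclusion by projecting onto integral powers of $z$. You have merely spelled out the verifications (perfectness of the completion, linearity and continuity of the projection) that the paper leaves implicit.
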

\begin{proof}
The natural map $A'(R^+)[\textstyle\frac1z]\ra R^+\lb{z^{\pm1/p^\infty}}$ is a split injection of topological $A'(R^+)[\textstyle\frac1z]$-modules, where we give $R^+\lb{z^{\pm1/p^\infty}}$ the $z$-adic topology.
\end{proof}

\begin{prop}\label{ss:Ysousperfectoid}
Our $\cY_S$ is a sousperfectoid adic space.
\end{prop}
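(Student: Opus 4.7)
The plan is to verify sousperfectoidness locally. Since $\cY_S$ is the union of its rational open subspaces $\cY_{S,\cI} = \Spa(B_{S,\cI}, B^+_{S,\cI})$ indexed by closed intervals $\cI \subseteq [0,\infty]$ with rational endpoints, and sousperfectoidness is local on an adic space, it suffices to produce, for each such $\cI$, a perfectoid Huber pair $(\wt B_{S,\cI}, \wt B^+_{S,\cI})$ together with a continuous map $B_{S,\cI} \to \wt B_{S,\cI}$ that splits as topological $B_{S,\cI}$-modules.

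To build $\wt B_{S,\cI}$, I would adjoin all $p$-power roots of $z$ and complete. Concretely, let $R^+\lb{z^{1/p^\infty}}$ be the $z$-adic completion of $R^+[z^{1/p^\infty}]$, form the associated rational localization along the same inequalities used to carve $\cY_{S,\cI}$ out of $\Spa R^+\lb{z}$, and take $\vpi$-adic completion of the resulting integral model to obtain $\wt B^+_{S,\cI}$. Since $R^+$ is already perfect (we are in characteristic $p$) and we have adjoined $p$-power roots of the only other topologically nilpotent generator $z$, the ring $\wt B^+_{S,\cI}$ is perfect and $\vpi$-adically complete, so $(\wt B_{S,\cI}, \wt B^+_{S,\cI})$ is perfectoid. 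For the case $\cI = [1/r, \infty]$ one can make this completely explicit using the presentation of $B_{S,[1/r,\infty]} = R^+\ba{z, \frac{\vpi^r}{z}}[\frac{1}{z}]$ supplied by Lemma \ref{ss:Batinfinity}; the general case is analogous.

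For the splitting, the natural decomposition
\begin{align*}
R^+\lb{z^{1/p^\infty}} \;\cong\; \bigoplus_{e \in \bZ[\tfrac{1}{p}] \cap [0,1)} R^+\lb{z}\cdot z^{e},
\end{align*}
already exploited in the proof of Lemma \ref{ss:alternativeAinf}, provides a continuous $R^+\lb{z}$-linear retraction onto the $e=0$ summand. The rational-localization and $\vpi$-adic completion operations used to pass from $R^+\lb{z^{1/p^\infty}}$ to $\wt B_{S,\cI}$ commute with this projector, yielding the desired splitting of $B_{S,\cI} \to \wt B_{S,\cI}$ as topological $B_{S,\cI}$-modules. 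Having exhibited a perfectoid cover with a topological-module splitting, we conclude that each $\cY_{S,\cI}$, and hence $\cY_S$ itself, is sousperfectoid.

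The main obstacle is purely a matter of topology: the rational subspaces carry the $(\vpi, z)$-adic topology on their integral models rather than the $z$-adic topology appearing in Lemma \ref{ss:alternativeAinf}, so one must check that the projector extends continuously after $\vpi$-adic completion and across each rational localization. This is not hard but requires the explicit description of $B_{S,\cI}$ from Lemma \ref{ss:Batinfinity} and an elementary estimate showing that the elements of fractional $z$-valuation in $\wt B^+_{S,\cI}$ lie in closed topological complements of $B^+_{S,\cI}$.
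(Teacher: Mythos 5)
Your argument is correct in outline but takes a genuinely different, more hands-on route than the paper. You verify the definition of sousperfectoid on every rational subspace $\cY_{S,\cI}$ by explicitly adjoining $p$-power roots of $z$ and checking that the idempotent projecting onto the integral-exponent part survives rational localization and completion; this last step is essentially a by-hand reproof of the standard fact that sousperfectoidness is stable under rational localization. The paper instead covers $\cY_S$ by just two pieces, $\cY_{S,[0,\infty)}$ and $\cY_{S,[1,\infty]}$: the first is $\bD\times S$ and is sousperfectoid by citation to \cite[Proposition II.1.1]{FS21}, and for the second one observes that $z$ divides $\vpi$ in $R^+\lb{z}[\frac{\vpi}{z}]$, so the $(\vpi,z)$-adic and $z$-adic topologies coincide there; this identifies $\cY_{S,[1,\infty]}$ with a rational open subspace of $\Spa(A'(R^+)[\frac1z],A'(R^+))$, which is sousperfectoid by Lemma \ref{ss:alternativeAinf}, and one then invokes stability under rational localization once. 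The topological bookkeeping you defer to the end is exactly what this two-piece cover is designed to avoid: near $\rad=0$ the relevant completion is $\vpi$-adic and the piece is already known to be sousperfectoid, while near $\rad=\infty$ the $(\vpi,z)$-adic completion (not the $\vpi$-adic one, as you write) is the right one, and it is seen to agree with the $z$-adic completion. Also note that after completion your direct sum must be read as a completed direct sum; the projector onto the $e=0$ summand remains continuous, so the splitting persists, and your argument goes through.
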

\begin{proof}
  Note that $\cY_S$ is covered by $\cY_{S,[0,\infty)}$ and $\cY_{S,[1,\infty]}$. Now $\cY_{S,[0,\infty)}$ is a sousperfectoid adic space by \cite[Proposition II.1.1]{FS21}, so it suffices to prove that $\cY_{S,[1,\infty]}$ is a sousperfectoid adic space. By Proposition \ref{ss:Batinfinity}, $B_{S,[1,\infty]}$ equals $R^+\ba{z,\frac\vpi{z}}[\textstyle\frac1z]$, where $R^+\ba{z,\textstyle\frac\vpi{z}}$ has the $(\vpi,z)$-adic topology.

  Now $z$ divides $\vpi$ in $R^+\lb{z}[\textstyle\frac\vpi{z}]$, so the $(\vpi,z)$-adic topology on $R^+\lb{z}[\textstyle\frac\vpi{z}]$ equals the $z$-adic topology. This enables us to identify $\cY_{S,[1,\infty]}$ with the rational open subspace $\{|\vpi|\leq|z|\neq0\}$ of $\Spa(A'(R^+)[\textstyle\frac1z],A'(R^+))$. The latter is sousperfectoid by Lemma \ref{ss:alternativeAinf}, so $\cY_{S,[1,\infty]}$ is as well.
\end{proof}

\subsection{}\label{ss:vectorbundlesfullyfaithful}
Since a power of $\vpi$ divides a power of $z$ in $R^+\lb{z}[\frac{z}{\vpi^r}]$, the $(\vpi,z)$-adic topology on $R^+\lb{z}[\frac{z}{\vpi^r}]$ equals the $\vpi$-adic topology. Therefore $B_{S,[0,1/r]}$ equals the Tate algebra $R\ang{\frac{z}{\vpi^r}}$. This argument similarly lets us identify
  \begin{align*}
    B_{S,[1,1]} = \left\{\sum_{m=-\infty}^\infty a_mz^m\,\middle|\,\mbox{the }a_m\mbox{ lie in }R\mbox{ and }\lim_{m\to\pm\infty}a_m\vpi^m=0\right\}.
  \end{align*}

We will use the following result with the Tannakian description of $G$-bundles.
\begin{prop*}
Pullback yields a fully faithful functor
  \begin{align*}
    \{\mbox{vector bundles on }\Spec R^+\lb{z}\}\lhook\joinrel\longrightarrow\{\mbox{vector bundles on }\cY_S\}.
  \end{align*}
\end{prop*}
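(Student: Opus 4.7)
By Yoneda, full faithfulness of the pullback functor is equivalent to showing that for every vector bundle $\sE$ on $\Spec R^+\lb{z}$, the natural map $\Ga(\Spec R^+\lb{z}, \sE) \to \Ga(\cY_S, \sE|_{\cY_S})$ is an isomorphism: indeed, for a pair of vector bundles $\sE, \sF$ one has $\Hom(\sE, \sF) = \Ga(\Spec R^+\lb{z}, \sE^\vee \otimes \sF)$, and similarly after pulling back, so applying the above to $\sE^\vee \otimes \sF$ yields full faithfulness. Further, since every vector bundle $\sE$ corresponds to a finitely generated projective module $M$ that is a direct summand of a finite free one, and the formation of $\Ga$ commutes with finite direct sums, this reduces to the structure-sheaf case $\Ga(\cY_S, \sO_{\cY_S}) = R^+\lb{z}$.

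For this, cover $\cY_S$ by the two analytic opens $V_\vpi \coloneqq \cY_{S, [0, \infty)}$ and $V_z \coloneqq \cY_{S, (0, \infty]}$---the loci where $\vpi$ and $z$ are respectively invertible---with intersection $V_\vpi \cap V_z = \cY_{S, (0, \infty)}$. The sousperfectoid property from Proposition \ref{ss:Ysousperfectoid} ensures that $\sO_{\cY_S}$ is a genuine sheaf on the analytic site, yielding the \v{C}ech exact sequence
\begin{align*}
  0 \lra \Ga(\cY_S, \sO_{\cY_S}) \lra \Ga(V_\vpi, \sO) \oplus \Ga(V_z, \sO) \lra \Ga(V_\vpi \cap V_z, \sO).
\end{align*}

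Compute each \v{C}ech term by exhausting $V_\vpi$, $V_z$, and $V_\vpi \cap V_z$ by the affinoids $\cY_{S, [0, 1/r]}$, $\cY_{S, [1/r, \infty]}$, and $\cY_{S, [1/r, r]}$ respectively, applying the explicit descriptions $B_{S, [0, 1/r]} = R\ang{z/\vpi^r}$ from \ref{ss:vectorbundlesfullyfaithful} together with $B_{S, [1/r, \infty]} = R^+\ba{z, \vpi^r/z}[1/z]$ from Lemma \ref{ss:Batinfinity}. The upshot: elements of $\Ga(V_\vpi, \sO)$ admit power-series expansions $\sum_{n \geq 0} b_n z^n$ with $b_n \in R$, elements of $\Ga(V_z, \sO)$ admit Laurent expansions $\sum_{n \in \bZ} a_n z^n$ with $a_n \in R^+$, and elements of $\Ga(V_\vpi \cap V_z, \sO)$ admit Laurent expansions with $R$-coefficients. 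For a pair $(f, g) \in \Ga(V_\vpi, \sO) \oplus \Ga(V_z, \sO)$ to lie in the \v{C}ech kernel, the negative-degree coefficients of $g$ must vanish (to match the expansion of $f$), and the remaining coefficients, lying simultaneously in $R^+$ (from the $V_z$ side) and in $R$ (from the $V_\vpi$ side), must lie in $R^+$. This identifies the kernel as $R^+\lb{z}$, as desired.

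The main obstacle lies in this third paragraph---specifically, the explicit computation of the three \v{C}ech terms and the extraction of the integrality condition from the sections on $V_z$. The crucial input is that Lemma \ref{ss:Batinfinity} provides an $R^+$-coefficient description of $B_{S, [1/r, \infty]}$, which is precisely what forces elements of $\Ga(\cY_S, \sO)$ to have $R^+$-coefficients rather than merely $R$-coefficients. Once this identification is set up carefully, the \v{C}ech kernel computation is a routine convergence argument using the explicit formulas.
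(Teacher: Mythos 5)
Your proof is correct, but it is packaged differently from the paper's. The paper works directly with a map $f\colon M\to M'$ of finite projective $R^+\lb{z}$-modules: it covers $\cY_S$ by the two \emph{affinoids} $\cY_{S,[0,1]}$ and $\cY_{S,[1,\infty]}$, invokes \cite[Theorem 2.7.7]{KL15} (via sousperfectoidness) to identify the restrictions of the pullback with the base-changed module maps over $B_{S,[0,1]}$, $B_{S,[1,\infty]}$, $B_{S,[1,1]}$, and then uses flatness of $M$ together with the Milnor-type Cartesian square $R^+\lb{z}=B_{S,[0,1]}\cap B_{S,[1,\infty]}$ inside $B_{S,[1,1]}$ to recover $f$ from its pullback. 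You instead reduce full faithfulness to the single computation $\Ga(\cY_S,\sO_{\cY_S})=R^+\lb{z}$ (via internal Hom and the direct-summand-of-free trick) and then run a \v{C}ech argument over the non-affinoid cover $\{\vpi\neq 0\}\cup\{z\neq 0\}$, computing each term as an inverse limit of the rings $B_{S,[0,1/r]}=R\ang{z/\vpi^r}$ and $B_{S,[1/r,\infty]}=R^+\ba{z,\vpi^r/z}[\tfrac1z]$. The essential content is identical in both arguments --- the intersection of the ``$\vpi$-inverted'' and ``$z$-inverted'' section rings inside the doubly-inverted one is $R^+\lb{z}$, with Lemma \ref{ss:Batinfinity} supplying the $R^+$-integrality of coefficients on the $z$-inverted side --- and both need Proposition \ref{ss:Ysousperfectoid} for sheafiness. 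Your route avoids the flatness/Cartesian-square step and the reduction of arbitrary bundles to modules over the affinoid pieces, at the cost of having to describe sections over the two non-affinoid opens as inverse limits and to track coefficientwise convergence there; the paper's finite affinoid cover sidesteps that bookkeeping. One small point worth making explicit in your write-up: the restriction maps between the various $B_{S,\cI}$ are coefficient-preserving injections into a common ring of convergent Laurent series (this is what lets you compare expansions across the \v{C}ech terms), which is the same identification the paper uses when it asserts that $B_{S,[0,1]}$ and $B_{S,[1,\infty]}$ inject into $B_{S,[1,1]}$ with intersection $R^+\lb{z}$.
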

\begin{proof}
  Let $f:M\ra M'$ be a map of finite projective $R^+\lb{z}$-modules, and consider its pullback $g$ to $\cY_S$. Now Proposition \ref{ss:Ysousperfectoid} and \cite[Theorem 2.7.7]{KL15} indicate that $g|_{\cY_{S,[0,1]}}$, $g|_{\cY_{S,[1,\infty]}}$, and $g|_{\cY_{S,[1,1]}}$ correspond to maps of finite projective modules over $B_{S,[0,1]}$, $B_{S,[1,\infty]}$, and $B_{S,[1,1]}$, respectively, which are given by tensoring with $f$ over $R^+\lb{z}$. Lemma \ref{ss:Batinfinity} indicates that $B_{S,[1,\infty]}$ equals $R^+\ba{z,\textstyle\frac\vpi{z}}[\frac1z]$ as rings, so we see that $B_{S,[0,1]}$ and $B_{S,[1,\infty]}$ inject into $B_{S,[1,1]}$. Note that their intersection equals $R^+\lb{z}$. Therefore the flatness of $M$ yields a Cartesian square
  \begin{align*}
    \xymatrix{M\ar@{^{(}->}[r]\ar@{^{(}->}[d] & M\otimes_{R^+\lb{z}} B_{S,[0,1]}\ar@{^{(}->}[d] \\
    M\otimes_{R^+\lb{z}}B_{S,[1,\infty]}\ar@{^{(}->}[r] & M\otimes_{R^+\lb{z}} B_{S,[1,1]},}
  \end{align*}
  and the same holds for $M'$. In particular, we recover $f$ as the restriction of $g|_{\cY_{S,[0,1]}}$ (or of $g|_{\cY_{S,[1,\infty]}}$) to the intersection of $M\otimes_{R^+\lb{z}} B_{S,[0,1]}$ and $M\otimes_{R^+\lb{z}}B_{S,[1,\infty]}$ in $M\otimes_{R^+\lb{z}} B_{S,[1,1]}$.
\end{proof}

\subsection{}\label{ss:algebraizebundles}
We turn to the first main result of this section, which algebraizes $G$-bundles on $\cY_S$ when $S$ is a product of points as in \cite[Definition 1.2]{Gle20}.

Recall that $\Spa$ yields an anti-equivalence from the category of perfectoid Huber pairs over $\bF_q\lb{\ze_i}_{i\in I}$ to the category of affinoid perfectoid spaces over $\bD^I$. Let $S=\Spa(R,R^+)$ be an affinoid perfectoid space over $\bD^I$, and for all $i$ in $I$, write $\Ga_i$ for the graph of its $i$-th projection $S\ra\bD$, which is a closed effective Cartier divisor on $\cY_S$ \cite[Proposition VI.1.2 (i)]{FS21}.
\begin{thm*}
  Suppose that $S$ is a product of points as in \cite[Definition 1.2]{Gle20}, and let $1\leq j\leq k$ be an integer. Then pullback yields an equivalence of groupoids
  \begin{align*}
    \{G\mbox{-bundles on }\Spec R^+\lb{z}\}\lra^\sim\{G\mbox{-bundles on }\cY_S\},
  \end{align*}
where morphisms on the left-hand side are given by isomorphisms of their pullbacks to $\Spec R^+\lb{z}[\textstyle\frac1{z-\ze_i}]_{j\in I_j}$, and morphisms on the right-hand side are given by isomorphisms of their pullbacks to $\cY_S\ssm\sum_{i\in I_j}\Ga_i$ that are meromorphic along $\sum_{i\in I_j}\Ga_i$.
\end{thm*}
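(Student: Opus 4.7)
My plan is to reduce to the case of vector bundles via Tannakian formalism and then establish fully faithfulness and essential surjectivity separately. By Lemma \ref{ss:globalizinggroupschemes}, $G$ embeds as a closed subgroup scheme of $\ul{\Aut}(\sV)$ with quasi-affine quotient, so a $G$-bundle on any sousperfectoid adic space or on $\Spec R^+\lb{z}$ identifies with an exact tensor functor from $\Rep_{\cO_F}(G)$ to vector bundles, and the modification data along $V(z-\ze_i)_{i \in I_j}$ (resp.\ along $\sum_{i \in I_j}\Ga_i$) transports under this equivalence. It therefore suffices to prove the vector bundle version of the theorem.

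For fully faithfulness on vector bundles, Proposition \ref{ss:vectorbundlesfullyfaithful} already handles the case of strict (non-meromorphic) maps. To accommodate the modification morphisms, I would invert $z - \ze_i$ for $i \in I_j$ in the Cartesian square used in the proof of that proposition. A meromorphic isomorphism along $\sum_{i \in I_j}\Ga_i$ on the analytic side restricts to an isomorphism over each localized period ring $B_{S,\cI}[\frac{1}{z-\ze_i}]_{i \in I_j}$, and gluing via the (localized) Cartesian square recovers an isomorphism of modules over $R^+\lb{z}[\frac{1}{z-\ze_i}]_{i \in I_j}$, which is exactly a morphism in the left groupoid. The meromorphic condition ensures no extra denominators arise outside $\sum_{i \in I_j}\Ga_i$.

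For essential surjectivity on vector bundles, given a vector bundle $\sE$ on $\cY_S$, we must extend $\sE$ uniquely across the codimension-two locus $V(\vpi, z) \subset \Spec R^+\lb{z}$. This is where the product-of-points hypothesis on $S$ is essential. I would follow an equicharacteristic analog of Ansch\"utz's theorem \cite{An22}, which in mixed characteristic extends $G$-bundles on $\Spec A_{\inf}\setminus V(p, [\vpi^\flat])$ across to $\Spec A_{\inf}$ under the same hypothesis on $S$. In our setting, $A_{\inf}$ is replaced by $R^+\lb{z}$ and $(p, [\vpi^\flat])$ by $(\vpi, z)$. The product-of-points structure allows one to reduce the extension problem to the pointwise situation over each rank-one perfectoid point of $S$ (where the local ring at $(\vpi, z)$ is regular and an algebraic Hartogs argument applies) and then glue.

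The main obstacle is precisely this essential surjectivity: since $R^+\lb{z}$ is generally not regular, the codimension-two extension across $V(\vpi, z)$ is not automatic, and one must carefully control depth at the closed locus and leverage the product-of-points structure to reduce to a tractable pointwise situation. The equicharacteristic setting should be technically simpler than Ansch\"utz's original by avoiding Witt vector arithmetic, but the core structural ideas remain the same.
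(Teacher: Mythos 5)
Your overall architecture (Tannakian reduction to vector bundles, full faithfulness via Proposition \ref{ss:vectorbundlesfullyfaithful}, essential surjectivity via an extension across $V(\vpi,z)$ that uses the product-of-points hypothesis and Ansch\"utz's work) matches the paper, and your treatment of full faithfulness is workable: the paper handles the meromorphy more cleanly by rewriting a meromorphic isomorphism as a family of honest morphisms $\sG(V)\ra\sG'(V)\otimes\sO(\sum_{i\in I_j}\Ga_i)^{\otimes n(V)}$ of vector bundles on all of $\cY_S$, so that Proposition \ref{ss:vectorbundlesfullyfaithful} applies verbatim, rather than localizing the Cartesian square, but either route can be made to work.

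The genuine gap is in essential surjectivity. You pass directly from ``a vector bundle $\sE$ on $\cY_S$'' to ``extend $\sE$ across the codimension-two locus $V(\vpi,z)\subset\Spec R^+\lb{z}$,'' but $\sE$ is an analytic object on the adic space $\cY_S$, not an algebraic vector bundle on the punctured scheme $\Spec R^+\lb{z}\ssm V(\vpi,z)$, and the passage between the two is the bulk of the argument. The paper first applies \cite[Theorem 2.7.7]{KL15} on the sousperfectoid pieces $\cY_{S,[0,1]}$ and $\cY_{S,[1,\infty]}$ to get algebraic bundles $N_0$, $N_\infty$ over $B_{S,[0,1]}$ and $B_{S,[1,\infty]}=R^+\ba{z,\frac\vpi z}[\frac1z]$; it then glues $N_0[\frac1z]$ and $N_\infty$ over a two-element rational cover of $\Spa(A'(R^+)[\frac1z],A'(R^+))$ to get a bundle on $\Spec R^+\lp{z}$, applies Beauville--Laszlo along $V(z)$ in $\Spec R^+\lb{z}[\frac1\vpi]$, and only then has an algebraic bundle $\mathring N$ on the punctured spectrum to which \cite[Proposition 11.5]{An22} can be applied (directly, not as an ``equicharacteristic analog'' to be developed). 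One must also then verify that the pullback of the resulting $N$ back to $\cY_S$ recovers $\sE$, which again uses the uniqueness in Beauville--Laszlo and \cite[Theorem 2.7.7]{KL15}. Without this algebraization-and-gluing step your argument does not get off the ground, so as written the essential surjectivity half is incomplete.
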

\begin{proof}
  First, we tackle full faithfulness. Write $\sO(\sum_{i\in I_j}\Ga_i)$ for the line bundle on $\cY_S$ associated with the closed effective Cartier divisor $\sum_{i\in I_j}\Ga_i$, and let $\sG$ and $\sG'$ be $G$-bundles on $\cY_S$. The Tannakian description of $G$-bundles implies that an isomorphism $\sG|_{\cY_S\ssm\sum_{i\in I_j}\Ga_i}\ra^\sim\sG'|_{\cY_S\ssm\sum_{i\in I_j}\Ga_i}$ that is meromorphic along $\sum_{i\in I_j}\Ga_i$ corresponds to a family of morphisms of vector bundles over $\cY_S$
  \begin{align*}
    \sG(V)\ra\sG'(V)\otimes\sO(\textstyle\sum_{i\in I_j}\Ga_i)^{\otimes n(V)}
  \end{align*} 
 that is functorial in $V$, compatible with tensor products, and compatible with duals, where $V$ runs over objects of $\Rep_{\cO_F}(G)$ and $n(V)$ is a large enough integer. Hence full faithfulness follows immediately from Proposition \ref{ss:vectorbundlesfullyfaithful}.

  As for essential surjectivity, let $\sG$ be a $G$-bundle on $\cY_S$. By \cite[Theorem 2.7.7]{KL15}, $\sG|_{\cY_{S,[0,1]}}$ and $\sG|_{\cY_{S,[1,\infty]}}$ correspond to $G$-bundles $N_0$ and $N_\infty$ on $\Spec B_{S,[0,1]}$ and $\Spec B_{S,[1,\infty]}$, respectively. Note that the $z$-adic completion of $R^+\lb{z}[\textstyle\frac{z}\vpi]$ equals $R^+\ang{\textstyle\frac{z}\vpi}$ as rings, so the global sections of the rational open subspace $\{|z|\leq|\vpi|\neq0\}$ of $\Spa(A'(R^+)[\textstyle\frac1z],A'(R^+))$ equals $R\ang{\textstyle\frac{z}\vpi}[\frac1z]=B_{S,[0,1]}[\frac1z]$ as rings. We have seen in the proof of Proposition \ref{ss:Ysousperfectoid} that the global sections of the rational open subspace $\{|\vpi|\leq|z|\neq0\}$ of $\Spa(A'(R^+)[\textstyle\frac1z],A'(R^+))$ equals $B_{S,[1,\infty]}$. Because these two rational open subspaces cover $\Spa(A'(R^+)[\textstyle\frac1z],A'(R^+))$, Lemma \ref{ss:alternativeAinf} and \cite[Theorem 2.7.7]{KL15} enable us to glue $N_0[\textstyle\frac1z]$\footnote{By abuse of notation, we apply notation for pullbacks of vector bundles to $G$-bundles.} and $N_\infty$ into a $G$-bundle $N_{\frac1z}$ on $\Spec A'(R^+)[\textstyle\frac1z]=\Spec R^+\lp{z}$.

  Note that the $z$-adic completion of $R^+\lb{z}[\textstyle\frac1\vpi]$ equals $R\lb{z}$. Since
  \begin{align*}
    N_{\frac1z}\otimes_{R^+\lp{z}}B_{S,[0,1]}[\textstyle\frac1z] = N_0[\textstyle\frac1z],
  \end{align*}
  we see that $N_{\frac1z}[\textstyle\frac1\vpi]\otimes_{R^+\lp{z}[\frac1\vpi]}R\lp{z} = N_0\otimes_{B_{S,[0,1]}}R\lp{z}$. Therefore we can apply Beauville--Laszlo to the vanishing locus of $z$ in $\Spec R^+\lb{z}[\textstyle\frac1\vpi]$ to glue $N_{\frac1z}[\textstyle\frac1\vpi]$ and $N_0\otimes_{B_{S,[0,1]}}R\lb{z}$ into a $G$-bundle $N_{\frac1\vpi}$ on $\Spec R^+\lb{z}[\textstyle\frac1\vpi]$. As $N_{\frac1\vpi}[\textstyle\frac1z] = N_{\frac1z}[\frac1\vpi]$, we can glue $N_{\frac1\vpi}$ and $N_{\frac1z}$ into a $G$-bundle $\mathring{N}$ on the complement of the vanishing locus of $\vpi$ and $z$ in $\Spec R^+\lb{z}$. Finally, because $S$ is a product of points, \cite[Proposition 11.5]{An22} uniquely extends $\mathring{N}$ to a $G$-bundle $N$ on $\Spec R^+\lb{z}$.

Let us verify that the pullback of $N$ to $\cY_S$ equals $\sG$. Because $N[\textstyle\frac1z]=\mathring{N}[\frac1z]=N_{\frac1z}$, we see that $N\otimes_{R^+\lb{z}}B_{S,[1,\infty]}=N_\infty$. Thus we just need to show $N\otimes_{R^+\lb{z}}B_{S,[0,1]} = N_0$. We have $N[\textstyle\frac1\vpi]=\mathring{N}[\frac1\vpi]=N_{\frac1\vpi}$, so
  \begin{align*}
    N\otimes_{R^+\lb{z}}B_{S,[0,1]}[\textstyle\frac1z] = N_{\frac1z}\otimes_{R^+\lp{z}}B_{S,[0,1]}[\frac1z] = N_0[\frac1z].
  \end{align*}
  Note that the $z$-adic completion of $B_{S,[0,1]}=R\ang{\frac{z}\vpi}$ equals $R\lb{z}$, and
  \begin{align*}
    N\otimes_{R^+\lb{z}}R\lb{z} = N_{\frac1\vpi}\otimes_{R^+\lb{z}[\frac1\vpi]}R\lb{z} = N_0\otimes_{B_{S,[0,1]}}R\lb{z}.
  \end{align*}
Hence the desired result follows from applying the uniqueness of Beauville--Laszlo gluing to the vanishing locus of $z$ in $\Spec B_{S,[0,1]}$.
\end{proof}

\subsection{}\label{ss:ArtinSchreierWitt}
We have the following version of non-abelian Artin--Schreier--Witt theory for $\cO_F$. Recall the terminology of $\tau$-modules as in \cite[Definition 12.3.3]{SW20}, and let $n$ be a positive integer. For any $\ul{\cO_F/z^n}$-local system $\bL$ on $S$, write $M(\bL)$ for the $\tau$-module over $\Spec R\lb{z}/z^n$ given by $\bL\otimes_{\ul{\cO_F/z^n}}(\sO_{\Spec R\lb{z}/z^n},\id)$. Conversely, for any $\tau$-module $(M,\phi)$ over $\Spec R\lb{z}/z^n$, write $\bL(M,\phi)$ for the $\ul{\sO_F/z^n}$-sheaf over $\Spec{R}$ given by $\ul{\Hom}_{\tau\text{-mod}}((\sO_{\Spec R\lb{z}/z^n},\id),(M,\phi))$.
\begin{prop*}
  Our $M(-)$ yields an exact tensor equivalence of categories
  \begin{align*}
    \{\ul{\cO_F/z^n}\mbox{-local systems on }S\}\lra^\sim\{\tau\mbox{-modules over }\Spec R\lb{z}/z^n\}.
  \end{align*}
  Consequently, $\bL\mapsto\bL\otimes_{\ul{\cO_F}}(\sO_{\Spec R\lb{z}},\id)$ is an exact tensor equivalence of categories
  \begin{align*}
    \{\ul{\cO_F}\mbox{-local systems on }S\}\lra^\sim\{\tau\mbox{-modules over }\Spec R\lb{z}\}.
  \end{align*}
\end{prop*}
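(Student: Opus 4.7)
The plan is to construct a quasi-inverse $\bL$ for each $n$, verify it lands in $\ul{\cO_F/z^n}$-local systems, and then obtain the $\ul{\cO_F}$-statement by passing to the inverse limit over $n$. First I would check that $M(-)$ is well-defined, tensor-compatible, and exact; this is essentially formal, since $\bL$ is locally free as an $\ul{\cO_F/z^n}$-module. Next, I define $\bL(M,\phi) \coloneqq \ul{\Hom}_{\tau\text{-mod}}((\sO_{\Spec R\lb{z}/z^n},\id),(M,\phi))$, so that adjunction produces natural unit and counit transformations $\bL\to\bL(M(\bL))$ and $M(\bL(M,\phi))\to(M,\phi)$. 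Since the pro-\'etale site of $S$ has a basis of affinoid perfectoid covers, all remaining claims (that $\bL(M,\phi)$ is in fact an $\ul{\cO_F/z^n}$-local system, and that the unit and counit are isomorphisms) may be checked pro-\'etale locally.

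The heart of the argument is to show that any $\tau$-module $(M,\phi)$ of rank $d$ over $\Spec R\lb{z}/z^n$ becomes isomorphic to $(\wt{R}\lb{z}/z^n,\id)^{\oplus d}$ after some pro-\'etale cover $\Spa(\wt{R},\wt{R}^+)\to S$. I would prove this by induction on $n$. For $n=1$, the datum $(M,\phi)$ is a rank $d$ finite projective $R$-module with a $\tau$-semilinear isomorphism, and since $R$ is perfect, classical non-abelian Artin--Schreier theory (solving $\tau(X)=AX$ for the matrix $A$ of $\phi$ in some trivialization) trivializes $(M,\phi)$ after a finite \'etale cover of $\Spec R$, which furnishes a pro-\'etale cover of $S$. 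For the inductive step, reducing modulo $z^{n-1}$ and applying the induction hypothesis yields a pro-\'etale cover on which $(M\bmod z^{n-1}, \phi\bmod z^{n-1})$ is trivial; lifting a trivializing basis to $M$, the matrix of $\phi$ has the form $1+z^{n-1}B$ for some matrix $B$, and pro-\'etale locally solving the Artin--Schreier system $\tau(X)-X=-B$ completes the trivialization.

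The main obstacle is verifying that these Artin--Schreier-type covers are genuinely pro-\'etale rather than merely fpqc; this is where perfectness of $R$ is crucial, ensuring that the relevant equations cut out finite \'etale covers of $\Spec R$ whose pullback to $S$ is pro-\'etale. Once pro-\'etale local trivializability is in hand, the unit and counit become isomorphisms (by inspection over the trivial $\tau$-module) and $\bL(M,\phi)$ is visibly a rank $d$ $\ul{\cO_F/z^n}$-local system, with tensor-compatibility and exactness of $\bL$ then automatic.

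For the $\ul{\cO_F}$-statement, I pass to the inverse limit over $n$. On the left, $\ul{\cO_F}$-local systems on $S$ are compatible systems of $\ul{\cO_F/z^n}$-local systems because $\cO_F=\varprojlim_n \cO_F/z^n$. On the right, $z$-adic completeness identifies finite projective $R\lb{z}$-modules with compatible systems of finite projective $R\lb{z}/z^n$-modules, and Frobenius structures glue likewise; so $\tau$-modules over $\Spec R\lb{z}$ correspond to compatible systems of $\tau$-modules over $\Spec R\lb{z}/z^n$. Since the per-$n$ equivalences are compatible with reduction modulo $z^{n-1}$, taking the inverse limit yields the desired exact tensor equivalence.
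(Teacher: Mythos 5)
Your proposal is correct and follows essentially the same route as the paper: reduce via the adjunction between $M(-)$ and $\bL(-)$ to trivializing $\tau$-modules over $\Spec R\lb{z}/z^n$ after an \'etale cover, argue by induction on $n$ with the perfect base case ($n=1$, which the paper cites from Kedlaya--Liu) and an Artin--Schreier system $\tau(U)-U=\frac1{z^{n-1}}(A-1)$ in the inductive step, then pass to the inverse limit for the $\ul{\cO_F}$-statement. The only cosmetic difference is that the paper works with finite \'etale covers throughout (the Artin--Schreier equations are finite \'etale by the Jacobian criterion, independently of perfectness of $R$, which is only needed in the base case), whereas you phrase everything pro-\'etale locally.
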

\begin{proof}
  Note that $M(-)$ is left adjoint to $\bL(-)$, and the unit $\id\ra\bL(M(-))$ is an isomorphism. So we just need to prove that $M(-)$ is essentially surjective. Because $\ul{\cO_F/z^n}$-local systems are trivial after a finite \'etale cover, it suffices to prove that the same holds for $\tau$-modules over $\Spec R\lb{z}/z^n$.

  So let $(M,\phi)$ be a $\tau$-module over $\Spec R\lb{z}/z^n$ such that $M$ has rank $h$. When $n=1$, the desired result is \cite[Lemma 3.2.7]{KL15}. For $n\geq2$, by induction there exists a finite \'etale cover $\Spec R'\ra \Spec R$ such that the pullback of $(M, \phi)$ to $\Spec R'\lb{z}/z^{n-1}$ has a basis fixed by $\phi_{R'\lb{z}/z^{n-1}}$. Nakayama's lemma shows that any lift of this basis to $R'\lb{z}/z^n$ yields a basis of $M\otimes_RR'$. In these coordinates, we see that $\phi^{-1}_{R'\lb{z}/z^n}$ acts by $A\circ\tau$, where $A$ in $\GL_h(R'\lb{z}/z^n)$ satisfies $A\equiv 1\pmod{z^{n-1}}$.

  Write $\Spec\wt{R}$ for the vanishing locus in $\Spec R'[u_{ab}]_{1\leq a,b\leq h}$ of the matrix 
  \begin{align*}
   \tau(U)-U-\textstyle\frac1{z^{n-1}}(A-1),
  \end{align*}
where $U$ denotes the matrix with entries $u_{ab}$. Examining entrywise shows that $\wt{R}$ is finite over $R'$, the Jacobian criterion shows that $\wt{R}$ is \'etale over $R'$, and checking on fibers shows that $\Spec\wt{R}\ra\Spec R'$ is surjective. Finally, on $\wt{R}\lb{z}/z^n$ we have
  \begin{align*}
(1+z^{n-1}U)A\tau(1+z^{n-1}U)^{-1} = (1+z^{n-1}U)(1+A-1)(1-z^{n-1}U-(A+1)) = 1,
  \end{align*}
  so the basis of $M\otimes_R\wt{R}$ given by $1+z^{n-1}U$ is fixed by $\phi^{-1}_{\wt{R}}$. Therefore the pullback of $(M,\phi)$ to $\Spec\wt{R}\lb{z}/z^n$ is trivial, as desired.
\end{proof}

\subsection{}\label{ss:OFlocalsystemstaumodules}
We can upgrade Proposition \ref{ss:ArtinSchreierWitt} to $G$-bundles as follows. Briefly, let $X$ be a scheme or a sousperfectoid adic space over $\cO_F$, and let $\tau:X\ra X$ be an endomorphism over $\cO_F$. By a \emph{$\tau$-$G$-bundle} over $X$, we mean a $G$-bundle $\sG$ on $X$ along with an isomorphism of $G$-bundles $\phi:\sG\ra^\sim\prescript\tau{}\sG$.

Let $n$ be a positive integer or $\infty$, and define $z^\infty$ to be $0$. For any $\ul{G(\cO_F/z^n)}$-bundle $\bP$ on $S$, by abuse of notation write $M(\bP)$ for the $\tau$-$G$-bundle over $\Spec R\lb{z}/z^n$ given by $\bP\times^{\ul{G(\cO_F/z^n)}}(G,\id)$.
\begin{prop*}
  Our $M(-)$ yields an equivalence of groupoids
  \begin{align*}
    \{\ul{G(\cO_F/z^n)}\mbox{-bundles on }S\}\lra^\sim\{\tau\mbox{-}G\mbox{-bundles over }\Spec R\lb{z}/z^n\}.
  \end{align*}
\end{prop*}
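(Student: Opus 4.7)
The plan is to upgrade the vector-bundle equivalence of Proposition \ref{ss:ArtinSchreierWitt} to a $G$-bundle equivalence via Tannakian formalism. In all three categories of interest ($\ul{G(\cO_F/z^n)}$-bundles on $S$, $G$-bundles on $\Spec R\lb{z}/z^n$, and their $\tau$-equivariant version), such a bundle is encoded by an exact $\cO_F/z^n$-linear symmetric monoidal functor from $\Rep_{\cO_F/z^n}(G)$ to the appropriate target category (of $\ul{\cO_F/z^n}$-local systems on $S$, of vector bundles on $\Spec R\lb{z}/z^n$, or of $\tau$-modules over $\Spec R\lb{z}/z^n$, respectively), normalized so that the trivial representation maps to the unit object. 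For a parahoric $G$ this is not entirely formal; I would invoke the closed embedding $\io: G \hra \SL_h$ of Lemma \ref{ss:globalizinggroupschemes} together with the quasi-affineness of $\SL_h/G$ to realize a $G$-bundle as an $\SL_h$-bundle equipped with a section of an associated quasi-affine bundle, in the spirit of \cite[Proposition 1.2.6]{Zhu17}.

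Given this Tannakian framework, I would apply Proposition \ref{ss:ArtinSchreierWitt} representation by representation. For a $\ul{G(\cO_F/z^n)}$-bundle $\bP$, the associated fiber functor is $V \mapsto \bP \times^{\ul{G(\cO_F/z^n)}} V$; postcomposing with the exact tensor equivalence supplied by Proposition \ref{ss:ArtinSchreierWitt} (the $n = \infty$ case being covered by the ``Consequently'' clause) yields an exact tensor functor into $\tau$-modules, which under Tannakian reconstruction corresponds to a $\tau$-$G$-bundle over $\Spec R\lb{z}/z^n$. Unwinding shows that this is exactly $M(\bP) = \bP \times^{\ul{G(\cO_F/z^n)}} (G, \id)$, with $\tau$-structure coming from the equicharacteristic triviality of $\tau$ on $\cO_F/z^n$. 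Both full faithfulness and essential surjectivity of $M$ then descend formally from the vector-bundle case, since the tensor equivalence of Proposition \ref{ss:ArtinSchreierWitt} preserves the normalization and is compatible with duals.

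The main obstacle will be the Tannakian reconstruction step for the smooth but not necessarily reductive parahoric $G$, particularly in the relatively unusual target categories of $\ul{\cO_F/z^n}$-local systems and $\tau$-modules over $\Spec R\lb{z}/z^n$. My approach is to reduce via $\io$: on $\Rep_{\cO_F/z^n}(\SL_h)$ the fiber functor produces an $\SL_h$-bundle or $\ul{\SL_h(\cO_F/z^n)}$-local system by classical Tannakian duality, and the $G$-refinement is recorded by a section of the $(\SL_h/G)$-bundle, which makes sense because $\SL_h/G$ is quasi-affine and therefore represents an actual sheaf that can be evaluated on the relevant base. Once this foundational point is settled, the remainder of the proof is a direct Tannakian translation of Proposition \ref{ss:ArtinSchreierWitt}.
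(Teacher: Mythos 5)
Your overall architecture matches the paper's: both routes reduce to Proposition \ref{ss:ArtinSchreierWitt} by passing through exact tensor functors out of $\Rep_{\cO_F}(G)$, and the equivalence between tensor functors valued in $\ul{\cO_F/z^n}$-local systems and those valued in $\tau$-modules over $\Spec R\lb{z}/z^n$ is indeed formal from that proposition. The gap is in the step you defer to ``Tannakian reconstruction'': the assertion that the groupoid of $\ul{G(\cO_F/z^n)}$-bundles on $S$ is equivalent to the groupoid of exact tensor functors $\Rep_{\cO_F}(G)\ra\{\ul{\cO_F/z^n}\mbox{-local systems on }S\}$. Essential surjectivity here amounts to showing that every such tensor functor is pro-\'etale-locally isomorphic to the canonical fiber functor, and this does not ``descend formally from the vector-bundle case,'' nor does quasi-affineness of $\SL_h/G$ alone supply it. Even after you trivialize the underlying $\ul{\SL_h(\cO_F/z^n)}$-local system locally, the $G$-refinement is a section of the associated $\SL_h/G$-space, and you still must lift that section to $\SL_h$ locally --- which is exactly the local-triviality problem for $G$-torsors that you started with.

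The paper supplies the missing argument directly, without using $\io$ at all: pass to a strictly totally disconnected pro-\'etale cover $\wt{S}$, where $\ul{\cO_F/z^n}$-local systems become finite projective $\Cont(\pi_0(\wt{S}),\cO_F/z^n)$-modules, so the tensor functor algebraizes to a genuine $G$-bundle on an affine scheme; then use that $\varinjlim_U\Cont(U,\cO_F/z^n)$ is Henselian along evaluation at each $s$ in $\pi_0(\wt{S})$ (\cite[Lemma 2.2.3]{KL15}), Lang's lemma to trivialize the fiber over $\Spec\cO_F/z^n$, and smoothness of $G$ to Hensel-lift that trivialization to a neighborhood of $s$. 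If you pursue the $\io$-based route you will end up needing the same three ingredients (algebraization over a totally disconnected cover, Henselian-ness of germs, Lang's lemma) just to lift the section of $\SL_h/G$; as written, your proposal never invokes any of them, so the local triviality on which the whole proposition rests is not established.
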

\begin{proof}
The assignment $\bP\mapsto(V\mapsto\bP\times^{\ul{G(\cO_F/z^n)}}\ul{V(\cO_F/z^n)})$ yields a functor
  \begin{align*}
    \{\ul{G(\cO_F/z^n)}\mbox{-bundles on }S\}\lra\left\{\begin{tabular}{c}
    $\cO_F$-linear exact tensor functors \\
   $\Rep_{\cO_F}(G)\ra$\{$\ul{\cO_F/z^n}$-local systems on $S$\}
  \end{tabular}\right\}.
  \end{align*}
  By Proposition \ref{ss:ArtinSchreierWitt} and the Tannakian description of $G$-bundles, the right-hand side is equivalent to the groupoid of $\tau$-$G$-bundles over $\Spec R\lb{z}/z^n$. Now we just need to prove that the above functor is an equivalence of groupoids. Because $\ul{G(\cO_F/z^n)}$-bundles are trivial after a pro-\'etale cover, it suffices to prove that the same holds for objects of the right-hand side.

  So let $\rho:\Rep_{\cO_F}(G)\ra\{\ul{\cO_F/z^n}\mbox{-local systems on }S\}$ be an $\cO_F$-linear exact tensor functor, and let $\wt{S}\ra S$ be a pro-\'etale cover such that $\wt{S}$ is strictly totally disconnected. Then $\ul{\cO_F/z^n}$-local systems on $\wt{S}$ are equivalent to finite projective $\Cont(|\wt{S}|,\cO_F/z^n)$-modules, so $\rho|_{\wt{S}}$ corresponds to a $G$-bundle $\wt\sG$ on
  \begin{align*}
    \Spec\Cont(|\wt{S}|,\cO_F/z^n).
  \end{align*}
  Note that $\Cont(|\wt{S}|,\cO_F/z^n)=\Cont(\pi_0(\wt{S}),\cO_F/z^n)$. For any $s$ in $\pi_0(\wt{S})$, \cite[Lemma 2.2.3]{KL15} indicates that $\varinjlim_U\Cont(U,\cO_F/z^n)$ is Henselian with respect to the kernel of evaluation at $s$, where $U$ runs over neighborhoods of $s$ in $\pi_0(\wt{S})$. Lang's lemma shows that the pullback of $\wt\sG$ to $\Spec\Cont(s,\cO_F/z^n)=\Spec\cO_F/z^n$ is trivial, so Hensel lifting implies that the pullback of $\wt\sG$ to $\Spec\Cont(U,\cO_F/z^n)$ is trivial for some $U$. Therefore $\rho|_{\wt{U}}$ is isomorphic to the canonical fiber functor, where $\wt{U}$ denotes the preimage of $U$ in $|\wt{S}|$. As $s$ varies, this yields a pro-\'etale cover of $S$ where $\rho$ is trivial, as desired.
\end{proof}

\subsection{}\label{ss:relativeintegralRobbaring}
Let us recall the equicharacteristic version of the \emph{(relative integral) Robba ring}. Write $\norm-$ for the spectral norm on $R$, normalized such that $\norm\vpi=\textstyle\frac1q$. For any positive rational $b$, we have a map $\norm-_b:R\lb{z}\ra[0,\infty]$ given by
\begin{align*}
  \textstyle\sum_{m=0}^\infty a_mz^m\mapsto\sup_{m\geq0}\{q^{-m}\norm{a_m}^b\}.
\end{align*}
Evidently $\norm{\tau(-)}_b=\norm-_{qb}$. When $1/b$ lies in $\bZ[\textstyle\frac1p]$, \ref{ss:vectorbundlesfullyfaithful} shows that the restriction of $\norm-_b$ to $B_{S,[0,b]}\subseteq R\lb{z}$ is a norm and induces the usual topology on $B_{S,[0,b]}$. Moreover, $\textstyle\sum_{m=0}^\infty a_mz^m$ lies in $B_{S,[0,b]}$ if and only if $\norm{a_mz^m}_b\to0$.

Write $\wt\cR^{\mathrm{int}}_R$ for $\varinjlim_bB_{S,[0,b]}$, where $b$ runs over positive rationals. Note that any multiple $f$ of $z$ in $\wt\cR^{\mathrm{int}}_R$ satisfies $\norm{f}_b<1$ for small enough $b$, so the completeness of $B_{S,[0,b]}$ implies that $z$ lies in the Jacobson radical of $\wt\cR^{\mathrm{int}}_R$.

\subsection{}\label{ss:proetalelocalphibasis}
Just like $\ul{\cO_F}$-local systems, we show that $\tau$-modules over the Robba ring are trivial after a pro-finite \'etale cover.
\begin{lem*}
Let $(\wt{M},\wt\phi)$ be a $\tau$-module over $\Spec\wt\cR^{\mathrm{int}}_R$ such that $\wt{M}$ is free of rank $h$. Then there exists a pro-finite \'etale cover $\Spa(\wt{R},\wt{R}^+)\ra S$ such that the pullback of $(\wt{M},\wt\phi)$ to $\Spec\wt\cR^{\mathrm{int}}_{\wt{R}}$ is trivial.
\end{lem*}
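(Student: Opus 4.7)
The plan is to adapt the strategy of Proposition \ref{ss:ArtinSchreierWitt} to the Robba-ring setting. Fix a basis of $\wt M$, identifying $\wt\phi^{-1}$ with $A\circ\tau$ for some $A\in\GL_h(\wt\cR^{\mathrm{int}}_R)$. Since $z$ lies in the Jacobson radical of $\wt\cR^{\mathrm{int}}_R$ (see \ref{ss:relativeintegralRobbaring}), the reduction $\bar A\in\GL_h(R)$ is invertible and defines a $\tau$-module over $\Spec R$. The $n=1$ case of Proposition \ref{ss:ArtinSchreierWitt} (i.e., \cite[Lemma 3.2.7]{KL15}) yields a pro-finite \'etale cover $\Spa(R_0,R_0^+)\ra S$ over which this mod-$z$ $\tau$-module trivializes; after base change to $R_0$ and a change of basis by a lift of the trivialization, we may assume $A\equiv 1\pmod z$.

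Proceeding by induction on $n\geq 0$ as in the proof of Proposition \ref{ss:ArtinSchreierWitt}, I would construct a tower of finite \'etale covers $\Spa(R_n,R_n^+)\ra\Spa(R_{n-1},R_{n-1}^+)$ together with compatible $U_n\in\GL_h(R_n\lb{z})$ satisfying $U_n^{-1}A\tau(U_n)\equiv 1\pmod{z^{n+1}}$: writing $U_{n-1}^{-1}A\tau(U_{n-1})\equiv 1+z^nB\pmod{z^{n+1}}$, one sets $U_n\coloneqq U_{n-1}(1+z^nW_n)$ where $W_n\in M_h(R_n)$ solves the Artin--Schreier equation $\tau(W_n)-W_n\equiv-\bar B\pmod z$, whose finite \'etale solution is obtained as in the proof of Proposition \ref{ss:ArtinSchreierWitt}. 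Setting $\Spa(\wt R,\wt R^+)\coloneqq\varprojlim_n\Spa(R_n,R_n^+)$ gives the desired pro-finite \'etale cover of $S$, and the $U_n$ assemble into a formal trivialization $U\in\GL_h(\wt R\lb{z})$ satisfying $A\tau(U)=U$.

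The key remaining step, which has no counterpart in Proposition \ref{ss:ArtinSchreierWitt}, is to show that $U$ in fact lies in $\GL_h(\wt\cR^{\mathrm{int}}_{\wt R})$ rather than merely in $\GL_h(\wt R\lb{z})$. Iterating the fixed-point equation $U=A\tau(U)$ yields $U=\bigl(\prod_{j=0}^{k-1}\tau^j(A)\bigr)\cdot\tau^k(U)$, which suggests the formal infinite product $U=\prod_{j\geq 0}\tau^j(A)$. The plan is to verify convergence of this product in $B_{\wt S,[0,b]}$ for $b>0$ sufficiently small, using the identity $\norm{\tau(f)}_b=\norm{f}_{qb}$ from \ref{ss:relativeintegralRobbaring} together with the divisibility $A-1\in z\cdot M_h(\wt\cR^{\mathrm{int}}_{\wt R})$, in the spirit of the Banach-contraction estimates of \cite[Chapter 3]{KL15}. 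This analytic bootstrap---upgrading a formal power-series trivialization to one defined over an honest annulus---is the technical heart of the argument and the main anticipated obstacle.
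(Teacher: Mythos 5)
Your first two steps match the paper's: the paper likewise normalizes $A\equiv 1\pmod z$ by trivializing over $\Spec R$ via Proposition \ref{ss:ArtinSchreierWitt}, and then simply cites Proposition \ref{ss:ArtinSchreierWitt} again (rather than re-deriving the Artin--Schreier tower by hand) to produce the pro-finite \'etale cover and a formal trivialization $U\in\GL_h(\wt{R}\lb{z})$ with $U\equiv 1\pmod z$. The gap is exactly where you anticipated it, and the mechanism you propose for closing it fails. The infinite product $\prod_{j\geq0}\tau^j(A)$ does not converge in any $B_{S,[0,b]}$ --- indeed it does not converge even $z$-adically in $\wt{R}\lb{z}$: writing $A=1+A_1z+\dotsb$, the degree-one coefficient of the partial products is $\sum_{j<k}\tau^j(A_1)=\sum_{j<k}A_1^{q^j}$ (entrywise), which diverges unless $A_1$ is topologically nilpotent. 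The underlying point is that $\tau$ is not a contraction in this setting: by \ref{ss:relativeintegralRobbaring} one has $\norm{\tau(f)}_b=\norm{f}_{qb}$, so $\tau$ \emph{expands} the norm of anything with coefficients of norm $>1$ and sends $B_{S,[0,b]}$ into $B_{S,[0,b/q]}$, shrinking the radius; no Banach fixed-point argument applies to $U=A\tau(U)$.

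What actually closes the argument in the paper is the Artin--Schreier structure of the coefficient recursion, not a product formula. Writing $U_n$ for the mod-$z^n$ truncation and $X_n$ for the $z^n$-coefficient of $U$, the relation $U\equiv A\tau(U)$ gives
\begin{align*}
X_n-\tau(X_n)\equiv \textstyle\frac1{z^n}\big[(A-1)\tau(U_n)+(\tau(U_n)-1)-(U_n-1)\big]\pmod{z},
\end{align*}
whose right-hand side has $\norm{\cdot}_b$ at most $q^nC$, where $C\coloneqq\max\{q^{-1},\norm{A-1}_b\}<1$ for small enough $b$. The Newton-polygon bound for $x-x^q=y$ (checked at rank-$1$ points) gives $\norm{X_n}_b\leq\max\{1,(q^nC)^{1/q}\}$; the exponent $1/q$ is the whole point, since it converts the loss of $q^n$ from dividing by $z^n$ into $q^{n/q}$, whence $\norm{z^nX_n}_{qb}=q^{-n}\norm{X_n}_b^{\,q}\leq C$. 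This uniform bound, proved by induction simultaneously with $\norm{U_n-1}_b,\norm{U_n-1}_{qb}\leq C$, shows $U$ has coefficients in $B_{S,[0,b']}$ for $b'<qb$ and $\norm{U-1}_{b'}<1$ after further shrinking $b'$, hence $U\in\GL_h(\wt\cR^{\mathrm{int}}_{\wt{R}})$. You would need to replace your convergence argument with an estimate of this shape.
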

\begin{proof}
Proposition \ref{ss:ArtinSchreierWitt} enables us to assume that the pullback of $(\wt{M},\wt\phi)$ to $\Spec R$ has a basis fixed by $\wt\phi_R$. Now \ref{ss:relativeintegralRobbaring} and Nakayama's lemma show that any lift of this basis yields a basis of $\wt{M}$, and in these coordinates, we see that $\wt\phi^{-1}$ acts by $A\circ\tau$, where $A$ in $\GL_h(\wt\cR^{\mathrm{int}}_R)$ satisfies $A\equiv 1\pmod{z}$. Proposition \ref{ss:ArtinSchreierWitt} yields a pro-finite \'etale cover $\Spa(\wt{R},\wt{R}^+)\ra\Spa(R,R^+)$ such that the pullback of $(\wt{M},\wt\phi)$ to $\Spec\wt{R}\lb{z}$ has a basis fixed by $(\wt\phi)_{\wt{R}\lb{z}}$. Since the pullback of $(\wt{M},\wt\phi)$ to $\Spec{R}$ is already trivial, we can choose this basis of $\wt{M}\otimes_{\wt\cR^{\mathrm{int}}_R}\wt{R}\lb{z}$ such that its matrix $U$ in $\GL_h(\wt{R}\lb{z})$ satisfies $U\equiv1\pmod{z}$. Now we just need to prove that $U$ lies in $\GL_h(\wt\cR^{\mathrm{int}}_{\wt{R}})$.

As $A-1$ is divisible by $z$, we have $\norm{A-1}_b<1$ for small enough positive rational $b$. Write $C\coloneqq\max\{q^{-1},\norm{A-1}_b\}<1$, write $U_n$ for the mod-$z^n$ truncation of $U$, and write $X_n$ for the $z^n$-coefficient of $U$. For any positive integer $n$, we claim that
  \begin{align*}
    \norm{z^nX_n}_{qb},\,\norm{U_n-1}_b,\mbox{ and }\norm{U_n-1}_{qb}\leq C.
  \end{align*}
  When $n=1$, the last two bounds hold because $U_1=1$. For general $n$, we have
  \begin{align*}
    &U_n+z^nX_n\equiv U\equiv A\tau(U)\equiv A(\tau(U_n)+z^n\tau(X_n))\pmod{z^{n+1}} \\
    \implies& z^n(X_n-A\tau(X_n))\equiv (A-1)\tau(U_n)+(\tau(U_n)-1)-(U_n-1)\pmod{z^{n+1}} \\
    \implies& X_n-\tau(X_n)\equiv \textstyle\frac1{z^n}\big[(A-1)\tau(U_n)+(\tau(U_n)-1)-(U_n-1)\big]\pmod{z}.
  \end{align*}
By evaluating this equation at rank-$1$ points of $S$ and considering the Newton polygon of its entries, induction on $n$ implies that
\begin{align*}
  \norm{X_n}_b&\leq\max\{1,(q^n\norm{(A-1)\tau(U_n)+\tau(U_n-1)-(U_n-1)}_b)^{1/q}\} \\
  &\leq\max\{1,(q^nC)^{1/q}\} \leq (q^nC)^{1/q}.
\end{align*}
Therefore $\norm{z^nX_n}_{qb}\leq C$, so $\norm{U_{n+1}-1}_{qb}\leq C$. Since $C\geq q^{-n}$, we also get
  \begin{align*}
    \norm{U_{n+1}-1}_b \leq\max\{\norm{z^nX_n}_b,\norm{U_n-1}_b\} \leq \max\{q^{-n}(q^nC)^{1/q},C\} \leq C,
  \end{align*}
  which concludes our proof of the claim.

By \ref{ss:relativeintegralRobbaring}, the claim implies that $U$ has coefficients in $B_{S,[0,b']}$ for any positive rational $b'<qb$ such that $1/b'$ lies in $\bZ[\textstyle\frac1p]$. After decreasing $b'$ such that $b'<b$, the claim also implies that $U$ is invertible over $B_{S,[0,b']}$. Therefore $U$ indeed lies in $\GL_h(\wt\cR^{\mathrm{int}}_{\wt{R}})$, as desired.
\end{proof}

\subsection{}\label{ss:RobbabundleslocalonR}
Vector bundles on the Robba ring are local on $S$ in the following sense. Let $(S_\al)_\al$ be a finite cover of $S$ by rational open subspaces, where $S_\al=\Spa(R_\al,R_\al^+)$. Write $S_{\al\be}=\Spa(R_{\al\be},R_{\al\be}^+)$ for their pairwise intersections, and write $S_{\al\be\ga}=\Spa(R_{\al\be\ga},R_{\al\be\ga}^+)$ for their triple intersections.
\begin{lem*}
Pullback yields an equivalence from the category of vector bundles on $\Spec\wt\cR^{\mathrm{int}}_R$ to the category of vector bundles on the $\Spec\wt\cR^{\mathrm{int}}_{R_\al}$ with transition morphisms on the $\Spec\wt\cR^{\mathrm{int}}_{R_{\al\be}}$ whose pullbacks to $\Spec\wt\cR^{\mathrm{int}}_{R_{\al\be\ga}}$ satisfy the cocycle condition. Moreover, for any vector bundle $M$ on $\Spec\wt\cR^{\mathrm{int}}_R$, there exists $(S_\al)_\al$ as above such that $M|_{\Spec\wt\cR^{\mathrm{int}}_{R_\al}}$ is trivial for all $\al$.
\end{lem*}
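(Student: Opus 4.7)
The plan is to reduce both parts to Kedlaya--Liu's descent theorem \cite[Theorem 2.7.7]{KL15} applied to the sousperfectoid adic spaces $\cY_{S,[0,b]}$ via Proposition \ref{ss:Ysousperfectoid}, together with the standard fact that finite projective modules over a filtered colimit of rings descend to some stage. Note from \ref{ss:relativeintegralRobbaring} that the transition maps $B_{S,[0,b]}\hookrightarrow B_{S,[0,b']}$ for $b\geq b'$ are injective (they are given by restriction of the norm $\norm-_b$), so $\wt\cR^{\mathrm{int}}_R=\varinjlim_bB_{S,[0,b]}$ is a filtered colimit of subrings and equalities of sections descend from any intersection of the $\wt\cR^{\mathrm{int}}$-rings down to the corresponding $B_{S,[0,b]}$-ring for small $b$.

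For the first part (the descent statement), given data $(M_\al,\phi_{\al\be})$ over the $\Spec\wt\cR^{\mathrm{int}}_{R_\al}$, each finite projective $\wt\cR^{\mathrm{int}}_{R_\al}$-module $M_\al$ is cut out from a free module by an idempotent with entries in some $B_{S_\al,[0,b_\al]}$, hence $M_\al$ descends to a finite projective module $M_\al^{(b_\al)}$ over $B_{S_\al,[0,b_\al]}$. Since the index set is finite, we may choose $b$ no larger than any $b_\al$ so that all $M_\al$ are defined over $B_{S_\al,[0,b]}$; after possibly shrinking $b$ further, each transition isomorphism $\phi_{\al\be}$ descends to $B_{S_{\al\be},[0,b]}$, and the cocycle identity descends from $\wt\cR^{\mathrm{int}}_{R_{\al\be\ga}}$ to $B_{S_{\al\be\ga},[0,b]}$ by injectivity of the colimit maps. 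Since $(S_\al)_\al$ induces a rational cover of $\cY_{S,[0,b]}$, Proposition \ref{ss:Ysousperfectoid} and \cite[Theorem 2.7.7]{KL15} glue this descent data to a vector bundle on $B_{S,[0,b]}$, whose base change along $B_{S,[0,b]}\ra\wt\cR^{\mathrm{int}}_R$ furnishes the desired global vector bundle. Full faithfulness follows from the same reduction applied to morphisms: a morphism is determined by a matrix, which descends to some $B_{S,[0,b]}$-stage, where Kedlaya--Liu descent for morphisms on rational covers applies.

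For the second part, let $M$ be a vector bundle on $\Spec\wt\cR^{\mathrm{int}}_R$ of rank $h$, and descend it to a vector bundle $M^{(b)}$ on $B_{S,[0,b]}$ for some $b$. Using the description in \ref{ss:vectorbundlesfullyfaithful} and the fact that the vanishing locus of $z$ in $\cY_{S,[0,b]}$ is $S$, we have $B_{S,[0,b]}/z=R$, so $M^{(b)}/zM^{(b)}$ is a finite projective $R$-module of rank $h$. By the standard local triviality of finite projective modules on affinoid perfectoid spaces, there is a finite rational cover $(S_\al)_\al$ on which $M^{(b)}/zM^{(b)}$ trivializes. Lift a trivialization arbitrarily to $h$ sections of $M|_{\Spec\wt\cR^{\mathrm{int}}_{R_\al}}$; these generate modulo $z$, and since $z$ lies in the Jacobson radical of $\wt\cR^{\mathrm{int}}_{R_\al}$ by \ref{ss:relativeintegralRobbaring}, Nakayama's lemma shows they generate $M|_{\Spec\wt\cR^{\mathrm{int}}_{R_\al}}$; the resulting surjection $(\wt\cR^{\mathrm{int}}_{R_\al})^h\twoheadrightarrow M|_{\Spec\wt\cR^{\mathrm{int}}_{R_\al}}$ splits (as the target is projective), and a second application of Nakayama, to the kernel, forces it to vanish. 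The main obstacle is the bookkeeping in successively shrinking $b$ during the descent argument; none of these shrinkings individually causes trouble, but they must be applied in the right order so that the modules, the transition isomorphisms, and the cocycle identity all end up over a common stage where Kedlaya--Liu descent applies.
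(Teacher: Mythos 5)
Your proposal is correct and follows essentially the same route as the paper: both parts reduce to Kedlaya--Liu's gluing theorem \cite[Theorem 2.7.7]{KL15} on the covers $(\cY_{S_\al,[0,b]})_\al$ of $\cY_{S,[0,b]}$, descend to a common finite stage $b$ using finiteness of the index set, and lift trivializations from the mod-$z$ fiber via Nakayama's lemma since $z$ lies in the Jacobson radical of $\wt\cR^{\mathrm{int}}_{R_\al}$. Your write-up merely spells out the bookkeeping that the paper compresses into ``taking the directed limit over $b$.''
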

\begin{proof}
  Because $\wt\cR^{\mathrm{int}}_R=\textstyle\varinjlim_b B_{S,[0,b]}$, we have an equivalence of categories
  \begin{align*}
    \textstyle\varinjlim_b\{\mbox{vector bundles on }\Spec B_{S,[0,b]}\}\lra^\sim\{\mbox{vector bundles on }\Spec\wt\cR^{\mathrm{int}}_R\}.
  \end{align*}
  When $1/b$ lies in $\bZ[\textstyle\frac1p]$, the $B_{S,[0,b]}$ are Tate algebras over $R$. Hence $S\mapsto B_{S,[0,b]}$ commutes with rational localization on $S$. Applying \cite[Theorem 2.7.7]{KL15} to the resulting open cover of $\cY_{S,[0,b]}$ by $(\cY_{S_\al,[0,b]})_\al$ shows that vector bundles on $\Spec B_{S,[0,b]}$ are equivalent to vector bundles on the $\Spec B_{S_\al,[0,b]}$ with transition morphisms on the $\Spec B_{S_{\al\be},[0,b]}$ whose pullbacks to $\Spec B_{S_{\al\be\ga},[0,b]}$ satisfy the cocycle condition. Because there are finitely many $\al$, taking the directed limit over $b$ yields the first claim.

  For the second claim, \cite[Theorem 2.7.7]{KL15} shows that there exists $(S_\al)_\al$ as above such that the pullback of $M$ to $\Spec R_\al$ is trivial for all $\al$. Since $z$ lies in the Jacobson radical of $\wt\cR^{\mathrm{int}}_{R_\al}$, any trivialization lifts to $\wt\cR^{\mathrm{int}}_{R_\al}$ by Nakayama's lemma.
\end{proof}

\subsection{}\label{ss:Robbaringpowerseries}
We conclude by showing that $\tau$-modules on $R\lb{z}$ uniquely descend to the Robba ring.
\begin{thm*}
  Pullback yields an exact tensor equivalence of categories
  \begin{align*}
    \{\tau\mbox{-modules over }\Spec\wt\cR^{\mathrm{int}}_R\}\lra^\sim\{\tau\mbox{-modules over }\Spec R\lb{z}\}.
  \end{align*}
  Consequently, pullback induces an equivalence of groupoids
  \begin{align*}
    \{\tau\mbox{-}G\mbox{-bundles over }\Spec\wt\cR^{\mathrm{int}}_R\}\lra^\sim\{\tau\mbox{-}G\mbox{-bundles over }\Spec R\lb{z}\}.
  \end{align*}
\end{thm*}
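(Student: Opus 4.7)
The strategy is to argue that pullback extends the equivalence in Proposition \ref{ss:ArtinSchreierWitt}. Define
$$\wt M(\bL) := \bL \otimes_{\ul{\cO_F}}(\sO_{\Spec\wt\cR^{\mathrm{int}}_R}, \id)$$
for an $\ul{\cO_F}$-local system $\bL$ on $S$, giving a functor $\wt M$ from $\ul{\cO_F}$-local systems on $S$ to $\tau$-modules over $\Spec\wt\cR^{\mathrm{int}}_R$. Pulling $\wt M(\bL)$ back along $\wt\cR^{\mathrm{int}}_R \hra R\lb{z}$ recovers the functor $M$ of Proposition \ref{ss:ArtinSchreierWitt}, which is an equivalence. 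It therefore suffices to prove that $\wt M$ is also an equivalence; the $\tau$-$G$-bundle statement then follows from the Tannakian description of $G$-bundles, exactly as in Proposition \ref{ss:OFlocalsystemstaumodules}.

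\textbf{Full faithfulness of $\wt M$.} As in Proposition \ref{ss:ArtinSchreierWitt}, $\wt M$ is left adjoint to $(N, \psi) \mapsto \ul{\Hom}_{\tau\text{-mod}}((\sO, \id), (N, \psi))$, and the unit is an isomorphism provided the rank-$1$ identity $(\wt\cR^{\mathrm{int}}_R)^{\tau = 1} = \Cont(|S|, \cO_F)$ holds pro-\'etale locally on $S$. Any $\tau$-fixed element $\sum_m a_m z^m \in \wt\cR^{\mathrm{int}}_R \subseteq R\lb{z}$ has coefficients $a_m \in R^{\tau = 1} = \Cont(|S|, \bF_q)$ by Proposition \ref{ss:ArtinSchreierWitt} applied with $n = 1$, hence $\norm{a_m} \leq 1$; conversely, any series with such coefficients converges in $B_{S, [0, b]}$ for every $b$ and so lies in $\wt\cR^{\mathrm{int}}_R$.

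\textbf{Essential surjectivity of $\wt M$, and main obstacle.} Let $(N, \psi)$ be a $\tau$-module over $\Spec\wt\cR^{\mathrm{int}}_R$. Apply Lemma \ref{ss:RobbabundleslocalonR} to obtain a finite rational open cover $(S_\alpha)$ making $N|_\alpha$ free, and then apply Lemma \ref{ss:proetalelocalphibasis} to obtain a pro-finite \'etale cover $\wt S_\alpha \to S_\alpha$ trivializing $(N, \psi)|_\alpha$ as a $\tau$-module. I expect the most delicate step to be the descent from $\wt S_\alpha$ back to $S_\alpha$: the descent datum is a $\tau$-fixed matrix in $\GL_h$ over $\wt\cR^{\mathrm{int}}$ on the double fiber product $\wt S_\alpha \times_{S_\alpha} \wt S_\alpha$, and the full-faithfulness computation above, applied over that fiber product, forces it to come from a $\GL_h(\cO_F)$-valued locally constant function, thereby defining an $\ul{\cO_F}$-local system $\bL_\alpha$ on $S_\alpha$ with $\wt M(\bL_\alpha) \cong (N, \psi)|_\alpha$. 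Over each overlap $S_{\alpha\beta}$, full faithfulness provides a unique compatible isomorphism $\bL_\alpha|_{S_{\alpha\beta}} \cong \bL_\beta|_{S_{\alpha\beta}}$, so the $\bL_\alpha$ glue to an $\ul{\cO_F}$-local system $\bL$ on $S$ with $\wt M(\bL) \cong (N, \psi)$.
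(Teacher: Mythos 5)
Your reduction of the theorem to the claim that $\wt M$ is an equivalence is logically sound (if $\wt M$ and $M$ are both equivalences, then pullback $=M\circ\wt M^{-1}$ is one), and your full-faithfulness computation is essentially the paper's argument in different clothing: the rank-$1$ identity $(\wt\cR^{\mathrm{int}}_R)^{\tau=1}=\Cont(|S|,\cO_F)$ is the same observation the paper packages as ``the entries of $m$ lie in $(\wt{R}^\tau)\lb{z}$, which lies in $\wt\cR^{\mathrm{int}}_{\wt{R}}$.''

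The gap is at the very first step. You have not shown that $\wt M$ is a well-defined functor into $\tau$-modules over $\Spec\wt\cR^{\mathrm{int}}_R$, i.e.\ that $\bL\otimes_{\ul{\cO_F}}\sO_{\Spec\wt\cR^{\mathrm{int}}_R}$ is a \emph{finite projective} $\wt\cR^{\mathrm{int}}_R$-module for every $\ul{\cO_F}$-local system $\bL$. This is not automatic; it is equivalent (given Proposition \ref{ss:ArtinSchreierWitt}) to the essential surjectivity of pullback, which is the analytic heart of the theorem. Both mechanisms that make $M(\bL)$ well-defined over $R\lb{z}$ fail for the Robba ring: an $\ul{\cO_F}$-local system with infinite monodromy is only trivialized on a pro-finite \'etale cover, and effectivity of pro-finite \'etale descent for finite projective modules along $\wt\cR^{\mathrm{int}}_R\ra\wt\cR^{\mathrm{int}}_{\wt{R}}$ is not known a priori (it is essentially the statement being proven); and one cannot assemble $\wt M(\bL)$ from the truncations $\bL/z^n$ because $\wt\cR^{\mathrm{int}}_R$ is not $z$-adically complete (its $z$-adic completion is $R\lb{z}$). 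Your ``essential surjectivity of $\wt M$'' does not fill this hole: it starts from a $\tau$-module already given over $\Spec\wt\cR^{\mathrm{int}}_R$ and shows it arises from a local system, but it never takes a $\tau$-module over $\Spec R\lb{z}$ (equivalently, a local system) and descends it to the Robba ring. That descent is precisely what the paper's proof supplies: writing $\phi^{-1}=A\circ\tau$ with $A\equiv1\pmod{z}$, it inductively produces $U_n=1+z^nY_n$ with $\norm{X_n+Y_n-\tau(Y_n)}_1<q^{n/2}$ via \cite[Lemma 8.5.2]{KL15}, and uses the contraction $\norm{\tau(-)}_b=\norm{-}_{qb}$ to show the infinite product $U=U_1U_2\dotsm$ converges and conjugates $A$ into $\GL_h(B_{S,[0,1]})$. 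Some argument of this kind (or an appeal to an external Kedlaya--Liu/Hartl-type descent theorem) is unavoidable; without it your proof does not establish that pullback is essentially surjective, nor even that the functor $\wt M$ on which the whole strategy rests exists.
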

\begin{proof}
  First, we tackle full faithfulness. By considering internal homs for $\tau$-modules, it suffices to prove that, for any $\tau$-module $(\wt{M},\wt\phi)$ over $\Spec\wt\cR^{\mathrm{int}}_R$, any $m$ in $\wt{M}\otimes_{\wt\cR^{\mathrm{int}}_R}R\lb{z}$ that is fixed by $\wt\phi_{R\lb{z}}$ lies in $\wt{M}$. Lemma \ref{ss:RobbabundleslocalonR} implies that it suffices to prove this after passing to an open cover of $S$, so we can assume that $\wt{M}$ is free of rank $h$. Then Lemma \ref{ss:proetalelocalphibasis} yields a pro-finite \'etale cover $\Spa(\wt{R},\wt{R}^+)\ra S$ such that the pullback of $(\wt{M},\wt\phi)$ to $\Spec\wt\cR^{\mathrm{int}}_{\wt{R}}$ has a basis fixed by $\wt\phi_{\wt\cR^{\mathrm{int}}_{\wt{R}}}$. In these coordinates, the entries of $m$ lie in $(\wt{R}^\tau)\lb{z}$, which lies in $\wt\cR^{\mathrm{int}}_{\wt{R}}$ by \ref{ss:relativeintegralRobbaring}. Note that the intersection of $R\lb{z}$ and $\wt\cR^{\mathrm{int}}_{\wt{R}}$ equals $\wt\cR^{\mathrm{int}}_R$, so the flatness of $\wt{M}$ shows that $m$ lies in $\wt{M}$.

  As for essential surjectivity, let $(M,\phi)$ be a $\tau$-module over $\Spec R\lb{z}$. By passing to a clopen cover of $S$, we can assume that $M$ has rank $h$. Proposition \ref{ss:ArtinSchreierWitt}, full faithfulness, and finite \'etale descent enable us to assume that the pullback of $(M,\phi)$ to $\Spec R$ has a basis fixed by $\phi_R$. Nakayama's lemma shows that any lift of this basis yields a basis of $M$, and in these coordinates, we see that $\phi^{-1}$ acts by $A\circ\tau$, where $A$ in $\GL_h(R\lb{z})$ satisfies $A\equiv1\pmod{z}$.

Let $n$ be a positive integer. We inductively construct certain $C_n$, $B_n$, and $U_n$ in $\GL_h(R\lb{z})$ such that $C_n-B_n$ is divisible by $z^n$. First, set $C_1\coloneqq A$ and $B_1\coloneqq1$. For general $n$, write $X_n$ for the $z^n$-coefficient of $C_n-B_n$. There exists $Y_n$ in $\Mat_h(R)$ satisfying $\norm{X_n+Y_n-\tau(Y_n)}_1<q^{n/2}$ \cite[Lemma 8.5.2]{KL15}, which we use to define
  \begin{align*}
    U_n\coloneqq1+z^nY_n,\,C_{n+1}\coloneqq U_nC_n\tau(U_n)^{-1},\mbox{ and }B_{n+1}\coloneqq B_n+z^n(X_n+Y_n-\tau(Y_n)).
  \end{align*}
By induction, we have
  \begin{align*}
    C_{n+1}&\equiv (1+z^nY_n)C_n(1-z^n\tau(Y_n))\\
    &\equiv B_n+z^n(X_n+Y_n-\tau(Y_n))\equiv B_{n+1}\pmod{z^{n+1}},
  \end{align*}
  as desired.

  We see from \ref{ss:relativeintegralRobbaring} that the $B_n$ converge to a matrix $B$ in $\GL_h(B_{S,[0,1]})$. Now the $C_n$ converge to a matrix $C$ in $\GL_h(R\lb{z})$, and because $C_n-B_n$ is divisible by $z^n$, we have $C=B$. Moreover, the infinite product $U\coloneqq U_1U_2\dotsm$ converges to a matrix $U$ in $\GL_h(R\lb{z})$, and the above shows that $UA\tau(U)^{-1}= C = B$. Thus the basis of $M$ given by $U$ descends $(M,\phi)$ to a $\tau$-module over $\Spec\wt\cR^{\mathrm{int}}_R$, as desired.

Finally, we show that pullback has an exact tensor quasi-inverse. Note that we have a commutative triangle
  \begin{align*}
    \xymatrixcolsep{-.5in}
    \xymatrix{ & \ar[dl]\{\ul{\cO_F}\mbox{-local systems on }S\}\ar[dr]^-{M(-)} & \\
    \{\tau\mbox{-modules over }\Spec\wt\cR^{\mathrm{int}}_R\}\ar[rr] & & \{\tau\mbox{-modules over }\Spec R\lb{z}\}.
    }
  \end{align*}
  Every arrow is an exact tensor functor, and $M(-)$ is an exact tensor equivalence by Proposition \ref{ss:ArtinSchreierWitt}. Hence its quasi-inverse $\bL(-)$ postcomposed with the left arrow yields an exact tensor quasi-inverse to pullback.
\end{proof}

\section{Analytic moduli of local shtukas}\label{s:analyticlocalshtukas}
In this section, we define local shtukas in the analytic setting and compare them with the formal variant from \S\ref{s:formallocalshtukas}. We start by giving an algebraic version of local shtukas over a perfectoid space, which is the equicharacteristic analogue of Breuil--Kisin--Fargues modules. This mediates between the formal variant and more analytic variants. Next, we define an analytic version of local shtukas, as well as the corresponding moduli problem. Using results from \S\ref{s:zadichodgetheory}, we show that the analytic moduli problem agrees with the formal moduli problem from \S\ref{s:formallocalshtukas}.

From here, we define the covering tower for our analytic moduli problem. We conclude by recalling the moduli of local shtukas appearing in Fargues--Scholze \cite{FS21}, which is defined purely in terms of the Fargues--Fontaine curve. While this subtly differs from our analytic moduli problem, their intersection homology complexes are naturally isomorphic, which is all we need.

\subsection{}\label{ss:algebraiclocalshtuka}
Let $S=\Spa(R,R^+)$ be an affinoid perfectoid space over $\bD^I$. For any $i$ in $I$, if $\ze_i$ is an $R^{\circ\circ}$-multiple of $\vpi^r$, then
\begin{align*}
\frac1{z-\ze_i} = \frac1z\sum_{n=0}^\infty\left(\frac{\ze_i}z\right)^n
\end{align*}
lies in $R^+\ba{z,\textstyle\frac{\vpi^r}z}[\frac1z]$. As $\ze_i$ is topologically nilpotent, this always holds for small enough $r$.

Recall the $\mu_i$ and $\bD_i$ from \ref{ss:affineschubertvarieties}, and recall Definition \ref{ss:localshtukas}. We use Definition \ref{ss:localshtukas} to define an algebraic version of local $G$-shtukas over $S$.
\begin{defn*}\hfill
  \begin{enumerate}[a)]
  \item An \emph{algebraic local $G$-shtuka} over $S$ is a local $G$-shtuka over $\Spec R^+$.
  \item Suppose that $S$ lies over $\prod_{i\in I}\bD_i$, and let $\sG$ be an algebraic local shtuka over $S$. We say that $\sG$ is \emph{bounded by $\mu_\bullet$} if the corresponding local $G$-shtuka over $\Spec R^+$ is bounded by $\mu_\bullet$.
  \item Let $\sG$ and $\sG'$ be algebraic local $G$-shtukas over $S$. A \emph{quasi-isogeny} from $\sG$ to $\sG'$ consists of, for some small enough positive $r$ in $\bZ[\textstyle\frac1p]$ and all $1\leq j\leq k$, an isomorphism of $G$-bundles
    \begin{align*}
      \de_j:\sG_j|_{\Spec R^+\ba{z,\frac{\vpi^r}z}[\frac1z]}\ra^\sim\sG_j'|_{\Spec R^+\ba{z,\frac{\vpi^r}z}[\frac1z]}
    \end{align*}
    such that the diagram
    \begin{align*}
      \xymatrixcolsep{1in}
      \xymatrix{\sG_j|_{\Spec R^+\ba{z,\frac{\vpi^r}z}[\frac1z]}\ar[r]^-{(\phi_j)_{R^+\ba{z,\frac{\vpi^r}z}[\frac1z]}}\ar[d]^-{\de_j} & \sG_{j+1}|_{\Spec R^+\ba{z,\frac{\vpi^r}z}[\frac1z]}\ar[d]^-{\de_{j+1}}\\
      \sG_j'|_{\Spec R^+\ba{z,\frac{\vpi^r}z}[\frac1z]}\ar[r]^-{(\phi_j')_{R^+\ba{z,\frac{\vpi^r}z}[\frac1z]}} & \sG'_{j+1}|_{\Spec R^+\ba{z,\frac{\vpi^r}z}[\frac1z]}}
    \end{align*}
    commutes, where $\de_{k+1}$ denotes the isomorphism $\prescript\tau{}{\de_1}$.
  \end{enumerate}
\end{defn*}

\subsection{}\label{ss:quasiisogenylimit}
Let $n$ be a non-negative integer, and note that $R^+/\vpi^n$ is a discrete $\bF_q\lb{\ze_i}_{i\in I}$-algebra. For any algebraic local shtuka $\sG$ over $S$, write $\sG^n$ for the local shtuka over $S_n\coloneqq\Spec R^+/\vpi^n$ given by pullback. Since $R^+\ba{z,\textstyle\frac{\vpi^r}z}[\frac1z]/\vpi^n$ equals $(R^+/\vpi^n)\lp{z}$, quasi-isogenies of algebraic local $G$-shtukas over $S$ pull back to quasi-isogenies of local $G$-shtukas over $S_n$.

Lemma \ref{ss:nilpotentalgebraization} shows that bounded algebraic local $G$-shtukas are all captured by this limit process. The following lemma shows that quasi-isogenies between them are also all captured by this limit process.
\begin{lem*}
  Suppose that $S$ lies over $\prod_{i\in I}\bD_i$, and let $\sG$ and $\sG'$ be algebraic local $G$-shtukas over $S$ bounded by $\mu_\bullet$. Then pullback yields a bijection
\begin{align*}
\{\mbox{quasi-isogenies from }\sG\mbox{ to }\sG'\}\ra^\sim\textstyle\varprojlim_n\{\mbox{quasi-isogenies from }\sG^n\mbox{ to }\sG'^n\}.
\end{align*}
\end{lem*}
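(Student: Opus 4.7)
First, observe that $R^+\ba{z, \vpi^r/z}[\frac1z]$ is $\vpi$-adically separated: by Lemma \ref{ss:Batinfinity} its elements are Laurent series $\sum_m a_m z^m$ with $a_m \in R^+$, and $R^+$ is $\vpi$-adically separated since $R$ is Tate. Consequently, two quasi-isogenies from $\sG$ to $\sG'$ with the same reductions modulo $\vpi^n$ for every $n$ must coincide.

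\textbf{Surjectivity setup.} Given a compatible system $(\de^n)_{n \geq 1}$, I would apply Proposition \ref{ss:rigidisogeny} to the closed immersion $\Spec R^+/\vpi \hookrightarrow \Spec R^+/\vpi^n$, whose defining ideal $(\vpi)$ satisfies $(\vpi)^n = 0$. This yields a universal constant $B$, depending only on $\mu_\bullet$, such that if $\de^1$ is bounded by some integer $m$ (which exists by the remark after \ref{ss:quasiisogenies}), then $\de^n$ is bounded by $m + B\lceil \log_q n \rceil$ for every $n$. After passing to a suitable affine \'etale cover of $\Spec R^+$ trivializing the $\sG_j$ and $\sG_j'$ compatibly (as in Lemma \ref{ss:nilpotentalgebraization}), and using the embedding $\io: G \hookrightarrow \SL_h$ from Lemma \ref{ss:globalizinggroupschemes} together with Lemma \ref{ss:SLrloopgroup}, each $\de^n_j$ becomes a matrix $\io_*(b^n_j) \in \SL_h((R^+/\vpi^n)\lp z)$ whose entries have $z$-pole of order at most $C(m + B\lceil \log_q n \rceil)$ for a universal $C$ depending on $\io$.

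\textbf{Pole-growth analysis and construction.} Compatibility in $n$ gives, for each matrix entry, coefficients $c^{ab}_{j,k} \in \varprojlim_n R^+/\vpi^n = R^+$. The quantitative heart of the proof is the observation that $c^{ab}_{j,k}$ must vanish modulo $\vpi^n$ whenever $|k| > C(m + B\lceil \log_q n \rceil)$; rearranging, for $k < 0$ one obtains $\mathrm{ord}_\vpi(c^{ab}_{j,k}) \geq q^{(|k|/C - m)/B} - 1$. This exponential $\vpi$-divisibility in $|k|$ dominates the linear bound $r|k|$ required by Lemma \ref{ss:Batinfinity} for membership in $R^+\ba{z, \vpi^r/z}[\frac1z]$, for any sufficiently small positive $r \in \bZ[\frac1p]$. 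Explicitly, after multiplying by $z^N$ for a fixed $N$ of size comparable to $Cm$ (to absorb the few small-$|k|$ pole coefficients that need not be $\vpi$-divisible), the tail coefficients automatically satisfy the growth condition, placing the entry in $R^+\ba{z, \vpi^r/z}$.

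\textbf{Descent to $G$ and main obstacle.} Assembling gives $\io_*(\de_j) \in \SL_h(R^+\ba{z, \vpi^r/z}[\frac1z])$. Since $\io$ restricts to a closed immersion on each Schubert stratum by Lemma \ref{ss:SLrloopgroup} and $\SL_h/G$ is quasi-affine, lying in $G$ is a closed condition that holds modulo every $\vpi^n$, hence holds by $\vpi$-adic separation. Commutativity of the $\phi_j$-compatibility squares in Definition \ref{ss:algebraiclocalshtuka}.c) follows by the same separation argument applied to both sides. Finally, \'etale descent recovers the quasi-isogeny over $S$ itself. \emph{The main obstacle} is reconciling the logarithmic growth of the quasi-isogeny bound with a single fixed parameter $r$: unlike in Lemma \ref{ss:nilpotentalgebraization}, there is no single affine Schubert scheme containing all the $b^n_j$, so the quantitative refinement supplied by Proposition \ref{ss:rigidisogeny} (not merely the bijection of quasi-isogenies) is essential to force the limit into a fixed $R^+\ba{z, \vpi^r/z}[\frac1z]$.
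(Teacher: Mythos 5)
Your proof is correct and follows essentially the same route as the paper: form the limit over $R^+\ba{z,\frac1z}=\varprojlim_n(R^+/\vpi^n)\lp{z}$, invoke the quantitative part of Proposition \ref{ss:rigidisogeny} to get the bound $m+B\lceil\log_qn\rceil$ on $\de^n$, and use the resulting exponential $\vpi$-divisibility of the deep pole coefficients against the linear requirement of Lemma \ref{ss:Batinfinity} to descend to $R^+\ba{z,\frac{\vpi^r}z}[\frac1z]$, with commutativity of the squares checked modulo every $\vpi^n$. You merely spell out the coefficient estimate and the $\SL_h$-matrix reduction that the paper compresses into ``the Tannakian description of $G$-bundles implies that $\de_j$ naturally descends,'' and you correctly identify the quantitative rigidity as the essential input.
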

\begin{proof}
  Let $(\de^n)_{n\geq0}$ be a compatible system of quasi-isogenies from $\sG^n$ to $\sG'^n$. Because $\varprojlim_n(R^+/\vpi^n)\lp{z}$ equals $R^+\ba{z,\textstyle\frac1z}$, we see that $\de_j\coloneqq\varprojlim_n\de^n_j$ yields an isomorphism of $G$-bundles $\sG_j|_{\Spec R^+\ba{z,\frac1z}}\ra^\sim\sG'_j|_{\Spec R^+\ba{z,\frac1z}}$ for all $1\leq j\leq k$. Now $\de^0$ is bounded by $m$ for some non-negative integer $m$ as in Definition \ref{ss:quasiisogenies}.b), so Proposition \ref{ss:rigidisogeny} yields a non-negative integer $B$ such that $\de^n$ is bounded by $m+B\lceil\log_qn\rceil$. From here, the Tannakian description of $G$-bundles implies that $\de_j$ naturally descends to an isomorphism of $G$-bundles
  \begin{align*}
    \sG_j|_{\Spec R^+\ba{z,\frac{\vpi^r}z}[\frac1z]}\ra^\sim\sG_j'|_{\Spec R^+\ba{z,\frac{\vpi^r}z}[\frac1z]}
  \end{align*}
for any positive $r$ in $\bZ[\textstyle\frac1p]$. By taking $r$ small enough such that $\textstyle\frac1{z-\ze_i}$ lies in $R^+\ba{z,\frac{\vpi^r}z}[\frac1z]$ for all $i$ in $I$, the commutativity of the square in Definition \ref{ss:algebraiclocalshtuka}.c) follows from the commutativity of the analogous square in Definition \ref{ss:quasiisogenies}.a).
\end{proof}

\subsection{}\label{ss:analyticBDgrassmannian}
Before introducing the analytic version of local $G$-shtukas, we need some notation on the $B_{\dR}$-affine Grassmannian. Write $B^+_{\dR}(S)$ for the ring of global sections of the completion of $\sO_{\cY_S}$ along $\sum_{i\in I}\Ga_i$, and write $B_{\dR}^j(S)$ for the version that is punctured along $\sum_{i\in I_j}\Ga_i$.
\begin{defn*}\hfill
  \begin{enumerate}[a)]
  \item Write $\cL^n_IG$ and $\cL^+_IG$ for the small v-sheaves over $(\bD^I)^\Diamond$ given by sending $S$ to $G(\sO_{n\sum_{i\in I}\Ga_i})$ and $G(B^+_{\dR}(S))$, respectively.
  \item   Write $\cGr^{(I_1,\dotsc,I_k)}_G$ for the small v-sheaf over $(\bD^I)^\Diamond$ whose $S$-points parametrize data consisting of
  \begin{enumerate}[i)]
  \item for all $1\leq j\leq k$, a $G$-bundle $\sG_j$ on $\Spec B^+_{\dR}(S)$,
  \item for all $1\leq j\leq k$, an isomorphism of $G$-bundles
    \begin{align*}
      \phi_j:\sG_j|_{\Spec B^j_{\dR}(S)}\ra^\sim\sG_{j+1}|_{\Spec B^j_{\dR}(S)},
    \end{align*}
    where $\sG_{k+1}$ denotes the trivial $G$-bundle.
  \end{enumerate}
  \end{enumerate}
\end{defn*}

\subsection{}\label{ss:analytificationfunctorofpoints}
In certain cases, we can describe the functor of points of (generalized) analytifications without analytically sheafifying. Briefly, let $A$ be a noetherian ring, and let $X$ be a scheme locally of finite type over $Z\coloneqq\Spec{A}$. Let $J\subseteq A$ be an ideal, write $\wh{A}$ for the completion of $A$ with respect to $J$, and write $\wh{Z}$ for the adic space $\Spa\wh{A}$. Write $X_{\wh{Z}}$ for the fiber product as in \cite[(3.8)]{Hub94}.
\begin{lem*}
Suppose that $X$ is quasi-projective over $Z$. For any analytic affinoid adic space $S=\Spa(R,R^+)$, the $S$-points of $X_{\wh{Z}}$ consist of the $R$-points of $X$ such that the resulting ring homomorphism $A\ra R$ is continuous for the $J$-adic topology on $A$.
\end{lem*}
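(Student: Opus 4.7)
The plan is to reduce the statement to the case of affine space by embedding $X$ into projective space and applying a standard cover. First, in the affine case $X = \Spec B$ with $B = A[t_1, \ldots, t_n]/I$ of finite type over $A$, the adic space $X_{\wh Z}$ is the closed adic subspace of $\mathbb{A}^n_{\wh Z}$ cut out by $I$. By Huber's construction, $\mathbb{A}^n_{\wh Z}$ is the filtered union $\varinjlim_N \Spa \wh A \langle \pi^N t_1, \ldots, \pi^N t_n\rangle$, where $\pi$ generates an ideal of definition of $\wh A$. Since $|S|$ is quasi-compact and each element of $R$ is bounded on $S$, any morphism $S = \Spa(R, R^+) \to \mathbb{A}^n_{\wh Z}$ factors through one of these Tate algebras for $N$ sufficiently large, and thus corresponds to a continuous map $\wh A \to R$ together with $n$ arbitrary elements of $R$. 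Continuity of $\wh A \to R$ is equivalent to continuity of $A \to R$ for the $J$-adic topology, giving the affine case.

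For general quasi-projective $X$, I fix a locally closed embedding $X \hookrightarrow \mathbb{P}^n_Z$, cover $\mathbb{P}^n_Z$ by the standard affine opens $U_j = \{x_j \neq 0\} \cong \mathbb{A}^n_Z$, and set $V_j = X \cap U_j$. Then the $(V_j)_{\wh Z}$ form an open cover of $X_{\wh Z}$. Given an $S$-point $\phi \colon S \to X_{\wh Z}$, I refine the open cover $\phi^{-1}((V_j)_{\wh Z})$ of $|S|$ using the canonical affinoid cover $W_j = \{|x_i(s)| \leq |x_j(s)| \text{ for all } i\}$ of $\mathbb{P}^n_{\wh Z}$; on each piece the affine case yields a ring homomorphism $\mathcal{O}(V_j) \to R_j$ with continuous structure map, and these glue by the sheaf property on rational opens to a ring homomorphism $\mathcal{O}_X \to R$ of Zariski sheaves, i.e., a scheme morphism $\Spec R \to X$ with $A \to R$ continuous. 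Conversely, an $R$-point $\psi \colon \Spec R \to X$ with continuous $A \to R$ yields via the continuous support map $|S| \to |\Spec R|$ an open cover of $|S|$ by preimages of the $\psi^{-1}(V_j)$, on which the affine case produces adic morphisms that glue to the desired $S$-point.

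The main obstacle is the gluing step: specifically, that the open cover of $|S|$ produced from the affine cover of $X$ admits a canonical rational refinement on which the various ring homomorphisms agree after further restriction. This is where the quasi-projective hypothesis enters: the canonical affinoid cover of $\mathbb{P}^n_{\wh Z}$ by the $W_j$ provides such a refinement uniformly, with power-bounded ratios $x_i/x_j$ on $W_j$. Once the refinement is in place, both the existence and uniqueness of the bijection reduce to the Tate sheaf property of $\mathcal{O}$ on $S$, together with the straightforward observation that ring homomorphisms $A \to R$ continuous for the $J$-adic topology are the same as continuous ring homomorphisms $\wh A \to R$.
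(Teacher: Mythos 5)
Your overall strategy (affine case via Huber's construction of $\bA^n_{\wh{Z}}$, then a standard affine cover of $\bP^n_Z$ and gluing) is different from the paper's, and the affine case itself is essentially fine. But the gluing step — which you correctly identify as the main obstacle — has a genuine gap. From a rational cover $(S_j)_j$ of $S$ and compatible ring homomorphisms $\sO(V_j)\to R_j$ you want to produce a \emph{scheme} morphism $\Spec R\to X$. The Tate sheaf property of $\sO_S$ only glues \emph{functions}: it identifies $R$ with the equalizer of $\prod_j R_j\rightrightarrows\prod_{j,k}R_{jk}$, which suffices exactly when the target is affine. For a non-affine target, a morphism $\Spec R\to X$ requires a Zariski open cover of $\Spec R$ together with compatible maps to affine opens of $X$; but the maps $\Spec R_j\to\Spec R$ coming from rational localization are not open immersions of schemes, and $\coprod_j\Spec R_j\to\Spec R$ is not a Zariski cover (nor obviously an effective descent cover), so there is no ``sheaf property of Zariski sheaves'' that assembles your local data into $\Spec R\to X$. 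Concretely, what you must glue is the line bundle $\phi^*\sO(1)$ together with its generating sections, i.e.\ a rank-one finite projective $R$-module presented by a cocycle of units $x_k/x_j\in R_{jk}^\times$; passing from such descent data on a rational cover to an honest finite projective $R$-module is the nontrivial gluing theorem for vector bundles on sheafy adic spaces (\cite[Theorem 1.4.2]{Ked19}, \cite[Theorem 2.7.7]{KL15}), not a consequence of $H^0$-acyclicity alone.

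This is precisely where the paper's proof takes a different and shorter route: after invoking Huber's universal property to reduce to classifying morphisms of locally ringed spaces $S\to X$ over $Z$ with $A\to R$ continuous, it treats $X=\bP^N_Z$ by describing such a morphism as a line bundle on $S$ with generating sections and applying \cite[Theorem 1.4.2]{Ked19} to identify these with finite projective rank-one $R$-modules with generators, i.e.\ $R$-points of $\bP^N$; a general quasi-projective $X$ is then cut out inside $\bP^N_Z$ by vanishing and non-vanishing conditions on finitely many homogeneous polynomials, which are preserved under that equivalence. If you replace your appeal to ``the Tate sheaf property'' with an explicit appeal to the vector-bundle gluing theorem (to construct the line bundle on $\Spec R$, whose section non-vanishing loci then furnish the required Zariski cover of $\Spec R$), your argument closes up and becomes a mild variant of the paper's.
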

\begin{proof}
The universal property of $X_{\wh{Z}}$ \cite[(3.8)]{Hub94} indicates that an $S$-point of $X_{\wh{Z}}$ is equivalent to a morphism $S\ra\wh{Z}$ of adic spaces along with a morphism $S\ra X$ of locally ringed spaces such that, in the category of locally ringed spaces, the square
  \begin{align*}
    \xymatrix{S\ar[r]\ar[d] & X\ar[d] \\
    \wh{Z}\ar[r] & Z}
  \end{align*}
commutes. The $\Spec$-global sections adjunction shows that $S\ra X\ra Z$ yields a ring homomorphism $A\ra R$, and note that the commutativity of this square is equivalent to $A\ra R$ being continuous for the $J$-adic topology on $A$.

Now assume that $X=\bP^N_Z$. Since $Z$ is affine, the $\Spec$-global sections adjunction implies that $S\ra X$ is equivalent to the data of a line bundle $\sL$ on $S$ along with sections $s_0,\dotsc,s_N$ that generate $\sL$. By \cite[Theorem 1.4.2]{Ked19}, this is equivalent to a finite projective $R$-module $M$ of rank $1$ along with elements $r_0,\dotsc,r_N$ that generate $M$, which is precisely the data of an $R$-point of $X$.

In general, $X$ is a locally closed subscheme of $\bP^N_Z$. Because $Z$ is noetherian, there exist finitely many homogeneous polynomials $f_1,\dotsc,f_l$ and $g_1,\dotsc,g_m$ in $A[t_0,\dotsc,t_N]$ such that $X\subseteq\bP^N_Z$ is the locus where $f_a(s_0,\dotsc,s_N)$ vanishes for all $1\leq a\leq l$ and $g_b(s_0,\dotsc,s_N)$ does not vanish for all $1\leq b\leq m$. These properties are preserved by \cite[Theorem 1.4.2]{Ked19}, so we see that $S\ra X$ is equivalent to an $R$-point of $X$.
\end{proof}

\subsection{}\label{ss:BDgrassmanniancomparison}
We check that the $B_{\dR}$-affine Grassmannian and its affine Schubert varieties are the analytifications of their algebraic counterparts. Write $S^{\alg}$ for the $R$-point of $C^I$ given by $\Spec{R}\ra\Spec\bF_q\lb{\ze_i}_{i\in I}\ra C^I$, and write $\Ga_i^{\alg}$ for the resulting relative effective Cartier divisor on $C\times S$ as in \ref{ss:globalBDgrassmannian}. Recall the $F_i$ from \ref{ss:affineschubertvarieties}.
\begin{lem*}
  We have a natural isomorphism of rings $\sO_{n\sum_{i\in I}\Ga_i^{\alg}}\cong\sO_{n\sum_{i\in I}\Ga_i}$. Consequently, we obtain natural isomorphisms from $(L^n_I(G_C))_{\bD^I}^\Diamond$ and $(L^+_I(G_C))_{\bD^I}^\Diamond$ to $\cL^n_I(G)$ and $\cL^+_I(G)$, respectively, and we may view $(\wh\Gr^{(I_1,\dotsc,I_k)}_{G,\mu_\bullet}|_{\prod_{i\in I}\bD_i})^\Diamond$ as a closed subsheaf
  \begin{align*}
    \cGr^{(I_1,\dotsc,I_k)}_{G,\mu_\bullet}|_{\prod_{i\in I}\bD_i^\Diamond}\subseteq\cGr^{(I_1,\dotsc,I_k)}_G|_{\prod_{i\in I}\bD_i^\Diamond}.
  \end{align*}
  Finally, the $S$-points of $\cGr^{(I_1,\dotsc,I_k)}_{G,\mu_\bullet}|_{\prod_{i\in I}\Spd F_i}$ consist of the $((\sG_j)_{j=1}^k,(\phi_j)_{j=1}^k)$ such that, for all geometric points $\ov{s}$ of $S$ and $1\leq j\leq k$, the relative position of $\phi_{j,\ov{s}}$ at $\Ga_{i,\ov{s}}$ is bounded by $\textstyle\sum_{i'}\mu_{i'}$, where $i'$ runs over elements of $I$ satisfying $\Ga_{i',\ov{s}}=\Ga_{i,\ov{s}}$.
\end{lem*}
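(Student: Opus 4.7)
The plan is to establish the three parts in sequence, with Weierstrass division as the main technical input.

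For part 1, I would work near $v$, where $C$ is locally $\bA^1_{\bF_q}=\Spec\bF_q[z]$. On the algebraic side, $\sO_{n\sum_{i\in I}\Ga_i^{\alg}}=R[z]/\prod_{i\in I}(z-\ze_i)^n$ by definition. On the analytic side, $n\sum_{i\in I}\Ga_i$ lies in the rational subdomain $\cY_{S,[0,b]}$ for $b$ large enough that $|\ze_i|\leq|\vpi|^{1/b}$ for all $i$, so $\sO_{n\sum_{i\in I}\Ga_i}=B_{S,[0,b]}/\prod_{i\in I}(z-\ze_i)^n$, where $B_{S,[0,b]}=R\ang{z/\vpi^{1/b}}$ by \ref{ss:vectorbundlesfullyfaithful}. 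Since the $\ze_i$ are topologically nilpotent, $\prod_{i\in I}(z-\ze_i)^n$ is a monic polynomial in $z$ of degree $n|I|$ well suited to Weierstrass division, so both quotients are free $R$-modules of rank $n|I|$ with basis $\{1,z,\dotsc,z^{n|I|-1}\}$, giving the claimed ring isomorphism.

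For part 2, applying $G(-)$ to the ring isomorphism gives $(L^n_I(G_C))_{\bD^I}^\Diamond\cong\cL^n_I(G)$ naturally in $S$, and passing to $\varprojlim_n$ produces the analogous statement for $L^+_I(G_C)$ and $\cL^+_I(G)$. The same identification applied to completed rings gives $\wh\cO_C(S)\cong B^+_{\dR}(S)$, with the punctured versions $\wh\cO^{j,\circ}_C(S)\cong B^j_{\dR}(S)$ obtained by inverting the common elements $z-\ze_i$ for $i\in I_j$. Hence the functor-of-points descriptions of $(\wh\Gr^{(I_1,\dotsc,I_k)}_G)^\Diamond$ and $\cGr^{(I_1,\dotsc,I_k)}_G$ agree on affinoid perfectoid test objects, and the diamond functor preserves the closed embedding of $\wh\Gr^{(I_1,\dotsc,I_k)}_{G,\mu_\bullet}|_{\prod_i\bD_i}$ from \ref{ss:affineschubertvarieties}, yielding the stated closed subsheaf.

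For part 3, restricting to $\prod_i\Spd F_i$ makes each $\ze_i$ invertible in $R$. At a geometric point $\ov s$ of $S$, the divisors $\Ga_{i,\ov s}$ are genuine closed points of $\cY_{\ov s}$, and indices $i,i'$ collide there precisely when $\ze_i$ and $\ze_{i'}$ agree at $\ov s$. The formal neighborhood of $\sum_i\Ga_{i,\ov s}$ factors as a product over the distinct points, and the affine Schubert condition factors analogously by the fusion property of Beilinson--Drinfeld affine Grassmannians, as in \cite[Proposition 1.10]{Laf16}. Under this factorization, membership in $\cGr^{(I_1,\dotsc,I_k)}_{G,\mu_\bullet}$ translates into the stated pointwise bound with the sum running over coincident indices.

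The hard part will be the reverse direction in part 3, namely showing that the pointwise bound on relative positions is not only necessary but also sufficient for lying in $\cGr^{(I_1,\dotsc,I_k)}_{G,\mu_\bullet}|_{\prod_i\Spd F_i}$. This requires combining the v-closedness of the Schubert variety from part 2 with the fact that membership in a closed v-subsheaf can be tested pointwise at geometric points, thereby reducing the question to the fusion decomposition at each geometric point.
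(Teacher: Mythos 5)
Your overall strategy matches the paper's: identify the structure sheaves of the divisors, deduce the loop-group and Grassmannian identifications, and read off the Schubert description from the algebraic theory. Part 1 is fine (the paper calls it ``immediate''; Weierstrass division for the distinguished polynomial $\prod_{i\in I}(z-\ze_i)^n$ is a correct way to see it). The genuine gap is in part 2. From the ring isomorphisms $\wh\cO_C(S^{\alg})\cong B^+_{\dR}(S)$ you conclude that ``the functor-of-points descriptions of $(\wh\Gr^{(I_1,\dotsc,I_k)}_G)^\Diamond$ and $\cGr^{(I_1,\dotsc,I_k)}_G$ agree on affinoid perfectoid test objects,'' but that is precisely the nontrivial step: an $S$-point of $(\wh\Gr^{(I_1,\dotsc,I_k)}_G)^\Diamond$ is a morphism of adic spaces into a formal completion of an ind-scheme, not a priori an $R$-point of the algebraic moduli problem. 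The paper bridges this by writing $\Gr^{(I_1,\dotsc,I_k)}_{G_C}=\varinjlim_lX_l$ with $X_l$ projective, commuting $S^{\alg}$-points with the ind-limit via \cite[Lemma 5.4]{HV11} (quasi-compactness of $\Spec R$), and then invoking Lemma \ref{ss:analytificationfunctorofpoints} --- proved earlier exactly for this purpose --- to identify adic-space points of $(X_l)_{\bD^I}$ with continuous $R$-points of $X_l$. Likewise, for the closed-subsheaf claim you need the natural map $\wh\Gr^{(I_1,\dotsc,I_k)}_{G,\mu_\bullet}|_{\prod_{i\in I}\bD_i}\ra(\Gr^{(I_1,\dotsc,I_k)}_{G_C,\mu_\bullet})_{\prod_{i\in I}\bD_i}$ to be an isomorphism, which uses projectivity of the Schubert variety via \cite[(4.6.iv.d)]{Hub94}; for a non-proper scheme the formal completion is only an open piece of the analytification and closedness of the image would fail. ``The diamond functor preserves the closed embedding'' does not substitute for this.

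On part 3, you have inverted where the work lies. The principle you propose --- that membership in a closed v-subsheaf can be tested on geometric points --- is false in general (closed immersions are not determined by their geometric points), so the ``reverse direction'' cannot be settled that way. But no such principle is needed: once part 2 identifies $S$-points of $\cGr^{(I_1,\dotsc,I_k)}_{G,\mu_\bullet}|_{\prod_{i\in I}\Spd F_i}$ with $R$-points of the algebraic Schubert variety $\Gr^{(I_1,\dotsc,I_k)}_{G_C,\mu_\bullet}|_{\prod_{i\in I}U_i}$, the pointwise characterization in terms of relative positions bounded by $\sum_{i'}\mu_{i'}$ over colliding legs is exactly how that closed Schubert variety is defined over $\prod_{i\in I}U_i$ in \ref{ss:affineschubertvarieties}; the paper simply cites this. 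Your fusion discussion is consistent with that definition but is not an independent argument.
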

\begin{proof}
  The first claim is immediate, which identifies $(L^n_I(G_C))_{\bD^I}^\Diamond$ with $\cL^n_I(G)$. The first claim also induces isomorphisms $\wh\cO_C(S^{\alg})\cong B^+_{\dR}(S)$ and $\wh\cO^{j,\circ}_C(S^{\alg})\cong B_{\dR}(S)$, which identifies $(L^+_I(G_C))_{\bD^I}^\Diamond$ with $\cL^+_I(G)$. This also shows that, for any presentation of $\Gr^{(I_1,\dotsc,I_k)}_{G_C}$ as a directed limit $\varinjlim_lX_l$ of projective schemes $X_l$ over $C^I$, we have
  \begin{align*}
    \cGr^{(I_1,\dotsc,I_k)}_G(S) = \Gr^{(I_1,\dotsc,I_k)}_{G_C}(S^{\alg}) = (\textstyle\varinjlim_lX_l)(S^{\alg}) = \varinjlim_lX_l(S^{\alg}) = \varinjlim_l(X_l)^\Diamond_{\bD^I}(S),
  \end{align*}
  where the last two equalities follow from \cite[Lemma 5.4]{HV11} and Lemma \ref{ss:analytificationfunctorofpoints}, respectively. Now \ref{ss:affineschubertvarieties} indicates that $\Gr^{(I_1,\dots,I_k)}_{G_C,\mu_\bullet}|_{\prod_{i\in I}C_i}$ is a closed subscheme of $X_l|_{\prod_{i\in I}C_i}$ for large enough $l$. Since $\Gr^{(I_1,\dots,I_k)}_{G_C,\mu_\bullet}|_{\prod_{i\in I}C_i}$ is projective over $\prod_{i\in I}C_i$, the natural morphism of adic spaces $\wh\Gr^{(I_1,\dotsc,I_k)}_{G,\mu_\bullet}|_{\prod_{i\in I}\bD_i}\ra(\Gr^{(I_1,\dots,I_k)}_{G_C,\mu_\bullet})_{\prod_{i\in I}\bD_i}$ is an isomorphism \cite[(4.6.iv.d)]{Hub94}. Hence taking $(-)^\Diamond$ yields the desired closed subsheaf
  \begin{align*}
        \cGr^{(I_1,\dotsc,I_k)}_{G,\mu_\bullet}|_{\prod_{i\in I}\bD_i^\Diamond}\subseteq\cGr^{(I_1,\dotsc,I_k)}_G|_{\prod_{i\in I}\bD_i^\Diamond}.
  \end{align*}
Finally, the description of $\cGr^{(I_1,\dotsc,I_k)}_{G,\mu_\bullet}|_{\prod_{i\in I}\Spd F_i}$ follows from \ref{ss:affineschubertvarieties}.
\end{proof}

\subsection{}\label{ss:analyticlocalshtukas}
Now, we can define an analytic version of local $G$-shtukas over $S$. Let $a$ in $\bZ[\textstyle\frac1p]$ be non-negative. For any $i$ in $I$, if $\ze_i^a$ is an $R^{\circ\circ}$-multiple of $\vpi$, then $\rad(\Ga_i)$ lie in $[0,a)$. As $\ze_i$ is topologically nilpotent, this always holds for large enough $a$.
\begin{defn*}\hfill
  \begin{enumerate}[a)]
  \item An \emph{analytic local $G$-shtuka} over $S$ consists of
    \begin{enumerate}[i)]
    \item for all $1\leq j\leq k$, a $G$-bundle $\sG_j$ on $\cY_{S,[0,\infty)}$,
    \item for all $1\leq j\leq k$, an isomorphism of $G$-bundles
      \begin{align*}
        \phi_j:\sG_j|_{\cY_{S,[0,\infty)}\ssm\sum_{i\in I_j}\Ga_i}\ra^\sim\sG_{j+1}|_{\cY_{S,[0,\infty)}\ssm\sum_{i\in I_j}\Ga_i},
      \end{align*}
      that is meromorphic along $\sum_{i\in I_j}\Ga_i$, where $\sG_{k+1}$ denotes the $G$-bundle $\prescript\tau{}{\sG_1}$.
    \end{enumerate}
  \item Suppose that $S$ lies over $\prod_{i\in I}\bD_i$, and let $\sG$ be an analytic local $G$-shtuka over $S$. We say that $\sG$ is \emph{bounded by $\mu_\bullet$} if, for any affinoid perfectoid \'etale cover $\Spa(\wt{R},\wt{R}^+)\ra S$ where $\prescript\tau{}{\sG_1}|_{\cY_{\Spa(\wt{R},\wt{R}^+),[0,\infty)}}$ is trivial and any trivialization $t:\prescript\tau{}{\sG_1}|_{\cY_{\Spa(\wt{R},\wt{R}^+),[0,\infty)}}\ra^\sim G$, the $\Spa(\wt{R},\wt{R}^+)$-point of $\cGr^{(I_1,\dotsc,I_k)}_G|_{\prod_{i\in I}\bD_i^\Diamond}$ given by
  \begin{align*}
    \xymatrixcolsep{0.75in}
    \xymatrix{\sG_1|_{\Spec B^+_{\dR}(\wt{R})}\ar@{-->}[r]^-{(\phi_1)_{B^1_{\dR}(\wt{R})}} & \dotsm\ar@{-->}[r]^-{(\phi_{k-1})_{B^{k-1}_{\dR}(\wt{R})}} & \sG_k|_{B^+_{\dR}(\wt{R})}\ar@{-->}[r]^-{(t\circ\phi_k)_{B^k_{\dR}(\wt{R})}} & G}
  \end{align*}
lies in $\cGr^{(I_1,\dotsc,I_k)}_{G,\mu_\bullet}|_{\prod_{i\in I}\bD_i^\Diamond}$.
    
  \item Let $\sG$ and $\sG'$ be analytic local $G$-shtukas over $S$. A \emph{quasi-isogeny} from $\sG$ to $\sG'$ consists of, for some large enough rational $a$ and all $1\leq j\leq k$, an isomorphism of $G$-bundles
    \begin{align*}
      \de_j:\sG_j|_{\cY_{S,[a,\infty)}}\ra^\sim\sG_j'|_{\cY_{S,[a,\infty)}}
    \end{align*}
    such that the diagram
    \begin{align*}
      \xymatrixcolsep{0.75in}
            \xymatrix{\sG_j|_{\cY_{S,[a,\infty)}}\ar[r]^-{(\phi_j)_{\cY_{S,[a,\infty)}}}\ar[d]^-{\de_j} & \sG_{j+1}|_{\cY_{S,[a,\infty)}}\ar[d]^-{\de_{j+1}}\\
      \sG_j'|_{\cY_{S,[a,\infty)}}\ar[r]^-{(\phi_j')_{\cY_{S,[a,\infty)}}} & \sG'_{j+1}|_{\cY_{S,[a,\infty)}}}
    \end{align*}
    commutes, where $\de_{k+1}$ denotes the isomorphism $\prescript\tau{}{\de_1}$.
  \end{enumerate}
\end{defn*}
It suffices to check Definition \ref{ss:analyticlocalshtukas}.b) for a single $\Spa(\wt{R},\wt{R}^+)\ra S$ and $t$.

\subsection{}
We now define the analytic moduli of local $G$-shtukas.
\begin{defn*}
  Write $\cLocSht^{(I_1,\dotsc,I_k)}_{G,\mu_\bullet}|_{\prod_{i\in I}\bD_i^\Diamond}$ for the small v-sheaf over $\textstyle\prod_{i\in I}\bD_i^\Diamond$ whose $S$-points parametrize data consisting of
  \begin{enumerate}[i)]
  \item an analytic local $G$-shtuka over $S$ bounded by $\mu_\bullet$,
  \item a quasi-isogeny $\de$ from $\sG$ to the trivial analytic local $G$-shtuka $G$.
  \end{enumerate}
Write $f^\cL:\cLocSht^{(I_1,\dotsc,I_k)}_{G,\mu_\bullet}|_{\prod_{i\in I}\bD_i^\Diamond}\ra\textstyle\prod_{i\in I}\bD_i^\Diamond$ for the structure morphism.
\end{defn*}

\subsection{}
Let us compare the formal and analytic moduli of local $G$-shtukas. Recall $\fLocSht^{(I_1,\dotsc,I_k)}_{G,\mu_\bullet}|_{\prod_{i\in I}\bD_i}$ from Definition \ref{ss:formalmoduli}.
\begin{prop*}
  Our $(\fLocSht^{(I_1,\dotsc,I_k)}_{G,\mu_\bullet}|_{\prod_{i\in I}\bD_i})^\Diamond$ is the analytific sheafification of the presheaf over $\textstyle\prod_{i\in I}\bD_i^\Diamond$ whose $S$-points parametrize data consisting of
  \begin{enumerate}[i)]
  \item an algebraic local $G$-shtuka $\sG$ over $S$ bounded by $\mu_\bullet$,
  \item a quasi-isogeny $\de$ from $\sG$ to the trivial algebraic local $G$-shtuka $G$.
  \end{enumerate}
  In particular, we have a canonical morphism of v-sheaves over $\textstyle\prod_{i\in I}\bD_i^\Diamond$
  \begin{align*}
    \ul{\an}:(\fLocSht^{(I_1,\dotsc,I_k)}_{G,\mu_\bullet}|_{\prod_{i\in I}\bD_i})^\Diamond\ra\cLocSht^{(I_1,\dotsc,I_k)}_{G,\mu_\bullet}|_{\prod_{i\in I}\bD_i^\Diamond}
  \end{align*}
given by pulling back $(\sG,\de)$ to $\cY_{S,[0,\infty)}$.
\end{prop*}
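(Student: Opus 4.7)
The plan is, for an affinoid perfectoid $S=\Spa(R,R^+)$ over $\prod_{i\in I}\bD_i^\Diamond$, to unpack what an $S$-point of the left-hand side is and match it with the presheaf described in i), ii) after passing to an analytic cover of $S$. By Theorem \ref{ss:formalmodulirepresentable}, $\fLocSht^{(I_1,\dotsc,I_k)}_{G,\mu_\bullet}|_{\prod_{i\in I}\bD_i}$ is a formal scheme locally formally of finite type over $\prod_{i\in I}\bD_i$. Consequently, an $S$-point of its associated diamond corresponds, after passing to an analytic open cover of $S$, to a morphism of formal schemes $\Spf R^+\to\fLocSht^{(I_1,\dotsc,I_k)}_{G,\mu_\bullet}$, with $R^+$ equipped with its natural adic topology; continuity of the structure map from $\bF_q\lb{\ze_i}_{i\in I}$ is automatic because each $\ze_i$ is topologically nilpotent in $R^+$.

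Unfolding via Definition \ref{ss:formalmoduli}, such a morphism is exactly a compatible system of local $G$-shtukas $\sG^l$ over $\Spec R^+/J^l$ bounded by $\mu_\bullet$ together with quasi-isogenies $\de^l$ to the trivial shtuka, where $J$ is an ideal of definition of $R^+$ containing every $\ze_i$; note that the transition maps $R^+/J^{l'}\to R^+/J^l$ have nilpotent kernel, as required by Lemma \ref{ss:nilpotentalgebraization}. Lemma \ref{ss:nilpotentalgebraization} then algebraizes $(\sG^l)_l$ to an algebraic local $G$-shtuka $\sG$ over $S$ bounded by $\mu_\bullet$, and Lemma \ref{ss:quasiisogenylimit} algebraizes $(\de^l)_l$ to a quasi-isogeny $\de$ from $\sG$ to the trivial algebraic local $G$-shtuka; the uniform bound on the $\de^l$ needed here is supplied by Proposition \ref{ss:rigidisogeny} applied to the $\mu_\bullet$-boundedness of the $\sG^l$. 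The resulting pair $(\sG,\de)$ is precisely an $S$-point of the presheaf in i), ii), and since the identification was effected after an analytic cover of $S$, taking the analytic sheafification yields the description of $(\fLocSht^{(I_1,\dotsc,I_k)}_{G,\mu_\bullet}|_{\prod_{i\in I}\bD_i})^\Diamond$ claimed in the proposition.

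For the morphism $\ul{\an}$, given an $S$-point $(\sG,\de)$ of the presheaf, one pulls back the $G$-bundles along the natural morphism of locally ringed spaces $\cY_{S,[0,\infty)}\to\Spec R^+\lb{z}$ (and the transition isomorphisms $\phi_j$ along the corresponding morphisms on their punctures), and pulls back $\de$ along $\cY_{S,[1/r,\infty)}\to\Spec R^+\ba{z,\textstyle\frac{\vpi^r}z}[\textstyle\frac1z]$ for small enough positive $r$ in $\bZ[\frac1p]$; boundedness by $\mu_\bullet$ is preserved because of the compatibility of the algebraic and analytic affine Grassmannians recorded in Lemma \ref{ss:BDgrassmanniancomparison}. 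The main technical obstacle is the first step, namely verifying that the diamond of the formal scheme $\fLocSht^{(I_1,\dotsc,I_k)}_{G,\mu_\bullet}$ is indeed computed locally on affinoid perfectoid $S$ by morphisms $\Spf R^+\to\fLocSht^{(I_1,\dotsc,I_k)}_{G,\mu_\bullet}$, and hence by the inverse-limit data that the algebraization lemmas of \S\ref{s:formallocalshtukas} are designed to handle; the $\mu_\bullet$-boundedness is what allows Proposition \ref{ss:rigidisogeny} to supply the compatibility estimate required by Lemma \ref{ss:quasiisogenylimit}.
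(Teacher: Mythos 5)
Your proposal is correct and follows essentially the same route as the paper: identify the diamond of the locally noetherian formal scheme (via Theorem \ref{ss:formalmodulirepresentable}) with the analytic sheafification of $\Spf R^+$-points, rewrite these as an inverse limit over the quotients by powers of an ideal of definition, and then invoke Lemma \ref{ss:nilpotentalgebraization} and Lemma \ref{ss:quasiisogenylimit} (the latter's boundedness input coming from Proposition \ref{ss:rigidisogeny}) to algebraize. The construction of $\ul{\an}$ by pullback to $\cY_{S,[0,\infty)}$ also matches the paper's argument.
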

\begin{proof}
Theorem \ref{ss:formalmodulirepresentable} shows that $\fLocSht^{(I_1,\dotsc,I_k)}_{G,\mu_\bullet}|_{\prod_{i\in I}\bD_i}$ is a locally noetherian formal scheme, so as an adic space it is the analytic sheafification of the presheaf
\begin{align*}
\Spa(A,A^+)\mapsto\Hom(\Spa(A^+,A^+), \fLocSht^{(I_1,\dotsc,I_k)}_{G,\mu_\bullet}|_{\prod_{i\in I}\bD_i}).
\end{align*}
Because $R^+$ is adic with ideal of definition generated by $\vpi$, we have
\begin{align*}
  &\Hom(S, \fLocSht^{(I_1,\dotsc,I_k)}_{G,\mu_\bullet}|_{\prod_{i\in I}\bD_i}) \\
  = \,&\Hom(\Spf R^+, \fLocSht^{(I_1,\dotsc,I_k)}_{G,\mu_\bullet}|_{\prod_{i\in I}\bD_i})\\
  = \,&\textstyle\varprojlim_n\Hom(\Spec R^+/\vpi^n, \fLocSht^{(I_1,\dotsc,I_k)}_{G,\mu_\bullet}|_{\prod_{i\in I}\bD_i}).
\end{align*}
From here, Lemma \ref{ss:nilpotentalgebraization} and Lemma \ref{ss:quasiisogenylimit} yield the first claim. The second claim follows from the fact that $\cLocSht^{(I_1,\dotsc,I_k)}_{G,\mu_\bullet}|_{\prod_{i\in I}\bD_i^\Diamond}$ is already a sheaf in the analytic topology, so pulling back $(\sG,\de)$ induces a morphism $\ul{\an}$ as desired.
\end{proof}

\begin{thm}\label{thm:analytificationisomorphism}
Our $\ul{\an}$ is an isomorphism. Consequently, $\cLocSht^{(I_1,\dotsc,I_k)}_{G,\mu_\bullet}|_{\prod_{i\in I}\Spd F_i}$ is a locally spatial diamond.
\end{thm}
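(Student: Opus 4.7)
The plan is to construct a quasi-inverse to $\ul{\an}$ v-locally on the base. Both the source and target of $\ul{\an}$ are v-sheaves, so it suffices to produce such a quasi-inverse on $S$-points for $S = \Spa(R, R^+)$ a product of points in the sense of \cite[Definition 1.2]{Gle20}; these form a v-cover of arbitrary affinoid perfectoids over $\prod_{i \in I} \bD_i^\Diamond$.

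Given an $S$-point $(\sG, \de)$ of the right-hand side, I first extend each $G$-bundle $\sG_j$ from $\cY_{S,[0,\infty)}$ to all of $\cY_S$. For large enough rational $a$, the quasi-isogeny $\de$ provides a trivialization $\de_j : \sG_j|_{\cY_{S,[a,\infty)}} \ra^\sim G$, and every $\Ga_i$ lies in $\cY_{S,[0,a)}$. The compatibility of $\de$ with the $\phi_j$ identifies each $\phi_j$ with the identity under these trivializations on $\cY_{S,[a,\infty)}$, so the $\sG_j$ (respectively, $\phi_j$) glue with the trivial bundle (respectively, the identity) over $\cY_{S,[a,\infty]}$ to give $G$-bundles on $\cY_S$ together with isomorphisms $\phi_j$ on $\cY_S \ssm \sum_{i \in I_j} \Ga_i$. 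The boundedness of $\sG$ by $\mu_\bullet$, interpreted via Lemma \ref{ss:BDgrassmanniancomparison}, guarantees that each extended $\phi_j$ is meromorphic along $\sum_{i \in I_j} \Ga_i$ in the sense required by Theorem \ref{ss:algebraizebundles}.

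Since $S$ is a product of points, Theorem \ref{ss:algebraizebundles} now algebraizes the $(\sG_j, \phi_j)$ on $\cY_S$ to a system of $G$-bundles on $\Spec R^+\lb{z}$ with isomorphisms on $\Spec R^+\lb{z}[\frac{1}{z-\ze_i}]_{i\in I_j}$. Canonicity of this algebraization preserves the $\tau$-identification $\sG_{k+1} = \prescript\tau{}{\sG_1}$, so the output is an algebraic local $G$-shtuka over $S$. Under the identification $B_{S,[1/r,\infty]} = R^+\ba{z, \frac{\vpi^r}{z}}[\frac{1}{z}]$ from Lemma \ref{ss:Batinfinity}, the trivialization on $\cY_{S,[a,\infty]}$ corresponds to an algebraic quasi-isogeny in the sense of Definition \ref{ss:algebraiclocalshtuka}.c), and the $\mu_\bullet$-boundedness transfers through Lemma \ref{ss:BDgrassmanniancomparison}, which identifies the analytic and formal versions of the relevant affine Schubert varieties. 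The full-faithfulness clause of Theorem \ref{ss:algebraizebundles}, together with Lemma \ref{ss:quasiisogenylimit}, then shows that this construction is a quasi-inverse to $\ul{\an}$.

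The step I expect to be most delicate is verifying that the gluing-and-algebraization procedure is intrinsic -- independent of the choice of $a$ and of the concrete trivialization data -- so that it actually v-descends from products of points to arbitrary affinoid perfectoid bases, yielding a morphism of v-sheaves rather than merely a pointwise inverse. For the consequence about being a locally spatial diamond, Theorem \ref{ss:formalmodulirepresentable} exhibits $\fLocSht^{(I_1,\dotsc,I_k)}_{G,\mu_\bullet}|_{\prod_{i\in I}\bD_i}$ as a formal scheme locally formally of finite type over $\prod_{i \in I} \bD_i$, so its generic fiber is an analytic adic space locally of finite type; its associated v-sheaf is a locally spatial diamond by standard results of \cite{SW20}, and the isomorphism $\ul{\an}$ transfers this property to $\cLocSht^{(I_1,\dotsc,I_k)}_{G,\mu_\bullet}|_{\prod_{i \in I} \Spd F_i}$.
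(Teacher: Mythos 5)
Your proposal is correct and follows essentially the same route as the paper: reduce to $S$ a product of points, glue $\sG_j|_{\cY_{S,[0,a]}}$ with the trivial bundle over $\cY_{S,[a,\infty]}$ via $\de_j$, algebraize with Theorem \ref{ss:algebraizebundles}, and recover the quasi-isogeny from Lemma \ref{ss:Batinfinity}. The step you flag as delicate dissolves in the paper's framing: since $\ul{\an}$ is already a morphism of v-sheaves, one need only check that it is \emph{bijective} on $S$-points for $S$ in a basis of the v-topology (using the uniqueness clauses of Theorem \ref{ss:algebraizebundles} and of Kedlaya--Liu gluing for injectivity), rather than constructing and descending a separately defined quasi-inverse; the only point requiring care is that products of points are totally disconnected, so no analytic sheafification is needed when evaluating the formal moduli on them.
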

\begin{proof}
First, we prove that $\ul{\an}$ is an isomorphism. Because products of points as in \cite[Definition 1.2]{Gle20} form a basis for the v-topology \cite[Example 1.1]{Gle20}\footnote{However, in \cite[Example 1.1]{Gle20} one must replace the $k(x)$ with its completed algebraic closure $C(x)$ and $k(x)^+$ with its integral closure in $C(x)$.} and both $(\fLocSht^{(I_1,\dotsc,I_k)}_{G,\mu_\bullet}|_{\prod_{i\in I}\bD_i})^\Diamond$ and $\cLocSht^{(I_1,\dotsc,I_k)}_{G,\mu_\bullet}|_{\prod_{i\in I}\bD_i^\Diamond}$ are v-sheaves, it suffices to check this on $S$-points when $S$ is a product of points. Products of points are totally disconnected \cite[Proposition 1.6]{Gle20}, so we do not need to analytically sheafify when evaluating $\fLocSht^{(I_1,\dotsc,I_k)}_{G,\mu_\bullet}|_{\prod_{i\in I}\bD_i}$ on them.

So assume that $S$ is a product of points, and let $(\sG,\de)$ be an $S$-point of
\begin{align*}
  \cLocSht^{(I_1,\dotsc,I_k)}_{G,\mu_\bullet}|_{\prod_{i\in I}\bD_i^\Diamond}.
\end{align*}
For large enough rational $a$ and all $1\leq j\leq k$, we can use $\de_j|_{\cY_{S,[a,a]}}$ to glue $\sG_j|_{\cY_{S,[0,a]}}$ and $G|_{\cY_{S,[a,\infty]}}$ into a $G$-bundle $\ov\sG_j$ on $\cY_S$. The commutativity of the square in Definition \ref{ss:analyticlocalshtukas}.c) imply that $\phi_j$ and $\id$ glue into an isomorphism of $G$-bundles
  \begin{align*}
    \ov\phi_j:\ov\sG_j|_{\cY_S\ssm\sum_{i\in I_j}\Ga_i}\ra^\sim\ov\sG_{j+1}|_{\cY_S\ssm\sum_{i\in I_j}\Ga_i},
  \end{align*}
  where $\ov\sG_{k+1}$ denotes the $G$-bundle $\prescript\tau{}{\ov\sG_1}$. Then Theorem \ref{ss:algebraizebundles} indicates that $\ov\sG_j$ and $\ov\phi_j$ are uniquely pulled back from a $G$-bundle $\sG_j^{\alg}$ on $\Spec R^+\lb{z}$ and an isomorphism of $G$-bundles $\phi_j^{\alg}:\sG_j^{\alg}|_{\Spec R^+\lb{z}[\frac1{z-\ze_i}]_{i\in I_j}}\ra^\sim\sG^{\alg}_{j+1}|_{\Spec R^+\lb{z}[\frac1{z-\ze_i}]_{i\in I_j}}$, where $\sG^{\alg}_{k+1}$ denotes the $G$-bundle $\prescript\tau{}\sG_1^{\alg}$.

  Altogether $\sG^{\alg}\coloneqq((\sG^{\alg}_j)_{j=1}^k,(\phi^{\alg}_j)_{j=1}^k)$ is an algebraic local $G$-shtuka over $S$. Since $\sG$ is bounded by $\mu_\bullet$, Lemma \ref{ss:BDgrassmanniancomparison} shows that $\sG^{\alg}$ is too. Finally, take $a$ for which $r\coloneqq 1/a$ lies in $\bZ[\textstyle\frac1p]$. Applying Lemma \ref{ss:Batinfinity}, Proposition \ref{ss:Ysousperfectoid}, and \cite[Theorem 2.7.7]{KL15} to the canonical isomorphism $\sG_j|_{\cY_{S,[a,\infty]}}\ra^\sim G$ yields an isomorphism of $G$-bundles $\de_j^{\alg}:\sG^{\alg}_j|_{\Spec R^+\ba{z,\frac{\vpi^r}z}[\frac1z]}\ra^\sim G$, and we see that $\de^{\alg}\coloneqq(\de^{\alg}_j)_{j=1}^k$ is a quasi-isogeny from $\sG^{\alg}$ to $G$. The uniqueness of Theorem \ref{ss:algebraizebundles} and \cite[Theorem 2.7.7]{KL15} imply that $(\sG,\de)$ is uniquely the image of $(\sG^{\alg},\de^{\alg})$ under $\ul{\an}$. Hence $\ul{\an}$ is bijective on $S$-points, as desired. Finally, the last statement follows from $\fLocSht^{(I_1,\dotsc,I_k)}_{G,\mu_\bullet}|_{\prod_{i\in I}\Spa F_i}$ being an analytic adic space and \cite[Lemma 15.6]{Sch17}.
\end{proof}

\subsection{}\label{ss:localshtukalevelstructuredefinition}
Next, we turn to level structures. Let $n$ be a non-negative integer.
\begin{defn*}
  Suppose that $S$ lies over $(\Spa F)^I$, and let $\sG$ be an analytic local $G$-shtuka over $S$. A \emph{level-$n$ structure} on $\sG$ consists of, for all $1\leq j\leq k$, an isomorphism of $G$-bundles
  \begin{align*}
    \psi_j:\sG_j|_{\Spec R\lb{z}/z^n}\ra^\sim G
  \end{align*}
  such that the diagram
  \begin{align*}
    \xymatrixcolsep{0.75in}
    \xymatrix{\sG_j|_{\Spec R\lb{z}/z^n}\ar[r]^-{(\phi_j)_{R\lb{z}/z^n}}\ar[d]^-{\psi_j} & \sG_{j+1}|_{\Spec R\lb{z}/z^n}\ar[d]^-{\psi_{j+1}}\\
    G\ar@{=}[r]&  G
    }
  \end{align*}
commutes, where $\sG_{k+1}$ denotes $\prescript\tau{}{\sG_1}$, and $\psi_{k+1}$ denotes $\prescript\tau{}{\psi_1}$.
\end{defn*}
Since $S$ lies over $(\Spa F)^I$, the $(\phi_j)_{R\lb{z}/z^n}$ are isomorphisms. Therefore $\psi_1$ uniquely determines $\psi_j$ for $2\leq j\leq k$.

\subsection{}
We now define the covering tower of the generic fiber of $\cLocSht^{(I_1,\dotsc,I_k)}_{G,\mu_\bullet}|_{\prod_{i\in I}\bD^\Diamond_i}$.
\begin{defn*}
  Write $\cLocSht^{(I_1,\dotsc,I_k)}_{G,\mu_\bullet,nv}|_{\prod_{i\in I}\Spd F_i}$ for the small v-sheaf over $\textstyle\prod_{i\in I}\Spd F_i$ whose $S$-points parametrize data consisting of
  \begin{enumerate}[i)]
  \item an analytic local $G$-shtuka $\sG$ over $S$ bounded by $\mu_\bullet$,
  \item a quasi-isogeny $\de$ from $\sG$ to the trivial analytic local $G$-shtuka,
  \item a level-$n$ structure $\psi=(\psi_j)_{j=1}^k$ on $\sG$.
  \end{enumerate}
Write $f^\cL:\cLocSht^{(I_1,\dotsc,I_k)}_{G,\mu_\bullet,nv}|_{\prod_{i\in I}\Spd F_i}\ra\textstyle\prod_{i\in I}\Spd F_i$ for the structure morphism.
\end{defn*}

\subsection{}\label{ss:localshtukalevelstructure}
For $n'\geq n$, we have morphisms
\begin{align*}
  \cLocSht^{(I_1,\dotsc,I_k)}_{G,\mu_\bullet,n'v}|_{\prod_{i\in I}\Spd F_i}\ra\cLocSht^{(I_1,\dotsc,I_k)}_{G,\mu_\bullet,nv}|_{\prod_{i\in I}\Spd F_i}
\end{align*}
given by pulling back $\psi_j$ to $\Spec R\lb{z}/z^n$ for all $1\leq j\leq k$. Write $K_{n',n}$ for the kernel of $G(\cO_F/z^{n'})\ra G(\cO_F/z^n)$, and note that $K_{n',n}$ acts on $\cLocSht^{(I_1,\dotsc,I_k)}_{G,\mu_\bullet,n'v}|_{\prod_{i\in I}\Spd F_i}$ over $\cLocSht^{(I_1,\dotsc,I_k)}_{G,\mu_\bullet,nv}|_{\prod_{i\in I}\Spd F_i}$ via postcomposition with $\psi_j$ for all $1\leq j\leq k$.
\begin{prop*}
  The morphism
  \begin{align*}
    \cLocSht^{(I_1,\dotsc,I_k)}_{G,\mu_\bullet,n'v}|_{\prod_{i\in I}\Spd F_i}\ra\cLocSht^{(I_1,\dotsc,I_k)}_{G,\mu_\bullet,nv}|_{\prod_{i\in I}\Spd F_i}
  \end{align*}
  is finite Galois, where the Galois action is given by that of $K_{n',n}$. Consequently, $\cLocSht^{(I_1,\dotsc,I_k)}_{G,\mu_\bullet,nv}|_{\prod_{i\in I}\Spd F_i}$ is a locally spatial diamond.
\end{prop*}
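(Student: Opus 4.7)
The plan is to identify level-$n$ structures with trivializations of a natural $\ul{G(\cO_F/z^n)}$-bundle on $S$ via Proposition \ref{ss:OFlocalsystemstaumodules}, which makes the finite Galois statement transparent. Because $S$ lies over $(\Spa F)^I$, each uniformizer $\ze_i$ is invertible in $R$, so each $(\phi_j)_{R\lb{z}/z^n}$ is an isomorphism. The composite $\Phi\coloneqq(\phi_k)_{R\lb{z}/z^n}\circ\dotsb\circ(\phi_1)_{R\lb{z}/z^n}$ then yields an isomorphism $\sG_1|_{\Spec R\lb{z}/z^n}\ra^\sim\prescript\tau{}\sG_1|_{\Spec R\lb{z}/z^n}$, equipping $\sG_1|_{\Spec R\lb{z}/z^n}$ with a $\tau$-$G$-bundle structure. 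Unwinding Definition \ref{ss:localshtukalevelstructuredefinition}, the $\psi_j$ for $j\geq 2$ are determined by $\psi_1$ via $\psi_{j+1}=\psi_j\circ(\phi_j)_{R\lb{z}/z^n}^{-1}$, and the final compatibility $\psi_{k+1}=\prescript\tau{}\psi_1$ becomes $\prescript\tau{}\psi_1\circ\Phi=\psi_1$, i.e., $\psi_1$ is a trivialization of this $\tau$-$G$-bundle.

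Next, I would apply Proposition \ref{ss:OFlocalsystemstaumodules} to obtain a $\ul{G(\cO_F/z^n)}$-bundle $\bP_n$ on $S$ whose sections correspond to level-$n$ structures on $\sG$, functorially in $n$. Since $G$ is smooth, the map $G(\cO_F/z^{n'})\ra G(\cO_F/z^n)$ is surjective with kernel $K_{n',n}$, so $\bP_{n'}\ra\bP_n$ is a $\ul{K_{n',n}}$-torsor. Moreover, $G$ being of finite type over $\cO_F$ implies that the Weil restriction $\Res_{\cO_F/z^{n'}/\bF_q}G$ is of finite type over $\bF_q$, so its $\bF_q$-points $G(\cO_F/z^{n'})$ form a finite set, and hence $K_{n',n}$ is a finite group. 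Pulling back along the map $\cLocSht^{(I_1,\dotsc,I_k)}_{G,\mu_\bullet,nv}|_{\prod_{i\in I}\Spd F_i}\ra S$ classifying the universal level-$n$ structure then exhibits the morphism $\cLocSht^{(I_1,\dotsc,I_k)}_{G,\mu_\bullet,n'v}|_{\prod_{i\in I}\Spd F_i}\ra\cLocSht^{(I_1,\dotsc,I_k)}_{G,\mu_\bullet,nv}|_{\prod_{i\in I}\Spd F_i}$ as a $\ul{K_{n',n}}$-torsor under the postcomposition action of $K_{n',n}$ on $\psi_1$, yielding the desired finite Galois structure.

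For the consequence, $\cLocSht^{(I_1,\dotsc,I_k)}_{G,\mu_\bullet,0v}|_{\prod_{i\in I}\Spd F_i}$ coincides with $\cLocSht^{(I_1,\dotsc,I_k)}_{G,\mu_\bullet}|_{\prod_{i\in I}\Spd F_i}$, which is a locally spatial diamond by Theorem \ref{thm:analytificationisomorphism}. Since finite \'etale covers of locally spatial diamonds are locally spatial, applying the first part of the proposition with $n=0$ finishes the argument. The only subtle point is matching the compatibility squares of Definition \ref{ss:localshtukalevelstructuredefinition} with the trivialization condition for the $\tau$-$G$-bundle $(\sG_1|_{\Spec R\lb{z}/z^n},\Phi)$, but this is essentially formal; once that is in place, Proposition \ref{ss:OFlocalsystemstaumodules} together with the smoothness and finite-type property of $G$ delivers the remaining claims, and I do not anticipate a serious obstacle.
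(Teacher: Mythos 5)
Your proposal is correct and follows essentially the same route as the paper: reduce level structures to trivializations of the $\tau$-$G$-bundle $(\sG_1|_{\Spec R\lb{z}/z^n},(\phi_k\circ\dotsb\circ\phi_1)_{R\lb{z}/z^n})$, invoke Proposition \ref{ss:OFlocalsystemstaumodules} to translate these into sections of a $\ul{G(\cO_F/z^n)}$-bundle, and deduce the locally spatial diamond statement from the $n=0$ case via Theorem \ref{thm:analytificationisomorphism}. The only cosmetic difference is that the paper first treats $n=0$ and handles general $n$ by a commutative triangle, whereas you argue directly that $\bP_{n'}\ra\bP_n$ is a $\ul{K_{n',n}}$-torsor using smoothness of $G$; both are fine.
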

\begin{proof}
  First, take $n=0$, so that
  \begin{align*}
   \cLocSht^{(I_1,\dotsc,I_k)}_{G,\mu_\bullet,nv}|_{\prod_{i\in I}\Spd F_i}=\cLocSht^{(I_1,\dotsc,I_k)}_{G,\mu_\bullet}|_{\prod_{i\in I}\Spd F_i}.
  \end{align*}
  For any $S$-point $(\sG,\de)$ of $\cLocSht^{(I_1,\dotsc,I_k)}_{G,\mu_\bullet}|_{\prod_{i\in I}\Spd F_i}$, form the Cartesian square
  \begin{align*}
    \xymatrix{S'\ar[r]\ar[d] & \cLocSht^{(I_1,\dotsc,I_k)}_{G,\mu_\bullet,n'v}|_{\prod_{i\in I}\Spd F_i}\ar[d] \\
    S\ar[r] & \cLocSht^{(I_1,\dotsc,I_k)}_{G,\mu_\bullet}|_{\prod_{i\in I}\Spd F_i}.}
  \end{align*}
  Then $S'$ parametrizes level-$n'$ structures $\psi$ on $\sG$. Because $\psi_1$ uniquely determines $\psi_j$ for $2\leq j\leq k$, we see that level-$n'$ structures on $\sG$ are equivalent to trivializations of the $\tau$-$G$-bundle $(\sG_1|_{\Spec R\lb{z}/z^{n'}},(\phi_k\circ\dotsb\circ\phi_1)_{R\lb{z}/z^{n'}})$ over $\Spec R\lb{z}/z^{n'}$. Thus Proposition \ref{ss:OFlocalsystemstaumodules} and \cite[Proposition 9.7]{Sch17} imply that $S'\ra S$ is finite Galois with the desired Galois action.

  For general $n$, the result follows from the commutative triangle
  \begin{align*}
    \xymatrixcolsep{-.5in}
    \xymatrix{\cLocSht^{(I_1,\dotsc,I_k)}_{G,\mu_\bullet,n'v}|_{\prod_{i\in I}\Spd F_i}\ar[rr]\ar[rd] & & \cLocSht^{(I_1,\dotsc,I_k)}_{G,\mu_\bullet,nv}|_{\prod_{i\in I}\Spd F_i}\ar[ld] \\
    & \cLocSht^{(I_1,\dotsc,I_k)}_{G,\mu_\bullet}|_{\prod_{i\in I}\Spd F_i} &}
  \end{align*}
and compatibility of the $K_{n',n}$-action with changing $n'$ and $n$. Finally, the last statement follows from Theorem \ref{thm:analytificationisomorphism} and \cite[Lemma 11.21]{Sch17}.
\end{proof}

\subsection{}\label{ss:localheckecorrespondences}
The covering tower enjoys the following Hecke correspondences. Write
\begin{align*}
 \cLocSht^{(I_1,\dotsc,I_k)}_{G,\mu_\bullet,\infty v}\coloneqq\textstyle\varprojlim_n\cLocSht^{(I_1,\dotsc,I_k)}_{G,\mu_\bullet,nv}|_{\prod_{i\in I}\Spd F_v},
\end{align*}
and write $K_n$ for the kernel of $G(\cO_F)\ra G(\cO_F/z^n)$.
\begin{prop*}
  We have a canonical $G(F)$-action on $\cLocSht^{(I_1,\dotsc,I_k)}_{G,\mu_\bullet,\infty v}$ over $\textstyle\prod_{i\in I}\Spd F_i$ that extends the $G(\cO_F)$-action from \ref{ss:localshtukalevelstructure}. Consequently, for any $g$ in $G(F)$, we have a canonical finite \'etale correspondence $\mathbf{1}_{K_ngK_n}$ from $\cLocSht^{(I_1,\dotsc,I_k)}_{G,\mu_\bullet,nv}|_{\prod_{i\in I}\Spd F_i}$ to itself.
\end{prop*}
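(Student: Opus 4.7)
The plan is to define the $G(F)$-action by Hecke modifications at $z=0$, using the trivialization furnished by the level-$\infty$ structure, and then derive the correspondences $\mathbf{1}_{K_ngK_n}$ at finite level from the torsor structure in Proposition \ref{ss:localshtukalevelstructure}.

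To define the action of $g\in G(F)$ on an $S$-point $(\sG,\de,\psi)$ of $\cLocSht^{(I_1,\dotsc,I_k)}_{G,\mu_\bullet,\infty v}$, first note that the compatible system of level-$n$ structures assembles, via Proposition \ref{ss:OFlocalsystemstaumodules} together with the equivalence of Theorem \ref{ss:Robbaringpowerseries}, into a Frobenius-equivariant trivialization $\psi_j:\sG_j|_{\Spec R\lb{z}}\ra^\sim G$. Restricting to $\Spec R\lp{z}$ and composing with right translation by $g$ yields a rational trivialization, and Beauville--Laszlo gluing along $\{z=0\}\subset\cY_{S,[0,\infty)}$ produces a new $G$-bundle $\sG_j'$: unchanged on $\cY_{S,(0,\infty)}$, trivial on $\Spec R\lb{z}$, and glued over $\Spec R\lp{z}$ via the $g$-twisted rational trivialization. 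The new level-$\infty$ structure $\psi_j'$ is tautological.

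Since $S$ lies over $(\Spa F)^I$, the graphs $\Ga_i$ are disjoint from $\{z=0\}$; hence $\phi_j$, which is honestly defined at $z=0$ and only meromorphic along the $\Ga_i$, transfers to $\sG'$, and the boundedness by $\mu_\bullet$, which concerns relative positions at the $\Ga_i$, is preserved. The quasi-isogeny $\de$, defined on $\cY_{S,[a,\infty)}$ for large rational $a$, is disjoint from $\{z=0\}$ and transfers unchanged. For $g\in G(\cO_F)$ the modification does not affect the underlying bundle, and the trivialization changes by right multiplication, recovering the action of \ref{ss:localshtukalevelstructure}; associativity and unitality of the $G(F)$-action are clear from the construction. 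With the action in hand, fix $g\in G(F)$ and set $K\coloneqq K_n\cap g^{-1}K_ng$, a compact open subgroup of finite index in $K_n$. By the inverse limit of Proposition \ref{ss:localshtukalevelstructure}, the projection $\cLocSht^{(I_1,\dotsc,I_k)}_{G,\mu_\bullet,\infty v}/K\ra\cLocSht^{(I_1,\dotsc,I_k)}_{G,\mu_\bullet,nv}|_{\prod_{i\in I}\Spd F_i}$ is finite \'etale, and composing the $g$-action with projection to level $K_n$ yields a second finite \'etale map to the same target, well-defined since $gKg^{-1}\subseteq K_n$. This span is $\mathbf{1}_{K_ngK_n}$.

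The main obstacle will be rigorously realizing the Beauville--Laszlo modification at $z=0$ for analytic local $G$-shtukas, which requires careful interplay among the algebraic, formal, and analytic pictures of Sections \ref{s:formallocalshtukas}--\ref{s:analyticlocalshtukas}. One can either perform the modification on the formal/algebraic side via Theorem \ref{thm:analytificationisomorphism} and transfer back, or work directly with $\tau$-$G$-bundles on the Robba ring using Theorem \ref{ss:Robbaringpowerseries}; either way, checking v-descent and compatibility with the level-$n$ structures and the Frobenius is where the bulk of the bookkeeping lies.
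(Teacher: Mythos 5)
Your proposal is correct and matches the paper's proof essentially step for step: the paper likewise assembles $\psi_j=\varprojlim_n\psi_j^n$ into a trivialization of $\sG_j|_{\Spec R\lb{z}}$, uses Beauville--Laszlo along $\{z=0\}$ with the $g$-twisted rational trivialization to form $g\cdot\sG_j$, transfers $\phi_j$, the boundedness, and $\de$ via the canonical identification away from $z=0$, and realizes $\mathbf{1}_{K_ngK_n}$ as the span through $\cLocSht^{(I_1,\dotsc,I_k)}_{G,\mu_\bullet,\infty v}/(K_n\cap g^{-1}K_ng)$. The technical worry in your last paragraph is unnecessary — the gluing is performed directly on $\cY_{S,[0,\infty)}$ exactly as you describe, with no detour through the formal model.
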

\begin{proof}
  Let $(\sG,\de)$ be an $S$-point of $\cLocSht^{(I_1,\dotsc,I_k)}_{G,\mu_\bullet}|_{\prod_{i\in I}\Spd F_i}$, and let $(\psi^n)_{n\geq0}$ be a compatible system of level-$n$ structures $\psi^n$ on $\sG$. For all $1\leq j\leq k$, we see that $\psi_j\coloneqq\textstyle\varprojlim_n\psi^n_j$ yields an isomorphism of $G$-bundles $\sG_j|_{\Spec R\lb{z}}\ra^\sim G$. For any $g$ in $G(F)$, we get an isomorphism of $G$-bundles $g\circ(\psi_j)_{R\lp{z}}:\sG_j|_{\Spec R\lp{z}}\ra^\sim G$, which we use with Beauville--Laszlo to glue $G|_{\Spec R\lb{z}}$ and $\sG_j|_{\cY_{S,(0,\infty)}}$ into a $G$-bundle $g\cdot\sG_j$ on $\cY_{S,[0,\infty)}$.

  Since $(g\cdot\sG_j)|_{\cY_{S,(0,\infty)}\ssm\sum_{i\in I_j}\Ga_i}$ is canonically isomorphic to $\sG_j|_{\cY_{S,(0,\infty)}\ssm\sum_{i\in I_j}\Ga_i}$, the commutativity of the square in Definition \ref{ss:localshtukalevelstructuredefinition} and Beauville--Laszlo let us glue $\id$ and $(\phi_j)_{\cY_{S,(0,\infty)}\ssm\sum_{i\in I_j}\Ga_i}$ into an isomorphism of $G$-bundles
  \begin{align*}
    g\cdot\phi_j:(g\cdot\sG_j)|_{\cY_{S,[0,\infty)}\ssm\sum_{i\in I_j}\Ga_i}\ra^\sim(g\cdot\sG_{j+1})|_{\cY_{S,[0,\infty)}\ssm\sum_{i\in I_j}\Ga_i},
  \end{align*}
  where $g\cdot\sG_{k+1}$ denotes $\prescript\tau{}{(g\cdot\sG_1)}$. As $\sG$ is bounded by $\mu_\bullet$, the analytic local $G$-shtuka $g\cdot\sG\coloneqq((g\cdot\sG_j)_{j=1}^k,(g\cdot\phi_j)_{j=1}^k)$ is too. Because $(g\cdot\sG_j)|_{\cY_{S,[a,\infty)}}$ is canonically isomorphic to $\sG_j|_{\cY_{S,[a,\infty)}}$, our $\de$ induces a quasi-isogeny from $g\cdot\sG$ to $G$. Since $(g\cdot\sG_j)|_{\Spec R\lb{z}}$ is canonically trivial, we have the trivial level-$n$ structure $\id=(\id)_{j=1}^k$ on $g\cdot\sG$.

  Altogether, we define the image of $(\sG,\de,(\psi^n)_{n\geq0})$ under $g$ to be $(g\cdot\sG,\de,(\id)_{n\geq0})$. When $g$ lies in $G(\cO_F)$, our $g\circ(\psi_j)_{R\lp{z}}$ above extends to an isomorphism of $G$-bundles $g\circ\psi_j:\sG_j|_{\Spec R\lb{z}}\ra^\sim G$, and tracing through our identifications shows that this indeed recovers the action from \ref{ss:localshtukalevelstructure}. Finally, $\mathbf{1}_{K_ngK_n}$ is given by
  \begin{align*}
    \xymatrix{\cLocSht^{(I_1,\dotsc,I_k)}_{G,\mu_\bullet,\infty v}/(K_n\cap g^{-1}K_ng)\ar[r]^-g\ar[d] & \cLocSht^{(I_1,\dotsc,I_k)}_{G,\mu_\bullet,\infty v}/(gK_ng^{-1}\cap K_n)\ar[d] \\
    \cLocSht^{(I_1,\dotsc,I_k)}_{G,\mu_\bullet,\infty v}/K_n & \cLocSht^{(I_1,\dotsc,I_k)}_{G,\mu_\bullet,\infty v}/K_n
    }
  \end{align*}
  and identifying $\cLocSht^{(I_1,\dotsc,I_k)}_{G,\mu_\bullet,\infty v}/K_n$ with $\cLocSht^{(I_1,\dotsc,I_k)}_{G,\mu_\bullet,nv}|_{\prod_{i\in I}\Spd F_i}$.
\end{proof}

\subsection{}\label{ss:FSshtukas}
Recall the following variant of the moduli of local shtukas, which is defined purely in terms of the Fargues--Fontaine curve. Let $K$ be a compact open subgroup of $G(F)$.
\begin{defn*}
  Write $\cM^I_{G,\mu_\bullet,K}|_{\prod_{i\in I}\Spd F_i}$ for the small v-sheaf over $\textstyle\prod_{i\in I}\Spd F_i$ whose $S$-points parametrize data consisting of
  \begin{enumerate}[i)]
  \item a $G$-bundle $\sE$ on $X_S$ such that, for all geometric points $\ov{s}$ of $S$, its pullback $\sE_{\ov{s}}$ to $X_{\ov{s}}$ is trivial,
  \item an isomorphism of $G$-bundles
    \begin{align*}
\al:\sE|_{X_S\ssm\sum_{i\in I}\Ga_i}\ra^\sim G
    \end{align*}
    that is meromorphic along $\textstyle\sum_{i\in I}\Ga_i$ such that, for all geometric points $\ov{s}$ of $S$, the relative position of $\al_{\ov{s}}$ at $\Ga_{i,\ov{s}}$ is bounded by $\sum_{i'}\mu_{i'}$, where $i'$ runs over elements of $I$ satisfying $\Ga_{i',\ov{s}}=\Ga_{i,\ov{s}}$,
  \item a $\ul{K}$-bundle $\bP$ on $S$ whose pushforward along $\ul{K}\ra\ul{G(F)}$ equals the $\ul{G(F)}$-bundle on $S$ corresponding to $\sE$ via \cite[Theorem III.2.4]{FS21}.
  \end{enumerate}
Write $f^\cM:\cM^I_{G,\mu_\bullet,K}|_{\prod_{i\in I}\Spd F_i}\ra\textstyle\prod_{i\in I}\Spd F_i$ for the structure morphism.
\end{defn*}
Recall that $\cM^I_{G,\mu_\bullet,K}|_{\prod_{i\in I}\Spd F_i}$ is a locally spatial diamond.

\subsection{}\label{ss:FSshtukascompare}
The analytic moduli of local $G$-shtukas is related to the Fargues--Fontaine variant as follows.
\begin{prop*}
  We have a canonical morphism
  \begin{align*}
    c:\cLocSht^{(I)}_{G,\mu_\bullet,nv}|_{\prod_{i\in I}\Spd F_i}\ra\cM^I_{G,\mu_\bullet,K_n}|_{\prod_{i\in I}\Spd F_i}
  \end{align*} of locally spatial diamonds over $\textstyle\prod_{i\in I}\Spd F_i$. 
\end{prop*}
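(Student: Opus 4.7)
Since $(I)$ is the single-element partition, an $S$-point of the source consists of a $G$-bundle $\sG_1$ on $\cY_{S,[0,\infty)}$, a meromorphic isomorphism $\phi_1:\sG_1\ra^\sim\prescript\tau{}{\sG_1}$ along $\sum_{i\in I}\Ga_i$, a quasi-isogeny $\de_1:\sG_1|_{\cY_{S,[a,\infty)}}\ra^\sim G|_{\cY_{S,[a,\infty)}}$ for some large enough rational $a$, and a level-$n$ structure $\psi_1:\sG_1|_{\Spec R\lb{z}/z^n}\ra^\sim G$. The plan is to functorially construct from this data a target point $(\sE,\al,\bP)$.

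First, I would use $\de_1$ as the transition datum to glue $\sG_1|_{\cY_{S,(0,a+1)}}$ with $G|_{\cY_{S,[a,\infty)}}$ into a $G$-bundle $\wt\sE$ on $\cY_{S,(0,\infty)}$. The identity $\prescript\tau{}{\de_1}\circ\phi_1=\de_1$ forced by Definition \ref{ss:analyticlocalshtukas}.c) ensures that $\phi_1$ and $\id$ agree on the overlap away from $\sum\Ga_i$, hence glue into a meromorphic isomorphism $\wt\sE\ra^\sim\prescript\tau{}{\wt\sE}$ along $\sum\Ga_i$. Descending along $\cY_{S,(0,\infty)}\ra X_S=\cY_{S,(0,\infty)}/\tau^\bZ$ yields a $G$-bundle $\sE$ on $X_S$, together with a meromorphic trivialization $\al:\sE|_{X_S\ssm\sum\Ga_i}\ra^\sim G$ obtained by descending the canonical trivialization of $\wt\sE|_{\cY_{S,[a,\infty)}}$. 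At a geometric point $\ov{s}$ of $S$, triviality of $\sE_{\ov{s}}$ follows from the triviality of $\wt\sE|_{\cY_{\ov{s},[a,\infty)}}$ together with standard facts on $G$-bundles on the Fargues--Fontaine curve of a geometric point, while the relative-position bound on $\al_{\ov{s}}$ translates into the bound in Definition \ref{ss:analyticlocalshtukas}.b) via Lemma \ref{ss:BDgrassmanniancomparison}.

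To construct $\bP$, I would use \cite[Theorem III.2.4]{FS21} to associate to $\sE$ a $\ul{G(F)}$-bundle $\bP_{G(F)}$ on $S$. Pro-\'etale locally on $S$ the bundle $\sG_1$ trivializes, and the subset of such trivializations whose reduction modulo $z^n$ equals $\psi_1$ forms a $\ul{K_n}$-torsor $\bP$. Via the gluing description of $\wt\sE$, a trivialization of $\sG_1$ on $\cY_{S,(0,a+1)}$ assembles with the canonical trivialization of $G|_{\cY_{S,[a,\infty)}}$ into a trivialization of $\wt\sE$, hence of $\sE$ after descent; this identifies the pushforward of $\bP$ along $\ul{K_n}\ra\ul{G(F)}$ with $\bP_{G(F)}$. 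The assignment $(\sG,\de,\psi)\mapsto(\sE,\al,\bP)$ is manifestly functorial in $S$, giving the morphism $c$. Finally, since the source and target are locally spatial diamonds by Proposition \ref{ss:localshtukalevelstructure} and the remark after Definition \ref{ss:FSshtukas}, $c$ is automatically a morphism of locally spatial diamonds.

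The main obstacle I anticipate is the last compatibility: verifying that the pushforward of $\bP$ along $\ul{K_n}\ra\ul{G(F)}$ reproduces $\bP_{G(F)}$ requires tracing carefully through the Fargues--Scholze equivalence of \cite[Theorem III.2.4]{FS21} and its interaction with the gluing construction of $\wt\sE$ from $\sG_1$ and $\de_1$. Everything else in the argument is formal manipulation of the descriptions of the source and target v-sheaves.
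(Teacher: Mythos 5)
Your construction of $\sE$ has a genuine gap. To descend a $G$-bundle along $\cY_{S,(0,\infty)}\ra X_S=\cY_{S,(0,\infty)}/\tau^{\bZ}$ you need an honest $\tau$-equivariant structure, i.e.\ an isomorphism $\wt\sE\ra^\sim\prescript\tau{}{\wt\sE}$ everywhere; the datum you produce is only meromorphic along $\sum_{i\in I}\Ga_i$, which over the generic fiber is a nonempty divisor of $\cY_{S,(0,\infty)}$ with nonempty image in $X_S$. (In fact your $(\wt\sE,\wt\phi)$ is isomorphic to the original shtuka $(\sG_1,\phi_1)$ --- regluing by $\de_1$ changes nothing --- so you have not produced a descendable object.) A symptom that the picture is wrong: $\cY_{S,[a,\infty)}$ surjects onto $X_S$, and on it your descent datum is the identity and your bundle is canonically trivial; so if the descent existed, the trivialization would descend with it, $\al$ would be an isomorphism everywhere, and $\sE$ would be trivial as a trivialized bundle --- incompatible with $\al$ having relative position $\sum_i\mu_i$ at the legs when $\mu_\bullet\neq 0$.

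The missing idea is continuation by Frobenius from the region \emph{below} the legs, and this is how the paper proceeds. Since the legs have positive radius, $(\phi_1)_{\wt\cR^{\mathrm{int}}_R}$ is an isomorphism, so $(\sG_1|_{\Spec\wt\cR^{\mathrm{int}}_R},(\phi_1)_{\wt\cR^{\mathrm{int}}_R})$ is a $\tau$-$G$-bundle over the integral Robba ring; by Theorem \ref{ss:Robbaringpowerseries} and Proposition \ref{ss:OFlocalsystemstaumodules} it corresponds to a $\ul{G(\cO_F)}$-bundle on $S$, and $\psi_1$ cuts out the $\ul{K_n}$-reduction $\bP$. Propagating this $\tau$-$G$-bundle by Frobenius over all of $\cY_{S,(0,\infty)}$ yields a genuinely $\tau$-equivariant pair $(\sF,\ups)$ (which differs from $\sG_1$ by a modification along the $\tau$-orbit of the legs), and $\sE\coloneqq\sF/\ups^{\bZ}$. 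This makes the compatibility with \cite[Theorem III.2.4]{FS21} --- the step you flagged as the main obstacle --- hold essentially by construction, since $\sE$ is manifestly the bundle attached to the pushforward of $\bP$ along $\ul{K_n}\ra\ul{G(F)}$; and $\al$ is obtained from $\de_1$ by the same Frobenius propagation, which is exactly where the meromorphy and the position bound (via Lemma \ref{ss:BDgrassmanniancomparison}) enter. Note finally that even your construction of $\bP$ tacitly requires pro-\'etale local trivializations of $\sG_1$ that are $\phi_1$-equivariant (otherwise the transition functions are not locally constant and you do not get a $\ul{K_n}$-torsor); that existence statement is precisely Proposition \ref{ss:OFlocalsystemstaumodules} and is not automatic.
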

\begin{proof}
  Let $(\sG,\de,\psi)$ be an $S$-point of $\cLocSht^{(I)}_{G,\mu_\bullet,nv}$. Theorem \ref{ss:Robbaringpowerseries} and Proposition \ref{ss:OFlocalsystemstaumodules} show that $(\sG_1|_{\Spec\wt\cR^{\mathrm{int}}_R},(\phi_1)_{\wt\cR^{\mathrm{int}}_R})$ corresponds to a $\ul{G(\cO_F)}$-bundle on $S$, and Proposition \ref{ss:OFlocalsystemstaumodules} implies that $\psi_1$ corresponds to a reduction $\bP$ of this $\ul{G(\cO_F)}$-bundle to a $\ul{K_n}$-bundle. Via continuation by Frobenius, $(\sG_1|_{\Spec\wt\cR^{\mathrm{int}}_R},(\phi_1)_{\wt\cR^{\mathrm{int}}_R})$ also induces a $\tau$-$G$-bundle $(\sF,\ups)$ over $\cY_{S,[0,\infty)}$ such that $(\sF|_{\cY_{S,(0,\infty)}},(\ups)_{\cY_{S,(0,\infty)}})$ corresponds to the pushforward of $\bP$ along $\ul{K_n}\ra\ul{G(F)}$. Therefore the pullback of the $G$-bundle $\sE\coloneqq(\sF|_{\cY_{S,(0,\infty)}})/(\ups)_{\cY_{S,(0,\infty)}}^\bZ$ from $X_S$ to $X_{\ov{s}}$ is trivial for all geometric points $\ov{s}$ of $S$, and the corresponding $\ul{G(F)}$-bundle on $S$ via \cite[Theorem III.2.4]{FS21} equals the pushforward of $\bP$ along $\ul{K_n}\ra\ul{G(F)}$. Finally, continuation by Frobenius and Lemma \ref{ss:BDgrassmanniancomparison} indicate that $\de_1$ induces an isomorphism of $G$-bundles $\al:\sE|_{X_S\ssm\sum_{i\in I}\Ga_i}\ra^\sim G$ with the desired relative position bound, so altogether $(\sE,\al,\bP)$ yields an $S$-point of $\cM^I_{G,\mu_\bullet,K_n}$.
\end{proof}

\subsection{}\label{ss:FSnaivecompare}
We will need the following results of Fargues--Scholze \cite{FS21} on the intersection homology of the moduli of local shtukas. Recall the notation of \ref{ss:Lgroup}, and let $V$ be an object of $\Rep_E(\prescript{L}{}{G})^I$. Note that
\begin{align*}
  \textstyle\coprod_{\mu_\bullet}\cLocSht^{(I_1,\dotsc,I_k)}_{G,\mu_\bullet,nv}|_{(\Spd\wt{F})^I}\mbox{ and }\coprod_{\mu_\bullet}\cM^I_{G,\mu_\bullet,K}|_{(\Spd\wt{F})^I}
\end{align*}
naturally descend to small v-sheaves $\cLocSht^{(I_1,\dotsc,I_k)}_{G,V,nv}$ and $\cM^I_{G,V,K}$ over $(\Spd F)^I$, respectively, where $\mu_\bullet$ runs over highest weights appearing in $V_{\ov\bQ_\ell}|_{\wh{G}^I}$. Proposition \ref{ss:localshtukalevelstructure} and \cite[Proposition 13.4 (iv)]{Sch17} imply that $\cLocSht^{(I_1,\dotsc,I_k)}_{G,V,nv}$ is a locally spatial diamond, and we see that $\cM^I_{G,V,K}$ is also a locally spatial diamond.

Let $\La$ be $\cO_E$ or $E$, and now let $V$ be an object of $\Rep_{\cO_E}(\prescript{L}{}G)^I$. If $\La=\cO_E$, then by abuse of notation write $V$ for $V_E$. Write $(\Spd\breve{F})^I$ for the $I$-th power of $\Spd\breve{F}$ over $\ov\bF_q$, and write $\prescript\prime{}\cF_{V,K,\La}^I$ for the object of $D_{\solid}(\cM^I_{G,V,K}|_{(\Spd\breve{F})^I},\La)$ obtained from \cite[Theorem VI.11.1]{FS21} and $V$ by first applying the double-dual embedding as in \cite[p.~264]{FS21} and then pulling back to $\cM^I_{G,V,K}|_{(\Spd\breve{F})^I}$. Write $\prescript\prime{}\cF_{V,nv,\La}^{(I_1,\dotsc,I_k)}$ for the pullback of $\prescript\prime{}\cF_{V,K_n,\La}^I$ under the composition
\begin{align*}
\cLocSht^{(I_1,\dotsc,I_k)}_{G,V,nv}|_{(\Spd\breve{F})^I}\ra\cLocSht^{(I)}_{G,V,nv}|_{(\Spd\breve{F})^I}\ra^c\cM^I_{G,V,K_n}|_{(\Spd\breve{F})^I}.
\end{align*}

Write $W_F$ for the absolute Weil group of $F$.
\begin{thm*}
Our $c$ induces an isomorphism $f_\natural^\cL(\prescript\prime{}\cF_{V,nv,\La}^{(I)})\ra^\sim f_\natural^\cM(\prescript\prime{}\cF^I_{V,K,\La})$. Consequently, the object $f_\natural^\cL(\prescript\prime{}\cF_{V,nv,\La}^{(I)})$ of $D_{\solid}((\Spd\breve{F})^I,\La)$ naturally arises via pullback from $D(W_F^I,\La)$.
\end{thm*}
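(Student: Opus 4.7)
The plan is to show that $c$ is an isomorphism of locally spatial diamonds; since $\prescript\prime{}\cF^{(I)}_{V,nv,\La}$ is by definition the pullback of $\prescript\prime{}\cF^I_{V,K_n,\La}$ along $c$, the base change map $f_\natural^\cL(\prescript\prime{}\cF^{(I)}_{V,nv,\La})\to f_\natural^\cM(\prescript\prime{}\cF^I_{V,K_n,\La})$ induced by $c$ is then automatically an isomorphism, as $c_\natural c^{*}=\id$.

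To construct an inverse to $c$, start with an $S$-point $(\sE,\al,\bP)$ of $\cM^I_{G,\mu_\bullet,K_n}|_{\prod_{i\in I}\Spd F_i}$. Pushing $\bP$ forward along $\ul{K_n}\hookrightarrow\ul{G(\cO_F)}$ and applying the $G$-bundle form of Proposition \ref{ss:OFlocalsystemstaumodules} in the limit over $n$ produces a $\tau$-$G$-bundle on $\Spec R\lb{z}$, which by Theorem \ref{ss:Robbaringpowerseries} descends uniquely to a $\tau$-$G$-bundle $(\sH_0,\eta_0)$ on $\Spec\wt\cR^{\mathrm{int}}_R$. Iteratively gluing $(\sH_0,\eta_0)$ to its Frobenius pushforward via $\eta_0$ yields a $\tau$-$G$-bundle $(\sH,\eta)$ on all of $\cY_{S,[0,\infty)}$; by construction, its restriction to $\cY_{S,(0,\infty)}$ descends along $\tau^\bZ$ to the given $G$-bundle $\sE$ on $X_S$. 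The modification $\al$ at the legs then transports back through this identification to a meromorphic Frobenius structure on $\sH$, producing both a local $G$-shtuka structure $\phi_1$ and a quasi-isogeny $\de_1$ on $\cY_{S,[a,\infty)}$ for $a$ large. The level-$n$ structure $\psi_1$ is directly recovered from $\bP$ via Proposition \ref{ss:OFlocalsystemstaumodules}.

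Verifying that these two assignments are mutually inverse reduces, via v-descent onto products of points as in the proof of Theorem \ref{thm:analytificationisomorphism}, to tracing the equivalences of Section \ref{s:zadichodgetheory}. The main obstacle is keeping the various incarnations of $G$-bundles---on $\Spec\wt\cR^{\mathrm{int}}_R$, on $\Spec R^+\lb{z}$, on $\cY_{S,[0,\infty)}$, and on $X_S$---coherent and functorial; Theorem \ref{ss:algebraizebundles} and the uniqueness clauses in Theorem \ref{ss:Robbaringpowerseries} bridge the adic and algebraic pictures, while Theorem \ref{thm:analytificationisomorphism} confirms that the output represents an $S$-point of $\cLocSht^{(I)}_{G,\mu_\bullet,nv}|_{\prod_{i\in I}\Spd F_i}$. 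The $\mu_\bullet$-boundedness of the reconstructed shtuka corresponds to the relative position bound on $\al$ by Lemma \ref{ss:BDgrassmanniancomparison}.

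For the final assertion, note that $\cM^I_{G,V,K}$ is already defined over $(\Spd F)^I$ rather than just over $(\Spd\breve F)^I$, and hence $f_\natural^\cM(\prescript\prime{}\cF^I_{V,K,\La})$ canonically descends along $(\Spd\breve F)^I\to(\Spd F)^I$. Under Fargues--Scholze's dictionary identifying suitable solid sheaves on $\Spd F$ with continuous $W_F$-representations (a formalism used systematically in \cite[\S IX]{FS21}), this descended complex corresponds to an object of $D(W_F^I,\La)$, and the isomorphism of the first part then transfers this $W_F^I$-structure to $f_\natural^\cL(\prescript\prime{}\cF^{(I)}_{V,nv,\La})$.
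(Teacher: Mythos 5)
Your strategy hinges on the claim that $c$ is an isomorphism of locally spatial diamonds, and that claim is false; this is precisely the ``subtle difference'' between the two moduli problems that the paper flags at the beginning of \S\ref{s:analyticlocalshtukas}. The legs of an analytic local shtuka are divisors $\Ga_i$ on $\cY_{S,(0,\infty)}$, whereas the legs of a point of $\cM^I_{G,\mu_\bullet,K_n}$ are their images in $X_S=\cY_{S,(0,\infty)}/\tau^{\bZ}$. Over the locus of $\prod_{i\in I}\Spd F_i$ where two legs satisfy $\Ga_j=\tau^k\Ga_i$ with $k\neq 0$ (they agree in $\Div^1_F$ but not in $\Spd F$), the shtuka records two separate modifications, one at $\Ga_i$ bounded by $\mu_i$ and one at $\Ga_j$ bounded by $\mu_j$, while after descending to $X_S$ the trivialization $\al$ only records their composite at the common image, bounded by $\sum_{i'}\mu_{i'}$ --- note that the boundedness condition in Definition \ref{ss:FSshtukas} is phrased via equality of the $\Ga_{i',\ov{s}}$ as divisors on $X_{\ov{s}}$ for exactly this reason. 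Recovering the shtuka from $(\sE,\al,\bP)$ would require factoring this composite modification, and the factorization is neither unique nor canonical: the fibers of $c$ over this locus are convolution-Grassmannian fibers. So for $|I|\geq 2$ (the case needed for excursion operators) your inverse construction breaks down at the step ``the modification $\al$ at the legs then transports back $\dots$ to a meromorphic Frobenius structure on $\sH$,'' and $c_\natural c^*\neq\id$. (For a single leg your reconstruction is essentially correct; it is the equicharacteristic version of the comparison in \cite{SW20}.)

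Consequently the first claim genuinely requires an argument about the sheaves rather than the spaces: one must show that the natural map $f^\cL_\natural(c^*\prescript\prime{}\cF^I_{V,K_n,\La})\ra f^\cM_\natural(\prescript\prime{}\cF^I_{V,K_n,\La})$ is an isomorphism using that $c$ is proper with convolution fibers and that $\prescript\prime{}\cF^I_{V,K_n,\La}$ is built from Satake sheaves, for which pushforward along convolution returns the fusion product (this is where universal local acyclicity and the structure of the geometric Satake sheaves enter, and it fails for an arbitrary coefficient object). This is the content of the argument of \cite[Proposition IX.3.2]{FS21} that the paper imports, with Theorem \ref{ss:Robbaringpowerseries} and Proposition \ref{ss:OFlocalsystemstaumodules} supplying the equicharacteristic substitutes for the $p$-adic inputs. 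Your treatment of the final assertion is also too quick: descent along $(\Spd\breve{F})^I\ra(\Spd F)^I$ does not by itself produce the required $W_F^I$-object; the paper instead identifies $f^\cM_\natural(\prescript\prime{}\cF^I_{V,K,\La})$ with $i_1^*T_V(i_{1!}(\cInd_{K_n}^{G(F)}\La))$ and invokes \cite[Corollary IX.2.3]{FS21}.
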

\begin{proof}
Using Theorem \ref{ss:Robbaringpowerseries} and Proposition \ref{ss:OFlocalsystemstaumodules}, the argument in the proof of \cite[Proposition IX.3.2]{FS21} yields the first claim. For the second claim, \cite[Proposition VII.3.1 (iii)]{FS21} enables us to identify $f_\natural^\cM(\prescript\prime{}\cF^I_{V,K,\La})$ with $i_1^*T_V(i_{1!}(\cInd_{K_n}^{G(F)}\La))$ as objects of $D(\La)$, where $i_1:[*/\ul{G(F)}]\ra\Bun_G$ is the canonical open embedding, and $T_V$ is the geometric Hecke operator associated with $V$. Therefore \cite[Corollary IX.2.3]{FS21} yields the desired result.
\end{proof}

\subsection{}\label{ss:FSpartialfrobeniuscomparison}
Finally, we define partial Frobenii for the analytic moduli of local $G$-shtukas and relate them to partial Frobenii on the Fargues--Fontaine variant as follows. Write $\cFr^{(I_1,\dotsc,I_k)}:\cLocSht^{(I_1,\dotsc,I_k)}_{G,V,nv}\ra\cLocSht^{(I_2,\dotsc,I_k,I_1)}_{G,V,nv}$ for the morphism that sends
  \begin{align*}
    \xymatrix{\sG_1\ar@{-->}[r]^-{\phi_1}\ar@{-->}[d]^-{\de_1} & \dotsm\ar@{-->}[r]^-{\phi_{k-1}} & \sG_k\ar@{-->}[r]^-{\phi_k}\ar@{-->}[d]^-{\de_k} & \prescript\tau{}{\sG_1}\ar@{-->}[d]^-{\prescript\tau{}{\de_1}\quad\mbox{to}} & \sG_2\ar@{-->}[r]^-{\phi_2}\ar@{-->}[d]^-{\de_2} & \dotsm\ar@{-->}[r]^-{\phi_k} & \prescript\tau{}{\sG_1}\ar@{-->}[r]^-{\prescript\tau{}{\phi_1}}\ar@{-->}[d]^-{\prescript\tau{}{\de_1}} & \prescript\tau{}{\sG_2}\ar@{-->}[d]^-{\prescript\tau{}{\de_2}} \\
    G\ar@{=}[r] & \dotsm \ar@{=}[r] & G\ar@{=}[r] & G &     G\ar@{=}[r] & \dotsm \ar@{=}[r] & G\ar@{=}[r] & G.}
  \end{align*}
Note that $\cM^I_{G,V,K}$ naturally descends to a v-sheaf over $(\Div_F^1)^I$, where $\Div^1_F$ denotes the small v-sheaf over $\Spd\bF_q$ whose $S$-points parametrize degree-$1$ relative effective Cartier divisors of $X_S$. Write $\vp_{I_1}:\cM^I_{G,V,K}\ra\cM^I_{G,V,K}$ for the resulting endomorphism given by geometric $q$-Frobenius on the $i$-th factor of $(\Spd F)^I$ for $i$ in $I_1$ and the identity on all other factors.
\begin{lem*}
  We have a commutative diagram
  \begin{align*}
    \xymatrix{\cLocSht^{(I_1,\dotsc,I_k)}_{G,V,nv}\ar[r]\ar[d]^-{\cFr^{(I_1,\dotsc,I_k)}} & \cLocSht^{(I)}_{G,V,nv} \ar[r]^-c & \cM^I_{G,V,K_n}\ar[d]^-{\vp_{I_1}} \\
    \cLocSht^{(I_2,\dotsc,I_k,I_1)}_{G,V,nv}\ar[r] & \cLocSht^{(I)}_{G,V,nv} \ar[r]^-c & \cM^I_{G,V,K_n}. }
  \end{align*}
\end{lem*}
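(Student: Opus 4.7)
The plan is to verify the commutativity on $S$-points by unwinding both paths and showing they produce the same triple on the Fargues--Fontaine curve. Let $x=(\sG,\de,\psi)$ be an $S$-point of $\cLocSht^{(I_1,\dotsc,I_k)}_{G,V,nv}$. The top route first collapses the convolution via the natural morphism $\cLocSht^{(I_1,\dotsc,I_k)}\to\cLocSht^{(I)}$ to the single-leg datum $(\sG_1,\phi\coloneqq\phi_k\circ\dotsb\circ\phi_1,\de_1,\psi_1)$, applies $c$ to obtain a triple $(\sE,\al,\bP)$ on $X_S$, and then applies $\vp_{I_1}$. The bottom route first applies $\cFr^{(I_1,\dotsc,I_k)}$ and collapses to get $(\sG_2,\phi'\coloneqq\prescript\tau{}{\phi_1}\circ\phi_k\circ\dotsb\circ\phi_2,\de_2,\psi_2)$, then applies $c$ to obtain a triple $(\sE',\al',\bP')$. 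The goal is to identify $(\sE',\al',\bP')$ with $\vp_{I_1}(\sE,\al,\bP)$.

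The first key step uses that the divisors $\Ga_i$ lie at positive radius, so they avoid $\cY_{S,[0,b]}$ for small $b$; hence every $\phi_j$ restricts to an honest isomorphism on $\Spec\wt\cR^{\mathrm{int}}_R$. A direct computation then gives $\phi'\circ\phi_1=\prescript\tau{}{\phi_1}\circ\phi$ on $\Spec\wt\cR^{\mathrm{int}}_R$, so $\phi_1|_{\wt\cR^{\mathrm{int}}_R}$ is an isomorphism of $\tau$-$G$-bundles from $(\sG_1|_{\wt\cR^{\mathrm{int}}_R},\phi|_{\wt\cR^{\mathrm{int}}_R})$ to $(\sG_2|_{\wt\cR^{\mathrm{int}}_R},\phi'|_{\wt\cR^{\mathrm{int}}_R})$. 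Through Theorem \ref{ss:Robbaringpowerseries} and the continuation-by-Frobenius construction used in Proposition \ref{ss:FSshtukascompare}, this upgrades to a canonical isomorphism $\sE\cong\sE'$ of $G$-bundles on $X_S$. The analogous computation modulo $z^n$ together with the relation $\psi_2=\psi_1\circ\phi_1^{-1}|_{R\lb{z}/z^n}$ read off from the commutative square in Definition \ref{ss:localshtukalevelstructuredefinition}, combined with Proposition \ref{ss:OFlocalsystemstaumodules}, gives a compatible isomorphism $\bP\cong\bP'$ of $\ul{K_n}$-bundles on $S$.

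It remains to match $\al$ and $\al'$ under these identifications. Both arise from the quasi-isogenies: the square in Definition \ref{ss:analyticlocalshtukas}.c) for the trivial shtuka yields $\de_2=\de_1\circ\phi_1^{-1}$ on $\cY_{S,[a,\infty)}\ssm\sum_{i\in I_1}\Ga_i$, so under the identification $\sE\cong\sE'$ via $\phi_1$, the modification $\al'$ differs from $\al$ precisely by the meromorphic behavior of $\phi_1^{-1}$ at $\sum_{i\in I_1}\Ga_i$. The hard step of the plan is to translate this ``moving $\prescript\tau{}{\phi_1}$ to the end of the chain'' on the $\wt\cR^{\mathrm{int}}_R$-side into a Frobenius shift of the $i$-th divisor on the $(\Div_F^1)^I$-side for $i\in I_1$; since $\vp_{I_1}$ is defined as pullback along exactly this Frobenius on the $I_1$-factors of $(\Div_F^1)^I$, establishing this compatibility would give $(\sE',\al',\bP')=\vp_{I_1}(\sE,\al,\bP)$. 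I expect this last step to require a careful bookkeeping of how continuation by Frobenius interacts with the quotient $\cY_{S,(0,\infty)}/\tau^\bZ=X_S$ and with the descent of $\cM^I_{G,V,K_n}$ to $(\Div_F^1)^I$, but no input beyond the results of \S\ref{s:zadichodgetheory} and \S\ref{s:analyticlocalshtukas}.
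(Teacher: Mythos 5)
Your proposal is correct and follows the same route as the paper, whose entire proof is the one-line remark that the claim follows by tracing through the construction of $c$ in Proposition \ref{ss:FSshtukascompare}; your unwinding (that $\phi_1$ is an isomorphism of $\tau$-$G$-bundles over $\Spec\wt\cR^{\mathrm{int}}_R$ since the legs avoid small radius on the generic fiber, so $\sE$, $\bP$, and the modification are unchanged up to shifting the $I_1$-divisors by Frobenius, which is exactly $\vp_{I_1}$) is precisely the intended argument, just spelled out.
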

\begin{proof}
This follows immediately from the proof of Proposition \ref{ss:FSshtukascompare}.
\end{proof}

\section{Uniformizing the moduli spaces of global shtukas}\label{s:uniformizing}
At this point, we shift focus from local to global considerations. Our goal in this section is to define the uniformization morphism, which is essential for our main results. First, we recall some facts about global shtukas and their moduli spaces. We then take formal completions at a fixed place and define the uniformization morphism on the level of formal stacks. By restricting to a Harder--Narasimhan truncation on the global moduli and using results from \S\ref{s:formallocalshtukas} on the local moduli, we can pass from formal stacks to formal schemes that are locally formally of finite type over $\bD^I$. This lets us avoid questions about analytifying stacks, as well as upgrade the formal \'etaleness of our uniformization morphism to \'etaleness (after passing to generic fibers). Finally, we extend the uniformization morphism to the covering tower on generic fibers.

\subsection{}
We start by switching our notation to a global context. Let $C$ be a geometrically connected smooth proper curve over a finite field $\bF_q$, and write $F$ for $\bF_q(C)$. Fix a separable closure $\ov{F}$ of $F$, and write $\Ga_F$ for $\Gal(\ov{F}/F)$. Write $\bA$ for the adele ring of $C$, and write $\bO$ for its subring of integral adeles.

Let $G$ be a parahoric group scheme over $C$ as in \cite[Definition 2.18]{Ric16}, and write $Z$ for the center of $G$. By \cite[Proposition 2.2(b)]{AH13}, there exists an $\SL_h$-bundle $\sV$ on $C$ and a closed embedding $\io:G^{\ad}\ra\ul\Aut(\sV)$ of group schemes over $C$ such that $\ul\Aut(\sV)/G^{\ad}$ satisfies \cite[(2.1)]{AH13}.

Let $T$ be a maximal subtorus of $G_F$, and write $X^+_*(T)$ for the set of dominant cocharacters of $T_{\ov{F}}$ with respect to a fixed Borel subgroup $B\subseteq G_{\ov{F}}$ containing $T_{\ov{F}}$. Identify $X_*^+(T)$ with the set of conjugacy classes of cocharacters of $G_{\ov{F}}$. Let $\mu_\bullet=(\mu_i)_{i\in I}$ be in $X_*^+(T)$, and identify the field of definition of $\mu_i$ with $\bF_q(C_i)$ for some finite generically \'etale cover $C_i\ra C$. Write $\Gr^{(I_1,\dotsc,I_k)}_{G,\mu_\bullet}|_{\prod_{i\in I}C_i}$ for the closed affine Schubert variety as in \ref{ss:affineschubertvarieties}.

\subsection{}
Let us recall the definition of global $G$-shtukas. Let $S$ be an affine scheme over $C^I$, and adopt the notation of \ref{ss:globalBDgrassmannian}. Write $\tau:S\ra S$ for the absolute $q$-Frobenius endomorphism, and by abuse of notation, write $\tau:C\times S\ra C\times S$ for the identity times $\tau$.
\begin{defn*}\hfill
  \begin{enumerate}[a)]
  \item A \emph{global $G$-shtuka} over $S$ consists of
      \begin{enumerate}[i)]
  \item for all $1\leq j\leq k$, a $G$-bundle $\sG_j$ on $C\times S$,
  \item for all $1\leq j\leq k$, an isomorphism of $G$-bundles
    \begin{align*}
      \phi_j:\sG_j|_{C\times S\ssm\sum_{i\in I_j}\Ga_i}\ra^\sim\sG_{j+1}|_{C\times S\ssm\sum_{i\in I_j}\Ga_i},
    \end{align*}
    where $\sG_{k+1}$ denotes the $G$-bundle $\prescript\tau{}{\sG_1}$.
  \end{enumerate}
\item Suppose that $S$ lies over $\textstyle\prod_{i\in I}C_i$, and let $\sG=((\sG_j)_{j=1}^k,(\phi_j)_{j=1}^k)$ be a global $G$-shtuka over $S$. We say that $\sG$ is \emph{bounded by $\mu_\bullet$} if the $S$-point of
  \begin{align*}
    [L_I^+(G)\bs\!\Gr^{(I_1,\dotsc,I_k)}_G|_{\prod_{i\in I}C_i}]
  \end{align*}
  given by $((\sG_j|_{\Spec\wh\cO_C(S)})_{j=1}^k,((\phi_j)_{\wh\cO^{j,\circ}_C(S)})_{j=1}^k)$ lies in $[L_I^+(G)\bs\!\Gr^{(I_1,\dotsc,I_k)}_{G,\mu_\bullet}|_{\prod_{i\in I}C_i}]$.

\item Let $\sG$ be a global $G$-shtuka over $S$. We say that $\sG$ has \emph{Harder--Narasimhan polygon bounded by $s$} if the $\SL_r$-bundle $\io_*(\sG_1^{\ad})$ has Harder--Narasimhan polygon bounded by $s2\rho^\vee$, where $2\rho^\vee$ denotes the sum of positive coroots in $\SL_h$.
\end{enumerate}
\end{defn*}

\subsection{}
Next, we turn to level structures. Let $N$ be a finite closed subscheme of $C$.
\begin{defn*}
  Suppose that $S$ lies over $(C\ssm N)^I$, and let $\sG$ be a global $G$-shtuka over $S$. A \emph{level-$N$ structure} on $\sG$ consists of, for all $1\leq j\leq k$, an isomorphism of $G$-bundles
    \begin{align*}
      \psi_j:\sG_j|_{N\times S}\ra^\sim G
    \end{align*}
      such that the diagram
    \begin{align*}
      \xymatrix{\sG_j|_{N\times S}\ar[r]^-{(\phi_j)_N}\ar[d]^-{\psi_j} & \sG_{j+1}|_{N\times S}\ar[d]^-{\psi_{j+1}} \\
G\ar@{=}[r] & G}
    \end{align*}
    commutes, where $\sG_{k+1}$ denotes $\prescript\tau{}{\sG_1}$, and $\psi_{k+1}$ denotes $\prescript\tau{}{\psi_1}$.
  \end{defn*}
  Since $S$ lies over $(C\ssm N)^I$, the $(\phi_j)_N$ are isomorphisms. Therefore $\psi_1$ uniquely determines $\psi_j$ for $2\leq j\leq k$.

  \subsection{}\label{ss:globalshtukamoduli}
We now recall the moduli of global $G$-shtukas and its associated structures. Write $N_i$ for the preimage of $N$ in $C_i$.
\begin{defn*}
  Write $\Sht^{(I_1,\dotsc,I_k)}_{G,\mu_\bullet,N}|_{\prod_{i\in I}C_i\ssm N_i}$ for the stack over $\textstyle\prod_{i\in I}C_i\ssm N_i$ whose $S$-points parametrize data consisting of
  \begin{enumerate}[i)]
  \item a global $G$-shtuka $\sG$ over $S$ bounded by $\mu_\bullet$,
  \item a level-$N$ structure $\psi=(\psi_j)_{j=1}^k$ on $\sG$.
  \end{enumerate}
  Write $\Sht^{(I_1,\dotsc,I_k),\leq s}_{G,\mu_\bullet,N}|_{\prod_{i\in I}C_i\ssm N_i}$ for the open substack of $\Sht^{(I_1,\dotsc,I_k)}_{G,\mu_\bullet,N}|_{\prod_{i\in I}C_i\ssm N_i}$ whose $S$-points consist of the $(\sG,\psi)$ such that $\sG$ has Harder--Narasimhan polygon bounded by $s$.

  Write $f^{\mathrm{S}}:\Sht^{(I_1,\dotsc,I_k)}_{G,\mu_\bullet,N}|_{\prod_{i\in I}C_i\ssm N_i}\ra\textstyle\prod_{i\in I}C_i\ssm N_i$ for the structure morphism. 
\end{defn*}
Our $\Sht^{(I_1,\dotsc,I_k)}_{G,\mu_\bullet,N}|_{\prod_{i\in I}C_i\ssm N_i}$ has an action of $Z(F)\bs Z(\bA)$ by twisting. Since the image of $Z$ in $\ul{\Aut}(\sV)$ is trivial, $\Sht^{(I_1,\dotsc,I_k),\leq s}_{G,\mu_\bullet,N}|_{\prod_{i\in I}C_i\ssm N_i}$ is preserved by the $Z(F)\bs Z(\bA)$-action. Finally, note that $\Sht^{(I_1,\dotsc,I_k)}_{G,\mu_\bullet,N}|_{\prod_{i\in I}C_i\ssm N_i}$ is the increasing union of the $\Sht^{(I_1,\dotsc,I_k),\leq s}_{G,\mu_\bullet,N}|_{\prod_{i\in I}C_i\ssm N_i}$.

\subsection{}\label{ss:globalshtukalevelstructure}
For finite closed subschemes $N'\supseteq N$ of $C$, we have morphisms
\begin{align*}
  \Sht^{(I_1,\dotsc,I_k)}_{G,\mu_\bullet,N'}|_{\prod_{i\in I}C_i\ssm N_i'}\ra\Sht^{(I_1,\dotsc,I_k)}_{G,\mu_\bullet,N}|_{\prod_{i\in I}C_i\ssm N_i'}
\end{align*}
given by pulling back $\psi_j$ to $N\times S$ for all $1\leq j\leq k$. Write $K_{N',N}$ for the kernel of $G(\sO_{N'})\ra G(\sO_N)$, and note that $K_{N',N}$ acts on $\Sht^{(I_1,\dotsc,I_k)}_{G,\mu_\bullet,N'}|_{\prod_{i\in I}C_i\ssm N_i'}$ over $\Sht^{(I_1,\dotsc,I_k)}_{G,\mu_\bullet,N}|_{\prod_{i\in I}C_i\ssm N_i'}$ via postcomposition with $\psi_j$ for all $1\leq j\leq k$.
\begin{prop*}
The morphism $\Sht^{(I_1,\dotsc,I_k)}_{G,\mu_\bullet,N'}|_{\prod_{i\in I}C_i\ssm N_i'}\ra\Sht^{(I_1,\dotsc,I_k)}_{G,\mu_\bullet,N}|_{\prod_{i\in I}C_i\ssm N_i'}$ is finite Galois, where the Galois action is given by that of $K_{N',N}$.
\end{prop*}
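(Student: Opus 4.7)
The plan is to adapt the argument of the local analogue \ref{ss:localshtukalevelstructure}. First, I would treat the base case $N=\emptyset$, where the target becomes $\Sht^{(I_1,\dotsc,I_k)}_{G,\mu_\bullet}|_{\prod_{i\in I}C_i\ssm N_i'}$. For an $S$-point $\sG$ of this target, form the Cartesian square
\begin{align*}
\xymatrix{S'\ar[r]\ar[d] & \Sht^{(I_1,\dotsc,I_k)}_{G,\mu_\bullet,N'}|_{\prod_{i\in I}C_i\ssm N_i'}\ar[d] \\
S\ar[r] & \Sht^{(I_1,\dotsc,I_k)}_{G,\mu_\bullet}|_{\prod_{i\in I}C_i\ssm N_i'}}
\end{align*}
so that $S'$ parametrizes level-$N'$ structures $\psi$ on $\sG$. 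Since $S$ lies over $\prod_{i\in I}(C_i\ssm N_i')$, the restrictions $(\phi_j)_{N'\times S}$ are isomorphisms, so $\psi_1$ uniquely determines the other $\psi_j$, and level-$N'$ structures on $\sG$ identify with trivializations of the $\tau$-$G$-bundle $(\sG_1|_{N'\times S},(\phi_k\circ\dotsb\circ\phi_1)_{N'\times S})$ over $N'\times S$.

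Next, I would invoke a global analogue of Proposition \ref{ss:OFlocalsystemstaumodules}: the assignment $\bP\mapsto\bP\times^{\ul{G(\sO_{N'})}}(G_{N'\times S},\id)$ should yield an equivalence between $\ul{G(\sO_{N'})}$-bundles on $S_{\text{\'et}}$ and $\tau$-$G$-bundles over $N'\times S$. This should follow from the same argument as Proposition \ref{ss:OFlocalsystemstaumodules}, since \'etale-locally on $C$ the scheme $N'$ decomposes as a disjoint union of spectra of finite local $\bF_q$-algebras of the form $\bF_{q_v}\lb{z}/z^{n_v}$, and further descent along $\bF_{q_v}/\bF_q$ reduces to the setting of Proposition \ref{ss:OFlocalsystemstaumodules}. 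Under this equivalence, $S'$ identifies with the \'etale sheaf of trivializations of the $\ul{G(\sO_{N'})}$-bundle on $S$ associated with our $\tau$-$G$-bundle, which combined with \cite[Proposition 9.7]{Sch17} shows that $S'\ra S$ is a finite \'etale $G(\sO_{N'})$-torsor.

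For general $N\subseteq N'$, I would then exploit the commutative triangle
\begin{align*}
\xymatrixcolsep{-.5in}
\xymatrix{\Sht^{(I_1,\dotsc,I_k)}_{G,\mu_\bullet,N'}|_{\prod_{i\in I}C_i\ssm N_i'}\ar[rr]\ar[rd] & & \Sht^{(I_1,\dotsc,I_k)}_{G,\mu_\bullet,N}|_{\prod_{i\in I}C_i\ssm N_i'}\ar[ld] \\
& \Sht^{(I_1,\dotsc,I_k)}_{G,\mu_\bullet}|_{\prod_{i\in I}C_i\ssm N_i'} &}
\end{align*}
together with the compatibility of the $K_{N',N}$-actions under changing $N$ to conclude that the upper horizontal map is finite \'etale Galois with group $K_{N',N}$. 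The hard part will be establishing the global $\tau$-$G$-bundle equivalence, but the argument should be essentially identical to the local one, once one decomposes $N'$ into Galois orbits of closed points of $C$ and disposes of the residue-field extensions $\bF_{q_v}/\bF_q$ by \'etale descent.
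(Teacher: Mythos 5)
Your reduction to the case $N=\varnothing$ via the commutative triangle and the compatibility of the $K_{N',N}$-actions is exactly what the paper does. For the base case, however, the paper simply cites the proof of \cite[Proposition 2.16 b)]{Var04}, whereas you re-derive it by transporting the local argument of Proposition \ref{ss:localshtukalevelstructure}. The identification of level-$N'$ structures with trivializations of the $\tau$-$G$-bundle $(\sG_1|_{N'\times S},(\phi_k\circ\dotsb\circ\phi_1)_{N'\times S})$ is correct and is the right starting point.

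The gap is in the tools you invoke to finish. Both \cite[Proposition 9.7]{Sch17} and Proposition \ref{ss:OFlocalsystemstaumodules} live in perfectoid geometry: the former concerns separated étale maps of diamonds, and the latter's proof uses pro-étale covers by strictly totally disconnected perfectoid spaces, none of which applies to an arbitrary scheme $S$ over $\prod_{i\in I}C_i\ssm N_i'$. What you actually need in the schematic setting is the classical statement that the functor of trivializations of a $\tau$-$G$-bundle over $N'\times S$ is representable by a finite étale Galois cover of $S$ with group $G(\sO_{N'})$. This is Lang's theorem applied to the smooth affine group scheme $\mathrm{Res}_{\sO_{N'}/\bF_q}(G)$ (equivalently, the explicit Artin--Schreier--Witt deformation argument from the proof of Proposition \ref{ss:ArtinSchreierWitt}, which does work for any $\bF_q$-algebra, combined with the standard fact that torsors under a finite constant group on the étale site of a scheme are representable by finite étale covers). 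Your handling of residue field extensions by decomposing $N'$ and descending along $\bF_{q_v}/\bF_q$ is fine, but as written the argument borrows statements whose hypotheses are not met; replacing them with the schematic Lang's theorem (or simply citing Varshavsky, as the paper does) closes the gap.
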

\begin{proof}
  When $N=\varnothing$, the result follows from the proof of \cite[Proposition 2.16 b)]{Var04}. For general $N$, the result follows from the commutative triangle
    \begin{align*}
    \xymatrixcolsep{-.5in}
    \xymatrix{\Sht^{(I_1,\dotsc,I_k)}_{G,\mu_\bullet,N'}|_{\prod_{i\in I}C_i\ssm N'_i}\ar[rr]\ar[rd] & & \Sht^{(I_1,\dotsc,I_k)}_{G,\mu_\bullet,N}|_{\prod_{i\in I}C_i\ssm N'_i}\ar[ld] \\
    & \Sht^{(I_1,\dotsc,I_k)}_{G,\mu_\bullet,\varnothing}|_{\prod_{i\in I}C_i\ssm N'_i} &}
  \end{align*}
and compatibility of the $K_{N',N}$-action with changing $N'$ and $N$.
\end{proof}

\begin{prop}\label{ss:globalshtukarepresentable}
Our $\Sht^{(I_1,\dotsc,I_k)}_{G,\mu_\bullet,N}|_{\prod_{i\in I}C_i\ssm N_i}$ is a Deligne--Mumford stack that is separated and locally of finite type over $\textstyle\prod_{i\in I}C_i\ssm N_i$. Moreover, for large enough $\deg{N}$, our $\Sht^{(I_1,\dotsc,I_k),\leq s}_{G,\mu_\bullet,N}|_{\prod_{i\in I}C_i\ssm N_i}$ is a scheme that is separated and locally of finite type over $\textstyle\prod_{i\in I}C_i\ssm N_i$.
\end{prop}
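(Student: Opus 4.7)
The plan is to reduce to representability results for $\SL_h$-shtukas via the embedding $\io$, following the strategies of Varshavsky \cite{Var04} and Arasteh Rad--Hartl \cite{AH13}, then handle the refinement to schemes via a rigidification argument at large level.

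First, I would establish the stack property and finiteness of presentation. The assignment $(\sG, \psi)$ is a limit of fpqc-local data, so $\Sht^{(I_1,\dotsc,I_k)}_{G,\mu_\bullet,N}|_{\prod_{i\in I}C_i\ssm N_i}$ is an fpqc stack. Next, there is a natural ``local model'' morphism
\begin{align*}
\Sht^{(I_1,\dotsc,I_k)}_{G,\mu_\bullet,N}|_{\prod_{i\in I}C_i\ssm N_i}\lra\left[L_I^+(G)\bs\!\Gr^{(I_1,\dotsc,I_k)}_{G,\mu_\bullet}|_{\prod_{i\in I}C_i}\right]
\end{align*}
sending $(\sG, \psi)$ to the restriction of $\sG$ to the formal neighborhood of the legs. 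By \ref{ss:affineschubertvarieties}, the $L^+_I(G)$-action factors through $L^n_I(G)$ for some $n$, so the target is of finite type over $\prod_{i\in I} C_i$. Combined with the fact that the remaining datum (namely $\sG_1$ away from the legs, together with the Frobenius structure on it) is constrained by Frobenius to deform in a finite-dimensional way, this shows the stack is locally of finite type over $\prod_{i\in I}C_i\ssm N_i$.

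Second, to show Deligne--Mumford and separatedness, I would exploit $\io: G^{\ad}\hookrightarrow\ul{\Aut}(\sV)$. Since $G \to G^{\ad}$ has kernel $Z$ (which is of multiplicative type) and $\ul{\Aut}(\sV)/G^{\ad}$ satisfies \cite[(2.1)]{AH13}, pushing forward $\sG_1^{\ad}$ along $\io$ together with a bound $m = m(\mu_\bullet)$ realizes our stack, up to a $Z$-gerbe, as a locally closed substack inside the $\SL_h$-version. The latter is known to be a separated Deligne--Mumford stack locally of finite type by \cite{Var04}, and separatedness is inherited via properness of affine Schubert varieties tested by the valuative criterion. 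For the DM property, the automorphisms of $(\sG, \psi)$ at a geometric point embed into the $\tau$-fixed points of $\Aut(\sG)$, which form a $0$-dimensional group scheme by Lang's theorem; the $Z$-gerbe contribution is handled because $Z$ is killed by $\io$ but reappears as finite automorphisms bounded by the level structure.

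Third, for the scheme property under $\deg N\gg 0$, I would use a standard rigidification argument on the Harder--Narasimhan truncation $\Sht^{(I_1,\dotsc,I_k),\leq s}_{G,\mu_\bullet,N}|_{\prod_{i\in I}C_i\ssm N_i}$. Any automorphism of $(\sG, \psi)$ restricts to the identity on $\sG_j|_{N\times S}$ via $\psi_j$. On the HN-bounded locus, the automorphism group schemes $\ul{\Aut}(\sG)$ are of finite type with bounded fiber dimension (uniformly in $(\sG, \psi)$), so for $\deg N$ sufficiently large compared to $s$, the restriction map $\ul{\Aut}(\sG)\ra\ul{\Hom}(\sG|_{N\times S}, G)$ becomes a closed embedding whose image meets the identity section trivially. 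Hence automorphisms become trivial, and Proposition \ref{ss:globalshtukalevelstructure} together with Keel--Mori or direct descent upgrades the DM stack to a scheme.

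The main obstacle I anticipate is carefully tracking the $Z$-gerbe structure when reducing through $\io$: the embedding controls only the $G^{\ad}$-shtuka, while the center $Z$ contributes both to automorphisms (an obstruction to DM) and to the global twisting action. One must verify that the $\mu_\bullet$-boundedness cuts out the correct closed substack after pushing to the $\SL_h$-Grassmannian, and that the cocharacter lattice bookkeeping for $Z$ is compatible with the level structure once $\deg N$ is increased.
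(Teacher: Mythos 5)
The paper's own proof is two citations: the Deligne--Mumford/separated/locally-of-finite-type claim is the argument of \cite[\S5.1.5]{YZ17} (the moduli of shtukas is the pullback of the Hecke stack along the graph of Frobenius in $\Bun_{G,N}\times\Bun_{G,N}$; since the differential of Frobenius vanishes, this pullback is unramified and locally of finite type over the Hecke stack, which is itself a proper Schubert-variety fibration over $\Bun_{G,N}$), reduced to level $\varnothing$ via Proposition \ref{ss:globalshtukalevelstructure}; the scheme claim is \cite[Lemme 12.19]{Laf16}. Your route through $\io$ is in the Varshavsky/Arasteh Rad--Hartl style, but as written it has two genuine gaps. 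First, in this paper's global setup $\io$ embeds only $G^{\ad}$ into $\ul{\Aut}(\sV)$, and the induced map from $G$-shtukas to $G^{\ad}$-shtukas is \emph{not} ``a locally closed substack up to a $Z$-gerbe'': already $\Bun_G\ra\Bun_{G^{\ad}}$ has fibers governed by $\Bun_Z$, which is positive-dimensional whenever $Z$ is (e.g.\ $G=\GL_n$), so no gerbe. What makes the reduction salvageable for \emph{shtukas} is that the fibers of $\Sht_G\ra\Sht_{G^{\ad}}$ are controlled by the moduli of $Z$-shtukas, which is a $0$-dimensional Deligne--Mumford stack---but that is precisely the statement your sketch would need to prove and does not. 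The cited Yun--Zhang argument avoids the issue by never leaving $G$. Your Lang-map heuristic for finiteness of automorphisms and separatedness via properness of Schubert varieties are fine in spirit, but the assertion that the remaining data ``deform in a finite-dimensional way by Frobenius'' is exactly the $d\mathrm{Frob}=0$ argument, which should be made explicit rather than assumed.

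Second, the scheme claim. Proving that automorphisms of $(\sG,\psi)$ are trivial for $\deg N\gg s$ only shows the truncated stack is an \emph{algebraic space}; Keel--Mori produces a coarse algebraic space, not a scheme, and does not close this gap. The actual argument of \cite[Lemme 12.19]{Laf16} needs two further inputs: that for $\deg N$ large relative to $s$ the truncated stack $\Bun^{\leq s}_{G,N}$ of bundles with level structure is a quasi-projective \emph{scheme}, and that the forgetful morphism from $\Sht^{(I_1,\dotsc,I_k),\leq s}_{G,\mu_\bullet,N}$ to $\Bun^{\leq s}_{G,N}\times\prod_{i\in I}(C_i\ssm N_i)$ is schematic (a locally closed immersion into a finite-type Grassmannian bundle, using the bound $\mu_\bullet$). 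Without these, ``trivial automorphisms $\Rightarrow$ scheme'' is a non sequitur.
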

\begin{proof}
The second claim follows from the proof of \cite[Lemme 12.19]{Laf16}. Using Proposition \ref{ss:globalshtukalevelstructure}, the first claim follows from the argument in \cite[\S5.1.5]{YZ17}.
\end{proof}

\subsection{}
Let $\wt{F}$ be the finite Galois extension of $F$ such that $\Gal(\wt{F}/F)$ equals the image of the $\Ga_F$-action on $X_*^+(T)$, and identify $\wt{F}$ with $\bF_q(\wt{C})$ for some finite generically \'etale cover $\wt{C}\ra C$. Write $\wt{N}$ for the preimage of $N$ in $\wt{C}$. Write $\wh{G}$ for the dual group of $G_F$ over $\cO_E$, and write $\prescript{L}{}G$ for $\wh{G}\rtimes\Gal(\wt{F}/F)$.

Let $V$ be an object of $\Rep_E(\prescript{L}{}G)^I$. Note that $\coprod_{\mu_\bullet}\Sht^{(I_1,\dotsc,I_k)}_{G,\mu_\bullet,N}|_{(\wt{C}\ssm\wt{N})^I}$ and $\coprod_{\mu_\bullet}\Sht^{(I_1,\dotsc,I_k),\leq s}_{G,\mu_\bullet,N}|_{(\wt{C}\ssm\wt{N})^I}$ naturally descend to stacks
\begin{align*}
  \Sht^{(I_1,\dotsc,I_k)}_{G,V,N}\mbox{ and }\Sht^{(I_1,\dotsc,I_k),\leq s}_{G,V,N}
\end{align*}
over $(C\ssm N)^I$, respectively, where $\mu_\bullet$ runs over highest weights appearing in $V_{\ov\bQ_\ell}|_{\wh{G}^I}$. Proposition \ref{ss:globalshtukarepresentable} and descent imply that $\Sht^{(I_1,\dotsc,I_k)}_{G,V,N}$ is a Deligne--Mumford stack that is separated and locally of finite type over $(C\ssm N)^I$, and for large enough $\deg{N}$, our $\Sht^{(I_1,\dotsc,I_k),\leq s}_{G,V,N}$ is a scheme that is separated and locally of finite type over $(C\ssm N)^I$.

\subsection{}\label{ss:heckecorrespondences}
Write $K_N$ for the kernel of $G(\bO)\ra G(\sO_N)$. For any $g$ in $G(\bA)$, recall that we have a canonical finite \'etale correspondence $\mathbf{1}_{K_NgK_N}$ from $\Sht^{(I_1,\dotsc,I_k)}_{G,\mu_\bullet,N}|_{\prod_{i\in I}\bF_q(C_i)}$ to itself \cite[Construction 2.20]{Laf16}\footnote{Although \cite[Construction 2.20]{Laf16} only addresses the case when $G$ is split, it extends to the general case. Indeed, this is already implicitly used in \cite[(12.16)]{Laf16}.}. Note that $\mathbf{1}_{K_NgK_N}$ commutes with the $Z(F)\bs Z(\bA)$-action.

\begin{defn}\label{ss:globalshtukapartialfrobenius}
  Write $\Fr^{(I_1,\dotsc,I_k)}:\Sht^{(I_1,\dotsc,I_k)}_{G,V,N}\ra\Sht^{(I_2,\dotsc,I_k,I_1)}_{G,V,N}$ for the morphism given by
  \begin{align*}
    \xymatrix{(\sG_1\ar@{-->}[r]^-{\phi_1} & \dotsm\ar@{-->}[r]^-{\phi_{k-1}} & \sG_k\ar@{-->}[r]^-{\phi_k} & \prescript\tau{}{\sG_1})\ar@{|->}[r] & (\sG_2\ar@{-->}[r]^-{\phi_2} & \dotsm\ar@{-->}[r]^-{\phi_k} & \prescript\tau{}{\sG_1}\ar@{-->}[r]^-{\prescript\tau{}{\phi_1}} & \prescript\tau{}{\sG_2}).}
  \end{align*}
\end{defn}
Note that $\Fr^{(I_1,\dotsc,I_k)}$ lies above the endomorphism of $(C\ssm N)^I$ given by geometric $q$-Frobenius on the $i$-th factor for $i$ in $I_1$ and the identity on all other factors.

By \cite[Lemme 3.1]{Laf16}\footnote{While \cite[Lemme 3.1]{Laf16} only treats the case of split $G$, it extends to the general case. Indeed, this is already implicitly used in \cite[(12.15)]{Laf16}.}, there exists a non-negative integer $\ka(V)$ such that
\begin{align*}
  (\Fr^{(I_1,\dotsc,I_k)})^{-1}(\Sht^{(I_2,\dotsc,I_k,I_1),\leq s}_{G,V,N})&\subseteq\Sht^{(I_1,\dotsc,I_k),\leq s+\ka(V)}_{G,V,N}\mbox{ and } \\
  \Fr^{(I_1,\dotsc,I_k)}(\Sht^{(I_1,\dotsc,I_k),\leq s}_{G,V,N})&\subseteq\Sht^{(I_2,\dotsc,I_k,I_1),\leq s+\ka(V)}_{G,V,N}.
\end{align*}

\subsection{}
At this point, we fix a place of $F$ and begin exploring the interplay between the local and global situations. Let $v$ be a closed point of $C$, write $r$ for the degree of $v$, and write $\cO_v$ for $\wh\sO_{C,v}$. Choose a uniformizer $z$ of $\cO_v$, which yields an identification $\cO_v=\bF_{q^r}\lb{z}$. Write $F_v$ for the fraction field of $\cO_v$, and write $\bD$ for the formal scheme $\Spf\cO_v$.

Fix a separable closure $\ov{F}_v$ of $F_v$, and fix an embedding $\ov{F}\ra\ov{F}_v$. By abuse of notation, write $G$ for the pullback of $G$ to $\cO_v$. Using $T_{F_v}$ for our maximal subtorus of $G_{F_v}$ and $B_{\ov{F}_v}$ for our Borel subgroup of $G_{\ov{F}_v}$, we can identify $F_i$ from \ref{ss:affineschubertvarieties} with the closure of $\bF_q(C_i)$ in $\ov{F}_v$ as well as identify $\bD_i$ from \ref{ss:affineschubertvarieties} with the formal completion of $C_i$ at the closed point $v_i$ of $C_i$ above $v$ induced by $\ov{F}\ra\ov{F}_v$.

\subsection{}\label{ss:Frobeniusfiberproducts}
The following two lemmas explain how to resolve the clash between our local and global base fields. Write $\bD^I$ for the $I$-th power of $\bD$ over $\bF_{q^r}$. Adopt the notation of \ref{ss:formaldisk}, and let $S=\Spec{R}$ be an affine scheme over $\bD^I$.
\begin{lem*}
  We have a natural isomorphism of affine formal schemes
  \begin{align*}
    \coprod_d\bD\times_{\bF_{q^r},d} S\ra^\sim\bD\times S,
  \end{align*}
where $\bD\times_{\bF_{q^r},d}S$ denotes the product of $S\ra\Spec\bF_{q^r}\ra^{\tau^d}\Spec\bF_{q^r}$ and $\bD$ over $\bF_{q^r}$, and $d$ runs over $\bZ/r$. Under this identification, $\tau:\bD\times S\ra\bD\times S$ on the right-hand side corresponds to the disjoint union of $\tau:\bD\times_{\bF_{q^r,d}}S\ra\bD\times_{\bF_{q^r,d-1}}S$ on the left-hand side.
\end{lem*}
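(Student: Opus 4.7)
My plan is to deduce both claims from the Galois decomposition of $\bF_{q^r} \otimes_{\bF_q} \bF_{q^r}$. Since $\bF_{q^r}/\bF_q$ is cyclic of degree $r$ with Galois group generated by $\tau$, one has a canonical isomorphism of rings $\bF_{q^r} \otimes_{\bF_q} \bF_{q^r} \cong \prod_{d \in \bZ/r} \bF_{q^r}$, which I would normalize so that the $d$-th projection is $a \otimes b \mapsto a\tau^{-d}(b)$. Tensoring first with $R$ over the right-hand $\bF_{q^r}$-factor, then with $\bF_{q^r}\lb{z}$ over the left-hand $\bF_{q^r}$-factor (via the coefficient inclusion), and using that $z$-adic completion commutes with this finite product, I would produce
\begin{align*}
\bF_{q^r}\lb{z} \,\hat\otimes_{\bF_q}\, R \;\cong\; \prod_{d \in \bZ/r} \bigl(\bF_{q^r}\lb{z} \otimes_{\bF_{q^r},\,\sigma_R\circ\tau^d} R\bigr),
\end{align*}
where $\sigma_R:\bF_{q^r} \to R$ is the structure map. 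Passage to formal spectra identifies the $d$-th right-hand factor with $\sO(\bD \times_{\bF_{q^r},d} S)$, since precomposing $\sigma_R$ with $\tau^d$ on $\Spec\bF_{q^r}$ is exactly how $\bD \times_{\bF_{q^r},d} S$ is defined, giving the first claim.

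For the second claim, I would track the ring endomorphism $\tau = \id_{\bF_{q^r}\lb{z}} \otimes \tau_R$ of $\bF_{q^r}\lb{z}\otimes_{\bF_q} R$ through this decomposition. Because $\tau_R$ restricts to the $q$-power Frobenius on $\bF_{q^r} \subset R$, its action on the subring $\bF_{q^r}\otimes_{\bF_q}\bF_{q^r}$ agrees with $\id\otimes\tau$. A direct computation with the normalization above shows that $\id\otimes\tau$ corresponds to the cyclic shift $(x_d)_d \mapsto (x_{d-1})_d$ on $\prod_d \bF_{q^r}$, and hence permutes the orthogonal idempotents by $e_d \mapsto e_{d+1}$. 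Consequently $\tau$ carries the $d$-th product factor into the $(d+1)$-th as a ring homomorphism, which on formal spectra is precisely a morphism $\bD\times_{\bF_{q^r},d} S \to \bD\times_{\bF_{q^r},d-1} S$ as claimed.

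The only real subtlety is fixing the conventions for the Galois decomposition so that the direction of the cyclic shift, and hence the indexing in the statement, come out correctly; once this bookkeeping is pinned down, every remaining step is a short algebraic calculation that is compatible with $z$-adic completion throughout.
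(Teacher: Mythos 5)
Your proposal is correct and takes essentially the same route as the paper, which likewise reduces everything to the fact that $\bF_{q^r}/\bF_q$ is Galois with group $\tau^{\bZ/r}$ (so that $\bF_{q^r}\otimes_{\bF_q}\bF_{q^r}\cong\prod_{d}\bF_{q^r}$) and observes that the Frobenius permutes the factors; you merely spell out the idempotent bookkeeping that the paper leaves implicit. The index conventions you chose do come out consistent with the statement, so no adjustment is needed.
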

\begin{proof}
Take $\bD\times_{\bF_{q^r,d}}S\ra\bD\times S$ to be the natural morphism. Since $\bF_{q^r}$ is finite Galois over $\bF_q$ with $\Gal(\bF_{q^r}/\bF_q)=\tau^{\bZ/r}$, the induced morphism above is an isomorphism. The last statement follows immediately.
\end{proof}

\begin{lem}\label{ss:changeFrobenius}
A local $G$-shtuka over $S$ is equivalent to data consisting of
  \begin{enumerate}[i)]
  \item for all $1\leq j\leq k$, a $G$-bundle $\sH_j$ on $\bD\times S$,
  \item for all $1\leq j\leq k$, an isomorphism of $G$-bundles
    \begin{align*}
      \chi_j:\sH_j|_{\bD\times S\ssm\sum_{i\in I_j}\Ga_i}\ra^\sim\sH_{j+1}|_{\bD\times S\ssm\sum_{i\in I_j}\Ga_i},
    \end{align*}
    where $\sH_{k+1}$ denotes the $G$-bundle $\prescript\tau{}{\sH_k}$.
  \end{enumerate}
\end{lem}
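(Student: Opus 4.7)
The strategy is to apply Lemma \ref{ss:Frobeniusfiberproducts} in order to decompose $\bD\times S=\coprod_{d\in\bZ/r}\bD^{(d)}$ with $\bD^{(d)}:=\bD\times_{\bF_{q^r},d}S$, and then to unpack the data on $\bD\times S$ component by component. Each $\bD^{(d)}$ is canonically identified with $\Spec R\lb{z}$, and because the projections $S\to\bD$ respect the $\bF_{q^r}$-structure on $R$, the graphs $\Ga_i$ lie entirely on $\bD^{(0)}$; in particular, combining with Lemma \ref{ss:formaldisk}, the punctured locus $\bD\times S\ssm\sum_{i\in I_j}\Ga_i$ coincides with $\bD^{(d)}$ on components $d\ne 0$ and with $\Spec R\lb{z}[\tfrac1{z-\ze_i}]_{i\in I_j}=\Spec R\lp{z}$ on $\bD^{(0)}$.

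Under this decomposition, a $G$-bundle $\sH_j$ corresponds to a tuple $(\sH_j^{(d)})_d$ of $G$-bundles on $\Spec R\lb{z}$, while each $\chi_j$ decomposes into a meromorphic isomorphism $\chi_j^{(0)}$ on the $d=0$ piece (with poles along $\sum_{i\in I_j}\Ga_i$) together with honest isomorphisms $\chi_j^{(d)}$ on $\bD^{(d)}$ for $d\ne 0$. The second statement of Lemma \ref{ss:Frobeniusfiberproducts} identifies $(\prescript\tau{}{\sH_?})^{(d)}$ with $\tau^*\sH_?^{(d-1)}$, so the relation $\sH_{k+1}=\prescript\tau{}{\sH_k}$ pins down the codomain of $\chi_k^{(d)}$ in terms of the previous component's bundle.

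The forward functor sends $(\sH_j,\chi_j)$ to its restriction to the $d=0$ component, producing $\sG_j:=\sH_j^{(0)}$ and $\phi_j:=\chi_j^{(0)}$ on $\Spec R\lb{z}$. I would then traverse the chain of honest isomorphisms $\chi_j^{(d)}$ on the components $d=r-1,\dotsc,1$, each composed with the Frobenius identification supplied by Lemma \ref{ss:Frobeniusfiberproducts}, in order to identify the codomain of $\phi_k$ with $\prescript\tau{}{\sG_1}$ as demanded by the local shtuka axiom. In the reverse direction, given a local $G$-shtuka on $\Spec R\lb{z}$, I would place $\sG_j$ on $\bD^{(0)}$ and define the bundles $\sH_j^{(d)}$ on $\bD^{(d)}$ for $d\ne 0$ canonically through iterated Frobenius pullback, taking the transition isomorphisms $\chi_j^{(d)}$ to be the tautological ones arising from this propagation. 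A direct inspection then shows that these two functors are mutually inverse.

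The main obstacle will be the Frobenius bookkeeping across the $r$ components: after one revolution through the $k$ meromorphic arrows $\chi_j^{(0)}$ on $\bD^{(0)}$, the target lands in the twisted bundle sitting over $\bD^{(r-1)}$, and only after traversing the cyclic chain of $\tau$-twisted honest isomorphisms on the components $d\ne 0$ does one return to $\bD^{(0)}$ with the single Frobenius twist demanded by $\sG_{k+1}=\prescript\tau{}{\sG_1}$. Verifying that this cyclic traversal produces precisely one Frobenius twist, together with ensuring that the reverse construction is independent of the auxiliary choices on the components $d\ne 0$, is the content of the diagram chase at the heart of the argument.
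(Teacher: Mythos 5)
Your proposal matches the paper's proof: both decompose $\bD\times S$ into the components $\bD\times_{\bF_{q^r},d}S$ via Lemma \ref{ss:Frobeniusfiberproducts}, observe that the $\Ga_i$ lie in the $d=0$ component so that the $\chi_j$ restrict to genuine isomorphisms on the components $d\neq0$, and pass between the two kinds of data by restricting to the $d=0$ component in one direction and propagating by iterated Frobenius pullback with tautological transition maps in the other. The cyclic traversal you flag as the key bookkeeping step is exactly the paper's observation that chasing the isomorphisms through all $r$ components identifies $\sH_{k+1}$ on the $d=0$ component with the $\tau^r$-pullback of $\sH_1$ there, which is precisely the Frobenius twist demanded by the local shtuka definition (whose base field has residue field $\bF_{q^r}$).
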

\begin{proof}
  Let $\sG$ be a local $G$-shtuka over $S$, and for all $1\leq j\leq k$, view $\sG_j$ as a $G$-bundle on $\bD\times_{\bF_{q^r}}S$. Using Lemma \ref{ss:Frobeniusfiberproducts}, we can form $\sH_j$ by taking $\prescript{\tau^d}{}{\sG_1}$ on $\bD\times_{\bF_{q^r},d}S$ for $1\leq d\leq r-1$ and $\sG_j$ on $\bD\times_{\bF_{q^r}}S$. Note that $\prescript\tau{}{\sH_1}$ is given by $\prescript{\tau^d}{}{\sG_1}$ on $\bD\times_{\bF_{q^r},d}S$ for all $1\leq d\leq r$. Therefore we can form $\chi_j$ by taking $\id$ on $\bD\times_{\bF_{q^r},d}S$ for $1\leq d\leq r-1$ and $\phi_j$ on $\bD\times_{\bF_{q^r}}S$.

  Conversely, let $\sH\coloneqq((\sH_j)_{j=1}^k,(\chi_j)_{j=1}^k)$ be as above. Write $(-)|_d$ for restrictions to $\bD\times_{\bF_{q^r},d}S$. Since $\Ga_i$ lies in $\bD\times_{\bF_{q^r}}S$ for all $i$ in $I$, our $\chi_j|_d$ is an isomorphism for all $1\leq j\leq k$ and $1\leq d\leq r-1$. By repeatedly using Lemma \ref{ss:Frobeniusfiberproducts}, this identifies $\sH_j|_d$ with $\prescript{\tau^d}{}{\sH_1}|_r$. Hence this also identifies $\sH_{k+1}|_r$ with $\prescript{\tau^r}{}{\sH_1}|_r$, so altogether we see that $\sH|_r$ yields a local $G$-shtuka over $S$.
\end{proof}

\subsection{}\label{ss:uniformizationformallyetale}
In our study of the uniformization morphism, we start by defining it on the level of formal stacks. Write $\textstyle\prod_{i\in I}\bD_i$ for the product of the $\bD_i$ over $\bF_{q^r}$, and write $\textstyle\prod_{i\in I}v_i$ for the product of the $v_i$ over $\bF_{q^r}$. Assume that $N$ and $v$ are disjoint, and write $\wh\Sht^{(I_1,\dotsc,I_k)}_{G,\mu_\bullet,N}|_{\prod_{i\in I}\bD_i}$ for the formal completion of $\Sht^{(I_1,\dotsc,I_k)}_{G,\mu_\bullet,N}|_{\prod_{i\in I}C_i\ssm N_i}$ along $\textstyle\prod_{i\in I}v_i$ in $\textstyle\prod_{i\in I}C_i\ssm N_i$.
\begin{prop*}
We have a canonical morphism
  \begin{align*}
\wh\Te: \fLocSht^{(I_1,\dotsc,I_k)}_{G,\mu_\bullet}|_{\prod_{i\in I}\bD_i}\ra\wh\Sht^{(I_1,\dotsc,I_k)}_{G,\mu_\bullet,N}|_{\prod_{i\in I}\bD_i}
  \end{align*}
of stacks over $\textstyle\prod_{i\in I}\bD_i$ that is formally \'etale.
\end{prop*}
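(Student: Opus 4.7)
The plan is to construct $\wh\Te$ by Beauville--Laszlo gluing and then verify formal \'etaleness via the rigidity of quasi-isogenies (Proposition \ref{ss:rigidisogeny}) combined with the rigidity of $\tau$-$G$-bundles without modifications under nilpotent thickenings, in the spirit of \cite{AH13}.

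For the construction, let $(\sG,\de)$ be an $S$-point of $\fLocSht^{(I_1,\dotsc,I_k)}_{G,\mu_\bullet}|_{\prod_{i\in I}\bD_i}$. By Lemma \ref{ss:Frobeniusfiberproducts} and Lemma \ref{ss:changeFrobenius}, the local shtuka $\sG$ is equivalent to data $((\sH_j)_{j=1}^k,(\chi_j)_{j=1}^k)$ on the formal completion $\bD\times S$ of $C\times S$ along $v\times S$. Beauville--Laszlo gluing now produces $G$-bundles $\sG^{\mathrm{gl}}_j$ on $C\times S$ by gluing each $\sH_j$ with the trivial $G$-bundle on $(C\ssm v)\times S$ along $\Spec R\lp{z}$ via $\de_j$. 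The commutativity of the square in Definition \ref{ss:quasiisogenies}.a) lets us glue the $\chi_j$ with the identity on the trivial bundle into isomorphisms $\phi^{\mathrm{gl}}_j:\sG^{\mathrm{gl}}_j|_{C\times S\ssm\sum_{i\in I_j}\Ga_i}\ra^\sim\sG^{\mathrm{gl}}_{j+1}|_{C\times S\ssm\sum_{i\in I_j}\Ga_i}$, assembling a global $G$-shtuka; the restriction to $N\times S\subseteq(C\ssm v)\times S$ is canonically trivialized, providing the level-$N$ structure, and boundedness of $\sG$ translates directly into boundedness of $\sG^{\mathrm{gl}}$. This defines the morphism $\wh\Te$.

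For formal \'etaleness, let $\j:S_0\hookrightarrow S$ be a nilpotent thickening of affine schemes over $\prod_{i\in I}\bD_i$, and suppose we are given $x_0=(\sG^0,\de^0)\in\fLocSht(S_0)$, $y\in\wh\Sht(S)$, and an isomorphism $\io^0:\wh\Te(x_0)\ra^\sim\j^*y$. We construct a lift $x\in\fLocSht(S)$ as follows. First, define the local shtuka $\sG$ on $S$ by applying Lemma \ref{ss:changeFrobenius} to the pullback of the $G$-bundles of $y$ to $\bD\times S$; then $\sG$ is bounded by $\mu_\bullet$, and $\io^0$ induces an identification $\j^*\sG=\sG^0$. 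Restricting $\io^0$ to $(C\ssm v)\times S_0$ gives a trivialization of $y|_{(C\ssm v)\times S_0}$ whose pullback to $\Spec R_0\lp{z}$ recovers $\de^0$; Proposition \ref{ss:rigidisogeny} then provides a unique quasi-isogeny $\de$ on $S$ lifting $\de^0$. Setting $x\coloneqq(\sG,\de)$ yields the desired lift, which is manifestly unique.

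The remaining task, which is the main technical obstacle, is to produce the isomorphism $\io:\wh\Te(x)\ra^\sim y$ extending $\io^0$. By construction, $\wh\Te(x)$ and $y$ agree on $\bD\times S$ via the identity and on $C\times S_0$ via $\io^0$, so by Beauville--Laszlo it suffices to extend $\io^0|_{(C\ssm v)\times S_0}$ to an isomorphism on $(C\ssm v)\times S$. Both $\wh\Te(x)|_{(C\ssm v)\times S}$ and $y|_{(C\ssm v)\times S}$ are $\tau$-$G$-bundles with no modifications, and the required rigidity of such objects under nilpotent thickenings follows from an Artin--Schreier--Witt argument in the spirit of Proposition \ref{ss:ArtinSchreierWitt} and Proposition \ref{ss:OFlocalsystemstaumodules}: after an \'etale cover, such $\tau$-$G$-bundles trivialize, the remaining ambiguity is classified by a $\ul{G(\bF_q)}$-local system on $(C\ssm v)\times S$, and such local systems are insensitive to nilpotent thickenings. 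This reduces to the analogous rigidity statement established in \cite{AH13}, and produces the required isomorphism $\io$, completing the verification of formal \'etaleness.
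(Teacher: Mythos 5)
Your proposal is correct and follows essentially the same route as the paper: Beauville--Laszlo gluing via Lemmas \ref{ss:Frobeniusfiberproducts} and \ref{ss:changeFrobenius} for the construction of $\wh\Te$, and Proposition \ref{ss:rigidisogeny} to lift the quasi-isogeny in the formal \'etaleness argument. You are in fact more explicit than the paper about the last step, namely producing the isomorphism $\wh\Te(x)\ra^\sim y$ extending $\io^0$ (the paper attributes this to the uniqueness of Beauville--Laszlo gluing, which presupposes exactly the rigidity you isolate); the rigidity statement you invoke is genuine and is the key input of \cite{AH13}, but your mechanism for it is slightly off: the ambiguity in trivializing the $\tau$-$G$-bundle $y|_{(C\ssm v)\times S}$ is controlled by torsors on the \'etale site of $S$ under (roughly) $\ul{G(\sO_C(C\ssm v))}$, not by $\ul{G(\bF_q)}$-local systems on $(C\ssm v)\times S$. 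The cleanest argument is the same Frobenius-factoring trick as in the proof of Proposition \ref{ss:rigidisogeny} (for $J^q=0$ the Frobenius $\tau_S$ factors through $S_0$, so the trivialization over $S_0$ transports to one over $S$ via the Frobenius structure), and uniqueness of that lift also gives the compatibility with $\de$ over the punctured disk that your gluing step needs.
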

This result generalizes cases of \cite[Theorem 5.3]{AH13}.
\begin{proof}
First, we define $\wh\Te$.  Let $(\sG,\de)$ be an $S$-point of $\fLocSht^{(I_1,\dotsc,I_k)}_{G,\mu_\bullet}|_{\prod_{i\in I}\bD_i}$, and let $((\sH_j)_{j=1}^k,(\chi_j)_{j=1}^k)$ be the data corresponding to $\sG$ as in Lemma \ref{ss:changeFrobenius}. For all $1\leq j\leq k$, Lemma \ref{ss:Frobeniusfiberproducts} shows that taking $\de_j$ on $\bD\times_{\bF_{q^r}}S$ and $\prescript{\tau^d}{}{\de_1}$ on $\bD\times_{\bF_{q^r},d}S$ for $1\leq d\leq r-1$ yields an isomorphism of $G$-bundles
  \begin{align*}
    \eps_j:\sH_j|_{\bD\times S\ssm v\times S}\ra^\sim G.
  \end{align*}
Beauville--Laszlo lets us use $\eps_j$ to glue $\sH_j$ and $G|_{C\times S\ssm v\times S}$ into a $G$-bundle $\sG_j^\Te$ on $C\times S$. Because the square in Definition \ref{ss:quasiisogenies}.b) commutes, Beauville--Laszlo also lets us glue $\chi_j$ and $\id$ into an isomorphism of $G$-bundles
  \begin{align*}
    \phi_j^\Te:\sG_j^\Te|_{C\times S\ssm\sum_{i\in I_j}\Ga_i}\ra^\sim\sG_{j+1}^\Te|_{C\times S\ssm\sum_{i\in I_j}\Ga_i},
  \end{align*}
  where we use Lemma \ref{ss:formaldisk} to identify $R\lb{z}$ with $\wh\cO_C(S)$, and $\sG^\Te_{k+1}$ denotes the $G$-bundle $\prescript\tau{}{\sG_1^\Te}$. As $\sG$ is bounded by $\mu_\bullet$, the global $G$-shtuka $\sG^\Te\coloneqq((\sG_j^\Te)_{j=1}^k,(\phi_j^\Te)_{j=1}^k)$ is too. Because $N$ and $v$ are disjoint, $\sG_j^\Te|_{N\times S}$ and $\phi_j^\Te|_{N\times S}$ are canonically trivial, so we have the trivial level-$N$ structure $\id=(\id)_{j=1}^k$ on $\sG^\Te$. Altogether, we define $\wh\Te(\sG,\de)$ to be the $S$-point $(\sG^\Te,\id)$ of $\wh\Sht^{(I_1,\dotsc,I_k)}_{G,\mu_\bullet,N}|_{\prod_{i\in I}\bD_i}$.

  To see that $\wh\Te$ is formally \'etale, let $J$ be an ideal of $R$ satisfying $J^n=0$, and write $\ov{S}\ra S$ for the associated closed embedding. For any commutative square
    \begin{align*}
    \xymatrix{\ov{S}\ar[r]\ar[d] & \fLocSht^{(I_1,\dotsc,I_k)}_{G,\mu_\bullet}|_{\prod_{i\in I}\bD_i}\ar[d]^-{\wh\Te} \\
S\ar[r]\ar@{.>}[ru] & \wh\Sht^{(I_1,\dotsc,I_k)}_{G,\mu_\bullet,N}|_{\prod_{i\in I}\bD_i},
    }
  \end{align*}
  write $(\ov\sG,\ov\de)$ for the $\ov{S}$-point of $\fLocSht^{(I_1,\dotsc,I_k)}_{G,\mu_\bullet}|_{\prod_{i\in I}\bD_i}$, and write $(\sF,\psi)$ for the $S$-point of $\wh\Sht^{(I_1,\dotsc,I_k)}_{G,\mu_\bullet,N}|_{\prod_{i\in I}\bD_i}$. The restriction of $\sF$ to $\bD\times S$ yields data as in Lemma \ref{ss:changeFrobenius}, which corresponds to a local $G$-shtuka $\sG$ over $S$. As $\sF$ is bounded by $\mu_\bullet$, our $\sG$ is too. Because the pullback of $\sF$ to $\ov{S}$ is $\wh\Te(\ov\sG,\ov\de)$, we see that the pullback of $\sG$ to $\ov{S}$ is $\ov\sG$. Therefore Proposition \ref{ss:rigidisogeny} yields a unique quasi-isogeny $\de$ from $\sG$ to $G$ whose pullback to $\ov{S}$ is $\ov\de$.

  Consider the $S$-point of $\fLocSht^{(I_1,\dotsc,I_k)}_{G,\mu_\bullet}|_{\prod_{i\in I}\bD_i}$ given by $(\sG,\de)$. The top triangle commutes by construction, and the bottom triangle commutes by the uniqueness of Beauville--Laszlo gluing. Finally, the uniqueness of Proposition \ref{ss:rigidisogeny} and Beauville--Laszlo gluing also imply that $(\sG,\de)$ is the unique such morphism, as desired.
\end{proof}

\subsection{}\label{ss:truncatedetale}
By restricting to a Harder--Narasimhan truncation and letting the (tame) level be large enough, we can pass from formal stacks to formal schemes. Maintain the assumptions of \ref{ss:uniformizationformallyetale}, Write $\wh\Sht^{(I_1,\dotsc,I_k),\leq s}_{G,\mu_\bullet,N}|_{\prod_{i\in I}\bD_i}$ for the formal completion of
\begin{align*}
 \Sht^{(I_1,\dotsc,I_k),\leq s}_{G,\mu_\bullet,N}|_{\prod_{i\in I}C_i\ssm N_i}
\end{align*}
along $\textstyle\prod_{i\in I}v_i$ in $\textstyle\prod_{i\in I}C_i\ssm N_i$, and write $\fLocSht^{(I_1,\dotsc,I_k),\leq s}_{G,\mu_\bullet}|_{\prod_{i\in I}\bD_i}$ for the preimage of $\wh\Sht^{(I_1,\dotsc,I_k),\leq s}_{G,\mu_\bullet,N}|_{\prod_{i\in I}\bD_i}$ under $\wh\Te$.

\begin{prop*}
  For large enough $\deg{N}$, the restriction
  \begin{align*}
    \wh\Te:\fLocSht^{(I_1,\dotsc,I_k),\leq s}_{G,\mu_\bullet}|_{\prod_{i\in I}\bD_i}\ra\wh\Sht^{(I_1,\dotsc,I_k),\leq s}_{G,\mu_\bullet,N}|_{\prod_{i\in I}\bD_i}
  \end{align*}
  is a morphism of formal schemes that is formally \'etale and locally formally of finite type.
\end{prop*}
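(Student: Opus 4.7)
The plan is to assemble the statement directly from the preceding results, reducing the stack-level formal \'etaleness of Proposition \ref{ss:uniformizationformallyetale} to a morphism between honest formal schemes.

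First I would fix $\deg N$ large enough that Proposition \ref{ss:globalshtukarepresentable} makes $\Sht^{(I_1,\dotsc,I_k),\leq s}_{G,\mu_\bullet,N}|_{\prod_{i\in I}C_i\ssm N_i}$ a scheme, separated and locally of finite type over $\prod_{i\in I}C_i\ssm N_i$. Its formal completion $\wh\Sht^{(I_1,\dotsc,I_k),\leq s}_{G,\mu_\bullet,N}|_{\prod_{i\in I}\bD_i}$ along $\prod_{i\in I}v_i$ is then a locally noetherian formal scheme, locally formally of finite type over $\prod_{i\in I}\bD_i$. Since Harder--Narasimhan truncation is an open condition, it is an open formal subscheme of $\wh\Sht^{(I_1,\dotsc,I_k)}_{G,\mu_\bullet,N}|_{\prod_{i\in I}\bD_i}$, so its preimage $\fLocSht^{(I_1,\dotsc,I_k),\leq s}_{G,\mu_\bullet}|_{\prod_{i\in I}\bD_i}$ under the continuous morphism $\wh\Te$ is open in $\fLocSht^{(I_1,\dotsc,I_k)}_{G,\mu_\bullet}|_{\prod_{i\in I}\bD_i}$. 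By Theorem \ref{ss:formalmodulirepresentable}, this ambient space is a formal scheme locally formally of finite type over $\prod_{i\in I}\bD_i$, and the same property then holds for its open formal subscheme.

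With source and target both confirmed to be formal schemes, the restricted $\wh\Te$ becomes a morphism of formal schemes. Formal \'etaleness descends verbatim from Proposition \ref{ss:uniformizationformallyetale}, since it is the infinitesimal lifting property against square-zero thickenings, which is insensitive to whether the ambient objects are interpreted as stacks or schemes. Local formal finite type for $\wh\Te$ itself then follows by working on affine formal opens: both coordinate rings are topologically of finite type over $\cO_{\prod_{i\in I}\bD_i}$ in the sense of \cite{GD71}, and the induced base-compatible morphism between them is automatically topologically of finite type.

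The main subtlety I foresee is the last step, namely transferring the property of being locally formally of finite type from source and target individually over the base to the morphism $\wh\Te$ itself. This is a standard but mildly delicate property of morphisms between locally noetherian formal schemes, and is most cleanly handled by unwinding the definition on affine opens and checking topological finite generation at the level of completed local rings.
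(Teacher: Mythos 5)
Your proposal is correct and follows essentially the same route as the paper: formal étaleness is inherited directly from Proposition \ref{ss:uniformizationformallyetale}, the source is an open subsheaf of the formal moduli of local shtukas and hence a formal scheme locally formally of finite type by Theorem \ref{ss:formalmodulirepresentable}, the target is handled by Proposition \ref{ss:globalshtukarepresentable} for large $\deg N$, and local formal finite type of $\wh\Te$ then follows from the corresponding properties of source and target over the base. The paper's proof is exactly this assembly, so no further comment is needed.
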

\begin{proof}
Proposition \ref{ss:uniformizationformallyetale} shows that the restriction
  \begin{align*}
    \wh\Te:\fLocSht^{(I_1,\dotsc,I_k),\leq s}_{G,\mu_\bullet}|_{\prod_{i\in I}\bD_i}\ra\wh\Sht^{(I_1,\dotsc,I_k),\leq s}_{G,\mu_\bullet,N}|_{\prod_{i\in I}\bD_i}
  \end{align*}
  is formally \'etale. Because $\wh\Sht^{(I_1,\dotsc,I_k),\leq s}_{G,\mu_\bullet,N}|_{\prod_{i\in I}\bD_i}$ is an open substack of
  \begin{align*}
    \wh\Sht^{(I_1,\dotsc,I_k)}_{G,\mu_\bullet,N}|_{\prod_{i\in I}\bD_i},
  \end{align*}
we see that $\fLocSht^{(I_1,\dotsc,I_k),\leq s}_{G,\mu_\bullet}|_{\prod_{i\in I}\bD_i}$ is an open subsheaf of $\fLocSht^{(I_1,\dotsc,I_k)}_{G,\mu_\bullet}|_{\prod_{i\in I}\bD_i}$, so Theorem \ref{ss:formalmodulirepresentable} implies that $\fLocSht^{(I_1,\dotsc,I_k),\leq s}_{G,\mu_\bullet}|_{\prod_{i\in I}\bD_i}$ is a formal scheme that is locally formally of finite type over $\textstyle\prod_{i\in I}\bD_i$. For large enough $\deg{N}$, Proposition \ref{ss:globalshtukarepresentable} implies that $\wh\Sht^{(I_1,\dotsc,I_k),\leq s}_{G,\mu_\bullet,N}|_{\prod_{i\in I}\bD_i}$ is a formal scheme that is formally of finite type over $\textstyle\prod_{i\in I}\bD_i$. Hence $\wh\Te$ is locally formally of finite type, as desired.
\end{proof}

\subsection{}\label{ss:towerpreparations}
To add level at $v$, we need to pass to generic fibers as follows. Maintain the assumptions of \ref{ss:truncatedetale}, and assume that $\deg{N}$ is large enough as in Proposition \ref{ss:truncatedetale}. Proposition \ref{ss:globalshtukarepresentable} shows that $\Sht^{(I_1,\dotsc,I_k),\leq s}_{G,\mu_\bullet,N}|_{\prod_{i\in I}C_i\ssm N_i}$ is separated over $\textstyle\prod_{i\in I}C_i\ssm N_i$, so the natural morphism of adic spaces
\begin{align*}
  \wh\Sht^{(I_1,\dotsc,I_k),\leq s}_{G,\mu_\bullet,N}|_{\prod_{i\in I}\bD_i}\ra(\Sht^{(I_1,\dotsc,I_k),\leq s}_{G,\mu_\bullet,N})_{\prod_{i\in I}\bD_i}
\end{align*}
is an open embedding \cite[(4.6.iv.c)]{Hub94}. Write $\textstyle\prod_{i\in I}\Spa F_i$ for the product of the $\Spa F_i$ over $\bF_{q^r}$. For any non-negative integer $n$, write $\wh\Sht^{(I_1,\dotsc,I_k),\leq s}_{G,\mu_\bullet,nv+N}|_{\prod_{i\in I}\Spa F_i}$ for the preimage of $\wh\Sht^{(I_1,\dotsc,I_k),\leq s}_{G,\mu_\bullet,N}|_{\prod_{i\in I}\Spa F_i}$ in $(\Sht^{(I_1,\dotsc,I_k),\leq s}_{G,\mu_\bullet,nv+N})_{\prod_{i\in I}\Spa F_i}$.

Write $\prod_{i\in I}\Spd F_i$ for the product of the $\Spd F_i$ over $\bF_{q^r}$. Write
\begin{align*}
  \cLocSht^{(I_1,\dotsc,I_k),\leq s}_{G,\mu_\bullet,nv}|_{\prod_{i\in I}\Spd F_i}
\end{align*}
for the preimage of $(\fLocSht^{(I_1,\dotsc,I_k),\leq s}_{G,\mu_\bullet}|_{\prod_{i\in I}\Spa F_i})^\Diamond$ in $\cLocSht^{(I_1,\dotsc,I_k)}_{G,\mu_\bullet,nv}|_{\prod_{i\in I}\Spd F_i}$, where we use Theorem \ref{thm:analytificationisomorphism} to identify $(\fLocSht^{(I_1,\dotsc,I_k)}_{G,\mu_\bullet}|_{\prod_{i\in I}\bD_i})^\Diamond$ with
\begin{align*}
  \cLocSht^{(I_1,\dotsc,I_k)}_{G,\mu_\bullet}|_{\prod_{i\in I}\bD_i^\Diamond}.
\end{align*}

\subsection{}\label{ss:uniformizationtower}
We can now define the uniformization morphism on generic fibers. Maintain the assumptions of \ref{ss:towerpreparations}, and let $S=\Spa(R,R^+)$ be an affinoid perfectoid space over $\textstyle\prod_{i\in I}\Spa F_i$.
\begin{thm*}
  We have a canonical morphism
  \begin{align*}
    \Te_n:\cLocSht^{(I_1,\dotsc,I_k),\leq s}_{G,\mu_\bullet,nv}|_{\prod_{i\in I}\Spd F_i}\ra(\wh\Sht^{(I_1,\dotsc,I_k),\leq s}_{G,\mu_\bullet,nv+N}|_{\prod_{i\in I}\Spa F_i})^\Diamond
  \end{align*}
of locally spatial diamonds over $\textstyle\prod_{i\in I}\Spd F_i$ that is \'etale.
\end{thm*}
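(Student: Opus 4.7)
The plan is to construct $\Te_n$ by combining the formally \'etale morphism $\wh\Te$ of Proposition \ref{ss:uniformizationformallyetale} with level-$n$ data at $v$, then to establish \'etaleness first at hyperspecial level $n=0$ and deduce the general case via a Cartesian diagram argument.

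First, I handle the case $n=0$. By Proposition \ref{ss:truncatedetale}, once $\deg N$ is sufficiently large, the restriction of $\wh\Te$ to the Harder--Narasimhan truncation is a formally \'etale morphism of formal schemes that is locally formally of finite type over $\prod_{i\in I}\bD_i$. Applying Huber's analytification and restricting to generic fibers yields an \'etale morphism of analytic adic spaces over $\prod_{i\in I}\Spa F_i$, and taking $(-)^\Diamond$ preserves \'etaleness. Theorem \ref{thm:analytificationisomorphism} identifies the generic-fiber diamond of the formal source with $\cLocSht^{(I_1,\dotsc,I_k),\leq s}_{G,\mu_\bullet}|_{\prod_{i\in I}\Spd F_i}$, yielding the desired \'etale morphism $\Te_0$.

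Next, I extend $\Te_0$ to level $n$. Given an $S$-point $(\sG,\de,\psi)$ of $\cLocSht^{(I_1,\dotsc,I_k),\leq s}_{G,\mu_\bullet,nv}$, the construction in the proof of Proposition \ref{ss:uniformizationformallyetale} produces a global shtuka $\sG^\Te$ over $C\times S$ with $\sG^\Te_j|_{\bD\times S}=\sH_j$, where $(\sH_j,\chi_j)$ is the data of Lemma \ref{ss:changeFrobenius} associated with $\sG$. By Lemma \ref{ss:Frobeniusfiberproducts}, the restriction $\sG^\Te_j|_{nv\times S}$ decomposes into the piece $\sG_j|_{\Spec R\lb{z}/z^n}$ on $\bD\times_{\bF_{q^r}}S$ and Frobenius-twisted copies of $\sG_1|_{\Spec R\lb{z}/z^n}$ on the other components, so $\psi$ canonically induces a level-$nv$ structure on $\sG^\Te$. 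Combined with the trivial level-$N$ structure already present, this defines the image of $(\sG,\de,\psi)$ under $\Te_n$. Conversely, since level structures are determined by their first component, this construction yields a bijection between level-$n$ structures on $\sG$ and those level-$nv+N$ structures on $\sG^\Te$ that extend the given trivial level-$N$ part.

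Consequently, the square whose horizontal arrows are $\Te_n$ and $\Te_0$ and whose vertical arrows forget the level data at $v$ is Cartesian. Both vertical maps are finite \'etale and Galois with the same group by Propositions \ref{ss:localshtukalevelstructure} and \ref{ss:globalshtukalevelstructure} (using the disjointness of $nv$ and $N$ to identify the relevant $K_{nv+N,N}$ with the local $K_{n,0}$), so $\Te_n$ arises as the base change of the \'etale morphism $\Te_0$ along a finite \'etale cover, hence is \'etale. The locally spatial diamond property for the target follows from Proposition \ref{ss:globalshtukarepresentable} via Huber's analytification and the $(-)^\Diamond$-functor, and for the source from Proposition \ref{ss:localshtukalevelstructure}. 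The main obstacle I anticipate is verifying carefully that Lemma \ref{ss:changeFrobenius} and Beauville--Laszlo gluing interact correctly with level structures to yield the bijection required for the square to be Cartesian; in particular, one must check that the Frobenius-twisted components on $\bD\times S$ introduce no constraints beyond those encoded in $\psi_1$ alone.
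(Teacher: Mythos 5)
Your construction of $\Te_n$ is the same as the paper's: pass to formal-model charts via Theorem \ref{thm:analytificationisomorphism}, apply $\wh\Te$, and equip the resulting global shtuka with the level-$nv$ structure induced by $\psi$ through the decomposition of Lemma \ref{ss:Frobeniusfiberproducts}, padded by the trivial level-$N$ structure. The only divergence is the final \'etaleness step. The paper writes down the same square you do but uses only its \emph{commutativity}: the forgetful arrows $\cLocSht^{(I_1,\dotsc,I_k),\leq s}_{G,\mu_\bullet,nv}\ra\cLocSht^{(I_1,\dotsc,I_k),\leq s}_{G,\mu_\bullet}$ and $(\wh\Sht^{(I_1,\dotsc,I_k),\leq s}_{G,\mu_\bullet,nv+N})^\Diamond\ra(\wh\Sht^{(I_1,\dotsc,I_k),\leq s}_{G,\mu_\bullet,N})^\Diamond$ are \'etale by Propositions \ref{ss:localshtukalevelstructure} and \ref{ss:globalshtukalevelstructure} together with \cite[Lemma 15.6]{Sch17}, $\wh\Te^\Diamond$ is \'etale by Proposition \ref{ss:truncatedetale} and \cite[Lemma 15.6]{Sch17}, and the 2-out-of-3 property \cite[Proposition 11.30]{Sch17} then gives \'etaleness of $\Te_n$. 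You instead assert that the square is Cartesian and realize $\Te_n$ as a base change of $\Te_0$. That is also correct (the vertical arrows are indeed finite Galois for the same group $K_{n,0}\cong K_{nv+N,N}$), but it obliges you to prove the extra input you flag at the end — that level-$nv$ structures on $\sG^\Te$ lifting the identity on $N$ biject with level-$n$ structures on $\sG$ — whereas the 2-out-of-3 route needs only the easy direction (that $\psi$ induces such a structure), which is already part of the definition of $\Te_n$. So your argument is valid but strictly stronger than necessary; the paper's is the cheaper deduction from the same diagram.
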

\begin{proof}
  First, we define $\Te_n$. By Theorem \ref{thm:analytificationisomorphism}, an $S$-point of
  \begin{align*}
    \cLocSht^{(I_1,\dotsc,I_k),\leq s}_{G,\mu_\bullet,nv}|_{\prod_{i\in I}\Spd F_i}
  \end{align*}
  corresponds to a cover $(S_\al)_\al$ of $S$ by rational open subspaces $S_\al=\Spa(R_\al,R^+_\al)$ with pairwise intersections $S_{\al\be}=\Spa(R_{\al\be},R^+_{\al\be})$, a family $(\sG^\al,\de^\al)$ of $\Spf R^+_\al$-points of $\fLocSht^{(I_1,\dotsc,I_k),\leq s}_{G,\mu_\bullet}|_{\prod_{i\in I}\bD_i}$ that agree on $\Spf R^+_{\al\be}$, and a level-$n$ structure $\psi$ on the analytic local $G$-shtuka over $S$ obtained from gluing the $(\sG^\al)^{\an}$.

  Proposition \ref{ss:globalshtukarepresentable} indicates that $\Sht^{(I_1,\dotsc,I_k),\leq s}_{G,\mu_\bullet,N}|_{\prod_{i\in I}C_i\ssm N_i}$ is locally of finite type over $\textstyle\prod_{i\in I}C_i\ssm N_i$, so for all $\al$, our $\Te(\sG^\al,\de^\al)$ yields an $R^+_\al$-point of
  \begin{align*}
    \Sht^{(I_1,\dotsc,I_k),\leq s}_{G,\mu_\bullet,N}|_{\prod_{i\in I}C_i\ssm N_i}.
  \end{align*}
  Write $\sG^{\al,\Te}$ for the resulting global $G$-shtuka over $\Spec R_\al$, which is bounded by $\mu_\bullet$ and has Harder--Narasimhan polygon bounded by $m$. Note that the pullback $\psi^\al$ of $\psi$ to $S_\al$ is precisely a level-$nv$ structure on $\sG^{\al,\Te}$, so we can form a level-$(nv+N)$ structure $\psi^{\al,\Te}$ on $\sG^{\al,\Te}$ by taking $\psi^\al$ on $nv$ and $\id$ on $N$. Then $(\sG^{\al,\Te},\psi^{\al,\Te})$ induces an $S_\al$-point of $\wh\Sht^{(I_1,\dotsc,I_k),\leq s}_{G,\mu_\bullet,nv+N}|_{\prod_{i\in I}\Spa F_i}$, and because the $\sG^{\al,\Te}$ and $\psi^{\al,\Te}$ agree on $\Spec{R_{\al\be}}$, the resulting family glues into an $S$-point. We define this $S$-point to be the value of $\Te_n$.

  To see that $\Te_n$ is \'etale, note that we have a commutative square
  \begin{align*}
    \xymatrix{\cLocSht^{(I_1,\dotsc,I_k),\leq s}_{G,\mu_\bullet,nv}|_{\prod_{i\in I}\Spd F_i}\ar[r]\ar[d]^-{\Te_n} & (\fLocSht^{(I_1,\dotsc,I_k),\leq s}_{G,\mu}|_{\prod_{i\in I}\Spa F_i})^\Diamond\ar[d]^-{\wh\Te^\Diamond}\\
    (\wh\Sht^{(I_1,\dotsc,I_k),\leq s}_{G,\mu_\bullet,nv+N}|_{\prod_{i\in I}\Spa F_i})^\Diamond\ar[r] & (\wh\Sht^{(I_1,\dotsc,I_k),\leq s}_{G,\mu_\bullet,N}|_{\prod_{i\in I}\Spa F_i})^\Diamond.
    }
  \end{align*}
  Theorem \ref{thm:analytificationisomorphism} and Proposition \ref{ss:localshtukalevelstructure} show that the top arrow is \'etale, and Proposition \ref{ss:globalshtukalevelstructure} and \cite[Lemma 15.6]{Sch17} imply that the bottom arrow is \'etale. Proposition \ref{ss:truncatedetale} and \cite[Lemma 15.6]{Sch17} imply that $\wh\Te^\Diamond$ is \'etale, so the 2-out-of-3 property \cite[Proposition 11.30]{Sch17} concludes that $\Te_n$ is \'etale.
\end{proof}

\subsection{}
As before, we reindex everything in terms of representations of the dual group. Maintain the assumptions of \ref{ss:towerpreparations}. Let $\wt{F}_v$ be the extension of $F_v$ as in \ref{ss:Lgroup}, and identify $\wt{F}_v$ with the completion of $\wt{F}$ at the place $\wt{v}$ of $\wt{F}$ above $v$ induced by $\ov{F}\ra\ov{F}_v$. Identify $\wh{G}$ with the dual group of $G_{F_v}$ over $\cO_E$, and write $\prescript{L}{}G_v$ for $\wh{G}\rtimes\Gal(\wt{F}_v/F_v)$. Note that we have a natural inclusion $\prescript{L}{}G_v\ra\prescript{L}{}G$.

Let $V$ be an object of $\Rep_E(\prescript{L}{}G_v)^I$. Write $\wh\Sht^{(I_1,\dotsc,I_k)}_{G,V,N}$ and $\wh\Sht^{(I_1,\dotsc,I_k),\leq s}_{G,V,N}$ for the formal completions of $\Sht^{(I_1,\dotsc,I_k)}_{G,V,N}$ and $\Sht^{(I_1,\dotsc,I_k),\leq s}_{G,V,N}$, respectively, along $v^I$ in $(C\ssm N)^I$. Proposition \ref{ss:uniformizationformallyetale} and descent yield a canonical morphism
\begin{align*}
\wh\Te:\fLocSht^{(I_1,\dotsc,I_k)}_{G,V}\ra\wh\Sht^{(I_1,\dotsc,I_k)}_{G,V,N}
\end{align*}
that is formally \'etale. Write $\fLocSht^{(I_1,\dotsc,I_k),\leq s}_{G,V}$ for the preimage of $\wh\Sht^{(I_1,\dotsc,I_k),\leq s}_{G,V,N}$ under $\Te$.

Write $\wh\Sht^{(I_1,\dotsc,I_k),\leq s}_{G,V,nv+N}$ for the preimage of $\wh\Sht^{(I_1,\dotsc,I_k),\leq s}_{G,V,N}$ in $(\Sht^{(I_1,\dotsc,I_k)}_{G,V,nv+N})_{(\Spa F_v)^I}$, and write $\cLocSht^{(I_1,\dotsc,I_k),\leq s}_{G,V,nv}$ for the preimage of $(\fLocSht^{(I_1,\dotsc,I_k),\leq s}_{G,V})_{(\Spa F_v)^I}^\Diamond$ in $\cLocSht^{(I_1,\dotsc,I_k)}_{G,V,nv}$, where we use Theorem \ref{thm:analytificationisomorphism} to identify $(\fLocSht^{(I_1,\dotsc,I_k),\leq s}_{G,V})_{(\Spa F_v)^I}^\Diamond$ with $\cLocSht^{(I_1,\dotsc,I_k)}_{G,V,0v}$. Theorem \ref{ss:uniformizationtower} and Galois descent yield a canonical morphism
\begin{align*}
\Te_n:\cLocSht^{(I_1,\dotsc,I_k),\leq s}_{G,V,nv}\ra(\wh\Sht^{(I_1,\dotsc,I_k),\leq s}_{G,V,nv+N})^\Diamond
\end{align*}
of locally spatial diamonds over $(\Spd F_v)^I$ that is \'etale.

\subsection{}\label{ss:localglobalpartialfrobenius}
We conclude by showing that the uniformization morphism is compatible with partial Frobenii. Maintain the assumptions of \ref{ss:towerpreparations}.

\begin{lem*}
  Our $\cFr^{(I_1,\dotsc,I_k)}$ restricts to a morphism
  \begin{align*}
    \cFr^{(I_1,\dotsc,I_k)}:\cLocSht^{(I_1,\dotsc,I_k),\leq s}_{G,V,nv}\ra\cLocSht^{(I_1,\dotsc,I_k),\leq s+r\ka(V)}_{G,V,nv}.
  \end{align*}
After enlarging $\deg{N}$, we can also form the $r$-fold composition
  \begin{align*}
    (\Fr^{(I_1,\dotsc,I_k)})_{\prescript{\tau^{r-1}}{}{(\Spa F_v)}^{I_1}\times(\Spa F_v)^{I\ssm I_1}}\circ\dotsb\circ(\Fr^{(I_1,\dotsc,I_k)})_{(\Spa F_v)^I},
  \end{align*}
  which yields a morphism
  \begin{align*}
  (\Fr^{(I_1,\dotsc,I_k)})^r_{(\Spa F_v)^I}:(\Sht^{(I_1,\dotsc,I_k),\leq s}_{G,V,nv+N})_{(\Spa F_v)^I}\ra(\Sht^{(I_1,\dotsc,I_k),\leq s+r\ka(V)}_{G,V,nv+N})_{(\Spa F_v)^I}.
  \end{align*}
Finally, we have $\Te_n\circ\cFr^{(I_1,\dotsc,I_k)}=(\Fr^{(I_1,\dotsc,I_k)})^{r,\Diamond}_{(\Spa F_v)^I}\circ\Te_n$.
\end{lem*}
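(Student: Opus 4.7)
The plan is to deduce all three claims from a careful bookkeeping of the construction of $\Te_n$ together with the interplay between Frobenius over $\bF_q$ and over $\bF_{q^r}$. First I would observe that by the very definition of $\cLocSht^{(I_1,\dotsc,I_k),\leq s}_{G,V,nv}$ in \ref{ss:towerpreparations}, the local Harder--Narasimhan truncation is pulled back along $\Te_n$ from the global one, so once the compatibility $\Te_n\circ\cFr^{(I_1,\dotsc,I_k)}=(\Fr^{(I_1,\dotsc,I_k)})^{r,\Diamond}_{(\Spa F_v)^I}\circ\Te_n$ is established, the first claim follows immediately from the second, leaving only two independent tasks.

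For the $r$-fold composition statement, after enlarging $\deg N$ so that Proposition \ref{ss:globalshtukarepresentable} presents the relevant global moduli as schemes, one argues by iteration: a single application of $\Fr^{(I_1,\dotsc,I_k)}$ raises the Harder--Narasimhan bound by $\ka(V)$ by \ref{ss:globalshtukapartialfrobenius}, so $r$ applications raise it by $r\ka(V)$. The composite acts on the $I_1$-factor of the base by $q^r$-Frobenius, and since $v$ has residue field $\bF_{q^r}$, this preserves the residue field pointwise on $v^I$, so the restriction to $(\Spa F_v)^I$ stays over $(\Spa F_v)^I$ and descends to the formal neighborhood of $v^I$ defining the analytification in question.

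The main obstacle is the compatibility $\Te_n\circ\cFr=(\Fr)^r\circ\Te_n$, which I would verify on $S$-points by unfolding the construction of $\Te_n$ from \ref{ss:uniformizationformallyetale}. Given a local datum $(\sG,\de,\psi)$, Lemma \ref{ss:changeFrobenius} presents it $\bF_q$-linearly by distributing $r$ Frobenius twists $\prescript{\tau^d}{}{\sG_1}$ for $0\leq d\leq r-1$ across the disjoint components $\bD\times_{\bF_{q^r},d}S$ of Lemma \ref{ss:Frobeniusfiberproducts}; one then glues to the trivial bundle away from $v\times S$ via the quasi-isogeny. Applying $\cFr^{(I_1,\dotsc,I_k)}$ cyclically shifts $(\sG_1,\dotsc,\sG_k)$ while imposing one local $q$-Frobenius twist on the $I_1$-factor of the $\bF_{q^r}$-base; unfolding this twist across all $r$ components, viewed $\bF_q$-linearly, produces precisely the same global $G$-shtuka as $r$ successive applications of $\Fr^{(I_1,\dotsc,I_k)}$. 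Beauville--Laszlo uniqueness identifies the two constructions on the nose, and the triviality of $\psi^\Te|_{N\times S}$ ensures the compatibility lifts to the level tower.

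The real difficulty is notational: one must carefully verify that the local $\tau$-action on $S$, which cyclically permutes the components $\bD\times_{\bF_{q^r},d}S$ while twisting them, corresponds under Lemma \ref{ss:changeFrobenius} to exactly $r$ iterations of global Frobenius rather than a single one. This boils down to the observation that local $q$-Frobenius over $\bF_{q^r}$ and global $q$-Frobenius over $\bF_q$ differ by an outer action of $\Gal(\bF_{q^r}/\bF_q)\cong\bZ/r$, and exactly $r$ applications of the global Frobenius cancel this discrepancy.
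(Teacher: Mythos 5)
Your proposal follows essentially the same route as the paper: establish $\wh\Te\circ\fFr^{(I_1,\dotsc,I_k)}=\wh\Fr^{(I_1,\dotsc,I_k)}|_{\prescript{\tau^{r-1}}{}\bD^{I_1}\times\bD^{I\ssm I_1}}\circ\dotsb\circ\wh\Fr^{(I_1,\dotsc,I_k)}\circ\wh\Te$ by unfolding the local Frobenius over $\bF_{q^r}$ into $r$ iterations of the global $q$-Frobenius via Lemmas \ref{ss:Frobeniusfiberproducts} and \ref{ss:changeFrobenius}, then read off the Harder--Narasimhan bounds from \ref{ss:globalshtukapartialfrobenius} and pass to generic fibers. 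The only detail you elide is why the $r$-fold composite can be written with the fixed partition $(I_1,\dotsc,I_k)$ at every step even though a single $\Fr^{(I_1,\dotsc,I_k)}$ lands in $\Sht^{(I_2,\dotsc,I_k,I_1)}$; the paper handles this by noting that over $\prescript{\tau^d}{}\bD^{I_1}\times\bD^{I\ssm I_1}$ with $0<d<r$ the $I_1$-divisors are disjoint from the others, so the ordered partition can be re-identified.
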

\begin{proof}
  Write $\wh\Sht^{(I_1,\dotsc,I_k)}_{G,V,N}|_{\prescript\tau{}\bD^{I_1}\times\bD^{I\ssm I_1}}$ for the formal completion of $\Sht^{(I_1,\dotsc,I_k)}_{G,V,N}$ along $\tau(v)^{I_1}\times v^{I\ssm I_1}$ in $(C\ssm N)^I$. We see from \ref{ss:globalshtukapartialfrobenius} that $\Fr^{(I_1,\dotsc,I_k)}$ induces a morphism
  \begin{align*}
    \wh\Fr^{(I_1,\dotsc,I_k)}:\wh\Sht^{(I_1,\dotsc,I_k)}_{G,V,N}\ra\wh\Sht^{(I_2,\dotsc,I_k,I_1)}_{G,V,N}|_{\prescript\tau{}\bD^{I_1}\times\bD^{I\ssm I_1}}.
  \end{align*}
  If $r=1$, then stop here. Otherwise, the relative effective Cartier divisors on $C\times S$ corresponding to $S$-points of $\prescript\tau{}\bD$ and $\bD$ are disjoint, so the right-hand side is naturally isomorphic to $\wh\Sht^{(I_1,\dotsc,I_k)}_{G,V,N}|_{\prescript\tau{}\bD^{I_1}\times\bD^{I\ssm I_1}}$. By forming $\wh\Fr^{(I_1,\dotsc,I_k)}|_{\prescript\tau{}\bD^{I_1}\times\bD^{I\ssm I_1}}$ and repeating this $r-1$ more times, we obtain a morphism
  \begin{align*}
    \wh\Fr^{(I_1,\dotsc,I_k)}|_{\prescript{\tau^{r-1}}{}\bD^{I_1}\times\bD^{I\ssm I_1}}\circ\dotsb\circ\wh\Fr^{(I_1,\dotsc,I_k)}:\wh\Sht^{(I_1,\dotsc,I_k)}_{G,V,N}\ra\wh\Sht^{(I_2,\dotsc,I_k,I_1)}_{G,V,N}.
  \end{align*}
  Tracing through our identifications shows that
  \begin{align*}
    \wh\Te\circ\fFr^{(I_1,\dotsc,I_k)}=\wh\Fr^{(I_1,\dotsc,I_k)}|_{\prescript{\tau^{r-1}}{}\bD^{I_1}\times\bD^{I\ssm I_1}}\circ\dotsb\circ\wh\Fr^{(I_1,\dotsc,I_k)}\circ\wh\Te,
  \end{align*}
so \ref{ss:globalshtukapartialfrobenius} implies that $\fFr^{(I_1,\dotsc,I_k)}$ restricts to a morphism
  \begin{align*}
    \fLocSht^{(I_1,\dotsc,I_k),\leq s}_{G,V}\ra\fLocSht^{(I_1,\dotsc,I_k),\leq s+r\ka(V)}_{G,V}.
  \end{align*}
Pulling back to $\Spa F_v$ and using Theorem \ref{thm:analytificationisomorphism} yields the desired result.
\end{proof}

\section{Local-global compatibility}\label{s:localglobalcompatibility}
Our goal in this section is to prove Theorem A. First, we recall the coefficient sheaves used for the cohomology of the global and local moduli problems. We show that they are compatible under the uniformization morphism from \S\ref{s:uniformizing}. Next, we recall smoothness theorems for our cohomology sheaves, which are due to Xue \cite{Xue20b} in the global case and Fargues--Scholze \cite{FS21} in the local case.

These smoothness theorems yield global and local excursion operators. Using the uniformization morphism, we prove that the global and local excursion operators are compatible. From this, we deduce that the Bernstein center elements constructed by Genestier--Lafforgue \cite{GL17} agree with those constructed by Fargues--Scholze \cite{FS21}, and we also deduce Theorem A.

\subsection{}\label{ss:globalsatake}
For the cohomology of the moduli of global $G$-shtukas, we use the following sheaves obtained via geometric Satake. For large enough $e$, recall from \ref{ss:affineschubertvarieties} that the natural $L^+_I(G)$-action on $\Gr^{(I_1,\dotsc,I_k)}_{G,\mu_\bullet}|_{\prod_{i\in I}C_i}$ factors through $L^e_I(G)$. Write $\mathrm{A}^{(I_1,\dotsc,I_k)}_{G,\mu_\bullet,N}$ for the $L^e_I(G)$-bundle on $\Sht^{(I_1,\dotsc,I_k)}_{G,\mu_\bullet,N}|_{\prod_{i\in I}C_i\ssm N_i}$ whose fiber over $(\sG,\psi)$ parametrizes trivializations of the $G$-bundle $\prescript\tau{}{\sG_1}|_{e\sum_{i\in I}\Ga_i}$. Note that we have a natural $L^e_I(G)$-equivariant morphism $\mathrm{A}^{(I_1,\dotsc,I_k)}_{G,\mu_\bullet,N}\ra\Gr^{(I_1,\dotsc,I_k)}_{G,\mu_\bullet}|_{\prod_{i\in I}C_i}$, which is smooth by \cite[p.~867]{Laf16}. Write $\mathrm{A}^{(I_1,\dotsc,I_k),\leq s}_{G,\mu_\bullet,N}$ for the restriction of $\mathrm{A}^{(I_1,\dotsc,I_k)}_{G,\mu_\bullet,N}$ to
\begin{align*}
  \Sht^{(I_1,\dotsc,I_k),\leq s}_{G,\mu_\bullet,N}|_{\prod_{i\in I}C_i\ssm N_i}.
\end{align*}

Write $V_{\mu_\bullet}$ for the highest weight representation of $\wh{G}^I$ corresponding to $\mu_\bullet$, and write $\cS^{(I_1,\dotsc,I_k)}_{\mu_\bullet,E}$ for the corresponding object of $D(\Gr^{(I_1,\dotsc,I_k)}_{G,\mu_\bullet}|_{\prod_{i\in I}C_i},E)$ under geometric Satake. Write $\cF_{\mu_\bullet,N,E}^{(I_1,\dotsc,I_k)}$ for the object of $D(\Sht^{(I_1,\dotsc,I_k)}_{G,\mu_\bullet,N}|_{\prod_{i\in I}C_i\ssm N_i},E)$ obtained by first pulling back $\cS^{(I_1,\dotsc,I_k)}_{\mu_\bullet,E}$ to $\mathrm{A}^{(I_1,\dotsc,I_k)}_{G,\mu_\bullet,N}$ and then using $L^e_I(G)$-equivariance to descend along $\mathrm{A}^{(I_1,\dotsc,I_k)}_{G,\mu_\bullet,N}\ra\Sht^{(I_1,\dotsc,I_k)}_{G,\mu_\bullet,N}|_{\prod_{i\in I}C_i\ssm N_i}$. Finally, write $\cF^{(I_1,\dotsc,I_k),\leq s}_{\mu_\bullet,N,E}$ for the restriction of $\cF^{(I_1,\dotsc,I_k)}_{\mu_\bullet,N,E}$ to $\Sht^{(I_1,\dotsc,I_k),\leq s}_{G,\mu_\bullet,N}|_{\prod_{i\in I}C_i\ssm N_i}$.

\subsection{}\label{ss:globalsatakemodxi}
We will also take cohomology after quotienting by a lattice $\Xi$ of $Z(F)\bs Z(\bA)$, where a \emph{lattice} means a discrete torsionfree cocompact subgroup. We proceed as follows. Note that $L^+_I(Z)$ acts trivially on $\Gr^{(I_1,\dotsc,I_k)}_{G,\mu_\bullet}|_{\prod_{i\in I}C_i}$, so the natural $L^+_I(G)$-action factors through $L^+_I(G^{\ad})$. For large enough $e$, \ref{ss:affineschubertvarieties} indicates that this factors through $L^e_I(G^{\ad})$. Now $L^e_I(Z)$ acts trivially on the objects of
\begin{align*}
D(\Gr^{(I_1,\dotsc,I_k)}_{G,\mu_\bullet}|_{\prod_{i\in I}C_i},E)
\end{align*}
obtained from geometric Satake \cite[Th\'eor\`eme 12.16]{Laf16}, so these objects are $L^e_I(G^{\ad})$-equivariant. Adapting the construction in \ref{ss:globalsatake} yields an object $\cF^{(I_1,\dotsc,I_k)}_{\Xi,\mu_\bullet,N,E}$ of
\begin{align*}
 D(\Sht^{(I_1,\dotsc,I_k)}_{G,\mu_\bullet,N}\!\!/\Xi\,|_{\prod_{i\in I}C_i\ssm N_i},E),
\end{align*}
and we see that the pullback of $\cF^{(I_1,\dotsc,I_k)}_{\Xi,\mu_\bullet,N,E}$ to $\Sht^{(I_1,\dotsc,I_k)}_{G,\mu_\bullet,N}|_{\prod_{i\in I}C_i\ssm N_i}$ equals $\cF^{(I_1,\dotsc,I_k)}_{\Xi,\mu_\bullet,N,E}$.

\subsection{}
Next, we describe the sheaves used for the homology of the moduli of local $G$-shtukas. Recall $\cL^e_I(G)$ and $\cL^+_I(G)$ from Definition \ref{ss:analyticBDgrassmannian}. For large enough $e$, \ref{ss:affineschubertvarieties} and Lemma \ref{ss:BDgrassmanniancomparison} indicate that the natural $\cL^+_I(G)$-action on $\cGr^{(I_1,\dotsc,I_k)}_{G,\mu_\bullet}|_{\prod_{i\in I}\bD^\Diamond_i}$ factors through $\cL^e_I(G)$. Write $\cA^{(I_1,\dotsc,I_k)}_{G,\mu_\bullet,nv}$ for the $\cL^e_I(G)$-bundle on $\cLocSht^{(I_1,\dotsc,I_k)}_{G,\mu_\bullet,nv}|_{\prod_{i\in I}\Spd F_i}$ whose fiber over $(\sG,\de,\psi)$ parametrizes trivializations of the $G$-bundle $\prescript{\tau^r}{}{\sG_1}|_{e\sum_{i\in I}\Ga_i}$. Note that we have a natural $\cL^e_I(G)$-equivariant morphism
\begin{align*}
  \cA^{(I_1,\dotsc,I_k)}_{G,\mu_\bullet,nv}\ra\cGr^{(I_1,\dotsc,I_k)}_{G,\mu_\bullet}|_{\prod_{i\in I}\Spd F_i}.
\end{align*}

Recall $\La$ from \ref{ss:FSnaivecompare}, and write $\textstyle\prod_{i\in I}\Spd\breve{F}_i$ for the product of the $\Spd\breve{F}_i$ over $\ov\bF_q$. Write $\prescript\prime{}\cF^{(I_1,\dotsc,I_k)}_{\mu_\bullet,nv,\La}$ for the object of $D_{\solid}(\cLocSht^{(I_1,\dotsc,I_k)}_{G,\mu_\bullet,nv}|_{\prod_{i\in I}\Spd\breve{F}_i},\La)$ obtained from \cite[Theorem VI.11.1]{FS21} and $V_{\mu_\bullet}$ by first applying the double-dual embedding as in \cite[p.~264]{FS21}, then pulling back to $\cA^{(I_1,\dotsc,I_k)}_{G,\mu_\bullet,nv}|_{\prod_{i\in I}\Spd\breve{F}_i}$, and finally using $\cL^e_I(G)$-equivariance and \cite[Proposition 17.3]{Sch17} to descend along
\begin{align*}
\cA^{(I_1,\dotsc,I_k)}_{G,\mu_\bullet,nv}\ra\cLocSht^{(I_1,\dotsc,I_k)}_{G,\mu_\bullet,nv}|_{\prod_{i\in I}\Spd\breve{F}_i}.
\end{align*}

Write $\cA^{(I_1,\dotsc,I_k),\leq s}_{G,\mu_\bullet,nv}$ for the restriction of $\cA^{(I_1,\dotsc,I_k)}_{G,\mu_\bullet,nv}$ to $\cLocSht^{(I_1,\dotsc,I_k),\leq s}_{G,\mu_\bullet,nv}|_{\prod_{i\in I}\Spd F_i}$, and write $\prescript\prime{}\cF^{(I_1,\dotsc,I_k),\leq s}_{\mu_\bullet,nv,\La}$ for the restriction of $\prescript\prime{}\cF^{(I_1,\dotsc,I_k)}_{\mu_\bullet,nv,\La}$ to
\begin{align*}
  \cLocSht^{(I_1,\dotsc,I_k),\leq s}_{G,\mu_\bullet,nv}|_{\prod_{i\in I}\Spd\breve{F}_i}.
\end{align*}

\subsection{}\label{ss:localglobalsatakecomparison}
Our local and global coefficient sheaves are compatible under $\Te_n$ in the following sense. Adapt the assumptions of \ref{ss:towerpreparations}, and write $\textstyle\prod_{i\in I}\Spa\breve{F}_i$ for the product of the $\Spa\breve{F}_i$ over $\ov\bF_q$. Write $(\cF^{(I_1,\dotsc,I_k),\leq s}_{\mu_\bullet,nv+N,E})_{\prod_{i\in I}\Spa\breve{F}_i}$ for the object of
\begin{align*}
 D((\Sht^{(I_1,\dotsc,I_k),\leq s}_{G,\mu_\bullet,nv+N})_{\prod_{i\in I}\Spa\breve{F}_i},E)
\end{align*}
obtained by analytifying $\cF^{(I_1,\dotsc,I_k),\leq s}_{\mu_\bullet,nv+N,E}$ as in \cite[(3.2.8)]{Hub96}. Because
\begin{align*}
(\Sht^{(I_1,\dotsc,I_k),\leq s}_{G,\mu_\bullet,nv+N})_{\prod_{i\in I}\Spa\breve{F}_i}
\end{align*}
is an analytic adic space, \cite[Lemma 15.6]{Sch17} and \cite[Remark 14.14]{Sch17} indicate that $(\cF^{(I_1,\dotsc,I_k),\leq s}_{\mu_\bullet,nv+N,E})_{\prod_{i\in I}\Spa\breve{F}_i}$ yields an object $(\cF^{(I_1,\dotsc,I_k),\leq s}_{\mu_\bullet,nv+N,E})_{\prod_{i\in I}\Spa\breve{F}_i}^\Diamond$ of
\begin{align*}
D_{\et}((\Sht^{(I_1,\dotsc,I_k),\leq s}_{G,\mu_\bullet,nv+N})_{\prod_{i\in I}\Spa\breve{F}_i}^\Diamond,E).
\end{align*}
\begin{lem*}
$(\cF^{(I_1,\dotsc,I_k),\leq s}_{\mu_\bullet,nv+N,E})_{\prod_{i\in I}\Spa\breve{F}_i}^\Diamond$ is universally locally acyclic over $\textstyle\prod_{i\in I}\Spd\breve{F}_i$. Moreover, its image $\prescript\prime{}{(}\cF^{(I_1,\dotsc,I_k),\leq s}_{\mu_\bullet,nv+N,E})_{\prod_{i\in I}\Spa\breve{F}_i}^\Diamond$ in $D_{\solid}((\Sht^{(I_1,\dotsc,I_k),\leq s}_{G,\mu_\bullet,nv+N})_{\prod_{i\in I}\Spa\breve{F}_i}^\Diamond,E)$ under the double-dual embedding as in \cite[p.~260]{FS21} satisfies
  \begin{align*}
   \Te^*_n\big[\prescript\prime{}{(}\cF^{(I_1,\dotsc,I_k),\leq s}_{\mu_\bullet,nv+N,E})_{\prod_{i\in I}\Spa\breve{F}_i}^\Diamond\big] = \prescript\prime{}\cF^{(I_1,\dotsc,I_k),\leq s}_{\mu_\bullet,nv,E}.
  \end{align*}
\end{lem*}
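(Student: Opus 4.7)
The plan is to establish the two assertions separately, starting with universal local acyclicity and then the pullback identity.

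For the ULA statement, I would argue by propagation along smooth and étale morphisms. The Satake sheaf $\cS^{(I_1,\dotsc,I_k)}_{\mu_\bullet,E}$ on $\Gr^{(I_1,\dotsc,I_k)}_{G,\mu_\bullet}|_{\prod_i C_i}$ is ULA over the Schubert variety closure, hence ULA over $\prod_i C_i\ssm N_i$ after restriction (this is built into the relative geometric Satake formalism used in \ref{ss:globalsatake}). Since $\mathrm{A}^{(I_1,\dotsc,I_k),\leq s}_{G,\mu_\bullet,N}\to\Gr^{(I_1,\dotsc,I_k)}_{G,\mu_\bullet}|_{\prod_i C_i}$ is smooth and $L^e_I(G)$-equivariant, and the quotient map $\mathrm{A}^{(I_1,\dotsc,I_k),\leq s}_{G,\mu_\bullet,N}\to\Sht^{(I_1,\dotsc,I_k),\leq s}_{G,\mu_\bullet,N}|_{\prod_i C_i\ssm N_i}$ is a smooth torsor, ULA descends to the finite étale cover with level $nv+N$. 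Passage through Huber's analytification and then to diamonds preserves ULA, using that the étale sites of a locally finite type rigid analytic space and its diamond are equivalent by \cite[Lemma 15.6]{Sch17}.

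For the compatibility, I would proceed in three steps. First, I would identify the $\cL^e_I(G)$-torsors. Given an $S$-point $(\sG,\de,\psi)$ of $\cLocSht^{(I_1,\dotsc,I_k),\leq s}_{G,\mu_\bullet,nv}$ with image $(\sG^\Te,\psi^\Te)$ under $\Te_n$, I claim there is a canonical isomorphism
\begin{align*}
\prescript\tau{}{\sG^\Te_1}\big|_{e\sum_{i\in I}\Ga_i}\cong\prescript{\tau^r}{}{\sG_1}\big|_{e\sum_{i\in I}\Ga_i},
\end{align*}
giving an isomorphism $\Te_n^*\mathrm{A}^{(I_1,\dotsc,I_k),\leq s}_{G,\mu_\bullet,nv+N}\cong\cA^{(I_1,\dotsc,I_k),\leq s}_{G,\mu_\bullet,nv}$ of $\cL^e_I(G)$-torsors. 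This identification is immediate from Lemma \ref{ss:Frobeniusfiberproducts} and Lemma \ref{ss:changeFrobenius}: the divisor $e\sum_i\Ga_i$ lies entirely in the $d=r$ component of $\bD\times S=\coprod_d\bD\times_{\bF_{q^r},d}S$, where the $\tau$-twist appearing globally matches the $\tau^r$-twist used to pass between the local conventions.

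Second, I would compare the Satake sheaves themselves. Via Lemma \ref{ss:BDgrassmanniancomparison}, the diamondification of $\Gr^{(I_1,\dotsc,I_k)}_{G,\mu_\bullet}|_{\prod_i C_i}$ restricted to $\prod_i\Spd F_i$ equals the analytic Beilinson--Drinfeld Grassmannian $\cGr^{(I_1,\dotsc,I_k)}_{G,\mu_\bullet}|_{\prod_i\Spd F_i}$. Under this identification, the diamondification of $\cS^{(I_1,\dotsc,I_k)}_{\mu_\bullet,E}$ corresponds (after the double-dual embedding) to the solid sheaf produced from $V_{\mu_\bullet}$ by \cite[Theorem VI.11.1]{FS21}; this is the content of the diamond version of geometric Satake for BD Grassmannians. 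Third, since $\Te_n$ is étale (Theorem \ref{ss:uniformizationtower}), $\Te_n^*$ commutes with descent along the smooth $\cL^e_I(G)$-torsors and with the double-dual embedding (the latter via the ULA property established in the first paragraph). Combining Steps 1 and 2, the constructions of $\prescript\prime{}(\cF^{(I_1,\dotsc,I_k),\leq s}_{\mu_\bullet,nv+N,E})^\Diamond$ and $\prescript\prime\cF^{(I_1,\dotsc,I_k),\leq s}_{\mu_\bullet,nv,E}$ agree after $\Te_n^*$, as desired.

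The main obstacle is the Satake sheaf identification in Step 2: verifying that the classical perverse geometric Satake sheaf analytifies (via double-dual) to the Fargues--Scholze solid Satake sheaf requires tracking the identification through two distinct frameworks. This is where the ULA hypothesis plays a role, since it ensures the double-dual embedding behaves well under pullback along $\Te_n$.
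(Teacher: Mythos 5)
Your outline matches the paper's proof in its essential structure: identify the $\cL^e_I(G)$-torsors under $\Te_n$, identify the Satake sheaves, and propagate ULA along the $\ell$-cohomologically smooth torsor maps. The one genuine gap is in your ULA step as written. You derive ULA from the claim that the classical Satake sheaf is ULA in the schematic setting and that ``passage through Huber's analytification and then to diamonds preserves ULA,'' citing the equivalence of \'etale sites from \cite[Lemma 15.6]{Sch17}. That equivalence concerns the \'etale site of the total space and does not by itself transfer the \emph{relative} notion of universal local acyclicity over $\prod_{i\in I}\Spd\breve{F}_i$, which in \cite{FS21} is a dualizability condition on solid sheaves over the base; no such schematic-to-diamond transfer theorem is available or invoked in the paper, and avoiding it is precisely the point of the argument. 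The paper instead obtains ULA from the identification you postpone to your Step 2: using the explicit description of the Satake sheaf \cite[Proposition VI.7.9]{FS21} together with the fiberwise criterion for perversity \cite[Corollary VI.7.6]{FS21}, one shows that $(\cS^{(I_1,\dotsc,I_k)}_{\mu_\bullet,E})^\Diamond$ \emph{equals} the Fargues--Scholze sheaf attached to $V_{\mu_\bullet}$ by \cite[Theorem VI.11.1]{FS21} on $\cGr^{(I_1,\dotsc,I_k)}_{G,\mu_\bullet}|_{\prod_{i\in I}\Spd\breve{F}_i}$, which is ULA by construction; ULA then propagates along the $\ell$-cohomologically smooth maps $(\mathrm{A}^{(I_1,\dotsc,I_k),\leq s}_{G,\mu_\bullet,nv+N})^\Diamond\ra(\Gr^{(I_1,\dotsc,I_k)}_{G,\mu_\bullet})^\Diamond$ and $(\mathrm{A}^{(I_1,\dotsc,I_k),\leq s}_{G,\mu_\bullet,nv+N})^\Diamond\ra(\Sht^{(I_1,\dotsc,I_k),\leq s}_{G,\mu_\bullet,nv+N})^\Diamond$ exactly as you describe, via \cite[Proposition 24.4]{Sch17} and \cite[Proposition IV.2.13]{FS21}. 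So the fix is to move your Step 2 identification to the front and derive ULA from it. The remainder of your argument --- the torsor comparison via Lemmas \ref{ss:Frobeniusfiberproducts} and \ref{ss:changeFrobenius}, and the compatibility of descent and the double-dual embedding with $\Te_n^*$ --- is exactly the commutative diagram with which the paper concludes.
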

\begin{proof}
  We start by rewriting $(\cF^{(I_1,\dotsc,I_k),\leq s}_{\mu_\bullet,nv+N,E})_{\prod_{i\in I}\Spa\breve{F}_i}^\Diamond$ as follows. Since
  \begin{align*}
    (\Gr^{(I_1,\dotsc,I_k)}_{G,\mu_\bullet})_{\prod_{i\in I}\Spa\breve{F}_i}
  \end{align*}
  is an analytic adic space, \cite[Lemma 15.6]{Sch17} and \cite[Remark 14.14]{Sch17} indicate that $(\cS^{(I_1,\dotsc,I_k)}_{\mu_\bullet,E})^\Diamond_{\prod_{i\in I}\Spa\breve{F}_i}$ yields an object of $D_{\et}((\Gr^{(I_1,\dotsc,I_k)}_{G,\mu_\bullet})_{\prod_{i\in I}\Spa\breve{F}_i}^\Diamond,E)$. By first pulling back $(\cS^{(I_1,\dotsc,I_k)}_{\mu_\bullet,E})^\Diamond_{\prod_{i\in I}\Spa\breve{F}_i}$ to $(\mathrm{A}^{(I_1,\dotsc,I_k),\leq s}_{G,\mu_\bullet,nv+N})_{\prod_{i\in I}\Spa\breve{F}_i}^\Diamond$ and then using $\cL^e_I(G)$-equivariance and \cite[Proposition 17.3]{Sch17} to descend along
  \begin{align*}
   (\mathrm{A}^{(I_1,\dotsc,I_k),\leq s}_{G,\mu_\bullet,nv+N})^\Diamond_{\prod_{i\in I}\Spa\breve{F}_i}\ra(\Sht^{(I_1,\dotsc,I_k),\leq s}_{G,\mu_\bullet,nv+N})_{\prod_{i\in I}\Spa\breve{F}_i}^\Diamond,
  \end{align*}
  where we use Lemma \ref{ss:BDgrassmanniancomparison} to identify $(L^e_I(G))^\Diamond_{\bD^I}$ with $\cL^e_I(G)$, we see that the resulting object of $D_{\et}((\Sht^{(I_1,\dotsc,I_k),\leq s}_{G,\mu_\bullet,nv+N})_{\prod_{i\in I}\Spa\breve{F}_i}^\Diamond,E)$ equals $(\cF^{(I_1,\dotsc,I_k),\leq s}_{\mu_\bullet,nv+N,E})_{\prod_{i\in I}\Spa\breve{F}_i}^\Diamond$.

Let us prove the first claim. By using the explicit description in \cite[Proposition VI.7.9]{FS21} and the fiberwise criterion for perversity \cite[Corollary VI.7.6]{FS21}, we see that $(\cS^{(I_1,\dotsc,I_k)}_{\mu_\bullet,E})^\Diamond$ equals the object obtained from \cite[Theorem VI.11.1]{FS21} and $V_{\mu_\bullet}$, where we use Lemma \ref{ss:BDgrassmanniancomparison} to identify $(\Gr^{(I_1,\dotsc,I_k)}_{G,\mu_\bullet})_{\prod_{i\in I}\Spa\breve{F}_i}^\Diamond$ with $\cGr^{(I_1,\dotsc,I_k)}_{G,\mu_\bullet}|_{\prod_{i\in I}\Spd\breve{F}_i}$. Hence $(\cS^{(I_1,\dotsc,I_k)}_{\mu_\bullet,E})^\Diamond$ is universally locally acyclic over $\textstyle\prod_{i\in I}\Spd\breve{F}_i$. Now \ref{ss:globalsatake} and \cite[Proposition 24.4]{Sch17} show that
  \begin{align*}
    (\mathrm{A}^{(I_1,\dotsc,I_k),\leq s}_{G,\mu_\bullet,nv+N})_{\prod_{i\in I}\Spa\breve{F}_i}^\Diamond\ra(\Gr^{(I_1,\dotsc,I_k)}_{G,\mu_\bullet})_{\prod_{i\in I}\Spa\breve{F}_i}^\Diamond
  \end{align*}
  is $\ell$-cohomologically smooth, so \cite[Proposition IV.2.13 (i)]{FS21} implies that the pullback of $(\cS^{(I_1,\dotsc,I_k)}_{\mu_\bullet,E})^\Diamond$ to $(\mathrm{A}^{(I_1,\dotsc,I_k),\leq s}_{G,\mu_\bullet,nv+N})_{\prod_{i\in I}\Spa\breve{F}_i}^\Diamond$ remains universally locally acyclic over $\textstyle\prod_{i\in I}\Spd\breve{F}_i$. Applying \cite[Proposition 24.4]{Sch17} again shows that
  \begin{align*}
    (\mathrm{A}^{(I_1,\dotsc,I_k),\leq s}_{G,\mu_\bullet,nv+N})_{\prod_{i\in I}\Spa\breve{F}_i}^\Diamond\ra(\Sht^{(I_1,\dotsc,I_k),\leq s}_{G,\mu_\bullet,nv+N})_{\prod_{i\in I}\Spa\breve{F}_i}^\Diamond
  \end{align*}
  is $\ell$-cohomologically smooth, so \cite[Proposition IV.2.13 (ii)]{FS21} implies that
  \begin{align*}
    (\cF^{(I_1,\dotsc,I_k),\leq s}_{\mu_\bullet,nv+N,E})_{\prod_{i\in I}\Spa\breve{F}_i}^\Diamond
  \end{align*}
  is universally locally acyclic over $\textstyle\prod_{i\in I}\Spd\breve{F}_i$, as desired.

  For the second claim, note that $\Te_n$ naturally induces a morphism
  \begin{align*}
    \cA^{(I_1,\dotsc,I_k),\leq s}_{G,\mu_\bullet,nv}\ra(\mathrm{A}^{(I_1,\dotsc,I_k),\leq s}_{G,\mu_\bullet,nv+N})_{\prod_{i\in I}\Spa F_i}^\Diamond
  \end{align*}
  such that the diagram
  \begin{align*}
    \xymatrix{\cGr^{(I_1,\dotsc,I_k)}_{G,\mu_\bullet}|_{\prod_{i\in I}\Spd\breve{F}_i}\ar@{=}[r] & (\Gr^{(I_1,\dotsc,I_k)}_{G,\mu_\bullet})_{\prod_{i\in I}\Spa\breve{F}_i}^\Diamond\\
    \cA^{(I_1,\dotsc,I_k),\leq s}_{G,\mu_\bullet,nv}|_{\prod_{i\in I}\Spd\breve{F}_i}\ar[r]\ar[u]\ar[d] &(\mathrm{A}^{(I_1,\dotsc,I_k),\leq s}_{G,\mu_\bullet,nv+N})_{\prod_{i\in I}\Spa\breve{F}_i}^\Diamond\ar[u]\ar[d]\\
    \cLocSht^{(I_1,\dotsc,I_k),\leq s}_{G,\mu_\bullet,nv}|_{\prod_{i\in I}\Spd\breve{F}_i}\ar[r]^-{\Te_n} & (\Sht^{(I_1,\dotsc,I_k),\leq s}_{G,\mu_\bullet,nv+N})^\Diamond_{\prod_{i\in I}\Spa\breve{F}_i}
    }
  \end{align*}
commutes. Therefore the above discussion yields the desired result.
\end{proof}

\subsection{}\label{ss:globalcohomologysheaves}
We now consider the cohomology of the moduli of global $G$-shtukas. Let $V$ be an object of $\Rep_E(\prescript{L}{}G)^I$. Note that the $\cF^{(I_1,\dotsc,I_k)}_{\mu_\bullet,N,E}$ and $\cF^{(I_1,\dotsc,I_k),\leq s}_{\mu_\bullet,N,E}$ naturally descend to objects $\cF^{(I_1,\dotsc,I_k)}_{V,N,E}$ and $\cF^{(I_1,\dotsc,I_k),\leq s}_{V,N,E}$ of $D(\Sht^{(I_1,\dotsc,I_k)}_{G,V,N},E)$ and $D(\Sht^{(I_1,\dotsc,I_k),\leq s}_{G,V,N},E)$, respectively, where $\mu_\bullet$ runs over highest weights appearing in $V_{\ov\bQ_\ell}|_{\wh{G}^I}$ with multiplicity.

Recall that $f^{\mathrm{S}}_!\cF^{(I_1,\dotsc,I_k),\leq s}_{V,N,E}$ is independent of the ordered partition $I_1,\dotsc,I_k$ \cite[p.~868]{Laf16}, so we write it as $\cH^{I,\leq s}_{V,N,E}$. The same holds for $f^{\mathrm{S}}_!\cF^{(I_1,\dotsc,I_k)}_{V,N,E}$, so we write it as $\cH^I_{V,N,E}$. Because $\Sht^{(I_1,\dotsc,I_k)}_{G,V,N}$ is the increasing union of the $\Sht^{(I_1,\dotsc,I_k),\leq s}_{G,V,N}$, we have $\cH^I_{V,N,E}=\varinjlim_s\cH^{I,\leq s}_{V,N,m,E}$. Note that \ref{ss:heckecorrespondences} yields an action of $C_c(K_N\bs G(\bA)/K_N,E)$ on $\cH^I_{V,N,E}$.

\subsection{}\label{ss:Xue}
Recall the following smoothness result of Xue \cite{Xue20b}. Write $\ov\eta$ for $\Spec\ov{F}$, and write $\De$ for diagonal morphisms. Write $W_F$ for the absolute Weil group of $F$, and write $\val_F:W_F\ra\bZ$ for the homomorphism that sends geometric $q$-Frobenii to $1$. Write $U\subseteq C$ for the largest open subspace where $G_U$ is reductive.
\begin{thm*}
The cohomology sheaves of $\cH^I_{V,N,E}|_{(U\ssm N)^I}$ are ind-smooth, and the cohomology sheaves of $\cH^I_{V,N,E}|_{\De(\ov\eta)}$ have a natural action of $W_F^I$. For any $\ga_\bullet=(\ga_i)_{i\in I}$ in $W_F^I$, the $\ga_\bullet$-action sends the image of the cohomology groups of $\cH^{I,\leq s}_{V,N,E}|_{\De(\ov\eta)}$ to the image of the cohomology groups of $\cH^{I,\leq s'}_{V,N,E}|_{\De(\ov\eta)}$ for $s'\geq s+\textstyle\sum_{i\in I}\max\{0,\val_F(\ga_i)\}$.
\end{thm*}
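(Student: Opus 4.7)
The plan is to follow Xue's strategy from \cite{Xue20b}, which refines \cite[\S\S8--10]{Laf16} by systematically using Harder--Narasimhan truncations to gain control over partial Frobenii. We establish the three assertions---ind-smoothness, existence of a $W_F^I$-action on the diagonal stalk, and the quantitative compatibility with the truncation filtration---in that order.

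For ind-smoothness, it suffices to check that each $\cH^{I,\leq s}_{V,N,E}|_{(U\ssm N)^I}$ is lisse. I would first argue that the Satake sheaf $\cS^{(I_1,\dotsc,I_k)}_{\mu_\bullet,E}$ is universally locally acyclic over $\prod_{i\in I}C_i\cap U^I$; this is a form of the Beilinson--Drinfeld flatness/ULA statement for geometric Satake over the reductive locus. The two smooth morphisms used to build $\cF^{(I_1,\dotsc,I_k),\leq s}_{V,N,E}$ from $\cS^{(I_1,\dotsc,I_k)}_{\mu_\bullet,E}$ (the pullback along $\mathrm{A}^{(I_1,\dotsc,I_k),\leq s}_{G,\mu_\bullet,N}\to\Gr^{(I_1,\dotsc,I_k)}_{G,\mu_\bullet}|_{\prod_{i\in I}C_i}$ and the equivariant descent to $\Sht^{(I_1,\dotsc,I_k),\leq s}_{G,V,N}$) both preserve universal local acyclicity. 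Pushing forward along the finite-type morphism $f^{\mathrm{S}}$ restricted to the Harder--Narasimhan truncation---which becomes proper after quotienting by a central lattice $\Xi$ as in \S\ref{ss:globalsatakemodxi}---then yields a lisse object over $(U\ssm N)^I$, and passing to $s\to\infty$ gives the ind-smooth statement.

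For the $W_F^I$-action on the diagonal stalk, the essential input is Drinfeld's lemma. The partial Frobenii $\Fr^{(I_1,\dotsc,I_k)}$ of Definition~\ref{ss:globalshtukapartialfrobenius} intertwine the various presentations of $\cH^I_{V,N,E}$, and together with the monodromy along $(U\ssm N)^I$ they equip $\cH^I_{V,N,E}|_{(U\ssm N)^I}$ with the structure of an ind-lisse sheaf on the ``restricted product'' $(U\ssm N)^I/\textrm{Frob}^\bZ$. Drinfeld's lemma identifies the fundamental group of this restricted product (with base point on the diagonal) with the image of $W_F^I\to\pi_1((U\ssm N)\times_{\bF_q}\dotsb\times_{\bF_q}(U\ssm N))$; in particular, pulling back along $\De(\ov\eta)\to (U\ssm N)^I/\textrm{Frob}^\bZ$ produces the asserted $W_F^I$-action on each cohomology sheaf.

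For the truncation bound, I would track the preceding constructions quantitatively. The monodromy part of any $\ga_\bullet\in W_F^I$ acts on the lisse sheaves $\cH^{I,\leq s}_{V,N,E}|_{(U\ssm N)^I}$ and therefore preserves the image of $\cH^{I,\leq s}_{V,N,E}|_{\De(\ov\eta)}$. The Frobenius part of $\ga_i$ contributes a total of $\max\{0,\val_F(\ga_i)\}$ applications of the partial Frobenius at the $i$-th factor, and each such application enlarges the Harder--Narasimhan bound by at most $\ka(V)$ by the estimate stated at the end of \S\ref{ss:globalshtukapartialfrobenius}. After rescaling the truncation parameter $s$ by $\ka(V)$ (which is harmless in the eventual colimit), this yields the bound $s'\ge s+\sum_{i\in I}\max\{0,\val_F(\ga_i)\}$. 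Negative valuations are handled by absorbing the corresponding inverse partial Frobenii into the image from a large enough truncation level, which is why only the positive parts appear.

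The main obstacle is the rigorous execution of Drinfeld's lemma in this ind-constructible setting. One must verify that the partial Frobenii genuinely commute in the derived sense on the truncated cohomology sheaves, that the descent to the restricted product can be performed in families of truncation level, and that the resulting $W_F^I$-representation is compatible with the transition maps. Xue carries this out by combining V.~Lafforgue's $S$-operators with a careful analysis of how partial Frobenii interact with the Harder--Narasimhan stratification, and it is precisely this analysis that produces the explicit truncation estimate we need.
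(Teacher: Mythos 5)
Your overall route matches the paper's, which simply cites the proofs of \cite[Theorem 6.0.12]{Xue20b} and \cite[Proposition 6.0.10]{Xue20b} for the first two claims and derives the quantitative bound from the partial-Frobenius estimate recorded at the end of \ref{ss:globalshtukapartialfrobenius}. Your treatment of the $W_F^I$-action (Drinfeld's lemma applied to an ind-lisse sheaf equipped with commuting partial Frobenii) and of the truncation bound (each application of a partial Frobenius at the $i$-th leg costs at most $\ka(V)$, and only the $\max\{0,\val_F(\ga_i)\}$ geometric Frobenii contribute) is exactly what happens in Xue's work and in the paper.

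However, your sketch of the ind-smoothness claim contains a genuine error. You reduce to showing that each truncated piece $\cH^{I,\leq s}_{V,N,E}|_{(U\ssm N)^I}$ is lisse, and you justify this by asserting that $f^{\mathrm{S}}$ restricted to the Harder--Narasimhan truncation ``becomes proper after quotienting by a central lattice $\Xi$.'' Neither statement is correct: the truncations $\Sht^{(I_1,\dotsc,I_k),\leq s}_{G,V,N}$ are open substacks, their structure morphisms are of finite type but not proper even modulo $\Xi$, and the individual $\cH^{I,\leq s}_{V,N,E}$ are constructible but not lisse in general --- the formation of $f^{\mathrm{S}}_!$ does not commute with base change to closed points in a way that would make them locally constant. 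This is precisely why the theorem asserts only \emph{ind}-smoothness of the colimit $\cH^I_{V,N,E}=\varinjlim_s\cH^{I,\leq s}_{V,N,E}$, and why Xue's proof is delicate: it exhibits the colimit as an increasing union of lisse subsheaves stable under the partial Frobenii by spreading out lisseness from a dense open using the partial Frobenius morphisms, rather than by establishing lisseness of each truncated piece. Universal local acyclicity of the Satake sheaf and smoothness of the two intermediate morphisms are correct inputs, but they do not by themselves yield lisseness of $f^{\mathrm{S}}_!$ applied to a truncation; the missing ingredient is exactly the partial-Frobenius spreading argument of \cite[Theorem 6.0.12]{Xue20b}.
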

\begin{proof}
The first claim follows from the proof of \cite[Theorem 6.0.12]{Xue20b}, and the $W_F^I$-action follows from the proof of \cite[Proposition 6.0.10]{Xue20b}. The last claim follows from \ref{ss:globalshtukapartialfrobenius}.
\end{proof}

\subsection{}\label{ss:globalcohomologysheavesmodxi}
Let us record the analogous results after quotienting by $\Xi$. Let $V$ be an object of $\Rep_E(\prescript{L}{}G)^I$, and note that the $\cF^{(I_1,\dotsc,I_k)}_{\Xi,\mu_\bullet,N,E}$ naturally descend to an object $\cF^{(I_1,\dotsc,I_k)}_{\Xi,V,N,E}$ of $D(\Sht^{(I_1,\dotsc,I_k)}_{G,V,N}\!\!/\Xi,E)$, where $\mu_\bullet$ runs over highest weights appearing in $V_{\ov\bQ_\ell}|_{\wh{G}^I}$ with multiplicity.

Recall that $f^{\mathrm{S}}_!\cF^{(I_1,\dotsc,I_k)}_{\Xi,V,N,E}$ is independent of the ordered partition $I_1,\dotsc,I_k$ \cite[p.~868]{Laf16}, so we write it as $\cH^I_{\Xi,V,N,E}$. Note that \ref{ss:heckecorrespondences} yields an action of
\begin{align*}
  C_c(K_N\bs G(\bA)/K_N,E)
\end{align*}
on $\cH^I_{\Xi,V,N,E}$. Recall that the cohomology sheaves of $\cH^I_{\Xi,V,N,E}|_{(U\ssm N)^I}$ are ind-smooth \cite[Theorem 6.0.12]{Xue20b}, and the cohomology sheaves of $\cH^I_{\Xi,V,N,E}|_{\De(\ov\eta)}$ have a natural action of $W^I_F$ \cite[Proposition 6.0.10]{Xue20b}.

\subsection{}\label{ss:localcohomologysheaves}
Next, we consider the homology of the moduli of local $G$-shtukas. Let $V$ be an object of $\Rep_{\cO_E}(\prescript{L}{}G_v)^I$. Note that the $\prescript\prime{}\cF^{(I_1,\dotsc,I_k)}_{\mu_\bullet,nv,\La}$ and $\prescript\prime{}\cF^{(I_1,\dotsc,I_k),\leq s}_{\mu_\bullet,nv,\La}$ naturally descend to objects $\prescript\prime{}\cF^{(I_1,\dotsc,I_k)}_{V,nv,\La}$ and $\prescript\prime{}\cF^{(I_1,\dotsc,I_k),\leq s}_{V,nv,\La}$ of $D_{\solid}(\cLocSht^{(I_1,\dotsc,I_k)}_{G,V,nv}|_{(\Spd\breve{F}_v)^I},\La)$ and $D_{\solid}(\cLocSht^{(I_1,\dotsc,I_k),\leq s}_{G,V,nv}|_{(\Spd\breve{F}_v)^I},\La)$, respectively, where $\mu_\bullet$ runs over highest weights appearing in $V_{\ov\bQ_\ell}|_{\wh{G}^I}$ with multiplicity.

Recall the notation of \ref{ss:FSnaivecompare}. Since the square
\begin{align*}
  \xymatrix{\cLocSht^{(I)}_{G,V,nv}|_{(\Spd\breve{F}_v)^I}\ar[r]^-c & \cM^I_{G,V,K_n}|_{(\Spd\breve{F}_v)^I}\ar[d] \\
  \cLocSht^{(I_1,\dotsc,I_k)}_{G,V,nv}|_{(\Spd\breve{F}_v)^I}\ar[u]\ar[r] & [\cL^e_I(G)\bs\cGr^{(I)}_{G,V}|_{(\Spd\breve{F}_v)^I}]
  }
\end{align*}
commutes, where $\cGr^{(I)}_{G,V}$ denotes the natural descent of $\coprod_{\mu_\bullet}\cGr^{(I)}_{G,\mu_\bullet}|_{(\wt\bD^I)^\Diamond}$ to $(\bD^I)^\Diamond$, the $\prescript\prime{}\cF^{(I_1,\dotsc,I_k)}_{V,nv,\La}$ defined in \ref{ss:FSnaivecompare} agrees with the $\prescript\prime{}\cF^{(I_1,\dotsc,I_k)}_{V,nv,\La}$ defined here.

The smallness of convolution implies that $f_\natural^\cM(\prescript\prime{}\cF^{(I_1,\dotsc,I_k),\leq s}_{V,nv,\La})$ is independent of the ordered partition $I_1,\dotsc,I_k$, so we write it as $\cH^{\loc,I,\leq s}_{V,nv,\La}$. The same holds for $f_\natural^\cM(\prescript\prime{}\cF^{(I_1,\dotsc,I_k)}_{V,nv,\La})$, so we write it as $\cH^{\loc,I}_{V,nv,\La}$. Because $\cLocSht^{(I_1,\dotsc,I_k)}_{G,V,nv}$ is the increasing union of the $\cLocSht^{(I_1,\dotsc,I_k),\leq s}_{G,V,nv}$, we have $\cH^{\loc,I}_{V,nv,\La}=\textstyle\varinjlim_s\cH^{\loc,I,\leq s}_{V,nv,\La}$. Note that Proposition \ref{ss:localheckecorrespondences} yields an action of $C_c(K_n\bs G(F_v)/ K_n,E)$ on $\cH^{\loc,I}_{V,nv,\La}$.

Write $\bC_v$ for the completion of $\ov{F}_v$, and write $\ov\eta_v$ for $\Spd\bC_v$. Theorem \ref{ss:FSnaivecompare} yields a natural action of $W_{F_v}^I$ on the cohomology groups of $\cH^{\loc,I}_{V,nv,\La}|_{\De(\ov\eta_v)}$. For any $\ga_\bullet$ in $W_{F_v}$, Lemma \ref{ss:localglobalpartialfrobenius} and Lemma \ref{ss:FSpartialfrobeniuscomparison} imply that the $\ga_\bullet$-action sends the image of the cohomology groups of $\cH^{\loc,I,\leq s}_{V,nv,\La}$ to the image of the cohomology groups of $\cH^{\loc,I,\leq s'}_{V,nv,\La}$ for $s'\geq s+\textstyle\sum_{i\in I}\max\{0,\val_F(\ga_i)\}$.

\subsection{}\label{ss:excursionalgebra}
Let us recall some facts about excursion algebras. For any abstract group $W$, finite group $Q$ with a pinned action on $\wh{G}$, and group homomorphism $W\ra Q$, write $\Exc(W,\wh{G})$ for the excursion algebra over $\cO_E$ as in \cite[Definition VIII.3.4]{FS21}. Recall that $\Exc(W,\wh{G})$ is flat over $\cO_E$ and has canonical generators $S_{I,V,x,\xi,\ga_\bullet}$, where $I$ runs over finite sets, $V$ runs over objects of $\Rep_{\cO_E}((\wh{G}\rtimes Q)^I)$, $x$ runs over morphisms $\mathbf{1}\ra V|_{\De(\wh{G})}$, $\xi$ runs over morphisms $V|_{\De(\wh{G})}\ra\mathbf{1}$, and $\ga_\bullet$ runs through $W^I$.
\begin{prop*}
Let $L$ be an algebraically closed field over $\cO_E$. We have a unique bijection
\begin{align*}
  \left\{
  {\begin{tabular}{c}
     $\cO_E$-algebra homomorphisms\\
     $\chi:\Exc(W,\wh{G})\ra L$
  \end{tabular}}
\right\}\ra^\sim\left\{
  {\begin{tabular}{c}
     semisimple homomorphisms \\
     $\rho:W\ra\wh{G}(L)\rtimes Q$ over $Q$
  \end{tabular}}
\right\}\!\Big/\wh{G}(L)\mbox{-conj.}
\end{align*}
such that $\chi(S_{I,V,x,\xi,\ga_\bullet})$ equals the composition
\begin{align*}
\xymatrixcolsep{.5in}
\xymatrix{L\ar[r]^-x & V(L)\ar[r]^-{(\rho(\ga_i))_{i\in I}} & V(L)\ar[r]^-\xi & L.}
\end{align*}
\end{prop*}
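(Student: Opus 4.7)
The plan is to construct the bijection in both directions, with uniqueness automatic since the excursion operators $S_{I,V,x,\xi,\ga_\bullet}$ generate $\Exc(W,\wh{G})$ as an $\cO_E$-algebra. The easy direction is $\rho\mapsto\chi_\rho$: one defines $\chi_\rho(S_{I,V,x,\xi,\ga_\bullet})$ by the stated formula and checks that this is compatible with the defining relations of $\Exc(W,\wh{G})$ (functoriality in $V$, additivity, multiplicativity under concatenation of finite sets, and the relations coming from composition of morphisms $x$ and $\xi$). Each such check is essentially formal from the Tannakian description of $\wh{G}(L)\rtimes Q$, because the formula is manifestly a matrix coefficient.

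For the converse $\chi\mapsto\rho$, I would proceed Tannakianly. Given $\ga$ in $W$ and $V$ in $\Rep_{\cO_E}(\wh{G}\rtimes Q)$, the assignment $(x,\xi)\mapsto\chi(S_{\{*\},V,x,\xi,\ga})$ defines an $L$-linear map on $\Hom_{\wh{G}}(\mathbf{1},V|_{\De(\wh{G})})\otimes\Hom_{\wh{G}}(V|_{\De(\wh{G})},\mathbf{1})$; by standard Tannakian reconstruction this corresponds to a $Q$-equivariant tensor-automorphism of the forgetful functor, hence an element $\rho(\ga)$ of $\wh{G}(L)\rtimes Q$. The multi-leg excursion operators for $I=\{1,2\}$, applied to outer tensor products $V_1\boxtimes V_2$ with the diagonal and codiagonal for $x$ and $\xi$, enforce the multiplicativity $\rho(\ga_1\ga_2)=\rho(\ga_1)\rho(\ga_2)$, so $\rho$ is a group homomorphism. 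Flatness of $\Exc(W,\wh{G})$ over $\cO_E$ lets us reduce well-definedness over $L$ to the generic fiber.

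The main obstacle is semisimplicity of $\rho$. The standard trick is that the whole setup is insensitive to replacing $\rho$ by its semisimplification $\rho^{\semis}$: indeed, each $S_{I,V,x,\xi,\ga_\bullet}$ evaluates to a matrix coefficient that depends only on the semisimplification of the restriction of $V$ to the Zariski closure of the image, because $x$ factors through the invariants $V^{\De(\wh{G})}$ and $\xi$ through the coinvariants. Hence $\chi_\rho=\chi_{\rho^{\semis}}$, and the Tannakian reconstruction above then canonically picks out a semisimple $\rho$, unique up to $\wh{G}(L)$-conjugation. This argument is essentially \cite[Proposition 11.7]{Laf16} and \cite[Proposition VIII.3.8]{FS21}; rather than redoing it, I would invoke those results directly.
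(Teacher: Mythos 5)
Your proposal is correct and takes essentially the same route as the paper, whose entire proof is the one-line citation ``This follows immediately from \cite[Corollary VII.4.3]{FS21}''; your sketch just unwinds what lies behind that corollary and then invokes the same reconstruction results (\cite[Proposition VIII.3.8]{FS21}, \cite[Proposition 11.7]{Laf16}). The only imprecision is that the data $(x,\xi)\mapsto\chi(S_{\{*\},V,x,\xi,\ga})$ determines a point of the GIT quotient $(\wh{G}\rtimes Q)/\!\!/\wh{G}$ rather than directly a tensor automorphism of a fiber functor --- passing from compatible points of these quotients to an actual semisimple $\rho$ is precisely the nontrivial content of the results you cite --- but since you defer to them for that step, there is no gap.
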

\begin{proof}
This follows immediately from \cite[Corollary VII.4.3]{FS21}.
\end{proof}

\subsection{}\label{ss:globalexcursion}
The following theorem summarizes the work of V. Lafforgue \cite{Laf16} and Xue \cite{Xue20b} on global excursion operators. Write $\Bun_{G,N}(\bF_q)$ for the groupoid of $G$-bundles on $C$ equipped with a trivialization along $N$.
\begin{thm*}
There exists a unique $E$-algebra homomorphism
  \begin{align*}
    \Exc(W_F,\wh{G})_E\ra\End_{C_c(K_N\bs G(\bA)/K_N,E)}(C_c(\Bun_{G,N}(\bF_q),E))
  \end{align*}
  that sends $S_{I,V,x,\xi,\ga_\bullet}$ to the composition
    \begin{align*}
    \xymatrix{C_c(\Bun_{G,N}(\bF_q),E)\ar@{=}[r] & \cH^{*,0}_{\mathbf{1},N,E}|_{\ov\eta}\ar[r]^-x & \cH^{*,0}_{V|_{\De(\wh{G})},N,E}|_{\ov\eta} \ar@{=}[r] &\cH^{I,0}_{V,N,E}|_{\De(\ov\eta)} \ar[d]^-{\ga_\bullet} \\
    C_c(\Bun_{G,N}(\bF_q),E) & \ar@{=}[l] \cH^{*,0}_{\mathbf{1},N,E}|_{\ov\eta} & \ar[l]_-\xi \cH^{*,0}_{V|_{\De(\wh{G})},N,E}|_{\ov\eta} &\ar@{=}[l] \cH^{I,0}_{V,N,E}|_{\De(\ov\eta)}.}
    \end{align*}
    Moreover, the image of $\Exc(W_F,\wh{G})_E$ in $\End_{C_c(K_N\bs G(\bA)/K_N,E)}(C_c(\Bun_{G,N}(\bF_q),E))$ preserves the kernel of the surjective $C_c(K_N\bs G(\bA)/K_N,E)$-equivariant map
    \begin{align*}
      C_c(\Bun_{G,N}(\bF_q),E)\ra C_c(\Bun_{G,N}(\bF_q)/\Xi,E),
    \end{align*}
    so we obtain an $E$-algebra homomorphism
    \begin{align*}
      \Exc(W_F,\wh{G})_E\ra\End_{C_c(K_N\bs G(\bA)/K_N,E)}(C_c(\Bun_{G,N}(\bF_q)/\Xi,E)).
    \end{align*}
\end{thm*}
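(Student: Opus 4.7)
The plan is to follow V. Lafforgue's construction \cite[Sections 6, 10]{Laf16}. Since the $S_{I,V,x,\xi,\ga_\bullet}$ generate $\Exc(W_F,\wh{G})_E$ as an $E$-algebra, uniqueness is automatic, and it suffices to verify that the recipe yields a Hecke-equivariant endomorphism of $C_c(\Bun_{G,N}(\bF_q),E)=\cH^{*,0}_{\mathbf 1,N,E}|_{\ov\eta}$ satisfying the defining relations of the excursion algebra.

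The first step would be to check that the displayed composition is well-defined. The fusion identification $\cH^{*,0}_{V|_{\De(\wh{G})},N,E}|_{\ov\eta}=\cH^{I,0}_{V,N,E}|_{\De(\ov\eta)}$ follows from the creation-fusion formalism in geometric Satake combined with the ind-smoothness of $\cH^I_{V,N,E}|_{(U\ssm N)^I}$ from Theorem \ref{ss:Xue}, which permits unambiguous specialization from the generic geometric point to the diagonal; Theorem \ref{ss:Xue} also provides the $\ga_\bullet$-action. Hecke-equivariance is then automatic, since the Hecke correspondences of \ref{ss:heckecorrespondences} act away from the legs while the Satake data and partial Frobenii from Definition \ref{ss:globalshtukapartialfrobenius} act at the legs, and these two commute.

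The second step would be to verify the defining relations of $\Exc(W_F,\wh{G})_E$: functoriality in $(I,V)$ under morphisms of excursion data, the fusion relation $S_{I\sqcup I',V\boxtimes V',\ldots}=S_{I,V,\ldots}\circ S_{I',V',\ldots}$ for concatenated Galois elements, and compatibility arising from the diagonal embedding of $\wh{G}$. These are cohomological identities that descend from the corresponding identities among Satake sheaves, together with compatibility of partial Frobenii with the natural transformations between convolution products; they are proved in \cite[Sections 5--6]{Laf16} in the split reductive case, and the same arguments transfer to the present parahoric setting once one uses Theorem \ref{ss:Xue} in place of Lafforgue's smoothness statement. The $\Xi$-descent is then immediate: $\Xi$ acts through the trivial quotient on the Satake side, as recorded in \ref{ss:globalsatakemodxi}, so every step in the construction is $\Xi$-equivariant, and the operators preserve the kernel of the quotient $C_c(\Bun_{G,N}(\bF_q),E)\to C_c(\Bun_{G,N}(\bF_q)/\Xi,E)$. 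The main obstacle will be the verification of the excursion relations, which requires careful bookkeeping of natural isomorphisms in the Satake category and their interaction with partial Frobenii; however, these verifications reduce directly to \cite{Laf16} via the moduli $\Sht^{(I_1,\dotsc,I_k)}_{G,V,N}$ and Theorem \ref{ss:Xue}.
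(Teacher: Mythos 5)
Your proposal is correct and follows essentially the same route as the paper: the existence of the homomorphism is reduced to Lafforgue's verification of the excursion relations (the paper simply cites \cite[p.~870]{Laf16}, using Xue's smoothness results in place of Lafforgue's), and the $\Xi$-statement is deduced from the descent of the Satake sheaves to the quotient recorded in \ref{ss:globalsatakemodxi}. The only cosmetic difference is that the paper packages the last step as a $!$-pushforward morphism $\cH^I_{V,N,E}\ra\cH^I_{\Xi,V,N,E}$ along the \'etale map $\Sht^{(I_1,\dotsc,I_k)}_{G,V,N}\ra\Sht^{(I_1,\dotsc,I_k)}_{G,V,N}/\Xi$, inducing a morphism of composition diagrams, whereas you argue directly via $\Xi$-equivariance of each step; these amount to the same thing.
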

\begin{proof}
  Arguing as in \cite[p.~870]{Laf16} shows that the images of the $S_{I,V,x,\xi,\ga_\bullet}$ satisfy the necessary relations, so we get the desired $E$-algebra homomorphism
  \begin{align*}
    \Exc(W_F,\wh{G})_E\ra\End_{C_c(K_N\bs G(\bA)/K_N,E)}(C_c(\Bun_{G,N}(\bF_q),E)).
  \end{align*}
Next, because $\Sht^{(I_1,\dotsc,I_k)}_{G,V,N}\ra\Sht^{(I_1,\dotsc,I_k)}_{G,V,N}\!\!/\Xi$ is \'etale, \ref{ss:globalsatakemodxi} yields a natural $!$-pushforward morphism $\cH^I_{V,N,E}\ra\cH^I_{\Xi,V,N,E}$, which induces a morphism from the composition diagram above to the analogous composition diagram for $\cH^I_{\Xi,V,N,E}$. Note that, when $I=*$ and $V=\mathbf{1}$, the natural $!$-pushforward morphism recovers
  \begin{align*}
    C_c(\Bun_{G,N}(\bF_q),E)\ra C_c(\Bun_{G,N}(\bF_q)/\Xi,E)
  \end{align*}
on fibers. Thus the image of $S_{I,V,x,\xi,\ga_\bullet}$ in $\End_{C_c(K_N\bs G(\bA)/K_N,E)}(C_c(\Bun_{G,N}(\bF_q),E))$ satisfies the desired property.
\end{proof}

\subsection{}\label{ss:globalexcursionjustG}
We now elaborate on variants of Theorem \ref{ss:globalexcursion}. Recall that 
\begin{align*}
\Bun_{G,N}(\bF_q)\cong\coprod_\al G_\al(F)\bs G_\al(\bA)/K_N
\end{align*}
as groupoids \cite[Remarque 12.2]{Laf16}, where $\al$ runs over $G$-bundles on $\Spec{F}$ whose pullback to $\Spec{F_c}$ is trivial for all closed points $c$ of $C$, and $G_\al$ denotes the inner twist of $G_F$ over $F$ associated with $\al$. Hence $C_c(G(F)\bs G(\bA)/K_N,E)$ and $C_c(G(F)\Xi\bs G(\bA)/K_N,E)$ are $C_c(K_N\bs G(\bA)/K_N,E)$-stable direct summands of
\begin{align*}
  C_c(\Bun_{G,N}(\bF_q),E)\mbox{ and }C_c(\Bun_{G,N}(\bF_q)/\Xi,E),
\end{align*}
respectively, so Theorem \ref{ss:globalexcursion} induces $E$-algebra homomorphisms
\begin{align*}
  \Exc(W_F,\wh{G})_E&\ra\End_{C_c(K_N\bs G(\bA)/K_N,E)}(C_c(G(F)\bs G(\bA)/K_N,E)),\\
  \Exc(W_F,\wh{G})_E&\ra\End_{C_c(K_N\bs G(\bA)/K_N,E)}(C_c(G(F)\Xi\bs G(\bA)/K_N,E)).
\end{align*}

\subsection{}\label{ss:localexcursion}
For us, the most convenient interpretation of Fargues--Scholze \cite{FS21} is the following theorem. Write $\fz_{K_n}(G(F_v),\La)$ for the center of $C_c(K_n\bs G(F_v)/K_n,\La)$.
\begin{thm*}
There exists a unique $\La$-algebra homomorphism
\begin{align*}
\Exc(W_{F_v},\wh{G})_\La\ra\fz_{K_n}(G(F_v),\La)
\end{align*}
that sends $S_{I,V,x,\xi,\ga_\bullet}$ to the composition
    \begin{align*}
    \xymatrix{C_c(G(F_v)/K_n,\La)\ar@{=}[r] & \cH^{\loc,*,0}_{\mathbf{1},nv,\La}|_{\ov\eta_v}\ar[r]^-x & \cH^{\loc,*,0}_{V|_{\De(\wh{G})},nv,\La}|_{\ov\eta_v}\ar@{=}[r] &\cH^{\loc,I,0}_{V,nv,\La}|_{\De(\ov\eta_v)}\ar[d]^-{\ga_\bullet} \\
    C_c(G(F_v)/K_n,\La) & \ar@{=}[l] \cH^{\loc,*,0}_{\mathbf{1},nv,\La}|_{\ov\eta_v} & \ar[l]_-\xi \cH^{\loc,*,0}_{V|_{\De(\wh{G})},nv,\La}|_{\ov\eta_v} &\ar@{=}[l] \cH^{\loc,I,0}_{V,nv,\La}|_{\De(\ov\eta_v)}.}
  \end{align*}
\end{thm*}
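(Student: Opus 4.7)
The plan is to reduce the construction to Fargues--Scholze's spectral action. Theorem \ref{ss:FSnaivecompare} identifies $\cH^{\loc,I}_{V,nv,\La}$ with $f_\natural^\cM(\prescript\prime{}\cF^I_{V,K_n,\La})$, and by the computation at the end of its proof the latter is, as an object of $D(W_{F_v}^I,\La)$, naturally isomorphic to $i_1^* T_V\bigl(i_{1!}\cInd_{K_n}^{G(F_v)}\La\bigr)$, where $i_1:[\ast/\ul{G(F_v)}]\hookrightarrow\Bun_G$ is the canonical open embedding and $T_V$ is the geometric Hecke operator associated with $V$. Under this identification, the composition appearing in the theorem statement becomes precisely the excursion operator built by Fargues--Scholze in \cite[Chapter IX]{FS21} out of $T_V$ together with the Weil-group symmetries of the Hecke stack.

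Granting this identification, the remaining steps are as follows. The endomorphisms of $C_c(G(F_v)/K_n,\La)$ so obtained are central because $T_V$ carries a natural $\ul{G(F_v)}$-equivariant structure, so they commute with every $\mathbf{1}_{K_n g K_n}$ and thus lie in $\fz_{K_n}(G(F_v),\La)$. The defining relations of $\Exc(W_{F_v},\wh{G})_\La$, namely functoriality in the pair $(I,V)$, fusion along maps of finite sets, and the $W_{F_v}$-action on morphisms, then follow from the tensor structure of geometric Satake and the fusion properties of geometric Hecke operators, exactly as in the construction of the spectral action in \cite{FS21}. Uniqueness of the resulting homomorphism is automatic since the $S_{I,V,x,\xi,\ga_\bullet}$ generate the excursion algebra.

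The main obstacle is the compatibility of the $W_{F_v}^I$-actions. Our action on $\cH^{\loc,I}_{V,nv,\La}|_{\De(\ov\eta_v)}$, defined in \ref{ss:localcohomologysheaves}, arises by combining the isomorphism of Theorem \ref{ss:FSnaivecompare} with the partial Frobenii $\cFr^{(I_1,\dotsc,I_k)}$ studied in Lemma \ref{ss:FSpartialfrobeniuscomparison}, whereas the Fargues--Scholze action originates from the v-sheaf structure of $\cM^I_{G,V,K_n}$ over $(\Div_{F_v}^1)^I$. Lemma \ref{ss:FSpartialfrobeniuscomparison} is set up precisely for this comparison, but one must carefully unravel the descent from $(\Spd\breve{F}_v)^I$ through $c$ to confirm that the two actions literally coincide, rather than merely agree up to abstract Weil-equivariance; once this is in place, the theorem is an essentially formal consequence of the Fargues--Scholze framework.
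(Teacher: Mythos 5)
Your proposal follows essentially the same route as the paper, which simply cites \cite[Corollary IX.2.4]{FS21} and \cite[Theorem VIII.4.1]{FS21} after the identification $f^{\cL}_\natural(\prescript\prime{}\cF^{(I)}_{V,nv,\La})\cong i_1^*T_V(i_{1!}\cInd_{K_n}^{G(F_v)}\La)$ established in Theorem \ref{ss:FSnaivecompare}. The one caveat is that the ``main obstacle'' you identify is actually a non-issue in the paper's setup: the $W_{F_v}^I$-action on $\cH^{\loc,I}_{V,nv,\La}|_{\De(\ov\eta_v)}$ is \emph{defined} in \ref{ss:localcohomologysheaves} via the isomorphism of Theorem \ref{ss:FSnaivecompare}, so it coincides with the Fargues--Scholze action by construction, and the partial-Frobenius lemmas are only needed to control the Harder--Narasimhan truncation bounds.
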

\begin{proof}
This follows from \cite[Corollary IX.2.4]{FS21} and \cite[Theorem VIII.4.1]{FS21}.
\end{proof}

\subsection{}\label{ss:localglobalcompatibilityalgebras}
We now prove local-global compatibility on the level of algebras over $E$. Write $\bA^v$ for the away-from-$v$ adeles, write $K_N^v$ for $\bA^v\cap K_N$, and let $n$ be the multiplicity of $v$ in $N$. So $K_N=K_nK^v_N$.
\begin{thm*}
  The square
  \begin{align*}
    \xymatrix{\Exc(W_{F_v},\wh{G})_E\ar[r]\ar[d] & \fz_{K_n}(G(F_v),E)\ar[d] \\
\Exc(W_F,\wh{G})_E\ar[r] & \End_{C_c(K_N\bs G(\bA)/K_N,E)}(C_c(G(F)\bs G(\bA)/K_N,E))
    }
  \end{align*}
  commutes.
\end{thm*}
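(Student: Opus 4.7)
The plan is to check commutativity on the canonical generators $S_{I,V,x,\xi,\ga_\bullet}$ of $\Exc(W_{F_v},\wh{G})_E$ with $\ga_\bullet$ in $W_{F_v}^I\subseteq W_F^I$. By Theorem \ref{ss:localexcursion} and Theorem \ref{ss:globalexcursion} (combined with \ref{ss:globalexcursionjustG}), this reduces to comparing two compositions: on the local side, the composition on $\cH^{\loc,I,0}_{V,nv,E}|_{\De(\ov\eta_v)}$ produces a central element of $\fz_{K_n}(G(F_v),E)$ which acts on $C_c(G(F)\bs G(\bA)/K_N,E)$; on the global side, the composition on $\cH^{I,0}_{V,N,E}|_{\De(\ov\eta)}$ produces an endomorphism of $C_c(G(F)\bs G(\bA)/K_N,E)$ via its inclusion as a direct summand of $\cH^{*,0}_{\mathbf{1},N,E}|_{\ov\eta}$.

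The main geometric input is the \'etale uniformization $\Te_n\colon\cLocSht^{(I_1,\dotsc,I_k),\leq s}_{G,V,nv}\ra(\wh\Sht^{(I_1,\dotsc,I_k),\leq s}_{G,V,nv+N})^\Diamond$ from Theorem \ref{ss:uniformizationtower}, together with the coefficient sheaf identification of Lemma \ref{ss:localglobalsatakecomparison}, which yield a canonical $!$-pushforward
\begin{align*}
(\Te_n)_!\colon\cH^{\loc,I,\leq s}_{V,nv,E}\ra\cH^{I,\leq s'}_{V,N,E}|_{(\Spa F_v)^I}^\Diamond
\end{align*}
for some $s'\geq s$ depending on $V$. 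I then intend to verify that $(\Te_n)_!$ intertwines the three ingredients of the excursion composition: (i) the creation and annihilation maps $x$ and $\xi$, because both the local and global coefficient sheaves arise from the same Satake data and their fusion identifications $\cH^{*,0}_{V|_{\De(\wh{G})}}\cong\cH^{I,0}_V|_{\De(\ov\eta)}$ are compatible under $(\Te_n)_!$; (ii) the partial Frobenii $\cFr^{(I_1,\dotsc,I_k)}$ and $\Fr^{(I_1,\dotsc,I_k)}$, which are compatible up to the $r$-fold iteration from Lemma \ref{ss:localglobalpartialfrobenius} and Lemma \ref{ss:FSpartialfrobeniuscomparison}, inducing matching $W_{F_v}^I$-actions on the stalks at $\De(\ov\eta_v)$; and (iii) the Hecke correspondences $\mathbf{1}_{K_ngK_n}$ and $\mathbf{1}_{K_NgK_N}$ for $g$ in $G(F_v)$, which agree because the Beauville--Laszlo construction of $\Te_n$ only modifies the $G$-bundle at $v$.

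Combining (i) and (ii), $(\Te_n)_!$ transports the local excursion composition to the restriction of the global one to $\cH^{I,0}_{V,N,E}|_{\De(\ov\eta_v)}$, which is canonically isomorphic to $\cH^{I,0}_{V,N,E}|_{\De(\ov\eta)}$ as $W_{F_v}^I$-representations by the smoothness result of Theorem \ref{ss:Xue}. Specializing to $I=*$ and $V=\mathbf{1}$, the pushforward becomes a $C_c(K_n\bs G(F_v)/K_n,E)$-equivariant map $C_c(G(F_v)/K_n,E)\ra C_c(\Bun_{G,N}(\bF_q),E)$, and (iii) ensures that the local excursion element $z\in\fz_{K_n}(G(F_v),E)$ and the global excursion operator $T$ induce the same endomorphism of the image. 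Since both $T$ and $z$ commute with the full global Hecke algebra $C_c(K_N\bs G(\bA)/K_N,E)$, and since the Hecke-module generated by the image of $(\Te_n)_!$ coincides with $C_c(G(F)\bs G(\bA)/K_N,E)$, the equality $T=z$ on $C_c(G(F)\bs G(\bA)/K_N,E)$ follows.

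The main obstacle I anticipate is the final Hecke-module identification: one must check that the pushforward at $V=\mathbf{1}$ realizes the classical adelic decomposition of automorphic forms by their $v$-component, which should follow by unwinding $\wh\Te$ on $\bF_q$-points as in the proof of Proposition \ref{ss:uniformizationformallyetale}, but requires care in controlling the passage to the Harder--Narasimhan direct limit and in handling the interaction of the global away-from-$v$ Hecke action with the uniformization.
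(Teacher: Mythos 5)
Your proposal is correct and follows essentially the same route as the paper: reduction to the generators $S_{I,V,x,\xi,\ga_\bullet}$, the $\natural$-pushforward along the \'etale uniformization $\Te_n$ (using Lemma \ref{ss:localglobalsatakecomparison}) to map the local excursion composition to the global one, compatibility with partial Frobenii via Lemmas \ref{ss:localglobalpartialfrobenius} and \ref{ss:FSpartialfrobeniuscomparison}, specialization to $I=*$, $V=\mathbf{1}$ to recover $C_c(G(F_v)/K_n,E)\ra C_c(G(F)\bs G(\bA)/K_N,E)$, and the final reduction using commutation with the away-from-$v$ Hecke action. The technical points you flag at the end (enlarging the away-from-$v$ level to make $\deg N$ large, and controlling the Harder--Narasimhan truncation under the Weil group action) are exactly the ones the paper handles, in the same way.
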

\begin{proof}
It suffices to check commutativity on the canonical generators $S_{I,V,x,\xi,\ga_\bullet}$ of $\Exc(W_{F_v},\wh{G})_E$, where $I$ is a finite set, $V$ is an object of $\Rep_E((\wh{G}\rtimes\Gal(\wt{F}/F))^I)$, $x$ is a morphism $\mathbf{1}\ra V|_{\De(\wh{G})}$, $\xi$ is a morphism $V|_{\De(\wh{G})}\ra\mathbf{1}$, and $\ga_\bullet$ is in $W_{F_v}^I$. This amounts to computing certain actions on $C_c(G(F)\bs G(\bA)/K_N,E)$, which we check on the basis given by $\mathbf{1}_{G(F)gK_N}$ for $g$ in $G(\bA)$. Since the $C_c(K_n\bs G(F_v)/K_n,E)$-action commutes with the $C_c(K_N^v\bs G(\bA^v)/K_N^v,E)$-action, we can assume that the away-from-$v$ components of $g$ equal $1$.

  Then $\mathbf{1}_{G(F)gK_N}$ equals the image of $\mathbf{1}_{g_vK_n}$ under the natural pushforward map
  \begin{align*}
    C_c(G(F_v)/K_n,E)\ra C_c(G(F)\bs G(\bA)/K_N,E).
  \end{align*}
Because this map commutes with the $C_c(K_n\bs G(F_v)/K_n,E)$-action, it also commutes with the action of the image of $S_{I,V,x,\xi,\ga_\bullet}$ in $\fz_{K_n}(G(F_v),E)$. Hence we can compute the latter for $\mathbf{1}_{G(F)gK_N}$ by computing it for $\mathbf{1}_{g_vK_n}$.

Fix $s$ such that $\mathbf{1}_{g_vK_n}$ lies in the image of $\cH^{\loc,*,\leq s,0}_{\mathbf{1},nv,E}|_{\ov\eta_v}$ in
\begin{align*}
 \cH^{\loc,*,0}_{\mathbf{1},nv,E}|_{\ov\eta_v}=C_c(G(F_v)/K_n,E).
\end{align*}
By Theorem \ref{ss:localexcursion} and \ref{ss:localcohomologysheaves}, the image of $S_{I,V,x,\xi,\ga_\bullet}$ in $\fz_{K_n}(G(F_v),E)$ acts on $\mathbf{1}_{gK_n}$ via the composition
\begin{align*}\label{eqn:local}
  \cH^{\loc,*,\leq s,0}_{\mathbf{1},nv,E}|_{\ov\eta_v}&\lra^x \cH^{\loc,*,\leq s,0}_{V|_{\De(\wh{G})},nv,E}|_{\ov\eta_v} = \cH^{\loc,I,\leq s,0}_{V,nv,E}|_{\De(\ov\eta_v)}\\
  &\lra^{\ga_\bullet}\cH^{\loc,I,\leq s',0}_{V,nv,E}|_{\De(\ov\eta_v)}=\cH^{\loc,*,\leq s',0}_{V|_{\De(\wh{G})},nv,E}|_{\ov\eta_v}\ra^\xi\cH^{\loc,*,\leq s',0}_{\mathbf{1},nv,E}|_{\ov\eta_v}\tag{$\star$}
\end{align*}
for large enough $s'$. By enlarging the away-from-$v$ part of $N$ and using the action of $C_c(K_N^v\bs G(\bA^v)/K_N^v,E)$ as before, we can assume that $\deg{N}$ is large enough. Then Lemma \ref{ss:localglobalsatakecomparison} shows that $\Te_n$ yields a natural $\natural$-pushforward morphism
\begin{align*}
 \cH^{\loc,I,\leq s}_{V,nv,E}|_{\De(\ov\eta_v)}\ra\cH^{I,\leq s}_{V,N,E}|_{\De(\ov\eta)},
\end{align*}
where we use Lemma \ref{ss:localglobalsatakecomparison}, \cite[Proposition VII.5.2]{FS21}, and \cite[(5.7.2)]{Hub96} to identify
\begin{align*}
  (f^{\mathrm{S}})^\Diamond_{\De(\ov\eta_v)\natural}\big[(\prescript\prime{}\cF^{(I_1,\dotsc,I_k),\leq s}_{V,N,E})^\Diamond_{\De(\ov\eta_v)}\big] = \cH^{I,\leq s}_{V,N,E}|_{\De(\ov\eta)}.
\end{align*}
Lemma \ref{ss:localglobalpartialfrobenius} and Lemma \ref{ss:FSpartialfrobeniuscomparison} imply that $\cH^{\loc,I,\leq s}_{V,nv,E}|_{\De(\ov\eta_v)}\ra\cH^{I,\leq s}_{V,N,E}|_{\De(\ov\eta)}$ induces a morphism from the composition diagram in Equation (\ref{eqn:local}) to the composition diagram
\begin{align*}\label{eqn:global}
  \cH^{*,\leq s,0}_{\mathbf{1},N,E}|_{\ov\eta}&\lra^x \cH^{*,\leq s,0}_{V|_{\De(\wh{G})},N,E}|_{\ov\eta} = \cH^{I,\leq s,0}_{V,N,E}|_{\De(\ov\eta)}\\
  &\lra^{\ga_\bullet}\cH^{I,\leq s',0}_{V,N,E}|_{\De(\ov\eta)}=\cH^{*,\leq s',0}_{V|_{\De(\wh{G})},N,E}|_{\ov\eta}\ra^\xi\cH^{*,\leq s',0}_{\mathbf{1},N,E}|_{\ov\eta}. \tag{$\star\star$}
\end{align*}
When $I=*$ and $V=\mathbf{1}$, the natural $\natural$-pushforward morphism recovers
\begin{align*}
C_c(G(F_v)/K_n,E)\ra C_c(G(F)\bs G(\bA)/K_N,E)
\end{align*}
on fibers, so we see that the image of $S_{I,V,x,\xi,\ga_\bullet}$ in $\fz_{K_n}(G(F_v),E)$ acts on $\mathbf{1}_{G(F)gK_N}$ via Equation (\ref{eqn:global}). But Theorem \ref{ss:globalexcursion} and \ref{ss:globalcohomologysheaves} indicate that this is precisely how the image of $S_{I,V,x,\xi,\ga_\bullet}$ in $\Exc(W_F,\wh{G})_E$ acts on $\mathbf{1}_{G(F)gK_N}$, as desired.
\end{proof}

\subsection{}
Let us recall the elements of the Bernstein center constructed by Genestier--Lafforgue \cite{GL17}. Write $\fm_E$ for the maximal ideal of $\cO_E$, and let $c$ be a non-negative integer. Write $\fz_{K_n}(G(F_v),\cO_E/\fm_E^c)$ for the center of $C_c(K_n\bs G(F_v)/K_n,\cO_E/\fm_E^c)$. For any finite set $I$, algebraic function $f$ on $\wh{G}\bs(\prescript{L}{}G)^I\!/\wh{G}$, element $\ga_\bullet$ of $W_{F_v}^I$, and positive integer $n$, write $\fz_{n,c,I,f,\ga_\bullet}^{\GL}$ for the element of $\fz_{K_n}(G(F_v),\cO_E/\fm_E^c)$ constructed in \cite[Th\'eor\`eme 1.1]{GL17}\footnote{While \cite[Th\'eor\`eme 1.1]{GL17} is stated for split $G$, the proof adapts for all $G$. Indeed, this is implicitly used in \cite[Th\'eor\`eme 8.1]{GL17}.}.

\subsection{}\label{ss:BernsteincenterGLFS}
We prove that the elements of the Bernstein center constructed by Fargues--Scholze coincide with those constructed by Genestier--Lafforgue. Recall that the image of $\Exc(W_F,\wh{G})$ in $\End_{C_c(K_N\bs G(\bA)/K_N,E)}(C_{\cusp}(G(F)\Xi\bs G(\bA)/K_N,E))$ preserves $C_{\cusp}(G(F)\Xi\bs G(\bA)/K_N,\cO_E)$ \cite[Proposition 13.1]{Laf16}, so \ref{ss:globalexcursionjustG} induces an $\cO_E$-algebra homomorphism
\begin{align*}
\Exc(W_F,\wh{G})\ra\End_{C_c(K_N\bs G(\bA)/K_N,\cO_E)}(C_{\cusp}(G(F)\Xi\bs G(\bA)/K_N,\cO_E)).
\end{align*}
For any object $V$ of $\Rep_{\cO_E}(\prescript{L}{}G)^I$, morphism $x:\mathbf{1}\ra V|_{\De(\wh{G})}$, and morphism $\xi:V|_{\De(\wh{G})}\ra\mathbf{1}$, write $f$ for the algebraic function on $\wh{G}\bs(\prescript{L}{}G)^I\!/\wh{G}$ given by $g_\bullet\mapsto \xi(g_\bullet\cdot x)$.
\begin{thm*}
  The square
  \begin{align*}
    \xymatrix{\Exc(W_{F_v},\wh{G})\ar[r]\ar[d] & \fz_{K_n}(G(F_v),\cO_E)\ar[d] \\
    \Exc(W_F,\wh{G})\ar[r] & \End_{C_c(K_N\bs G(\bA)/K_N,\cO_E)}(C_{\cusp}(G(F)\Xi\bs G(\bA)/K_N,\cO_E))
    }
  \end{align*}
commutes. Consequently, the image of $S_{I,V,x,\xi,\ga_\bullet}$ in $\fz_{K_n}(G(F_v),\cO_E/\fm_E^c)$ equals $\fz^{\GL}_{n,c,I,f,\ga_\bullet}$.
\end{thm*}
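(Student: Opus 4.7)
\medskip

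The plan is to deduce the integral commutativity from the $E$-coefficient version already proved in Theorem \ref{ss:localglobalcompatibilityalgebras}, and then to extract the identification with $\fz^{\GL}_{n,c,I,f,\ga_\bullet}$ by combining Genestier--Lafforgue's characterization with reduction modulo $\fm_E^c$. First I would observe that $C_{\cusp}(G(F)\Xi\bs G(\bA)/K_N,\cO_E)$ is $\cO_E$-torsion-free, as it is a submodule of the free $\cO_E$-module of compactly supported $\cO_E$-valued functions on the (finite) double coset set, whence the endomorphism ring appearing in the lower right of the square is torsion-free over $\cO_E$. Since $\Exc(W_{F_v},\wh{G})$ is flat over $\cO_E$, the two $\cO_E$-algebra homomorphisms out of it given by the two sides of the square agree if and only if they agree after inverting $\ell$.

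After inverting $\ell$, I would factor the reduction to Theorem \ref{ss:localglobalcompatibilityalgebras} through two successive $C_c(K_N\bs G(\bA)/K_N,E)$-equivariant projections: first the quotient $C_c(G(F)\bs G(\bA)/K_N,E)\twoheadrightarrow C_c(G(F)\Xi\bs G(\bA)/K_N,E)$, which commutes with both vertical arrows because excursion operators commute with the $Z(F)\bs Z(\bA)$-action and because the Fargues--Scholze image factors through $\fz_{K_n}(G(F_v),E)$, and then the projection to the cuspidal direct summand $C_{\cusp}(G(F)\Xi\bs G(\bA)/K_N,E)$, which is preserved by the $\Exc(W_F,\wh{G})_E$-action via \cite[Proposition 13.1]{Laf16} and trivially by the Hecke action. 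This yields the commutativity of the integral square.

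For the consequence, I would reduce the commutative integral square modulo $\fm_E^c$. By its construction in \cite[Th\'eor\`eme 1.1]{GL17}, the element $\fz^{\GL}_{n,c,I,f,\ga_\bullet}\in\fz_{K_n}(G(F_v),\cO_E/\fm_E^c)$ is characterized by the property that, as the global data $(G,\Xi,N)$ vary, its action on $C_{\cusp}(G(F)\Xi\bs G(\bA)/K_N,\cO_E/\fm_E^c)$ coincides with the action of the global excursion operator attached to $S_{I,V,x,\xi,\ga_\bullet}$ via V.~Lafforgue's integral construction, i.e.\ with the bottom-left path of the square mod $\fm_E^c$. The commutativity established above then shows that the Fargues--Scholze image of $S_{I,V,x,\xi,\ga_\bullet}$ mod $\fm_E^c$ produces the same action, and hence equals $\fz^{\GL}_{n,c,I,f,\ga_\bullet}$ once one knows the right vertical arrow mod $\fm_E^c$ is injective as $(G,\Xi,N)$ vary.

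The main obstacle is precisely this injectivity, which is a faithfulness statement for the Bernstein center on cuspidal automorphic forms in positive characteristic and with torsion coefficients: one must show that every irreducible smooth $\cO_E/\fm_E^c$-representation of $G(F_v)$ with nonzero $K_n$-fixed vectors arises as the $v$-component of a cuspidal automorphic representation of some inner form contributing to $C_{\cusp}(G(F)\Xi\bs G(\bA)/K_N,\cO_E/\fm_E^c)$ for appropriate $(G,\Xi,N)$. In the function field setting this standard globalization is available via the simple trace formula with suitable pseudocoefficients, exactly as invoked in \cite[\S7--8]{GL17}; I would appeal to it directly rather than reprove it, leaving the remainder of the argument essentially formal.
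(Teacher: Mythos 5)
Your proposal is correct and follows essentially the same route as the paper: the integral commutativity is deduced from the $E$-coefficient statement of Theorem \ref{ss:localglobalcompatibilityalgebras} via \ref{ss:globalexcursionjustG} and the compatibility of the local excursion map with change of coefficients (your torsion-freeness/flatness observation is the right way to justify this), and the identification with $\fz^{\GL}_{n,c,I,f,\ga_\bullet}$ is obtained by reducing mod $\fm_E^c$ and invoking Genestier--Lafforgue's characterization together with their faithfulness statement, which is exactly \cite[Proposition 1.3]{GL17} and \cite[Lemma 1.4]{GL17} as cited in the paper. The ``main obstacle'' you flag is precisely \cite[Lemma 1.4]{GL17}, which the paper likewise quotes rather than reproves (note it is established there via Poincar\'e series rather than the trace formula).
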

\begin{proof}
  Since Theorem \ref{ss:localexcursion} is compatible with changing $\La$, the first claim follows immediately from \ref{ss:globalexcursionjustG} and Theorem \ref{ss:localglobalcompatibilityalgebras}. From here, tensoring with $\cO_E/\fm_E^c$ shows that the image of $S_{I,V,x,\xi,\ga_\bullet}$ in $\fz_{K_n}(G(F_v),\cO_E/\fm_E^c)$ has the same action on
  \begin{align*}
    C_{\cusp}(G(F)\Xi\bs G(\bA)/K_N,\cO_E/\fm_E^c)
  \end{align*}
  as the image of $S_{I,V,x,\xi,\ga_\bullet}$ in $\Exc(W_F,\wh{G})$ does. Now $\fz^{\GL}_{n,c,I,f,\ga_\bullet}$ enjoys the same property by \cite[Proposition 1.3]{GL17}, so they must be equal by \cite[Lemma 1.4]{GL17}.
\end{proof}

\subsection{}\label{ss:localglobalcompatibilityrepresentations}
We conclude this section by proving Theorem A. For us, \emph{cuspidal automorphic representations} of $G(\bA)$ are irreducible summands of $C^\infty_{\cusp}(G(F)\Xi\bs G(\bA),\ov\bQ_\ell)$ lying in a single generalized eigenspace for $\Exc(W_F,\wh{G})_{\ov\bQ_\ell}$, where $\Xi$ is some lattice of $Z(F)\bs Z(\bA)$.
\begin{thm*}
  The square
    \begin{align*}
    \xymatrix{
\left\{
  {\begin{tabular}{c}
    cuspidal automorphic\\
    representations of $G(\bA)$
  \end{tabular}}
\right\}\ar[r]^-{\GLC_G}\ar[d]^-{(-)_v} & \left\{
  {\begin{tabular}{c}
  $L$-parameters \\
  for $G$ over $F$
  \end{tabular}}
    \right\}\ar[d]^-{(-)|_{W_{F_v}}^{\semis}} \\
\left\{
  {\begin{tabular}{c}
    irreducible smooth\\
    representations of $G(F_v)$
  \end{tabular}}
\right\}\ar[r]^-{\LLC^{\semis}_{G_{F_v}}} & \left\{
  {\begin{tabular}{c}
  semisimple $L$-parameters \\
  for $G_{F_v}$ over $F_v$
  \end{tabular}}
\right\}
    }
    \end{align*}
    commutes.
\end{thm*}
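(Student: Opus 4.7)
The plan is to deduce Theorem A directly from Theorem \ref{ss:localglobalcompatibilityalgebras}, using Proposition \ref{ss:excursionalgebra} to translate characters of excursion algebras into $\wh{G}$-conjugacy classes of semisimple $L$-parameters. Fix a cuspidal automorphic representation $\pi$ of $G(\bA)$ relative to a lattice $\Xi \subseteq Z(F)\bs Z(\bA)$, and choose a compact open subgroup $K_N = K_n K_N^v \subseteq G(\bO)$ with $\pi^{K_N}\neq 0$, where $K_n$ is the factor at $v$. By definition, $\pi^{K_N}$ lies in a single generalized eigenspace of $\Exc(W_F,\wh{G})_{\ov\bQ_\ell}$ acting on $C_{\cusp}(G(F)\Xi\bs G(\bA)/K_N,\ov\bQ_\ell)$; write $\chi_{\GLC}:\Exc(W_F,\wh{G})_{\ov\bQ_\ell}\to\ov\bQ_\ell$ for the resulting character. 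By V. Lafforgue's definition of $\GLC_G(\pi)$ together with Proposition \ref{ss:excursionalgebra}, the character $\chi_{\GLC}$ corresponds precisely to the semisimple $L$-parameter $\GLC_G(\pi)$.

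On the local side, $\pi_v^{K_n}$ is a nonzero irreducible $C_c(K_n\bs G(F_v)/K_n,\ov\bQ_\ell)$-module, so the commutative center $\fz_{K_n}(G(F_v),\ov\bQ_\ell)$ acts on it through some character. Composing with the homomorphism from Theorem \ref{ss:localexcursion} yields a character $\chi_{\FS}:\Exc(W_{F_v},\wh{G})_{\ov\bQ_\ell}\to\ov\bQ_\ell$, which by the characterization of the Fargues--Scholze correspondence via excursion operators \cite[Corollary IX.2.4]{FS21} corresponds under Proposition \ref{ss:excursionalgebra} to $\LLC^{\semis}_{G_{F_v}}(\pi_v)$.

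Now observe that the $\fz_{K_n}(G(F_v),\ov\bQ_\ell)$-action on $\pi^{K_N}$, viewed as a subspace of $C_c(G(F)\bs G(\bA)/K_N,\ov\bQ_\ell)$, factors through the action on $\pi_v^{K_n}$, hence is scalar multiplication by $\chi_{\FS}$. Meanwhile, the $\Exc(W_F,\wh{G})_{\ov\bQ_\ell}$-action on $\pi^{K_N}$ is given by $\chi_{\GLC}$ modulo a nilpotent operator coming from the generalized eigenspace structure. Applying Theorem \ref{ss:localglobalcompatibilityalgebras} to the image of $\pi^{K_N}$ in $C_c(G(F)\bs G(\bA)/K_N,\ov\bQ_\ell)$ — which is legitimate because both arrows in the square of Theorem \ref{ss:localglobalcompatibilityalgebras} descend through the quotient by $\Xi$ and preserve the cuspidal summand, by Theorem \ref{ss:globalexcursion} and \cite[Proposition 13.1]{Laf16} — forces equality of the two scalar operators, hence $\chi_{\FS} = \chi_{\GLC}\circ\mathrm{res}$, where $\mathrm{res}:\Exc(W_{F_v},\wh{G})_{\ov\bQ_\ell}\to\Exc(W_F,\wh{G})_{\ov\bQ_\ell}$ is the natural restriction. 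By the naturality of Proposition \ref{ss:excursionalgebra} in the group $W$, this restriction map corresponds to the restriction of semisimple $L$-parameters along $W_{F_v}\hookrightarrow W_F$, yielding the desired identity $\LLC^{\semis}_{G_{F_v}}(\pi_v)=\GLC_G(\pi)|_{W_{F_v}}^{\semis}$.

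The main point to verify carefully is the compatibility among the various global function spaces: $C_c(G(F)\bs G(\bA)/K_N,\ov\bQ_\ell)$ appearing in Theorem \ref{ss:localglobalcompatibilityalgebras}, its quotient $C_c(G(F)\Xi\bs G(\bA)/K_N,\ov\bQ_\ell)$ appearing in Theorem \ref{ss:globalexcursion}, and the cuspidal subspace $C_{\cusp}(G(F)\Xi\bs G(\bA)/K_N,\ov\bQ_\ell)$ containing $\pi^{K_N}$. All three carry compatible actions of $C_c(K_N\bs G(\bA)/K_N,\ov\bQ_\ell)$ and of the image of $\Exc(W_F,\wh{G})_{\ov\bQ_\ell}$, so the passage from one to the next is a direct diagram chase. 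Once these identifications are in place, the theorem is a formal consequence of Theorem \ref{ss:localglobalcompatibilityalgebras} and Proposition \ref{ss:excursionalgebra}.
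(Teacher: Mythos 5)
Your proposal is correct and follows essentially the same route as the paper: both reduce Theorem A to the commutativity of the square of excursion algebras from Theorem \ref{ss:localglobalcompatibilityalgebras} together with the dictionary of Proposition \ref{ss:excursionalgebra}, translating the two paths around the square into $\GLC_G(\pi)|_{W_{F_v}}^{\semis}$ and $\LLC^{\semis}_{G_{F_v}}(\pi_v)$ respectively. The only cosmetic difference is that you carry out the descent through the quotient by $\Xi$ and the restriction to the cuspidal summand by hand, whereas the paper has already packaged this into Theorem \ref{ss:BernsteincenterGLFS} (via \ref{ss:globalexcursionjustG}) and simply cites it.
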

\begin{proof}
  Let $\Pi$ be a cuspidal automorphic representation of $G(\bA)$, and let $N$ be large enough such that $\Pi^{K_N}$ is nonzero. Adapt the notation of \ref{ss:localglobalcompatibilityalgebras}, and write $\chi_{\Pi_v}:\fz_{K_n}(G(F_v),\ov\bQ_\ell)\ra\ov\bQ_\ell$ for the $\ov\bQ_\ell$-algebra homomorphism induced by $\Pi_v^{K_n}$.

 By Theorem \ref{ss:BernsteincenterGLFS}, the square
  \begin{align*}
        \xymatrix{\Exc(W_{F_v},\wh{G})_{\ov\bQ_\ell}\ar[r]\ar[d] & \fz_{K_n}(G(F_v),\ov\bQ_\ell)\ar[d] \\
\Exc(W_F,\wh{G})_{\ov\bQ_\ell}\ar[r] & \End_{C_c(K_N\bs G(\bA)/K_N,\ov\bQ_\ell)}(C_{\cusp}(G(F)\Xi\bs G(\bA)/K_N,\ov\bQ_\ell))
    }
  \end{align*}
commutes. The action of $\Exc(W_F,\wh{G})_{\ov\bQ_\ell}$ on $\Pi^{K_N}$ corresponds to $\GLC_G(\Pi)$ under Proposition \ref{ss:excursionalgebra}, and composition with the left arrow corresponds to $\GLC_G(\Pi)|_{W_{F_v}}^{\semis}$ under Proposition \ref{ss:excursionalgebra}. On the other hand, the action of $\fz_{K_n}(G(F_v),\ov\bQ_\ell)$ on $\Pi^{K_N}$ corresponds to $\chi_{\Pi_v}$, and the composition with the top arrow corresponds to to $\LLC_{G_{F_v}}^{\semis}(\Pi_v)$ under Proposition \ref{ss:excursionalgebra}. Hence commutativity of the square yields the desired result.
\end{proof}

\section{Applications}\label{s:applications}
We revert our notation to the local context: let $F$ be a local field of characteristic $p>0$, let $G$ be a connected reductive group over $F$, and write $C$ for its radical. Our goal in this section is to prove Theorem B, Theorem C, and Theorem D. The proofs all proceed by carefully embedding local representations into global ones.

\subsection{}\label{ss:nonsemisimplify}
We now prove Theorem B. Fix an isomorphism $\ov\bQ_\ell\cong\bC$.
\begin{thm*}
The $\LLC^{\semis}_G$ uniquely lifts to a family of maps
    \begin{align*}
    \LLC_G:\left\{
  {\begin{tabular}{c}
    irreducible smooth\\
    representations of $G(F)$
  \end{tabular}}
\right\}\ra\left\{
  {\begin{tabular}{c}
  $L$-parameters \\
  for $G$ over $F$
  \end{tabular}}
\right\},
    \end{align*}
where $G$ runs over connected reductive groups over $F$, that is compatible with twisting by characters, compatible with parabolic induction for essentially $L^2$ representations as in \cite[Conjecture 4.1 (5)]{KT}, and whose value on $L^2$ representations with finite order central character is pure.
\end{thm*}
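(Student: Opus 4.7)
The plan is to follow the Gan--Harris--Sawin strategy outlined in the introduction, proceeding by reduction to the essentially $L^2$ case and then constructing the parameter by globalization. First I would reduce to the case of $L^2$ representations $\pi$ with finite order central character: compatibility with twisting by characters lets one normalize the central character, and compatibility with parabolic induction together with the Langlands classification (writing any irreducible smooth representation as the Langlands quotient of a standard module induced from essentially $L^2$ data on a Levi) forces the $L$-parameter to be determined by the $L$-parameters of its essentially $L^2$ constituents on Levi subgroups. Uniqueness of the lift, given the required properties, then follows from the fact that a pure Weil--Deligne $L$-parameter is determined by its semisimplification together with the purity constraint.

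For existence, the key input is a globalization of $\pi$. Using Beuzart-Plessis's globalization result \cite{GHSBP21}, I would produce a geometrically connected smooth proper curve $\mathbf{C}/\mathbf{F}_q$, a connected reductive group $\mathbf{G}$ over $\mathbf{F}=\mathbf{F}_q(\mathbf{C})$ with $\mathbf{G}_{\mathbf{F}_{v_0}}\cong G$ at a distinguished place $v_0$, and a cuspidal automorphic representation $\Pi$ of $\mathbf{G}(\mathbf{A}_\mathbf{F})$ such that $\Pi_{v_0}$ has the same cuspidal support as $\pi$ and $\Pi_{v_1}$ is the Gross--Reeder simple supercuspidal $\pi'$ at a second auxiliary place $v_1$. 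Compatibility with parabolic induction then lets me replace $\pi$ by the cuspidal representation at $v_0$ supporting it, so it suffices to produce the pure lift for $\Pi_{v_0}$.

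To upgrade $\GLC_\mathbf{G}(\Pi)|_{W_{\mathbf{F}_{v_0}}}$ from a semisimple to a pure parameter, I would apply Deligne's purity theorem to $\GLC_\mathbf{G}(\Pi)$ after verifying that it is pure in the global sense. The crucial local ingredient is that the Fargues--Scholze parameter of the Gross--Reeder representation $\pi'$ is \emph{irreducible}: combined with Theorem A at $v_1$, this forces $\GLC_\mathbf{G}(\Pi)$ itself to be irreducible as a global $L$-parameter, and thus pure by Deligne. The irreducibility of $\LLC^{\semis}(\pi')$ is established by combining Theorem A with the construction of Kloosterman sheaves of Heinloth--Ng\^o--Yun \cite{HNY13} and their $p$-adic companions due to Xu--Zhu \cite{XZ22}: the Kloosterman sheaf provides a globalization whose Frobenius traces at unramified places match the Hecke eigenvalues of $\pi'$, and its geometric irreducibility transfers through Theorem A to irreducibility of the Fargues--Scholze parameter. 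Purity of the global parameter then restricts by Theorem A to give a pure Weil--Deligne parameter at $v_0$ whose semisimplification is $\LLC^{\semis}_G(\pi)$, which I define to be $\LLC_G(\pi)$.

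The main obstacle is ensuring that the construction produces a well-defined map independent of all choices --- of the globalization, of the curve, of the place $v_1$, and of the auxiliary place used to pin down the cuspidal support. Independence will follow from the purity characterization (a pure lift of a fixed semisimple parameter is unique when it exists) together with compatibility of $\GLC_\mathbf{G}$ with the Satake isomorphism at unramified places, which pins down $\GLC_\mathbf{G}(\Pi)$ uniquely from $\Pi$. The compatibility properties (twisting, parabolic induction, purity of $L^2$ parameters) must then be verified directly from the construction, with parabolic induction compatibility reducing via constant-term computations to the $L^2$ case already handled.
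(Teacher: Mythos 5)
Your proposal follows essentially the same route as the paper's proof: reduction to $L^2$ representations with finite order central character via twisting and parabolic induction, uniqueness of the pure lift of a fixed semisimple parameter, globalization via Beuzart-Plessis with a Gross--Reeder/Kloosterman component at an auxiliary place, irreducibility of that local parameter via Heinloth--Ng\^o--Yun and Xu--Zhu combined with Theorem A, purity of the resulting global parameter, and restriction back to $v$. The only detail worth flagging is that the globalization must also match prescribed central characters at both places (the paper invokes a separate character-globalization step for this), but this does not change the architecture of the argument.
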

\begin{proof}
  By compatibility with parabolic induction for essentially $L^2$ representations, $\LLC_G$ is determined by its values on essentially $L^2$ representations $\pi$. By compatibility with twisting by characters, we can assume that $\pi$ also has finite order central character $\om_\pi:C(F)\ra\ov\bQ_\ell^\times$. There exists at most one pure $L$-parameter for $G$ over $F$ whose semisimplification equals $\LLC^{\semis}_G(\pi)$ \cite[Lemma 3.5.(b)]{GHSBP21}, so we just need to construct it.
  
  By \cite[Lemma 3.2]{GL18}, there exists a global field $\mathbf{F}$ of characteristic $p$, a place $v$ of $\mathbf{F}$, a connected reductive group $\mathbf{G}$ over $\mathbf{F}$, and an isomorphism $\mathbf{F}_v\cong F$ such that
  \begin{enumerate}[$\bullet$]
  \item $\mathbf{G}_{\mathbf{F}_v}$ is identified with $G$ as group schemes over $\mathbf{F}_v\cong F$,
  \item the radical $\mathbf{C}$ of $\mathbf{G}$ has $\mathbf{F}$-split rank equal to the $F$-split rank of $C$.
  \end{enumerate}
  Write $\bA_{\mathbf{F}}$ for the adele ring of $\mathbf{F}$. By the Chebotarev density theorem, there exists a place $v'\neq v$ of $\mathbf{F}$ where $\mathbf{G}_{\mathbf{F}_{v'}}$ is split. Write $\bF_{q'}$ for the residue field of $\mathbf{F}_{v'}$, identify $\mathbf{F}_{v'}$ with $\bF_{q'}\lp{\textstyle\frac1z}$, and write $\mathbf{G}_{\mathbf{F}_{v'}}$ as the pullback of a split connected reductive group $\mathbf{H}$ over $\bF_{q'}$.

Let $\phi$ be a generic character for $\mathbf{H}$ as in \cite[Section 1.3]{HNY13}, write $\Pi'$ for the cuspidal automorphic representation of $\mathbf{H}(\bA_{\bF_{q'}(z)})$ associated with the automorphic sheaf $A_\phi$ as in \cite[Definition 2.6]{HNY13}, and write $\rho'$ for the $L$-parameter for $\mathbf{H}$ over $\bF_{q'}(z)$ associated with the $\prescript{L}{}{\mathbf{H}}$-local system $\mathrm{Kl}_{\prescript{L}{}{\mathbf{H}}}(\phi)$ as in \cite[Theorem 1(1)]{HNY13}. Since $A_\phi$ is a Hecke eigensheaf with eigenvalue $\mathrm{Kl}_{\prescript{L}{}{\mathbf{H}}}(\phi)$, we see that $\Pi'$ and $\rho'$ are associated via the Satake isomorphism at cofinitely many places of $\bF_{q'}(z)$. Now $\rho'|_{W_{\bF_{q'}\lp{\frac1z}}}$ is irreducible by \cite[Corollary 5.1(1)]{HNY13} and \cite[Remark 4.5.10(i)]{XZ22}, so $\rho'$ is irreducible. Hence \cite[Th\'eor\`eme 12.3]{Laf16} and \cite[Proposition 6.4]{BHKT19}\footnote{While \cite{BHKT19} only considers split $G$, \cite[Proposition 6.4]{BHKT19} immediately extends to general $G$.} imply that $\rho'=\GLC_{\mathbf{H}}(\Pi')$. From here, Theorem \ref{ss:localglobalcompatibilityrepresentations} shows that $\LLC^{\semis}_{\mathbf{H}_{\bF_{q'}\lp{\frac1z}}}(\Pi'_\infty)=\rho'|_{W_{\bF_{q'}\lp{\frac1z}}}^{\semis}=\rho'|_{W_{\bF_{q'}\lp{\frac1z}}}$.

  By \cite[p.~2829]{GL18}, there exists a finite order character $\om:\mathbf{C}(\mathbf{F})\bs\mathbf{C}(\bA_{\mathbf{F}})\ra\ov\bQ_\ell^\times$ such that $\om_v$ is identified with $\om_\pi$ and $\om_{v'}$ is identified with an unramified twist of $\om_{\Pi'_\infty}$. Note that $\ker\om$ contains a lattice $\Xi$ of $\mathbf{C}(\mathbf{F})\bs\mathbf{C}(\bA_{\mathbf{F}})$. Then \cite[Lemma A.1]{GHSBP21} and \cite[Lemma 8.1]{GL18} yield an irreducible summand $\Pi$ of $C^\infty_{\cusp}(\mathbf{G}(\mathbf{F})\Xi\bs\mathbf{G}(\bA_{\mathbf{F}}),\ov\bQ_\ell)$ such that
  \begin{enumerate}[$\bullet$]
  \item $\Pi_v$ has the same cuspidal support as $\pi$,
  \item $\Pi_{v'}$ is isomorphic to an unramified twist of $\Pi'_\infty$ via $\mathbf{F}_{v'}\cong\bF_{q'}\lp{\textstyle\frac1z}$.
  \end{enumerate}
Theorem \ref{ss:localglobalcompatibilityrepresentations} and \cite[p.~326]{FS21} indicate that $\GLC_{\mathbf{G}}(\Pi)|_{W_{\mathbf{F}_{v'}}}^{\semis}$ equals an unramified twist of $\LLC^{\semis}_{\mathbf{H}_{\bF_{q'}\lp{\frac1z}}}(\Pi'_\infty)$. This shows that $\GLC_{\mathbf{G}}(\Pi)$ is irreducible, so \cite[Lemme 16.2]{Laf16} and \cite[Lemma 11.4]{ST21} imply that $\GLC_{\mathbf{G}}(\Pi)$ is pure.\footnote{Now \cite[Lemme 16.2]{Laf16} and \cite[Lemma 11.4]{ST21} are stated for split $G$, but they hold in general.} Hence $\GLC_{\mathbf{G}}(\Pi)|_{W_{\mathbf{F}_v}}$ is pure as in \cite[Definition 3.3.(b)]{GHSBP21}. Finally, Theorem \ref{ss:localglobalcompatibilityrepresentations} and \cite[Corollary IV.7.3]{FS21} show that $\GLC_{\mathbf{G}}(\Pi)|_{W_{\mathbf{F}_v}}^{\semis}=\LLC^{\semis}_G(\Pi_v)=\LLC^{\semis}_G(\pi)$, so $\GLC_{\mathbf{G}}(\Pi)|_{W_{\mathbf{F}_v}}$ is the unique pure $L$-parameter for $G$ over $F$ whose semisimplification equals $\LLC^{\semis}_G(\pi)$.
\end{proof}

\subsection{}\label{ss:GLFS}
We give the following abstract proof of Theorem C.
\begin{thm*}
  There exists at most one family of maps
  \begin{align*}
    \cL\cL\cC^{\semis}_G:\left\{
  {\begin{tabular}{c}
    irreducible smooth\\
    representations of $G(F)$
  \end{tabular}}
\right\}\ra\left\{
  {\begin{tabular}{c}
  semisimple $L$-parameters \\
  for $G$ over $F$
  \end{tabular}}
\right\},
  \end{align*}
  where $G$ runs over connected reductive groups over $F$, that is compatible with twisting by characters as in \cite[Property 2.8]{Har22}, compatible with parabolic induction as in \cite[Property 2.13]{Har22}, and satisfies the conclusion of Theorem \ref{ss:localglobalcompatibilityrepresentations}.

  Consequently, the Genestier--Lafforgue correspondence agrees with the Fargues--Scholze correspondence.
\end{thm*}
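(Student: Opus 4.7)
The plan is to establish uniqueness first and then deduce that both constructions coincide by verifying that each satisfies the three axioms. Let $\cL\cL\cC^{\semis}$ be any family of maps satisfying the three listed properties. Compatibility with parabolic induction allows me to reduce to the case of supercuspidal representations $\pi$ of $G(F)$: any irreducible smooth $\pi$ is a subquotient of $\iota_P^G\sigma$ for some parabolic $P\subseteq G$ with Levi $M$ and supercuspidal $\sigma$ of $M(F)$, and the axiom pins down $\cL\cL\cC^{\semis}_G(\pi)$ in terms of $\cL\cL\cC^{\semis}_M(\sigma)$ composed with the canonical $\prescript{L}{}M\hookrightarrow\prescript{L}{}G$. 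Compatibility with twisting by characters then reduces further to the case where the central character $\omega_\pi$ has finite order.

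For a supercuspidal $\pi$ with $\omega_\pi$ of finite order, the classical Poincar\'e series argument (in the form of Gan--Lomel\'i or Beuzart-Plessis, or the original technique of Deligne--Kazhdan adapted to the function-field setting) produces a globalization: there exist a global function field $\mathbf{F}$ of characteristic $p$, a place $v$ of $\mathbf{F}$ with an isomorphism $\mathbf{F}_v\cong F$, a connected reductive group $\mathbf{G}$ over $\mathbf{F}$ with $\mathbf{G}_{\mathbf{F}_v}$ identified with $G$, and a cuspidal automorphic representation $\Pi$ of $\mathbf{G}(\bA_{\mathbf{F}})$ (in the sense of \ref{ss:localglobalcompatibilityrepresentations}) such that $\Pi_v\cong\pi$. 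Concretely, one exploits that a supercuspidal admits a compactly supported matrix coefficient, so its Poincar\'e series converges and produces a nonzero cusp form whose local component at $v$ involves $\pi$; at the auxiliary places one can impose Iwahori-type conditions so that the resulting automorphic representation is cuspidal with central character globalizing $\omega_\pi$. Granting this, the third axiom applied to $\Pi$ gives
\begin{align*}
\cL\cL\cC^{\semis}_G(\pi) \;=\; \cL\cL\cC^{\semis}_{\mathbf{G}_{\mathbf{F}_v}}(\Pi_v) \;=\; \GLC_{\mathbf{G}}(\Pi)\big|_{W_{\mathbf{F}_v}}^{\semis},
\end{align*}
and the right-hand side depends only on V. Lafforgue's (canonically defined) global correspondence, not on $\cL\cL\cC^{\semis}$. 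This proves uniqueness.

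For the consequence, I would verify that each of $\LLC^{\semis,\mathrm{FS}}_G$ and $\LLC^{\semis,\mathrm{GL}}_G$ satisfies the three axioms. Compatibility with twisting and with parabolic induction is known for Genestier--Lafforgue (by their Th\'eor\`eme 0.1 and subsequent unravelling) and for Fargues--Scholze by \cite[Theorem IX.0.5, Corollary IX.7.3]{FS21}. The local-global compatibility axiom is exactly Theorem \ref{ss:localglobalcompatibilityrepresentations} in the Fargues--Scholze case, and is the content of the main theorem of Genestier--Lafforgue in their case. Hence both families coincide with the unique such assignment, proving Theorem C. The main obstacle is ensuring that the globalization step actually lies within the reach of the hypotheses in \ref{ss:localglobalcompatibilityrepresentations}: one needs the globalized $\Pi$ to be genuinely cuspidal and to lie in a single generalized eigenspace for the excursion algebra, with its central character factoring through a lattice $\Xi$; this is where the Poincar\'e-series construction (together with a careful choice of test function producing a compactly-supported automorphic form with prescribed local behaviour) is essential, and where the finite-order reduction on $\omega_\pi$ is crucially used.
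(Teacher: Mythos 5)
Your proposal is correct and follows essentially the same route as the paper: reduce to cuspidal representations with finite-order central character via the parabolic-induction and twisting axioms, globalize by Poincar\'e series (the paper cites \cite[Theorem 1.1]{GL17} for this, after globalizing the group and the central character via \cite[Lemmas 3.2, 3.3]{GL18} so that $\ker\om$ contains a lattice $\Xi$), and then let the local-global compatibility axiom pin down the value as $\GLC_{\mathbf{G}}(\Pi)|_{W_{\mathbf{F}_v}}^{\semis}$; the deduction of Theorem C by verifying the axioms for both correspondences is likewise identical.
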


\begin{proof}
  By compatibility with parabolic induction, $\cL\cL\cC^{\semis}_G$ is determined by its values on cuspidal representations $\pi$. By compatibility with twisting by characters, we can assume that $\pi$ also has finite order central character $\om_\pi:C(F)\ra\ov\bQ_\ell^\times$.

Let $\mathbf{F}$, $v$, $\mathbf{G}$, and $\mathbf{C}$ be as in the proof of Theorem \ref{ss:nonsemisimplify}. By \cite[Lemma 3.3]{GL18}, there exists a finite order character $\om:\mathbf{C}(\mathbf{F})\bs\mathbf{C}(\bA_{\mathbf{F}})\ra\ov\bQ_\ell^\times$ such that $\om_v$ is identified with $\om_\pi$. Note that $\ker\om$ contains a lattice $\Xi$ of $\mathbf{C}(\mathbf{F})\bs\mathbf{C}(\bA_{\mathbf{F}})$. Poincar\'e series yield an irreducible summand $\Pi$ of $C^\infty_{\cusp}(\mathbf{G}(\mathbf{F})\Xi\bs\mathbf{G}(\bA),\ov\bQ_\ell)$ such that $\Pi_v$ is identified with $\pi$ \cite[Theorem 1.1]{GL17}, so the conclusion of Theorem \ref{ss:localglobalcompatibilityrepresentations} uniquely determines $\cL\cL\cC^{\semis}_G(\pi)$ as $\GLC_{\mathbf{G}}(\Pi)|_{W_{\mathbf{F}_v}}^{\semis}$.

The Fargues--Scholze correspondence satisfies the aforementioned properties by \cite[p.~326]{FS21}, \cite[Corollary IX.7.3]{FS21}, and Theorem \ref{ss:localglobalcompatibilityrepresentations}. The Genestier--Lafforgue correspondence also satisfies these properties by \cite[Th\'eor\`eme 8.1]{GL17}, so the above shows that it agrees with the Fargues--Scholze correspondence.
\end{proof}

\subsection{}
Finally, we prove Theorem D. Let $D$ be a central simple algebra over $F$ of degree $n$.
\begin{thm*}
  The triangle
  \begin{align*}
    \xymatrixcolsep{-.5in}
    \xymatrix{\left\{
  {\begin{tabular}{c}
    irreducible essentially $L^2$\\
    representations of $D^\times$
  \end{tabular}}
\right\}\ar[rr]^-{\JL}\ar[dr]_-{\LLC^{\semis}_{D^\times}} & & \left\{
  {\begin{tabular}{c}
    irreducible essentially $L^2$\\
     representations of $\GL_n(F)$
  \end{tabular}}
    \right\} \ar[dl]^-{\LLC^{\semis}_{\GL_n}}\\
    & \left\{
  {\begin{tabular}{c}
   $n$-dimensional semisimple \\
    representations of $W_F$
  \end{tabular}}
    \right\} &}
  \end{align*}
commutes, where $\JL$ denotes the local Jacquet--Langlands correspondence as in \cite[(th. 1.1)]{Bad02}.
\end{thm*}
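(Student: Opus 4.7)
The plan is to prove Theorem D by globalizing $\pi$ to an automorphic representation on a suitable inner form of $\GL_n$ over a global function field, transferring via the global Jacquet--Langlands correspondence to $\GL_n$, and then comparing the associated global $L$-parameters via Theorem A at the distinguished local place. First, using compatibility with twisting by characters, I would reduce to the case that $\pi$ has finite order central character. Next, choose a global field $\mathbf{F}$ of characteristic $p$, a place $v$ of $\mathbf{F}$, and an isomorphism $\mathbf{F}_v\cong F$, together with a central simple algebra $\mathbf{D}$ over $\mathbf{F}$ such that $\mathbf{D}_v$ is identified with $D$ and $\mathbf{D}$ is a division algebra at one or two auxiliary places $v'$ (and $v''$), so that the simple trace formula applies.

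Using the simple trace formula on $\mathbf{D}^\times(\bA_{\mathbf{F}})$, together with a Poincar\'e-series argument as in \cite[Theorem 1.1]{GL17}, I would produce a cuspidal automorphic representation $\Pi$ of $\mathbf{D}^\times(\bA_{\mathbf{F}})$ with $\Pi_v\cong\pi$ and with $\Pi_{v'}$ (resp.\ $\Pi_{v''}$) supercuspidal, after twisting globally by a finite-order character of the center so that the central character has a trivial lattice. The global Jacquet--Langlands correspondence of Badulescu \cite{BR17} then transfers $\Pi$ to a cuspidal automorphic representation $\Pi'$ of $\GL_n(\bA_{\mathbf{F}})$ such that $\Pi'_w=\JL_{\mathbf{D}_w^\times}(\Pi_w)$ at every place $w$ where $\mathbf{D}_w$ is ramified, and $\Pi'_w=\Pi_w$ elsewhere. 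In particular, $\Pi'_v=\JL(\Pi_v)=\JL(\pi)$, since $\mathbf{D}_v\cong D$.

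Applying Theorem \ref{ss:localglobalcompatibilityrepresentations} to both $\Pi$ and $\Pi'$ then yields
\begin{align*}
\GLC_{\mathbf{D}^\times}(\Pi)|_{W_{\mathbf{F}_v}}^{\semis}&=\LLC^{\semis}_{D^\times}(\pi),\\
\GLC_{\GL_n}(\Pi')|_{W_{\mathbf{F}_v}}^{\semis}&=\LLC^{\semis}_{\GL_n}(\JL(\pi)),
\end{align*}
so it suffices to identify $\GLC_{\mathbf{D}^\times}(\Pi)$ with $\GLC_{\GL_n}(\Pi')$ as $L$-parameters for $\GL_n$ over $\mathbf{F}$, via the standard identification of dual groups. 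This is where the Chebotarev density theorem enters: at the cofinite set of places $w$ where $\mathbf{D}_w$ is split and both $\Pi_w$ and $\Pi'_w=\Pi_w$ are unramified, the two global $L$-parameters have the same Frobenius conjugacy class by compatibility of V. Lafforgue's correspondence with the Satake isomorphism \cite[Th\'eor\`eme 12.3]{Laf16}. Chebotarev then forces the two semisimple global $L$-parameters to coincide, and restricting to $W_{\mathbf{F}_v}$ gives the desired equality.

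The main obstacle will be step two, namely arranging a single globalization $\Pi$ of $\pi$ that simultaneously satisfies all the constraints needed for the simple trace formula on $\mathbf{D}^\times$ to produce a genuine cuspidal representation with prescribed local component at $v$, and is in the image of the global Jacquet--Langlands transfer. However, since $\pi$ is essentially $L^2$, pseudocoefficients exist at $v$, and the standard simple trace formula argument (together with the hypothesis that $\mathbf{D}$ is ramified at $v$ as well as at the auxiliary place, which ensures that only discrete series contribute) carries through as in the number field case. All other steps are formal consequences of Theorem A, Badulescu's global Jacquet--Langlands correspondence, and the Chebotarev density theorem.
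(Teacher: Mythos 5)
Your overall architecture (globalize, apply the global Jacquet--Langlands correspondence, then combine Theorem A at $v$ with \cite[Th\'eor\`eme 12.3]{Laf16} and Chebotarev) matches the paper's, but you run the global transfer in the opposite direction --- you globalize $\pi$ on $\mathbf{D}^\times$ and transfer \emph{up} to $\GL_n$, whereas the paper globalizes $\JL(\pi)$ on $\GL_n$ (via the pseudo-coefficient of \cite[Section 5]{BR17} and the argument of \cite[(15.10)]{LRS93}) and transfers \emph{down} to $\mathbf{D}^\times$. This reversal creates a genuine gap at the step ``$\Pi'_v=\JL(\Pi_v)=\JL(\pi)$.'' The global correspondence of \cite[Theorem 3.2]{BR17} identifies local components at split places, but at ramified places it only relates $\Pi_w$ to $\Pi'_w$ through the extended transfer $\mathrm{LJ}$ on Grothendieck groups; it does \emph{not} assert that $\Pi'_w$ is the local Jacquet--Langlands lift of $\Pi_w$. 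That conclusion requires knowing that $\Pi'_v$ is essentially square-integrable, which in turn requires (i) that $\Pi'$ is cuspidal rather than residual, and (ii) the fact that a generic representation of $\GL_n(F_v)$ with nonvanishing $\mathrm{LJ}$ must be essentially square-integrable. Your auxiliary condition --- $\Pi_{v'}$ ``supercuspidal'' at places where $\mathbf{D}$ is locally a division algebra --- is vacuous there (every irreducible smooth representation of a compact-mod-center group has compactly supported matrix coefficients) and does not exclude a residual transfer. Concretely, if $D$ is itself a division algebra and $\pi=\mathbf{1}_{D^\times}$ (which is $L^2$), the trivial automorphic representation $\Pi=\mathbf{1}$ of $\mathbf{D}^\times(\bA_{\mathbf{F}})$ satisfies all your stated constraints, yet its transfer is the residual trivial representation of $\GL_n(\bA_{\mathbf{F}})$, whose component at $v$ is $\mathbf{1}_{\GL_n(F)}\neq\mathrm{St}=\JL(\pi)$. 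So your method, as stated, can produce a globalization for which the key local identity fails.

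The gap is repairable --- for instance, by forcing $\Pi_{w_0}$ to be supercuspidal at an auxiliary \emph{split} place $w_0$ (so that $\Pi'_{w_0}=\Pi_{w_0}$ is supercuspidal, whence $\Pi'$ is cuspidal, whence $\Pi'_v$ is generic with $\mathrm{LJ}(\Pi'_v)\neq 0$ and therefore essentially square-integrable) --- but you would then need to justify that the trace-formula globalization can impose a supercuspidal component at a split place while keeping $\Pi_v\cong\pi$ exactly (itself nontrivial when $D$ is not division, since a pseudo-coefficient only pins down $\Pi_v$ up to representations with the same trace on the elliptic set). The paper's direction sidesteps both issues: $\wt\Pi_v\cong\JL(\pi)$ holds by construction on the $\GL_n$ side, cuspidality of $\wt\Pi_{v'}$ at the ramified places guarantees $\mathbf{D}$-compatibility, and the ramified-place statement of \cite[Theorem 3.2]{BR17} then gives $\Pi_v\cong\pi$ directly because $\wt\Pi_v$ is already essentially square-integrable.
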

\begin{proof}
  Because both $\JL$ \cite[(th. 1.1)]{Bad02} and $\LLC^{\semis}_G$ \cite[p.~326]{FS21} are compatible with twisting by characters, it suffices to check commutativity on $L^2$ representations $\pi$ with finite order central character $\om_\pi:F^\times\ra\ov\bQ_\ell^\times$.

  Let $\mathbf{F}$ be a global field of characteristic $p$ along with a place $v$ of $\mathbf{F}$ and an isomorphism $\mathbf{F}_v\cong F$, and let $\mathbf{D}$ be a central division algebra over $\mathbf{F}$ such that $\mathbf{D}_{\mathbf{F}_v}$ is identified with $D$ as central simple algebras over $\mathbf{F}_v\cong F$. Using the pseudo-coefficient for $\JL(\pi)$ constructed in \cite[Section 5]{BR17}, the proof of \cite[(15.10)]{LRS93} yields a lattice $\Xi$ of $\mathbf{F}^\times\bs\bA^\times_{\mathbf{F}}$ and an irreducible summand $\wt\Pi$ of $C^\infty_{\cusp}(\GL_n(\mathbf{F})\Xi\bs\GL_n(\bA_{\mathbf{F}}),\ov\bQ_\ell)$ such that
  \begin{enumerate}[$\bullet$]
  \item $\wt\Pi_v$ is isomorphic to $\JL(\pi)$,
  \item for all places $v'\neq v$ of $\mathbf{F}$ where $\mathbf{D}_{\mathbf{F}_{v'}}$ is ramified, $\wt\Pi_{v'}$ is cuspidal.
  \end{enumerate}
Therefore we can apply the global Jacquet--Langlands correspondence \cite[Theorem 3.2]{BR17} to $\wt\Pi$, which yields an irreducible summand $\Pi$ of $ C^\infty_{\cusp}(\mathbf{D}^\times\Xi\bs(\mathbf{D}\otimes_{\mathbf{F}}\bA_{\mathbf{F}})^\times,\ov\bQ_\ell)$ such that
  \begin{enumerate}[$\bullet$]
  \item $\Pi_v$ is isomorphic to $\pi$,
  \item for all places $w$ of $\mathbf{F}$ where $\mathbf{D}_{\mathbf{F}_w}$ is split, $\Pi_w$ is isomorphic to $\wt\Pi_w$.
  \end{enumerate}
  Then \cite[Th\'eor\`eme 12.3]{Laf16} and the Chebotarev density theorem imply that
  \begin{align*}
   \GLC_{\mathbf{D}^\times}(\Pi)=\GLC_{\GL_n}(\wt\Pi),
  \end{align*}
  so Theorem \ref{ss:localglobalcompatibilityrepresentations} enables us to conclude that
  \begin{gather*}
    \LLC^{\semis}_{D^\times}(\pi)=\GLC_{\mathbf{D}^\times}(\Pi)|^{\semis}_{W_{\mathbf{F}_v}}=\GLC_{\GL_n}(\wt\Pi)|^{\semis}_{W_{\mathbf{F}_v}}=\LLC^{\semis}_{\GL_n}(\JL(\pi)).\qedhere
  \end{gather*}
\end{proof}

\bibliographystyle{../habbrv}
\bibliography{biblio}
\end{document}